\documentclass[a4paper, english,11pt, reqno]{amsart}

\newcommand{\cE}{\mathcal{E}}
\newcommand{\cF}{\mathcal{F}}

\newcommand{\cH}{\mathcal{H}}
\newcommand{\cI}{\mathcal{I}}

\newcommand{\cM}{\mathcal{M}}

\newcommand{\cO}{\mathcal{O}}

\newcommand{\cX}{\mathcal{X}}

\usepackage{amsmath, amsfonts, dsfont,amssymb, amsthm, tikz-cd, enumerate, hyperref,mathtools, mathrsfs, geometry, verbatim, lilyglyphs,marginnote, romannum, appendix, todonotes}
\geometry{centering,vcentering,asymmetric,marginratio=1:1,vscale=0.77,hscale=0.71, marginparwidth= 0.115\paperwidth}

\usepackage{graphicx}

\DeclareRobustCommand{\SkipTocEntry}[5]{}

\newcommand{\ddc}{\mathrm{dd^c}}

\newcommand{\ia}{\mathfrak{a}}

\newcommand{\C}{\mathds C}
\newcommand{\ind}{\mathds 1}

\newcommand{\ideal}{\mathscr I}

\newcommand{\Q}{\mathds Q}

\newcommand{\R}{\mathds R}

\newcommand{\Pro}{\mathds P^1}

\DeclareMathOperator{\PL}{PL}

\DeclareMathOperator{\MA}{MA}

\DeclareMathOperator{\tspec}{TropSpec}
\DeclareMathOperator{\trivial}{triv}

\DeclareMathOperator{\supp}{supp}
\DeclareMathOperator{\Aff}{Aff}

\DeclareMathOperator{\VCar}{VCar}

\DeclareMathOperator{\chern}{c}
\DeclareMathOperator{\NA}{{na}}
\DeclareMathOperator{\PSH}{PSH}
\DeclareMathOperator{\CPSH}{CPSH}

\DeclareMathOperator{\divisorial}{{div}}

\DeclareMathOperator{\ord}{ord}

\DeclareMathOperator{\vol}{vol}

\newcommand{\Xdiv}{X^{\divisorial}}

\makeatletter
\newcommand{\newreptheorem}[2]{\newtheorem*{rep@#1}{\rep@title}\newenvironment{rep#1}[1]{\def\rep@title{#2 \ref*{##1}}\begin{rep@#1}}{\end{rep@#1}}}
\makeatother

\newreptheorem{theorem}{Theorem}
\newreptheorem{lemma}{Lemma}

\newtheorem{defi}{Definition}[subsection]
\newtheorem{remark}[defi]{Remark}
\newtheorem{example}[defi]{Example}
\newtheorem*{example*}{Example}
\newtheorem{lemma}[defi]{Lemma}
\newtheorem{prop}[defi]{Proposition}
\newtheorem{corollary}[defi]{Corollary}
\newtheorem{theorem}[defi]{Theorem}
\newtheorem{theorema}{Theorem}

\numberwithin{equation}{subsection}


\AtBeginDocument{\pagenumbering{arabic}}


\newcommand{\Md}{{\cM^{\divisorial}}}

\newcommand{\DcX}{\Delta_\cX}

\newcommand{\E}{\mathrm E}
\newcommand{\T}{\mathrm T}
\newcommand{\Ev}{\mathrm E^\vee}
\newcommand{\M}{\mathrm M}
\newcommand{\DF}{\mathrm{DF}}
\newcommand{\He}{\mathrm H}
\newcommand{\Jd}{\mathrm J}
\newcommand{\Pe}{\mathrm P}
\DeclareMathOperator{\np}{np}
\DeclareMathOperator{\psef}{psef}
\DeclareMathOperator{\Ent}{Ent}
\newcommand{\Xnp}{X^{\np}}


\newcommand{\dtz}{\frac{\mathrm d}{\mathrm dt}\Bigr\rvert_{t = 0}}

\hypersetup{
    colorlinks=true,
    linkcolor=blue,
    filecolor=magenta,      
    urlcolor=cyan,
    pdftitle={A transcendental non-Archimedean Calabi-Yau Theorem},
}
\title[A transcendental non-Archimedean Calabi--Yau Theorem]{A transcendental non-Archimedean Calabi--Yau Theorem with applications to the cscK problem}
\author[Pietro Mesquita Piccione \and David Witt Nyström]{Pietro Mesquita-Piccione \and David Witt Nyström}
\begin{document}
\maketitle

\begin{abstract}
Let $X$ be a compact K\"ahler manifold and $\alpha$ a K\"ahler class on $X$. We prove that if $(X,\alpha)$ is uniformly K-stable for models, then there is a unique cscK metric in $\alpha$. This was first proved in the algebraic case by Chi Li \cite{Li22geodesic,Li23fujita}, and it strengthens a related result in \cite{MP24transcendental}. K-stability for models is defined in terms of big test configurations, but we also give a valuative criterion as in \cite{BJ23nakstabii} together with an explicit formula for the associated $\beta$-invariant. To accomplish this we further develop the non-Archimedean pluripotential theory in the transcendental setting, as initiated in \cite{DXZ23transcendental} and \cite{MP24transcendental}. In particular we prove the continuity of envelopes and orthogonality properties, and using that, we are able to extend the non-Archimedean Calabi-Yau Theorem in \cite{BJ22trivval} to the general K\"ahler setting.
\end{abstract}

\setcounter{tocdepth}{1}
\tableofcontents

\section{Introduction}

\addtocontents{toc}{\SkipTocEntry}
\subsection{The cscK problem and the YTD conjecture}

Let $(X,\omega)$ be a compact Kähler manifold and let $\alpha:=\{\omega\}\in H^{1,1}(X,\R)$ be the associated K\"ahler class. A major open problem is to decide when $\alpha$ contains a cscK metric, i.e. a K\"ahler metric with constant scalar curvature (for simplicity we do not distinguish between Kähler forms and metrics). A possible answer is given by the Yau-Tian-Donaldson (YTD) conjecture which in particular says that there should be a unique cscK metric in $\alpha$ if and only if $(X,\alpha)$ is uniformly K-stable. The conjecture was famously proved by Chen-Donaldson-Sun \cite{CDS15kahler1, CDS15kahler2, CDS15kahler3} in the important case where $X$ is Fano and $\alpha=-c_1(K_X)$, but the general case remains open. 

Recall that K-stability is defined in terms of certain degenerations of $(X,\alpha)$ known as test configurations.

\begin{defi} A (smooth dominating) test configuration $(\mathcal{X},A+D)$ of $(X,\alpha)$ consists of the following data:
\begin{enumerate}
    \item a compact K\"ahler manifold $\mathcal{X}$ together with a surjective map $\pi:\mathcal{X}\to X\times \mathbb{P}^1$ such that $\pi: \mathcal{X}\setminus \mathcal{X}_0\to X\times (\mathbb{P}^1\setminus \{0\})$ is a biholomorphism, $\mathcal{X}_0$ denoting the zero fiber,
    \item a lift of the standard $\mathbb{C}^*$-action on $X\times \mathbb{P}^1$ to $\mathcal{X}$ making $\pi$ equivariant,
    \item a class $A+D\in H^{1,1}(\mathcal{X},\R)$ where $A:=\pi_X^*(\alpha)$ and $D$ is a vertical divisor, i.e. a divisor supported on $\mathcal{X}_0$ (for convenience we do not distinguish between $D$ and its cohomology class).
\end{enumerate}

\end{defi}

We say that the test configuration $(\mathcal{X},A+D)$ is K\"ahler/big if $A+D$ is K\"ahler/big (recall that a class is said to be \emph{pseudoeffective} if it contains a closed positive current, and \emph{big} if it can be written as a sum of a K\"ahler class and a pseudoeffective class). We also call $\mathcal{X}$ a model of $X$. If $\mathcal{X}$ is a model and $Y\subseteq \mathcal{X}_0$ is a $\mathbb{C}^*$-invariant submanifold the blow-up of $Y$ in $\mathcal{X}$ is a new model. A simple way to produce models is thus to start from the trivial model $X\times \mathbb{P}^1$ and then to iterate this blow-up procedure. 

Many different invariants of a K\"ahler/big test configuration can now be defined in terms of intersection numbers, including the Donaldson-Futaki invariant 
$$\DF(\mathcal{X},A+D):=K_{\mathcal{X}/\mathbb{P}^1}\cdot\langle(A+D)^n\rangle -\frac{n\alpha^{n-1}\cdot K_X}{(n+1)\alpha^n}\langle(A+D)^{n+1}\rangle,$$ the closely related Mabuchi invariant $$\M^{\NA}(\cX, A+D):=\DF(\mathcal{X},A+D)-(\cX_0-\cX_0^{red})\cdot \langle(A+D)^n\rangle$$ and the $J$-invariant $$\Jd^{\NA}(\mathcal{X},A+D):=\langle A+D\rangle\cdot A^n-\frac{1}{n+1}\langle(A+D)^{n+1}\rangle.$$
Here $\langle(A+D)^k\rangle$ denotes the positive (or movable) intersection classes introduced in \cite{Bou02cones}, which are equal to $(A+D)^k$ when $A+D$ is K\"ahler (see Section \ref{sec:posint} for details).

\begin{defi}\label{def:models}
    We say that $(X,\alpha)$ is uniformly K-stable if there is some $\delta>0$ such that for all K\"ahler test configurations $(\mathcal{X},A+D)$ we have that $$\M^{\NA}(\mathcal{X},A+D)\geq \delta \Jd^{\NA}(\mathcal{X},A+D).$$ If the same is true for all big test configurations we say that $(X,\alpha)$ is uniformly K-stable for models. 
\end{defi}

\begin{remark}
    K-stability was originally only defined for algebraic $(X,\alpha)$, i.e. when $X$ is projective and $\alpha=c_1(L)$ for some ample ($\R$-)line bundle $L$. The general case (i.e. allowing $X$ to be non-projective and/or $\alpha$ to be non-rational) was first considered in \cite{SD18} and \cite{DR17kstability}, where they independently proved that the existence of a cscK metric in $\alpha$ implies that $(X,\alpha)$ is $K$-stable. K-stability for models was introduced by Chi Li \cite{Li22geodesic, Li23fujita}.
\end{remark}

Even though the general YTD-conjecture remains open, there has been some impressive progress since the solution of the Fano case. E.g., in \cite{Li22geodesic, Li23fujita} Chi Li proved that for algebraic $(X,\alpha)$, uniform K-stability for models implies the existence of a unique cscK metric in $\alpha$. Our first main result extends Chi Li's result to the general K\"ahler case:

\begin{theorema} \label{thm:A} 
If $(X,\alpha)$ is uniformly K-stable for models, then there is a unique cscK metric in $\alpha$.
\end{theorema}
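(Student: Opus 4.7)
The plan is to follow the strategy pioneered by Chi Li in the algebraic setting \cite{Li22geodesic,Li23fujita}, reformulated entirely on the non-Archimedean side and then transferred back via the newly developed transcendental pluripotential theory. First I would reduce the existence of a cscK metric in $\alpha$ to a properness/coercivity statement for the Mabuchi functional $\M$ on the finite-energy space $\cE^1(\alpha)$ modulo the action of $\Aut_0(X,\alpha)$, using the Chen--Cheng/Berman--Darvas--Lu framework. The uniqueness part is then standard once existence is established, via geodesic connectivity in $\cE^1(\alpha)$ and convexity of $\M$. The task therefore reduces to proving coercivity of $\M$ from uniform K-stability for models.

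To carry this out, I would attach to every geodesic ray in $\cE^1(\alpha)$ its non-Archimedean limit, an element $\phi^{\NA}$ of the space $\cE^{1,\NA}(\alpha)$ of finite-energy NA potentials, and establish a slope formula of the form
\[
\lim_{t\to\infty}\frac{\M(\phi_t)}{t}\ge \M^{\NA}(\phi^{\NA}),\qquad \lim_{t\to\infty}\frac{\Jd(\phi_t)}{t}=\Jd^{\NA}(\phi^{\NA}),
\]
so that Archimedean coercivity follows from NA coercivity, namely $\M^{\NA}\ge\delta\, \Jd^{\NA}$ on $\cE^{1,\NA}(\alpha)$. The hypothesis gives this inequality only on the dense subclass of NA potentials $\phi_{(\cX,A+D)}$ arising from big test configurations, so the real content is extending it to all of $\cE^{1,\NA}(\alpha)$ by approximation.

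This extension is where the transcendental NA pluripotential theory developed in the paper enters. The plan is to approximate an arbitrary $\phi\in\cE^{1,\NA}(\alpha)$ by NA potentials of big test configurations via a Fekete/regularization procedure combined with the transcendental non-Archimedean Calabi--Yau theorem, which produces, given a prescribed Monge--Amp\`ere measure, a solution that can then be perturbed into a divisorial/test configuration potential. The slope formula for $\Jd$ is comparatively soft, but the inequality for $\M$ requires upper semicontinuity of $\M^{\NA}$ along such approximations; this is obtained using the continuity of envelopes and the orthogonality relations proved in the paper, together with the positive intersection formalism of \cite{Bou02cones} which is what allows $\M^{\NA}$ to make sense for big, not merely K\"ahler, NA classes.

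The main obstacle is the approximation step in the big (non-K\"ahler) regime: for a general finite-energy NA potential one must produce big test configurations $(\cX_j,A_j+D_j)$ whose NA potentials converge to $\phi$ strongly enough that both $\Jd^{\NA}$ converges and $\limsup \M^{\NA}(\cX_j,A_j+D_j)\le \M^{\NA}(\phi)$. In the algebraic case \cite{Li23fujita} this rests on the NA Calabi--Yau theorem of Boucksom--Jonsson \cite{BJ22trivval} combined with a careful analysis of Monge--Amp\`ere energies on models; transcendentally the corresponding Calabi--Yau theorem is one of the main results of the paper, and reconciling its output with movable intersection numbers on non-projective models (where the usual algebraic tools such as Fujita approximation and ample/big decompositions are less rigid) is the delicate core of the argument. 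Once this upper-semicontinuity is in place, NA coercivity follows by continuity from the hypothesis on test configurations, the slope formula yields Archimedean coercivity, and hence cscK existence and uniqueness.
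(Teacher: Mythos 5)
Your proposal follows essentially the same route as the paper: reduce to coercivity of $\M_A$ (equivalently uniform $\widehat{K}$-stability) and then bridge the gap between the test-configuration hypothesis and all finite-energy NA potentials by approximation, using the transcendental NA Calabi--Yau theorem, continuity of envelopes, and orthogonality. The only organizational difference is that the paper outsources the Archimedean-to-NA reduction (slope formulas, maximal geodesic rays, Chen--Cheng coercivity) to \cite[Theorem A]{MP24transcendental} rather than re-deriving it, and the approximation step is carried out concretely by pushing $\mu=\MA_A(\varphi)$ forward onto dual complexes, solving the Monge--Amp\`ere equation for those measures, invoking the continuity-of-solutions theorem and $\varphi = P_A(\varphi\circ p_\cX)$ to descend to PL envelopes, with both energy and entropy shown to converge --- rather than via the Fekete-type regularization you gesture at.
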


In addition we give a \emph{valuative criterion} for $K$-stability for models,  see Section~\ref{sec:valuative}. More precisely we define a $\beta$-invariant on the set of divisorial measures $\mu\in \Md$, and we prove that $(X,\alpha)$ is $K$-stable for models iff there is a $\delta>0$ such that for all $\mu\in \Md$, $\beta_A(\mu)\ge \delta \Ev_A(\mu)$, where $\Ev_A(\mu)$ denotes the energy of $\mu$. This extends results of Boucksom--Jonsson in the algebraic case \cite{BJ23nakstabii}, which in turn was inpired by the work of Fujita \cite{Fuj19valuative} and Li \cite{Li17Ksemi} in the Fano case. In Section~\ref{sec:computingbeta} we also give an explicit formula for $\beta_A(\mu_{\xi})$ where $\mu_{\xi}:=\sum_{i=1}^l\xi_i\delta_{\ord_{F_i}}$, the $F_i$:s being prime divisors on a modification $\mu: X'\to X$ of $X$. Namely, we get that $$\beta_A(\mu_{\xi})=\sum_i \xi_i A_X(F_i)+\nabla_{K_X}(\widehat{-f_A})(-\xi),$$ where $A_X(F_i)$ denotes the log discrepancy of $F_i$, $$f_A(t)=\min_i(t_i)+V^{-1}\int_{\min_i(t_i)}^{+\infty}\langle(\mu^*\alpha-\sum_{i=1}^l(\lambda-t_i)_+F_i)^n\rangle d\lambda,$$ and $\widehat{-f_A}$ denotes the Legendre transform of $-f_A$ (see Corollary \ref{cor:betaformula}). Our proof relies in part on some results from \cite{DXZ23transcendental}.

\addtocontents{toc}{\SkipTocEntry}
\subsection{Geodesic stability and non-Archimedean pluripotential theory}
Chi Li's proof of Theorem \ref{thm:A} in the algebraic case relied on some big technical and conceptual advances made in the last ten years or so.

Developments in (Archimedean) pluripotential theory has demonstrated the benefits of working with the space of finite energy potentials $\cE^1_{\omega}(X)$ (see e.g. \cite{BBGZ13variational,Dar17completion}). Notably Darvas and He showed in \cite{DH17geodesic}    how geodesic rays in $\cE^1_{\omega}(X)$ can be constructed to detect instability.

In \cite{CC21csck1, CC21csck2} Chen-Cheng then proved some very strong estimates which allowed them to prove that if $(X,\alpha)$ is geodesically stable (meaning that the Mabuchi functional $M_{\omega}$ is coercive along any non-trivial finite energy geodesic ray), then there is a unique cscK metric in $\alpha$. 

Also underlying Chi Li's proof is the non-Archimedean pluripotential theory, which after being initiated by Kontsevich--Tschinkel \cite{KT01kahler} has been extensively developed by Boucksom, Jonsson, Favre and others (see e.g. \cite{BFJ15solution, BFJ16semipositive, BJ22trivval, BJ24green}).

A key observation in \cite{Li22geodesic} which ultimately allowed Chi Li to prove Theorem \ref{thm:A} in the algebraic case is that any destabilizing geodesic ray must be maximal, i.e. must correspond to a non-Archimedean potential (see e.g. \cite{BBJ21YTD}). This ties geodesic stability to a non-Archimedean stability notion called $\widehat{K}$-stability. 

The results of Chen-Cheng on geodesic stability are valid in the general K\"ahler setting. Non-Archimedean pluripotential theory on the other hand was originally only formulated for algebraic $(X,\alpha)$. Recently though, Darvas-Xia-Zhang \cite{DXZ23transcendental} and the first named author \cite{MP24transcendental} proposed two somewhat different ways of extending non-Archimedean pluripotential theory to the general setting. As an application the first named author generalized a result of Chi Li, closely related to but weaker than Theorem \ref{thm:A}, which says that uniform $\widehat{K}$-stability implies the existence of a unique cscK metric (see \cite{MP24transcendental}).

In this paper we further develop the non-Archimedean pluripotential theory for general $(X,\alpha)$, and as an application we are able to prove Theorem \ref{thm:A} in the general case.

\addtocontents{toc}{\SkipTocEntry}
\subsection{A non-Archimedean Calabi-Yau Theorem}

Key to proving Theorem \ref{thm:A} is our second main result, namely a non-Archimedean Calabi--Yau Theorem for general $(X,\alpha)$. 
A non-Archimedean Calabi--Yau Theorem for algebraic $(X,\alpha)$ was proved by Boucksom--Favre--Jonsson \cite{BFJ15solution}, subsequently followed by other versions (see e.g. \cite{BJ22trivval, BGM22differentiability}). 
Before describing these results though it makes sense to briefly discuss the original Calabi--Yau Theorem.

\subsubsection{The Calabi--Yau Theorem} \label{sec:introCY}

The celebrated Calabi--Yau Theorem proved by Yau \cite{Yau77calabi, Yau78ricci} says that for any volume form $dV$ on $X$ such that $\int_XdV=\int_X\omega^n=\alpha^n$ there is a unique K\"ahler form $\omega'$ in $\alpha$ such that $(\omega')^n=dV$. 

Other versions have since been established by e.g. Kolodziej \cite{Kol98complex} and Berman--Boucksom--Guedj--Zeriahi \cite{BBGZ13variational}. To formulate this last version, which is of particular relevance to our paper, we first need to remind the reader about some basic definitions in (Archimedean) pluripotential theory. 

A smooth function $\varphi$ on $X$ is called a K\"ahler potential (with respect to $\omega$), written $\varphi\in\mathcal{H}_{\omega}(X)$, if $\omega+dd^c\varphi$ is K\"ahler. A decreasing limit $\psi$ of K\"ahler potentials not identically equal to $-\infty$ is said to be $\omega$-psh, written $\psi\in PSH_{\omega}(X)$. 

The energy of a K\"ahler potential is defined as $$\E_{\omega}(\varphi):=\frac{V^{-1}}{n+1}\sum_{j=0}^n\int_X \varphi(\omega+dd^c\varphi)^j\wedge \omega^{n-j},$$ where $V:=\alpha^n$. The energy of $\psi\in PSH_{\omega}(X)$ is defined as the infimum of the energy of all K\"ahler potentials $\varphi\geq \psi$, and the space of finite energy potentials is defined as $$\mathcal{E}^1_{\omega}(X):=\{\psi\in PSH_{\omega}(X): \E_{\omega}(\psi)>-\infty\}.$$ We also let $\cE^1_{\omega,\sup}(X):=\{\varphi\in \mathcal{E}^1_{\omega} : \sup \varphi=0\}$.

The Monge-Amp\`ere measure of a K\"ahler potential $\varphi$ is defined as $$\MA_{\omega}(\varphi):=V^{-1}(\omega+dd^c\varphi)^n$$ and there is a natural extension of the Monge-Amp\`ere operator to $\mathcal{E}^1_{\omega}(X)$. 

The (dual) energy $\Ev_{\omega}(\mu)$ of a Radon probabiltiy measure is defined as $$\Ev_{\omega}(\mu):=\sup\left\{\E_{\omega}(\varphi)-\int_X\varphi\, \mathrm d\mu: \varphi\in \mathcal{E}^1_{\omega}(X)\right\},$$ those with $\Ev_{\omega}(\mu)<\infty$ giving us the space of finite energy measures $\mathcal{M}^1_{\omega}(X)$.

The Calabi--Yau Theorem proved in \cite{BBGZ13variational} is the following:

\begin{theorem} \label{thm:CYweak}
The Monge-Amp\`ere operator is a bijection between $\mathcal{E}^1_{\omega,\sup}(X)$ and \\ $\mathcal{M}^1_{\omega}(X)$. 
\end{theorem}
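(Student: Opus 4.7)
The plan is to use the variational approach of Berman--Boucksom--Guedj--Zeriahi. Fix $\mu \in \cM^1_\omega(X)$ and consider the functional
\[
F_\mu(\varphi) := \E_\omega(\varphi) - \int_X \varphi \, \mathrm d\mu
\]
on $\cE^1_{\omega,\sup}(X)$, whose supremum equals $\Ev_\omega(\mu)$ by definition. Surjectivity of the Monge--Amp\`ere operator will be obtained by showing that (i) this supremum is attained by some $\varphi^\ast$, and (ii) any maximizer satisfies the Euler--Lagrange equation $\MA_\omega(\varphi^\ast) = \mu$. Injectivity will come from a domination/comparison principle.

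For existence of a maximizer, I would first recall that $\E_\omega$ is upper semicontinuous and concave on $\PSH_\omega(X)$ with respect to the $L^1$-topology, and that $\cE^1_{\omega,\sup}(X) \cap \{\E_\omega \geq -M\}$ is compact for each $M$. The assumption $\Ev_\omega(\mu) < \infty$ gives the coercivity estimate $\int \varphi \, \mathrm d\mu \geq \E_\omega(\varphi) - \Ev_\omega(\mu)$ for all $\varphi \in \cE^1_{\omega,\sup}(X)$, which confines a maximizing sequence to such a compact sublevel set. One also needs upper semicontinuity of $\varphi \mapsto \int \varphi \, \mathrm d\mu$ along the maximizing sequence; this follows from the same energy bound combined with truncation, since $\mu$ puts no mass on polar sets for $\mu \in \cM^1_\omega$. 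Extracting a limit then yields a maximizer $\varphi^\ast$.

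The main obstacle is the Euler--Lagrange step, namely deducing $\MA_\omega(\varphi^\ast) = \mu$ from maximality of $\varphi^\ast$. The difficulty is that $\cE^1_\omega(X)$ is not a vector space, so arbitrary perturbations may leave $\PSH_\omega$. The standard device is to use the projection $P_\omega(u) := \sup\{\psi \in \PSH_\omega : \psi \leq u\}$ and to consider perturbations of the form $\varphi_t := P_\omega(\varphi^\ast + t\chi)$ for $\chi \in \Cz(X)$. The key inputs are the differentiability identity
\[
\frac{\mathrm d}{\mathrm dt}\Bigr\rvert_{t=0} \E_\omega(\varphi_t) = \int_X \chi \, \MA_\omega(\varphi^\ast),
\]
which rests on the orthogonality relation $\MA_\omega(P_\omega u) = \ind_{\{P_\omega u = u\}} \MA_\omega(P_\omega u)$, together with the fact that $P_\omega$ is a contraction in sup norm so $\int \varphi_t \, \mathrm d\mu$ is Lipschitz in $t$. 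Maximality then forces $\int \chi \, \MA_\omega(\varphi^\ast) = \int \chi \, \mathrm d\mu$ for every continuous $\chi$, giving the result.

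For injectivity, suppose $\varphi_1, \varphi_2 \in \cE^1_{\omega,\sup}(X)$ both satisfy $\MA_\omega(\varphi_i) = \mu$. The domination principle, which in turn follows from the comparison principle for $\cE^1_\omega$, says that if $\MA_\omega(\varphi_1) \leq \MA_\omega(\varphi_2)$ on $\{\varphi_1 < \varphi_2\}$ then $\varphi_1 \geq \varphi_2$. Applied symmetrically this forces $\varphi_1 = \varphi_2$. Alternatively, strict concavity of $\E_\omega$ modulo constants along the Mabuchi-type segments connecting $\varphi_1$ and $\varphi_2$ gives uniqueness directly; the sup-normalization then removes the ambiguity up to constants. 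The technical heart of the argument remains the first-variation computation via envelopes, the rest being robust once the standard finite energy machinery is in place.
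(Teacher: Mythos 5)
The paper does not prove Theorem~\ref{thm:CYweak}; it is recalled verbatim from \cite{BBGZ13variational} as Archimedean motivation for the non-Archimedean Theorem~\ref{thm:B}, which is what the paper actually proves. Your outline is a faithful account of the variational strategy of \cite{BBGZ13variational}: coercivity from $\Ev_\omega(\mu)<\infty$ plus compactness of energy sublevels yields a maximizer of $F_\mu$; the differentiability of $t\mapsto\E_\omega(P_\omega(\varphi^\ast+t\chi))$ at $t=0$, resting on the orthogonality relation $\MA_\omega(P_\omega u)=\ind_{\{P_\omega u=u\}}\MA_\omega(P_\omega u)$ you cite, converts maximality into $\MA_\omega(\varphi^\ast)=\mu$; and the domination principle gives uniqueness. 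One small correction: to pass to the limit in the maximizing sequence one needs \emph{lower} semicontinuity (in fact continuity) of $\varphi\mapsto\int\varphi\,\mathrm d\mu$ along $L^1$-convergent, $J$-bounded sequences, not the upper semicontinuity you invoke. The upper direction is the easy one (Hartogs/Fatou, since the potentials are uniformly bounded above); the nontrivial lower bound is exactly where the finite-energy hypothesis on $\mu$ enters, via an estimate of the type $\left\lvert\int(\varphi_j-\varphi)\,\mathrm d\mu\right\rvert\lesssim \Jd(\varphi_j,\varphi)^q$ (the Archimedean analogue of Lemma~\ref{lem:L1}). For comparison, the paper proves the non-Archimedean Theorem~\ref{thm:B} by the same variational scheme but with a different packaging: injectivity comes from the Dirichlet-energy identity $\Jd(\varphi,\psi)+\Jd(\psi,\varphi)=\int(\varphi-\psi)\left(\MA(\psi)-\MA(\varphi)\right)$ rather than the domination principle, and the Euler--Lagrange/envelope-perturbation step is bypassed entirely by quasi-metric estimates on $\cM^1$ (Theorem~\ref{thm:quasimetric} and Proposition~\ref{prop:MAdense}, following \cite{BJ23synthetic}), which show directly that the Monge--Amp\`ere measures of a maximizing sequence converge strongly to $\mu$. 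The Continuity of Envelopes and Orthogonality properties that your first-variation step would require are established in the paper (Theorems~\ref{thm:CoE} and~\ref{thm:Orth}) precisely because they are needed to set up that quasi-metric structure, so the two routes use the same inputs assembled differently.
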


To explain the non-Archimedean analogue we need to recall some basic definitions of non-Archimedean pluripotential theory.

\subsubsection{Berkovich/tropical analytification}
In the algebraic case one would start with the Berkovich analytification $X^{an}$ of $X$ with respect to the trivial norm on $\mathbb{C}$ (see e.g. \cite{BJ22trivval}). However, if $X$ is non-projective $X^{an}$ might be trivial, so following \cite{MP24transcendental} we instead consider the tropical analytification $X^{\NA}$ (denoted by $X^{\beth}$ in \cite{MP24transcendental}) of $X$ whose points correspond to semivaluations on the set $\ideal_X$ of coherent ideal sheaves on $X$. Recall that a nonconstant function $v: \ideal_X\to [0,\infty]$ is called a semivaluation if for any $I,J\in\ideal_X$ we have that $v(IJ)=v(I)+v(J)$ and $v(I+J)=\min(v(I),v(J))$. Equipped with the topology of pointwise convergence $X^{\NA}$ becomes a compact Hausdorff space. When $X$ is projective there is a natural identification between $X^{an}$ and $X^{\NA}$. For details see \cite{MP24transcendental}.

\subsubsection{Divisorial points} If $E_i$ is an irreducible vertical divisor on a model $\mathcal{X}$, then there is an associated semivaluation $v_{E_i}(I):=\min\{b_i^{-1}\ord_{E_i}(f\circ \pi_X): f\in I(U), U\subseteq X\}$, where $b_i$ is the order of vanishing of $\cX_0$ along $E_i$. 
Such valuations are called \emph{divisorial valuations} and the set of divisorial points in $X^{\NA}$ is denoted by $\Xdiv$. Importantly $\Xdiv$ is dense in $X^{\NA}$ (see \cite[Theorem B]{MP24transcendental}). 

\subsubsection{Dual complexes}

To each SNC model $\cX$ (i.e. a model such that $(\mathcal{X},\mathcal{X}_0^{red})$ is a SNC pair) one can associate a \emph{dual complex} $\Delta_{\cX}$ in the following way. 
Let $I$ be the index set for the irreducible components $E_i$ of $\cX_0$. To each irreducible component $Z$ of an intersection $\bigcap_{i\in J} E_i$, where $J\subseteq I$, we associate the simplex $\Delta_Z:=\{w\in (\R_{\ge 0})^{|J|}\mid \sum_{i\in J} w_i b_i\le 1\},$ where as usual $b_i$ denotes the multiplicity of $\cX_0$ along $E_i$.
This collection then defines a simplicial complex that we denote by $\Delta_{\cX}$. Morphisms between SNC models induce simplicial maps on the associated dual complexes, and one can show that there is a natural identification between $X^{\NA}$ and the projective limit of $\Delta_{\cX}$. Using so-called monomial valuations there is also a natural injection $i_{\cX}: \Delta_{\cX}\hookrightarrow X^{\NA}$, hence we can think of $\Delta_{\cX}$ as a subset of $X^{\NA}$. For more details see \cite{MP24transcendental}.

\subsubsection{Vertical divisors and PL functions} Let $D$ be a (not necessarily irreducible) vertical divisor on a model $\mathcal{X}$. If $x\in \Xdiv$ we let $\mathcal{X}'$ be a SNC model that dominates $\mathcal{X}$ and with an irreducible vertical divisor $E_i$ such that $x=v_{E_i}$. We can then write $\mu^*(D)=a_iE_i+\sum_{j\neq i} a_jE_j$ where $\mu$ is the map from $\mathcal{X}'$ to $\mathcal{X}$ and $(E_j)_{j\neq i}$ are the other irreducible vertical divisors on $\mathcal{X}'$. Then $f_D(x):=b_i^{-1}a_i$ defines a function $f_D$ on $\Xdiv$ which can be seen to have a continuous extension to the whole of $X^{\NA}$, also denoted by $f_D$. Functions of this kind are called piecewise linear (PL), the set of PL functions being denoted by $\PL(X^{\NA})$.

\subsubsection{K\"ahler potentials and $A$-psh functions} A PL function $f_D=\varphi_D$ is said to be a K\"ahler potential (with respect to $A$), written $\varphi_D\in \mathcal{H}_A(X^{\NA})$, if $A+D$ is relatively Kähler on some model $\mathcal{X}$ (relatively Kähler here means that $A+D+c\mathcal{X}_0$ is Kähler for large $c$). A decreasing limit $\varphi$ of K\"ahler potentials $\varphi_{D_i}$ is said to be $A$-psh (or just psh), written $\varphi\in PSH_A(X^{\NA})$.

\subsubsection{Finite energy potentials} The energy $\E_A(\varphi_D)$ of a K\"ahler potential $\varphi_D$ is defined as $$\E_A(\varphi_D):=\frac{V^{-1}}{n+1}(A+D)^{n+1},$$ the energy $\E_A(\varphi)$ of an $A$-psh function $\varphi$ is defined as the infimum of the energy of all K\"ahler potentials $\varphi_D\geq \varphi$, and the space of finite energy potentials is defined as $$\mathcal{E}^1_A(X^{\NA}):=\{\varphi\in PSH_A(X^{\NA}): \E_A(\varphi)>-\infty\}.$$ We equip $\mathcal{E}^1_A(X^{\NA})$ with the strong topology, defined as the coursest topology, finer than the topology of pointwise convergence on $\Xdiv$, such that $\E_A$ becomes continuous. We also let $\cE^1_{A,\sup}(X^{\NA}):=\{\varphi\in \mathcal{E}^1_A : \sup \varphi=0\}$ and equip it with the subspace topology.

\subsubsection{The non-Archimedean Monge-Amp\`ere operator} The (non-Archimedean) Monge-Amp\`ere measure of a K\"ahler potential $\varphi_D$ is defined as $$\MA_A(\varphi_D):=V^{-1}\sum_i \left((A+D)^n\cdot (b_iE_i)\right)\delta_{v_{E_i}},$$ where $\mathcal{X}_0=\sum_i b_iE_i$ is the zero divisor on a SNC model $\mathcal{X}$ where $D$ is defined. This is easily seen to be a probability measure. There is also a natural extension of the non-Archimedean Monge-Amp\`ere operator to $\mathcal{E}^1_A(X^{\NA})$ (see Section \ref{sec:enpair}).  

\subsubsection{Finite energy measures}
The (dual) energy $\Ev_{A}(\mu)$ of a Radon probability measure is defined as $$\Ev_{A}(\mu):=\sup\left\{E_{A}(\varphi)-\int\varphi\, \mathrm d\mu: \varphi\in \mathcal{E}^1_{A}(X^{\NA})\right\},$$ those with $\Ev_{A}(\mu)<\infty$ giving us the space of finite energy measures $\mathcal{M}^1_{A}(X^{\NA})$. We endow $\mathcal{M}^1_{A}(X^{\NA})$ with the strong topology, defined as the coursest topology, finer than the weak topology of measures, that makes $\Ev_A$ continuous.

\subsubsection{A non-Archimedean version of Theorem \ref{thm:CYweak}}
Our second main result is a non-Archimedean Calabi--Yau Theorem, first proved in the algebraic case by Boucksom-Jonsson  \cite[Theorem 12.8]{BJ22trivval}. 

\begin{theorema} \label{thm:B}
The non-Archimedean Monge-Amp\`ere operator is a homeomorphism between $\mathcal{E}^1_{A,\sup}(X^{\NA})$ and $\mathcal{M}^1_{A}(X^{\NA})$.
\end{theorema}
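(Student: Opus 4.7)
The plan is to follow the variational proof of Boucksom--Jonsson \cite{BJ22trivval} from the algebraic case, now made possible in the transcendental setting by the continuity of envelopes and orthogonality properties established in the earlier sections of the paper.

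Given $\mu\in\cM^1_A(X^{\NA})$, introduce the functional $F_\mu(\varphi):=\E_A(\varphi)-\int\varphi\,\mathrm d\mu$ on $\cE^1_{A,\sup}(X^{\NA})$; by definition of $\Ev_A$ its supremum equals $-\Ev_A(\mu)\in\R$. The first task is to produce a maximizer $\varphi_\mu\in\cE^1_{A,\sup}(X^{\NA})$. Arguing as in the archimedean case, sublevel sets of $-\E_A$ in $\cE^1_{A,\sup}(X^{\NA})$ are compact for the topology of pointwise convergence on $\Xdiv$, and along decreasing nets $-\int\varphi\,\mathrm d\mu$ is lower semi-continuous (using $\Ev_A(\mu)<\infty$); upper semi-continuity of $\E_A$ then yields the desired maximizer from any maximizing sequence.

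The core step is to show $\MA_A(\varphi_\mu)=\mu$. For a continuous function $f$ on $X^{\NA}$, consider the perturbation $\psi_t:=\mathrm{P}_A(\varphi_\mu+tf)$, where $\mathrm{P}_A$ denotes the $A$-psh envelope. Continuity of envelopes ensures $\psi_t\in\cE^1_A(X^{\NA})$ and guarantees differentiability of $\E_A$ along this family, while the orthogonality property gives $\int(\varphi_\mu+tf-\psi_t)\,\mathrm d\MA_A(\psi_t)=0$. Combining these, the stationarity condition at $t=0$ translates into
$$\int f\,\mathrm d\MA_A(\varphi_\mu)=\int f\,\mathrm d\mu.$$
Since PL functions are dense in $C^0(X^{\NA})$ and continuous functions separate Radon measures, we conclude $\MA_A(\varphi_\mu)=\mu$. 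Uniqueness under the normalization $\sup\varphi_\mu=0$ then follows from the domination principle, itself a consequence of the comparison principle for $\MA_A$.

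The homeomorphism statement is essentially built into the definitions of the strong topologies: continuity of $\MA_A$ and of its inverse follow from the continuity of $\E_A$ and $\Ev_A$ in these topologies together with the variational characterization of $\varphi_\mu$, via a routine compactness and convergence argument along the lines of \cite{BJ22trivval}. The main obstacle is the joint use of envelope continuity and orthogonality in the differentiability computation above: in the algebraic case these rest on the well-developed foundations from \cite{BFJ15solution,BJ22trivval}, but in the transcendental setting, where one must deal with big (not necessarily K\"ahler) cohomology classes and rely on positive intersection products, their proofs require the new pluripotential-theoretic machinery established earlier in the paper, and this is what unlocks the extension of Boucksom--Jonsson's theorem to the general K\"ahler setting.
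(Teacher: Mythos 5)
Your strategy is the classical BFJ/BJ variational route---extract a maximizer $\varphi_\mu$ of $F_\mu=\E_A-\int\cdot\,\mathrm d\mu$, then show it solves $\MA_A(\varphi_\mu)=\mu$ by differentiating $\E_A\circ P_A$ and invoking orthogonality. The paper instead avoids the envelope--differentiability computation entirely: it builds the quasi-metric $\delta_A$ on $\cM^1$ (Theorem~\ref{thm:quasimetric}, imported from \cite{BJ23synthetic} once orthogonality is available), observes that $\delta_A(\mu,\MA_A(\varphi_j))=\Jd_A(\mu,\varphi_j)\to 0$ along any maximizing sequence (Proposition~\ref{prop:MAdense}), and concludes $\MA_A$ of the weak limit equals $\mu$ by strong convergence. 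So the two proofs share the compactness/variational skeleton but use genuinely different machinery for the crucial ``Euler--Lagrange'' step.

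Two points in your sketch would need substantial work to close. First, the perturbation $\psi_t=P_A(\varphi_\mu+tf)$ is problematic: the Continuity of Envelopes theorem proved here (Theorem~\ref{thm:CoE}) applies to continuous $f$, but $\varphi_\mu$ is merely a finite energy (typically unbounded, not even continuous) potential, so $\varphi_\mu+tf$ is not in the domain covered; and the differentiability of $t\mapsto\E_A(P_A(\varphi_\mu+tf))$ with derivative $\int f\,\mathrm d\MA_A(\varphi_\mu)$ is a separate, nontrivial theorem which the paper neither states nor proves. In \cite{BFJ15solution, BJ22trivval} this requires its own careful argument (regularization, approximation by continuous potentials, passing to the limit), so the assertion ``continuity of envelopes \dots guarantees differentiability'' is a gap, not a routine deduction. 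Second, your extraction of a maximizer elides the key difficulty: for a weakly convergent (not monotone) maximizing sequence $\varphi_i\to\varphi$, one must show $\int\varphi_i\,\mathrm d\mu\to\int\varphi\,\mathrm d\mu$ for a general finite energy $\mu$. Citing ``lower semi-continuity along decreasing nets'' does not address this, since the sequence need not decrease; the paper resolves it via the quasi-metric estimate (Theorem~\ref{thm:quasimetric}(2)) and Lemma~\ref{lem:L1}, which constitutes a real input you would need to replicate. (Also, a small sign slip: $\sup F_\mu=\Ev_A(\mu)$, not $-\Ev_A(\mu)$.) The homeomorphism claim is handled cleanly in the paper by the quasi-isometry identity $\delta_A(\MA_A(\varphi),\MA_A(\psi))=\Jd_A(\varphi,\psi)=\partial_A(\varphi,\psi)$; your appeal to ``routine compactness and convergence'' again presupposes the quasi-metric framework without acknowledging it.
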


We also provide an essential regularity result which says that if $\mu$ furthermore has support on the dual complex of some SNC model, then the solution $\varphi$ is continuous (see Theorem~\ref{thm:continuity}).

The proof of Theorem \ref{thm:B} follows the variational approach described in \cite{BFJ15solution, BJ22trivval}. To carry this through we need to establish two crucial properties of envelopes in the transcendental setting: the Continuity of Envelopes Property and the Orthogonality Property.

\addtocontents{toc}{\SkipTocEntry}
\subsection{Continuity of Envelopes} 

The $A$-psh envelope of a continuous function $f$ on $X^{\NA}$ is defined as $$P_A(f) :=\sup\{\varphi\in PSH_A(X^{\NA}): \varphi\leq f\}.$$

In Section \ref{sec:CoEPOP} we prove what is commonly known as the Continuity of Envelopes Property:
\begin{theorem} \label{thm:CoE}
For any continuous function $f$ on $X^{\NA}$ the $A$-psh envelope $P_A(f)$ is continuous and $A$-psh.
\end{theorem}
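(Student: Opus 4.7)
The plan is to reduce the theorem to the case of PL functions by exploiting the Lipschitz property of $P_A$, and then to handle PL functions via positive intersection theory on SNC models.

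First, $P_A$ is $1$-Lipschitz for the uniform norm: if $\psi\in\PSH_A(X^{\NA})$ satisfies $\psi\le f$, then $\psi-\|f-g\|_\infty\le g$, which yields $\|P_A(f)-P_A(g)\|_\infty\le\|f-g\|_\infty$ after symmetrizing. Next, PL functions are uniformly dense in $C^0(X^{\NA})$: they form a $\Q$-subalgebra containing the constants that separates divisorial points, which are themselves dense in $X^{\NA}$ by \cite[Theorem B]{MP24transcendental}, so Stone--Weierstrass applies. Hence, if continuity is proved for PL $f$, then for a sequence $f_m\to f$ of PL functions converging uniformly we obtain $P_A(f_m)\to P_A(f)$ uniformly, so $P_A(f)$ is continuous. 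The $A$-pshness of $P_A(f)$ then follows automatically: the upper semicontinuous regularization of the pointwise supremum of a uniformly bounded family of $A$-psh functions is $A$-psh, and continuity of $f$ forces this regularization to remain $\le f$, so it must coincide with $P_A(f)$.

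It thus suffices to prove continuity for PL $f=f_D$, where $D$ is a vertical $\Q$-divisor on an SNC model $\mathcal X$. The plan is to realize $P_A(f_D)$ as a b-nef envelope: for each SNC model $\mathcal X'$ dominating $\mathcal X$ via $\pi:\mathcal X'\to\mathcal X$, take the maximal relatively nef class $\theta_{\mathcal X'}\le\pi^*(A+D)$. The associated PL function $\varphi_{\mathcal X'}$ is $A$-psh with $\varphi_{\mathcal X'}\le f_D$, and as $\mathcal X'$ runs over a cofinal system of SNC models this produces a decreasing net. One then identifies its pointwise limit with $P_A(f_D)$ on divisorial points and shows that it extends continuously to $X^{\NA}$, using density of $\Xdiv$ together with the structural properties of positive intersection classes from \cite{Bou02cones, DXZ23transcendental, MP24transcendental}.

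The main obstacle is identifying the b-nef envelope with $P_A(f_D)$ and proving continuity on all of $X^{\NA}$. The nontrivial direction is to show that every $A$-psh $\psi\le f_D$ is dominated on each divisorial point $v_E\in\Xdiv$ by $\varphi_{\mathcal X'}(v_E)$ for $\mathcal X'$ sufficiently fine; this requires an orthogonality-type argument comparing $\psi$ with the nef part of $\pi^*(A+D)$. A natural alternative is an Archimedean regularization, in the spirit of \cite[\S 12]{BJ22trivval}: approximate $A+D$ by Kähler classes $A+D+\varepsilon[\omega_{\mathcal X'}]$, solve a Monge--Amp\`ere equation to obtain smooth Kähler potentials $\varphi_\varepsilon$, and pass to a non-Archimedean limit as $\varepsilon\to 0$. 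Either route ultimately rests on the positive intersection and energy apparatus developed in \cite{DXZ23transcendental, MP24transcendental}.
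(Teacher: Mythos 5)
Your reduction to the PL case via the $1$-Lipschitz property of $P_A$ and Stone--Weierstrass density is sound, and the observation that continuity of $P_A(f)$ plus Proposition~\ref{prop:regsupApsh} gives $A$-pshness is also correct. However, you never actually carry out the crucial step: continuity of $P_A(f_D)$ for PL $f_D$. You sketch a ``b-nef envelope'' construction --- taking, for each dominating SNC model $\cX'\to\cX$, ``the maximal relatively nef class $\theta_{\cX'}\le\pi^*(A+D)$'' --- but such a maximal class need not exist (nef classes $\le$ a fixed class do not form a directed set under the divisor order), and even with the correct formulation via positive intersection b-divisors, identifying the resulting decreasing net with $P_A(f_D)$ and establishing its continuity is precisely the hard content of the theorem. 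You write explicitly ``The main obstacle is identifying the b-nef envelope with $P_A(f_D)$ and proving continuity on all of $X^{\NA}$,'' which concedes the gap without closing it; the alternative ``Archimedean regularization'' route is likewise only named, not executed. In the algebraic setting this kind of argument ultimately leans on finite-generation input that has no obvious transcendental substitute, which is exactly why the problem is nontrivial.

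The paper takes a genuinely different and much shorter route that bypasses the PL reduction entirely. Lower semicontinuity of $P_A(f)$ comes from compactness of $X^{\NA}$ and continuity of $f$: any $\psi\in\PSH_A$ with $\psi\le f$ is a decreasing net limit of $\varphi_i\in\cH_A$, and for each $\epsilon>0$ eventually $\varphi_i-\epsilon\le f$, so $P_A(f)=\sup\{\varphi\in\cH_A:\varphi\le f\}$ is a supremum of continuous functions. For upper semicontinuity and $A$-pshness, the paper uses the correspondence with subgeodesic rays (Propositions~\ref{prop:extension}, \ref{prop:subgeodesic}, \ref{prop:subgeodesicequal}): forming the Archimedean envelope $U_{\le f}$ of subgeodesic rays $U$ with $U^{\NA}\le f$, one shows $P_A(f)=U_{\le f}^{\NA}\in\PSH_A$ by a two-sided comparison on $\Xdiv$ using monotonicity of Lelong numbers and Proposition~\ref{prop:extineq}. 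This transfers the hard regularity question to the Archimedean side, where the psh envelope of a family of subgeodesics is automatically psh by classical pluripotential theory --- which is precisely the mechanism that lets the argument succeed without any Zariski-decomposition or finite-generation input. Your outline is a reasonable plan in spirit, but as written it does not constitute a proof of the theorem.
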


The proof relies on a correspondence between $A$-psh functions and subgeodesic rays (see Section \ref{sec:corr}), established in \cite{BJ21nakstabi} in the algebraic case and extended to the transcendental case in \cite{MP24transcendental}. 

\addtocontents{toc}{\SkipTocEntry}
\subsection{Orthogonality}

In Section \ref{sec:orthogonality} we prove what is commonly known as the Orthogonality Property:
\begin{theorem} \label{thm:Orth}
If $f$ is a continuous function on $X^{\NA}$ then
\begin{equation} \label{eq:OP}
\int(f-P_A(f))\MA_A(P_A(f))=0.
\end{equation}
\end{theorem}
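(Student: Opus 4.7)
The plan is to follow the variational strategy pioneered in \cite{BFJ15solution, BJ22trivval} and adapt it to the transcendental setting. By Theorem~\ref{thm:CoE}, $u := P_A(f)$ is continuous and $A$-psh, so $\chi := f - u \ge 0$ is a non-negative continuous function on $X^{\NA}$, and it suffices to show $\int \chi\, \MA_A(u) = 0$. The argument combines two ingredients: a maximality identity obtained via a convex-combination trick, and a differentiability formula for the composition $f \mapsto \E_A(P_A(f))$.

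First, I would establish the maximality identity
$$P_A(f + t\chi) = u \qquad \text{for every } t \ge 0.$$
The lower bound $P_A(f + t\chi) \ge u$ is immediate from $\chi \ge 0$. For the reverse, rewrite $f + t\chi = (1+t) f - t u$: for any $\psi \in \PSH_A(X^{\NA})$ with $\psi \le (1+t) f - t u$, the convex combination $\tfrac{1}{1+t}\psi + \tfrac{t}{1+t} u$ is $A$-psh (since $\PSH_A(X^{\NA})$ is closed under convex combinations with non-negative coefficients summing to one) and bounded above by $f$, hence $\le u$; rearranging yields $\psi \le u$, and taking the supremum gives $P_A(f + t\chi) \le u$. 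Therefore the function $F(t) := \E_A(P_A(f + t\chi))$ is constant equal to $\E_A(u)$ on $[0, +\infty)$, so its right-derivative at $t = 0$ vanishes.

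The second step is the differentiability formula
$$\frac{d}{dt}\bigg|_{t=0^+} \E_A(P_A(f + t\chi)) = \int \chi\, \MA_A(P_A(f)),$$
which, combined with the previous step, yields $\int \chi\, \MA_A(u) = 0$. To prove it, write $\phi_t := P_A(f + t\chi)$ and exploit the uniform estimate $|\phi_t - u| \le t\, \|\chi\|_\infty$ (from the translation invariance $P_A(g + c) = P_A(g) + c$) together with the cocycle identity for the non-Archimedean energy
$$\E_A(\phi_t) - \E_A(u) = \frac{V^{-1}}{n+1} \sum_{j=0}^{n} \int (\phi_t - u)\, \MA_A(\phi_t^{(j)}, u^{(n-j)}),$$
where $\MA_A(\phi_t^{(j)}, u^{(n-j)})$ denotes the mixed non-Archimedean Monge--Amp\`ere measure. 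As $t \to 0^+$ the potentials $\phi_t$ converge uniformly to $u$, so the mixed Monge--Amp\`ere measures converge weakly to $\MA_A(u)$; dividing the cocycle identity by $t$ and passing to the limit produces the claimed derivative.

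The main obstacle is the second step: in the transcendental setting we need the cocycle formula for $\E_A$ and the weak continuity of mixed Monge--Amp\`ere measures under uniform convergence of continuous $A$-psh potentials. These are the non-Archimedean counterparts of standard facts from algebraic non-Archimedean pluripotential theory \cite{BJ22trivval}, but here they must be proved using the model-by-model definition of $\MA_A$ developed in the paper alongside Theorem~\ref{thm:CoE}. Once they are in place, combining the maximality identity with the differentiability formula gives the orthogonality identity; since $\chi \ge 0$ and $\MA_A(u)$ is a positive measure, this furthermore forces $f = P_A(f)$ $\MA_A(P_A(f))$-almost everywhere.
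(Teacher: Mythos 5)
The maximality identity in your first step is correct and elegant: since $\chi := f - P_A(f) \ge 0$ and the convex-combination trick shows $P_A(f + t\chi) = u := P_A(f)$ for all $t \ge 0$. But this is precisely what kills your second step. Because $\phi_t := P_A(f + t\chi)$ equals $u$ identically, the cocycle identity reads
\[
\E_A(\phi_t) - \E_A(u) = \frac{V^{-1}}{n+1}\sum_{j=0}^n \int (\phi_t - u)\,\MA_A\bigl(\phi_t^{(j)},u^{(n-j)}\bigr) = 0,
\]
and dividing by $t$ and letting $t\to 0^+$ yields only the tautology $0 = 0$: the quantity $(\phi_t - u)/t$ is identically zero, not $\chi$, so no weak convergence of mixed Monge--Amp\`ere measures can conjure up $\int\chi\,\MA_A(u)$. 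The differentiability formula $\frac{d}{dt}\big|_{t=0^+}\E_A(P_A(f+tg)) = \int g\,\MA_A(P_A(f))$ is a genuine theorem, but it is well known to be \emph{equivalent} to the orthogonality property you are trying to prove; asserting it without independent input makes the argument circular. Your ``main obstacle'' paragraph names the cocycle formula and weak continuity of mixed measures as the missing ingredients, but even with those in hand the argument collapses to $0=0$ for the reason above.

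The paper breaks this circle by supplying an explicit formula for $\MA_A(P_A(f_D))$ when $f_D \in \PL_{\ge 1}$ (Proposition~\ref{prop:MAform}), expressing the measure via restricted volumes $\langle(A+D)^n\rangle_{\cX|E_i}$. The crucial input is the transcendental differentiability of the volume function (Theorem~\ref{thm:diffvol}, from \cite{WN24deformations,Vu23derivative}), which yields $\sum_i b_i\langle(A+D)^n\rangle_{\cX|E_i}=V$ (Proposition~\ref{prop:sumvol}) and forces equality in the easy inequality $\MA_A(P_A(f_D)) \ge V^{-1}\sum_i b_i\langle(A+D)^n\rangle_{\cX|E_i}\delta_{v_{E_i}}$. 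Orthogonality in the PL case then follows because $f_D - P_A(f_D)$ is controlled by Lelong numbers (Proposition~\ref{prop:lelongineq}) and the restricted volume vanishes wherever the Lelong number is positive; the general case is uniform approximation. If you want to salvage a variational proof of orthogonality, you would need an independent proof of the envelope-energy differentiability (as in Berman--Boucksom's Archimedean work, which itself relies on algebraic or Bergman-kernel input), and in the present transcendental non-Archimedean setting that appears to require the same restricted-volume machinery the paper uses.
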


The idea of our proof is to first consider the PL case $f=f_D$, furthermore assuming that $D\ge \cX_0=\sum_ib_iE_i$ (i.e. $f_D\ge 1$). It follows that $A+D$ is big, and key to our proof is the following explicit formula: 
\begin{equation} \label{eq:MAform}
\MA_A(P_A(f_D)) =V^{-1}\sum_ib_i\langle(A+D)^n\rangle_{\cX|E_i}\delta_{v_{E_i}},
\end{equation}
where $\langle(A+D)^n\rangle_{\cX|E_i}$ denotes the \emph{restricted volume} of $A+D$ along $E_i$ (see Section \ref{sec:restricted}). This formula was first proved in the algebraic case by Chi Li \cite{Li23fujita} and our proof is basically the same. In fact, using the definitions it is not difficult to see that $$\MA_A(P_A(f_D)) \ge V^{-1}\sum_ib_i\langle(A+D)^n\rangle_{\cX|E_i}\delta_{v_{E_i}},$$ and the equality then follows from the fact that
\begin{equation} \label{eq:voleq}
\sum_ib_i\langle(A+D)^n\rangle_{\cX|E_i}=V,
\end{equation}
meaning that both the LHS and the RHS are probability measures.  The equality (\ref{eq:voleq}) is a consequence of the differentiability of the volume function (in divisorial directions), proved in the algebraic case by Boucksom--Favre--Jonsson \cite{BFJ09differentiability} and Lazarsfeld--Mustaţă \cite{LM09convex} and in the transcendental case by the second named author \cite{WN24deformations} (see also \cite{Vu23derivative}). We then show that $f_D(v_{E_i})-P_A(f_D)(v_{E_i})\le b_i^{-1}\nu_{E_i}(A+D),$ where $\nu_{E_i}(A+D)$ denotes the Lelong number of $A+D$ along $E_i$ (see Section \ref{sec:lelong}). Since $\langle(A+D)^n\rangle_{\cX|E_i}=0$ whenever $\nu_{E_i}(A+D)>0$, this finally proves (\ref{eq:OP}) for $f=f_D$. The general case then follows by uniform approximation.

It is not very hard to see that one can find $D$ with prescribed restricted volumes $\langle(A+D)^n\rangle_{\cX|E_i}$ as long as (\ref{eq:voleq}) holds. Thus by (\ref{eq:MAform}) we can directly solve the Monge-Amp\`ere equation $\MA_A(\varphi)=\mu$ for divisorial measures $\mu\in \cM^{div}$, i.e. probability measures of the form $\sum_{i=1}^N a_i\delta_{v_{E_i}}$. This clearly shows the strong link that exists between properties of restricted volumes and the solvability of the non-Archimedean Monge-Amp\`ere equation.

\addtocontents{toc}{\SkipTocEntry}
\subsection{Related works} 
Many related works have already been mentioned in the introduction. Boucksom-Jonsson has recently announced a very general Yau--Tian--Donaldson correspondence between the existence of \emph{weighted} constant scalar curvature Kähler metrics and a \emph{weighted} version of K-stability for models, see \cite{BJ25YTD}. 
E.g., in the smooth unweighted algebraic case, they prove that the existence of a cscK metric implies K-stability for models.

Near the completion of this paper we were informed by Tamás Darvas and Kewei Zhang about their soon to appear work on a different kind of Yau--Tian--Donaldson correspondence, see \cite{DZ25YTD}. They define a notion of stability that they call \emph{$K^\beta$-stability}, which depends on a parameter $\beta\in \R$, and they prove that there exists a constant scalar curvature metric in $\alpha$ if and only if $(X,\alpha)$ is $K^\beta$-stable for some $\beta>0$. As their proof relies on transcendental methods, it applies in the general K\"ahler setting.

\addtocontents{toc}{\SkipTocEntry}
\subsection{Acknowledgements}

The authors would like to thank Robert Berman, Sébastien Boucksom, Tamás Darvas, Ruadhaí Dervan, Mattias Jonsson, Chung-Ming Pan, Rémi Reboulet, Julius Ross and Mingchen Xia for many insightful discussions about non-Archimedean pluripotential theory and transcendental methods.

In addition, the first named author is especially grateful to his PhD advisors Sébastien Boucksom and Tat Dat Tô for their helpful feedback.
The idea of considering the Legendre transform in connection to the valuative criterion came from a conversation with them, which we thank them for. 

The second named author has received funding from the Göran Gustafsson Foundation for Research in Natural Sciences and Medicine.
The first named author has received funding from the European Union’s Horizon 2020 research and innovation programme under the Marie Skłodowska-Curie grant agreement No 94532. \includegraphics*[scale = 0.029]{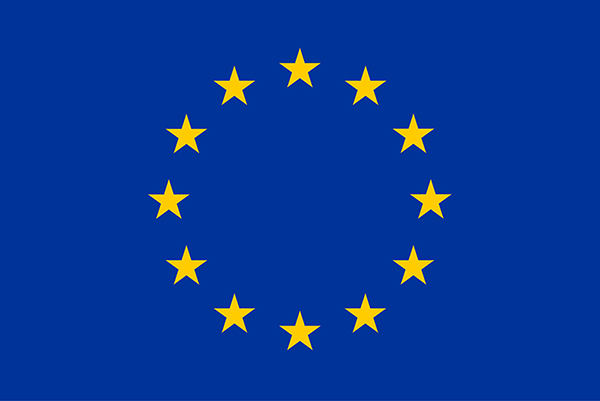}

\section{Preliminaries}

\addtocontents{toc}{\SkipTocEntry}
\subsection{Big cohomology classes}

In this section we briefly recall some pluripotential theory of big $(1,1)$-classes. As in the introduction we let $(X,\omega)$ be a compact K\"ahler manifold of dimension $n$ and $\alpha:=\{\omega\}$. We let $\beta\in H^{1,1}(X,\R)$ be a big class.

\subsubsection{$\theta$-psh functions and closed positive currents with analytic/minimal singularities} Let $\theta\in\beta$ be a smooth form. An upper semicontinuous (usc) function $\varphi: X\to[-\infty,\infty)$ is said to be $\theta$-psh if $\theta+dd^c\varphi\geq0$ as a current, and the set of $\theta$-psh functions is denoted by $PSH_{\theta}(X)$. We say that $\varphi$ has analytic singularities if locally we can write $\varphi=c\log(\sum_i |f_i|^2)+g$ where the $f_i$:s are holomorphic and $g$ is bounded. If $\varphi,\psi\in PSH_{\theta}(X)$ we say that $\varphi$ is less singular than $\psi$ if $\varphi\geq \psi-C$ for some constant $C$, and $\varphi$ is said to have minimal singularities if $\varphi$ is less singular than all $\theta$-psh functions. It is easy to see that such functions always exist. We also say that a closed positive current $T=\theta+dd^c\varphi$ has analytic/minimal singularities if $\varphi$ has analytic/minimal singularities.

\subsubsection{Lelong numbers} \label{sec:lelong} 
Let $u$ be a psh function in a neighbourhood of $0\in \mathbb{C}^n$. Then the Lelong number of $u$ at $0$ is defined as $$\nu_0(u):=\liminf_{z\to 0}\frac{u(z)}{\log|z|}.$$ If $u$ rather is psh in a neighbourhood of a point $x\in X$ where $X$ is a complex manifold, then $\nu_x(u):=\nu_0(u\circ g^{-1})$ where $f$ is a local holomorphic chart centered at $x$. If $T$ is a closed positive current which locally near $x$ can be written as $T=dd^c u$, then we let $\nu_x(T):=\nu_x(u)$, and if $Z\subseteq X$ is a subvariety we let $\nu_Z(T):=\inf_{x\in Z}\nu_x(T)$. 
We will later have use of the following well-known lemma:
\begin{lemma} \label{lem:suplelong}
Let $(u_i)_{i\in I}$ be a family of psh functions that are uniformly bounded from above on some fixed neighbourhood of $x\in X$, and let $u:=(\sup_i(u_i))^{\star}$, where $^{\star}$ denotes the upper semicontinuous (usc) regularization. Then $u$ is psh and $$\nu_x(u)=\inf_i(\nu_x(u_i)).$$
\end{lemma}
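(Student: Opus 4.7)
The plan is to verify the two conclusions separately. That $u$ is psh is a standard consequence of Choquet's lemma: the usc regularization of the upper envelope of a locally uniformly bounded-above family of psh functions is again psh.

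For the easy inequality $\nu_x(u)\le\inf_i\nu_x(u_i)$, I would use that $u\ge u_i$ pointwise for every $i$, together with the classical comparison property for Lelong numbers (a less singular psh function has a smaller Lelong number), and then take the infimum over $i$.

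The reverse inequality $\nu_x(u)\ge c:=\inf_i\nu_x(u_i)$ is the main content. Fix any $c'<c$ and shrink to a neighborhood of $x$ on which $u_i\le M$ uniformly in $i$, say on $B(x,2r_0)$. Put $M_i(r):=\sup_{B(x,r)}u_i$. As a function of $s=\log r$, $M_i$ is convex and non-decreasing, and $\lim_{s\to-\infty}M_i(s)/s=\nu_x(u_i)\ge c'$; convexity forces the (right) slope of $M_i$ at every $s\le\log(2r_0)$ to be $\ge\nu_x(u_i)\ge c'$. Integrating this slope bound downward from $\log(2r_0)$, and using $M_i(\log 2r_0)\le M$, yields the uniform estimate
\[
u_i(z)\le c'\log|z-x|+C\qquad\text{for all }z\in B(x,r_0),
\]
where $C$ depends only on $M$, $r_0$ and $c'$, but not on $i$. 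Taking the supremum over $i$ and then the usc regularization—which preserves the pointwise upper bound since the right-hand side is continuous—gives $u(z)\le c'\log|z-x|+C$ near $x$, hence $\nu_x(u)\ge c'$. Letting $c'\uparrow c$ completes the proof.

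The main technical step, and the heart of the argument, is obtaining a bound that is genuinely uniform in $i$. This is what combines the hypothesis of a common upper bound on the family with the convexity-in-$\log r$ of the sphere-supremum $M_i$; together these encode the precise relationship between the Lelong number and the local size of a psh function, and without uniformity in $i$ one could not pass the estimate to the regularized supremum $u$.
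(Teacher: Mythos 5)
Your proposal is correct and is essentially a fleshed-out version of the paper's terse proof. The paper cites the elementary fact that a subharmonic function $v\le 0$ on the unit disc with $\nu_0(v)\ge 1$ satisfies $v\le\log|z|$; your argument via the convexity and monotonicity of $s\mapsto\sup_{B(x,e^s)}u_i$ is precisely how one proves (and then applies, uniformly in $i$) that fact, so the two arguments coincide in substance.
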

\begin{proof}
That $u$ is psh is a standard result in pluripotential theory, and the statement about Lelong numbers is a direct consequence of the elementary fact that if $v$ is subharmonic on the unit disc, $v\le 0$ and $\nu_0(v)\ge 1$, then $v\le \log|z|$.
\end{proof}

Given a big class $\beta$ we also let $$\nu_Z(\beta):=\inf\{\nu_Z(T): T\text{ is a closed positive current in }\beta\}.$$ If $T\in\beta$ has minimal singularities, then we have that $\nu_Z(\beta)=\nu_Z(T)$.
It is important to note that Lelong numbers depend continuously on the class (see e.g. \cite[Proposition 3.6]{Bou04divisorial}).

\subsubsection{$E_{nK}(\beta)$ and $E_{nn}(\beta)$}

A closed positive current $T=\theta+dd^c\varphi\in \beta$ is called a K\"ahler current if $T-\epsilon \eta\geq0$ for some $\epsilon>0$. We say that $x\in X$ lies in the K\"ahler locus of $\beta$ if there is a K\"ahler current $T\in \beta$ with analytic singularities which is smooth near $x$. The complement of the K\"ahler locus is called the non-K\"ahler locus of $\beta$ and is denoted by $E_{nK}(\beta)$.

The non-nef-locus is defined as $E_{nn}(\beta):=\{x\in X: \nu_x(\beta)>0\}$ and it is easy to see that $E_{nn}(\beta)\subseteq E_{nK}(\beta)$. It is also easy to show that if $x\in E_{nK}(\beta)\setminus E_{nn}(\beta)$ then $x$ lies in the K\"ahler locus of $\beta+\epsilon\alpha$ for any $\epsilon>0$.

\subsubsection{Positive products of currents} If $T_1,...,T_k$ are closed positive currents, following \cite{BEGZ10monge} one can form a closed positive $(k,k)$-current $\langle T_1 \wedge ... \wedge T_k\rangle$ known as the positive product. In the special case of $k=n$ and $T_i=T=\theta+dd^c\varphi$ for all $i$ we have that $\langle (\theta+dd^c\phi)^n\rangle$ is a positive measure which up to a constant is equal the non-pluripolar Monge-Amp\`ere measure $MA_{\theta}(\phi)$ (see \cite{BEGZ10monge}). 

\subsubsection{Positive intersections and volumes} \label{sec:posint} For $1\leq k\leq n$ the positive (or movable) intersection class $\langle \beta^k \rangle\in H^{k,k}(X,\R)$ is defined as $$\langle \beta^k \rangle:=\{\langle T^k\rangle\},$$ where $T$ is any closed positive current in $\beta$ with minimal singularities. An equivalent definition says that if $\gamma\in H^{n-k,n-k}(X,\R)$ is semipositive (i.e. contains a semipositive form) then $\langle \beta^k \rangle\cdot \gamma$ is the supremum of all numbers $(\beta')^k\cdot \mu^*\gamma$ where $\mu: X'\to X$ is a modification and $\beta'$ is a K\"ahler class on $X'$ such that $\beta'\leq\mu^*\beta$ (see \cite{Bou02cones}). In the special case of $k=n$ we get that $\langle\beta^n\rangle$ is a positive number, also known as the volume of $\beta$, written $\vol(\beta)$. If $\beta$ is K\"ahler then clearly $\langle\beta^k\rangle=\beta^k$.

\subsubsection{Restricted volumes} \label{sec:restricted}
Let $E$ be a prime divisor on $X$ which is not contained in $E_{nK}(\beta)$. The restricted volume of $\beta$ along $E$ is defined as $$\langle \beta^{n-1}\rangle_{X|E}:=\int_E\langle (T_{|E})^{n-1}\rangle,$$ where $T$ is any closed positive current in $\beta$ with minimal singularities. Equivalently it can be defined as the supremum of all numbers $(\beta')^{n-1}\cdot \tilde{E}$ where $\tilde{E}$ is the strict transform of $E$ under a modification $\mu: X'\to X$ and $\beta'$ is a K\"ahler class on $X'$ such that $\mu^*\beta-\beta$ is the class of an effective divisor $D$ whose support is contained in $\mu^{-1}(E_{nK}(\beta))$ (see \cite{ELMNP09restricted} and \cite[Theorem 5.3]{CT22restricted}). 
In the case when $E$ is contained in $E_{nK}(\beta)$ but not in $E_{nn}(\beta)$ we let $\langle \beta^{n-1}\rangle_{X|E}:=\lim_{\epsilon \to 0+}\langle (\beta+\epsilon\{\eta\})^{n-1}\rangle_{X|E}$, while if $E\subseteq E_{nn}(\beta)$ we let $\langle \beta^{n-1}\rangle_{X|E}:=0$. We say that $E$ is $\beta$-good if it either intersects the K\"ahler locus or lies in the non-nef locus of $\beta$. For a generic big class $\beta$, $E$ will be $\beta$-good.

It follows easily from the definitions that $$0\le\langle \beta^{n-1}\rangle_{X|E}\le \langle \beta^{n-1}\rangle \cdot E\le \langle (\beta_{|E})^{n-1}\rangle.$$

We also have the following key relationship between positive intersections and restricted volumes:

\begin{theorem} \label{thm:diffvol}
We have that $$\dtz\langle (\beta+tE)^n\rangle=n\langle \beta^{n-1}\rangle_{X|E}.$$
\end{theorem}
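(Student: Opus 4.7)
The plan is to view Theorem~\ref{thm:diffvol} as a repackaging of the transcendental differentiability of the volume function proved in \cite{WN24deformations}, treating the three cases in the definition of $\langle\beta^{n-1}\rangle_{X|E}$ separately and reducing the two pathological cases to the main one.

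In the main case $E \not\subseteq E_{nK}(\beta)$, the divisor $E$ meets the K\"ahler locus, is $\beta$-good, and the formula is essentially the content of \cite{WN24deformations}: once one verifies that our definition of restricted volume via $\int_E\langle(T_{|E})^{n-1}\rangle$ for a minimal singularity current agrees with the derivative term appearing in loc.\ cit., the identity is immediate. One may need to combine this with the alternative description of $\langle \beta^{n-1}\rangle_{X|E}$ as a supremum over modifications and K\"ahler perturbations in order to match the two formulations.

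When $E \subseteq E_{nn}(\beta)$, both sides vanish. The right-hand side is zero by definition. For the left-hand side, set $\nu:=\nu_E(\beta)>0$: the Siu decomposition of a minimal singularity current $T\in \beta$ gives $T=\nu[E]+S$ with $\nu_E(S)=0$, so $T+t[E]=(\nu+t)[E]+S\in\beta+tE$ is positive for $|t|$ small and, by continuity of Lelong numbers, still of minimal singularities. Since the non-pluripolar Monge-Amp\`ere measure is insensitive to singular components carried by the divisor $E$, the volume $\langle(\beta+tE)^n\rangle$ is locally constant near $t=0$, hence its derivative is zero.

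The delicate case is $E \subseteq E_{nK}(\beta) \setminus E_{nn}(\beta)$, which I would handle by perturbing with $\epsilon\alpha$. For every $\epsilon>0$ the divisor $E$ meets the K\"ahler locus of $\beta+\epsilon\alpha$, so the main case yields
$$\dtz\langle (\beta+\epsilon\alpha+tE)^n\rangle=n\langle (\beta+\epsilon\alpha)^{n-1}\rangle_{X|E},$$
and the right-hand side converges to $n\langle\beta^{n-1}\rangle_{X|E}$ by definition. For the left-hand side I would pass to the $n$th root $g_\epsilon(t):=\langle(\beta+\epsilon\alpha+tE)^n\rangle^{1/n}$; by the Khovanskii--Teissier inequality for big classes each $g_\epsilon$ is concave in $t$, as is the pointwise limit $g_0(t):=\langle(\beta+tE)^n\rangle^{1/n}$. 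Since $g_\epsilon$ is differentiable at $0$ with $g_\epsilon'(0)\to c$, a standard concavity argument forces $g_0$ to be differentiable at $0$ with $g_0'(0)=c$; the chain rule then gives the stated formula. The main anticipated obstacle is precisely this interchange of limit and derivative in the third case, which is resolved by exploiting concavity of $\vol^{1/n}$ to get uniform control on difference quotients in $\epsilon$.
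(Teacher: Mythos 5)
The paper does not actually prove Theorem~\ref{thm:diffvol}; the sentence immediately following the statement cites it as known, attributing the algebraic case to Boucksom--Favre--Jonsson and Lazarsfeld--Musta\c{t}\u{a}, the transcendental case with $E$ smooth and $\beta$-good to \cite{WN24deformations}, and the general transcendental case to Vu \cite{Vu23derivative}. Your proposal attempts instead to reduce the general case to the $\beta$-good case of \cite{WN24deformations}. Your first case is exactly that result. Your second case ($E\subseteq E_{nn}(\beta)$) is essentially correct in spirit, though the phrase ``by continuity of Lelong numbers, still of minimal singularities'' is not the right justification: the clean argument is that $T+t[E]=(\nu+t)[E]+S$ is a valid competitor in $\beta+tE$ with the same non-pluripolar mass as $T$, giving $\vol(\beta+tE)\ge\vol(\beta)$ on $[-\nu,\infty)$, and then translation-invariance of this inequality shows the volume is constant on $[-\nu,\infty)$, so the derivative vanishes.

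The genuine gap is in your third case. The ``standard concavity argument'' you invoke does not exist: pointwise convergence of concave functions $g_\epsilon\to g_0$, together with differentiability of each $g_\epsilon$ at $0$ and convergence $g_\epsilon'(0)\to c$, does \emph{not} imply differentiability of $g_0$ at $0$. A counterexample is $g_\epsilon(t)=-\sqrt{t^2+\epsilon^2}$, which are smooth, concave, and increasing in $\epsilon$ (so they decrease pointwise to $g_0$ as $\epsilon\downarrow 0$), with $g_\epsilon'(0)=0$ for every $\epsilon>0$, yet the pointwise limit $g_0(t)=-|t|$ has a corner at $0$. What concavity does give you, by passing to the limit in the monotone secant inequalities $\tfrac{g_\epsilon(t)-g_\epsilon(0)}{t}\le g_\epsilon'(0)$ for $t>0$ and the reverse for $t<0$, is the two-sided bound $g_0'^{+}(0)\le c\le g_0'^{-}(0)$. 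But the missing step --- that the two one-sided derivatives agree --- is precisely the nontrivial content of the theorem in this regime, and it is exactly what Vu's paper \cite{Vu23derivative} supplies. You should either cite \cite{Vu23derivative} for this case outright, or else provide the genuinely new analytic input controlling the difference quotients of $g_0$ directly, rather than only those of the smoothed approximants $g_\epsilon$.
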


In the algebraic case this was proved by Boucksom--Favre--Jonsson \cite{BFJ09differentiability} and Lazars\-feld--Mustaţă \cite{LM09convex}, while in the transcendental setting this was proved by the second named author in \cite{WN24deformations}, assuming $E$ to be smooth and $\beta$-good. The general case was recently shown by Vu \cite{Vu23derivative}. 

An useful consequence of Theorem \ref{thm:diffvol} is the following (see \cite[Corollary A]{WN24deformations}):

\begin{corollary} \label{cor:WN}
If $E_1,...,E_m$ and $D_1,..., D_l$ are $\beta$-good prime divisors such that $$\sum_ia_i\{E_i\}=\sum_jb_j\{D_j\},$$ then we have that $$\sum_ia_i\langle \beta^{n-1}\rangle_{X|E_i}=\sum_jb_j\langle \beta^{n-1}\rangle_{X|D_j}.$$
\end{corollary}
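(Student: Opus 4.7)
The plan is to derive the corollary directly from Theorem \ref{thm:diffvol} by examining the derivative of the volume function in the direction of the common class $\gamma := \sum_i a_i\{E_i\} = \sum_j b_j\{D_j\} \in H^{1,1}(X,\R)$. Since bigness is an open condition and $\vol$ only depends on the cohomology class, the function $\phi(t) := \vol(\beta + t\gamma)$ is well-defined on some neighbourhood of $0 \in \R$. Showing that
\begin{equation*}
\phi'(0) = n\sum_i a_i \langle\beta^{n-1}\rangle_{X|E_i} = n\sum_j b_j \langle\beta^{n-1}\rangle_{X|D_j}
\end{equation*}
would immediately give the claim after dividing by $n$, the first equality coming from the representation $\gamma = \sum_i a_i\{E_i\}$ and the second from $\gamma = \sum_j b_j\{D_j\}$.

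To upgrade Theorem \ref{thm:diffvol}, which only gives one-variable derivatives along individual prime divisor directions, to a directional derivative along $\gamma$, I would consider the multi-variable function
\begin{equation*}
F(s_1,\ldots,s_m,u_1,\ldots,u_l) := \vol\Bigl(\beta + \sum_i s_i\{E_i\} + \sum_j u_j\{D_j\}\Bigr)
\end{equation*}
on a small neighbourhood of the origin in $\R^{m+l}$. For $(s,u)$ close to $0$, the class $\beta_{s,u}:= \beta + \sum_i s_i\{E_i\} + \sum_j u_j\{D_j\}$ remains big and each $E_i, D_j$ remains $\beta_{s,u}$-good, since the $\beta$-good condition (intersecting the K\"ahler locus or lying in the non-nef locus) is stable under small perturbations by openness of the K\"ahler locus and continuity of Lelong numbers (Lemma \ref{lem:suplelong} and \cite[Proposition 3.6]{Bou04divisorial}). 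Hence Theorem \ref{thm:diffvol} applies at every such $\beta_{s,u}$ and identifies the partial derivatives of $F$ as $n\langle\beta_{s,u}^{n-1}\rangle_{X|E_i}$ and $n\langle\beta_{s,u}^{n-1}\rangle_{X|D_j}$, respectively.

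The next step is to show that these partial derivatives depend continuously on $(s,u)$, which reduces to the continuity of restricted volumes on the open set of classes keeping the divisors good. This continuity is classical in the algebraic case \cite{Bou02cones, BFJ09differentiability, LM09convex} and in the transcendental setting follows from \cite{WN24deformations, Vu23derivative}. Once continuity of all partial derivatives is in hand, a standard multi-variable calculus result yields that $F$ is $C^1$ near the origin. Applying the chain rule to the two parametrisations $t \mapsto (ta_1,\ldots,ta_m,0,\ldots,0)$ and $t \mapsto (0,\ldots,0,tb_1,\ldots,tb_l)$, both of which trace out the same curve $t \mapsto \beta + t\gamma$ in $H^{1,1}(X,\R)$, produces the two claimed expressions for $\phi'(0)$ and hence the corollary.

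The main obstacle is the verification of the continuous dependence of restricted volumes and the openness of the $\beta$-good condition in the transcendental setting; these are essentially technical, but they are the crucial inputs without which one cannot promote the one-variable derivative of Theorem \ref{thm:diffvol} to a linear directional derivative. After that, the proof is a clean application of the chain rule exploiting the fact that $\vol$ only sees the class, not the particular representation as a combination of prime divisors.
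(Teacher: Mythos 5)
The paper does not give its own proof of this corollary; it simply cites \cite[Corollary A]{WN24deformations}. Your reconstruction is on the right track in that it aims to derive the identity from Theorem~\ref{thm:diffvol} via the chain rule applied to the two parametrisations of $t\mapsto\vol(\beta+t\gamma)$, but your route to the chain rule is more demanding than necessary and leaves a genuine gap: you need $C^1$ regularity of the multi-variable function $F$, which you reduce to continuity of restricted volumes as a function of the class. You label this ``essentially technical'' and defer to \cite{WN24deformations,Vu23derivative}, but neither the present paper nor Theorem~\ref{thm:diffvol} as stated supplies such a continuity statement, and it is not obviously free; relying on the very reference whose Corollary A you are trying to reprove makes the argument circular as written.

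There is a cleaner route that sidesteps the unproved continuity entirely, and which only ever invokes Theorem~\ref{thm:diffvol} at the single class $\beta$. Since $\vol^{1/n}$ is concave on the big cone (see \cite{Bou02cones}), the function $(s,u)\mapsto F(s,u)^{1/n}$ is concave on a neighbourhood of the origin in $\R^{m+l}$. A concave (equivalently, a convex) function on an open set is differentiable at a point as soon as all its partial derivatives exist there; this is a standard fact in convex analysis (Rockafellar, \emph{Convex Analysis}, Theorem~25.2). Theorem~\ref{thm:diffvol}, applied once for each $E_i$ and each $D_j$, gives all $m+l$ partial derivatives of $F$ at the origin, using precisely the assumption that each of these divisors is $\beta$-good --- no stability of the $\beta$-good condition under perturbation is needed. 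Thus $F^{1/n}$, and hence $F$ itself (as $F(0)>0$ and $x\mapsto x^n$ is smooth), is differentiable at $0$. Applying the chain rule to the two lines $t\mapsto(ta_1,\dotsc,ta_m,0,\dotsc,0)$ and $t\mapsto(0,\dotsc,0,tb_1,\dotsc,tb_l)$, both of which map to the curve $t\mapsto\beta+t\gamma$ under $F$ because the two linear combinations represent the same class, yields the two expressions for the derivative and hence the corollary. This removes the need to verify openness of the $\beta$-good condition and the continuity of restricted volumes that your argument hinges on.
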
  

\addtocontents{toc}{\SkipTocEntry}
\subsection{A correspondence between subgeodesic rays and $A$-psh functions} \label{sec:corr}

An $S^1$-invariant $\pi_X^*\omega$-psh function $U$ on $X\times \mathds D$ which is bounded from above is called a \emph{subgeodesic ray}. Here the $S^1$-action is that coming from the standard action on the base. 

Note that if $(U_i)_{i\in I}$ is a family of subgeodesic rays that are uniformly bounded from above, then by standard pluripotential theory $(\sup_{i\in I}(U_i))^{\star}$ is also a subgeodesic ray.

Let $\cX\overset{\mu}{\to}X\times \Pro$ be a SNC model of $X$ with 
$\cX_0=\sum_i b_i E_i$. If $U$ is a subgeodesic ray we let 
$$U^{\NA}(v_{E_i}):= -b_i^{-1}\,\nu_{E_i}(U\circ \mu).$$

The next result was proved in the algebraic case in \cite[Section 6]{BBJ21YTD}, and the arguments were subsequently adapted to the transcendental setting in \cite[Theorem 5.1.4]{MP24transcendental}.

\begin{prop}\label{prop:extension}
The function $U^{\NA}$ on $\Xdiv$ defined above extends to an $A$-psh function on $X^{\NA}$.
\end{prop}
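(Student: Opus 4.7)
The plan is to adapt the approach of \cite[Section 6]{BBJ21YTD} to the transcendental setting, producing the extension of $U^{\NA}$ as a decreasing limit of a sequence of PL K\"ahler potentials obtained from Demailly's multiplier-ideal construction. First I would verify that $U^{\NA}(v_{E_i}) := -b_i^{-1}\nu_{E_i}(U\circ\mu)$ depends only on the divisorial valuation $v_{E_i}$ and not on the chosen SNC model realising it; this follows by passing to a common higher SNC resolution and using the birational invariance of Lelong numbers together with the transformation of the multiplicities $b_i$ under pullback.

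For each integer $m\ge 1$ I would form a coherent ideal sheaf $\fJ_m$ on $X\times \Pro$ out of the multiplier ideal $\mathcal{J}(mU)$ of $mU$, extended by the unit ideal outside a smaller neighbourhood of $X\times\{0\}$ (a shift by $(1-\epsilon)\log|z|^2$ may be added to localise the singular behaviour at $0\in\Pro$). Resolving $\fJ_m$ yields an SNC model $\pi_m\colon \cX_m \to X\times\Pro$ dominating the trivial one, with $\fJ_m\cdot \cO_{\cX_m} = \cO(-E_m)$ for some effective vertical divisor $E_m$, and I set $\varphi_m := -m^{-1}f_{E_m}\in \PL(X^{\NA})$. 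The next step is to show $\varphi_m\in \PSH_A(X^{\NA})$: pulling back the closed positive current $T := \pi_X^*\omega + \ddc U$ to $\cX_m$, its Lelong numbers along each component of $E_m$ dominate the corresponding multiplicity in $m^{-1}E_m$ by the defining integrability property of $\mathcal{J}(mU)$, producing a closed positive current in the class $A + m^{-1}E_m$ modulo a correction supported on $\cX_{m,0}$ that can be absorbed by adding a large multiple of $\cX_{m,0}$.

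To finish, Demailly's comparison between orders of vanishing in multiplier ideals and Lelong numbers gives, for every divisorial valuation $v_{E_i}$, a two-sided bound of the form
\[
m\nu_{E_i}(U\circ\mu) - C \,\le\, \ord_{E_i}(E_m)\, \le\, m\nu_{E_i}(U\circ\mu),
\]
with $C$ depending only on the log discrepancy of $E_i$, so $\varphi_m(v_{E_i}) \to U^{\NA}(v_{E_i})$ as $m\to\infty$. The subadditivity of multiplier ideals along the subsequence $m=2^k$ yields a decreasing family, whose pointwise limit, regularised upper semi-continuously, is an $A$-psh function on $X^{\NA}$ extending $U^{\NA}$. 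The hard part will be the $A$-psh step: in the projective setting one uses Nadel vanishing and Bergman-kernel sections of $L^m\otimes\mathcal{J}(mU)$, tools unavailable without a polarising line bundle, so the positivity of $A + m^{-1}E_m$ has to be established by direct current-theoretic means as developed in \cite[Section 5]{MP24transcendental}.
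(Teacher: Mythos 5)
Your overall strategy matches the one behind the cited references: the paper does not reprove this proposition but simply cites \cite[Theorem 5.1.4]{MP24transcendental}, which in turn adapts the multiplier-ideal construction of \cite[Section 6]{BBJ21YTD} to the transcendental setting. Your outline of that construction (well-definedness of $U^{\NA}$, multiplier ideals $\cJ(mU)$, PL potentials $\varphi_m$, Demailly's Lelong-number comparison for the convergence on $\Xdiv$, subadditivity for monotonicity along $m = 2^k$) is correct, and you rightly identify the transcendental obstacle: without a polarizing line bundle, Nadel vanishing and Bergman kernels are unavailable.

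However, the step you flag as hard is also the one you leave essentially unproved, and the sketch you offer for it does not work. First, a sign issue: since $\varphi_m = -m^{-1}f_{E_m} = f_{-m^{-1}E_m}$, the class whose relative nefness must be established is $A - m^{-1}E_m$, not $A + m^{-1}E_m$. Second, and more importantly, the argument ``pull back $T = \pi_X^*\omega + \ddc U$, apply the Siu decomposition, and absorb the correction by adding a large multiple of $\cX_{m,0}$'' is not valid. The Siu decomposition of $\mu_m^*T$ is a statement about a current that lives only on $\mu_m^{-1}(X\times\D)$, a neighbourhood of $\cX_{m,0}$, so it gives at most local information about positivity near the central fiber and says nothing directly about the cohomology class $A - m^{-1}E_m$ on the compact manifold $\cX_m$. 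Moreover, adding $c\,\cX_{m,0}$ cannot make a class Kähler: $\{\cX_{m,0}\} = \{\cX_{m,1}\}$ is the pullback of a point class from $\Pro$, hence nef but with zero self-intersection in the base direction, so it cannot supply the missing strict positivity. The passage from local positivity of $\mu_m^*T$ near $\cX_{m,0}$ to the global relative nefness of $A - m^{-1}E_m$ is exactly where the algebraic proof uses Ohsawa--Takegoshi extension and global sections, and the transcendental substitute (built in \cite[Section 5]{MP24transcendental} from $S^1$-invariance and envelope/extension arguments for currents on $X\times\Pro$) is the real content; deferring to it is fine, but the heuristic you give in its place would not survive as a proof.
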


One can also go in the other direction, see \cite[Theorem 6.6]{BBJ21YTD} and \cite[Theorem 5.1.7]{MP24transcendental}.

\begin{prop}\label{prop:subgeodesic}
If $\varphi\in \cE_A^{1}(X^{\NA})$ with $\varphi\le 0$, then there is a subgeodesic ray $U_{\varphi}$ such that 
$U_{\varphi}^{\NA}=\varphi$.
\end{prop}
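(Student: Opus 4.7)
The plan, adapting the algebraic argument in \cite[Theorem 6.6]{BBJ21YTD} and its transcendental version in \cite[Theorem 5.1.7]{MP24transcendental}, is to construct $U_\varphi$ as the upper semicontinuous envelope of subgeodesic rays attached to Kähler potentials lying pointwise below $\varphi$.

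\medskip
First I would treat a single Kähler potential $\psi=\varphi_D\in\cH_A$. Pick an SNC model $\mu\colon\cX\to X\times\Pro$ on which $A+D$ is relatively Kähler, so that $A+D+c\cX_0$ admits a smooth Kähler representative $\Omega$ for $c\gg 0$. Writing $D=\sum_i a_iE_i$ and $\cX_0=\sum_i b_iE_i$, equip $\cO_\cX(D+c\cX_0)$ with a smooth Hermitian metric; its canonical section $s$ has Lelong number $a_i+cb_i$ along $E_i$. Combining $\Omega$, $\log|s|^2$ and a smooth radial cut-off near $0\in\Pro$, pushing forward by $\mu$ and normalizing, produces an $S^1$-invariant $\pi_X^*\omega$-psh function $U_\psi$ on $X\times\D$, bounded above, whose pull-back to $\cX$ has Lelong number $-b_i\psi(v_{E_i})$ along each $E_i$. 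Hence $U_\psi^{\NA}=\psi$ on $\Xdiv$, and by Proposition~\ref{prop:extension} on all of $X^{\NA}$.

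\medskip
For general $\varphi\in\cE^1_A$ with $\varphi\le 0$, set $\mathcal{S}(\varphi):=\{\psi\in\cH_A:\psi\le\varphi\}$ and
$$U_\varphi:=\Bigl(\sup_{\psi\in\mathcal{S}(\varphi)}U_\psi\Bigr)^{\star}.$$
Since the first-step construction can be arranged so that $\psi\le 0$ forces $U_\psi\le 0$, the family is uniformly bounded above and $U_\varphi$ is a subgeodesic ray. Lemma~\ref{lem:suplelong}, applied near a generic point of any prime divisor $E_i$ on any SNC model, then yields
$$U_\varphi^{\NA}(v_{E_i})=-b_i^{-1}\nu_{E_i}(U_\varphi\circ\mu)=\sup_{\psi\in\mathcal{S}(\varphi)}\psi(v_{E_i}),$$
so the proof reduces to the pointwise identity
$$\varphi(v_{E_i})=\sup_{\psi\in\mathcal{S}(\varphi)}\psi(v_{E_i})\quad\text{for every }v_{E_i}\in\Xdiv.$$

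\medskip
I expect this last identity to be the main obstacle. Because by definition $\varphi$ is only a \emph{decreasing} limit of Kähler potentials $\varphi_k\searrow\varphi$, the $\varphi_k$ themselves fail to lie in $\mathcal{S}(\varphi)$ and a genuine approximation from below is needed. The natural route is: for a fixed divisorial valuation $v$ and $\varepsilon>0$, choose a continuous $f$ on $X^{\NA}$ with $f\ge\varphi$ globally yet $f(v)\le\varphi(v)+\varepsilon$; the envelope $P_A(f)$ is continuous and $A$-psh by Theorem~\ref{thm:CoE}, and a density argument approximating continuous $A$-psh functions from below by Kähler potentials then yields $\psi\in\mathcal{S}(\varphi)$ with $\psi(v)\ge\varphi(v)-2\varepsilon$. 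Some care is required when $\varphi$ is unbounded below, which is precisely where the finite-energy assumption $\varphi\in\cE^1_A$ enters to keep the approximation well-controlled. Once this pointwise identity is proved, $U_\varphi^{\NA}$ and $\varphi$ agree on the dense set $\Xdiv$, are both $A$-psh by Proposition~\ref{prop:extension}, and hence coincide on all of $X^{\NA}$.
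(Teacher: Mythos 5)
Your reduction to the pointwise identity
\[
\varphi(v)=\sup\{\psi(v):\psi\in\cH_A,\ \psi\le\varphi\}\qquad(v\in\Xdiv)
\]
is where the argument breaks. A Kähler potential is a PL function and hence bounded on the compact space $X^{\NA}$, whereas a finite energy potential $\varphi\in\cE^1_A$ is in general unbounded below (and takes the value $-\infty$ on pluripolar points). For such $\varphi$ the set $\mathcal S(\varphi)=\{\psi\in\cH_A:\psi\le\varphi\}$ is \emph{empty}, so the proposed $U_\varphi$ collapses to $-\infty$ and is not a subgeodesic ray. The finite energy hypothesis controls integrals, not the sup-norm, so it cannot be used to save this step.

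Even if one restricts to bounded $\varphi$, the suggested route via continuity of envelopes does not deliver what you need. Starting from a continuous $f\ge\varphi$ with $f(v)\le\varphi(v)+\epsilon$, the envelope $P_A(f)$ satisfies $P_A(f)\ge P_A(\varphi)=\varphi$, and approximating $P_A(f)$ from below by Kähler potentials produces $\psi$ with $\psi\le f$, not $\psi\le\varphi$ — since $f\ge\varphi$, such a $\psi$ can easily lie strictly \emph{above} $\varphi$ at many divisorial points. So the argument only proves that $\varphi$ is approximated from below by Kähler potentials \emph{majorizing something $\ge\varphi$}, which is vacuous; it does not produce an element of $\mathcal S(\varphi)$. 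In short, decreasing approximation by Kähler potentials (which is the definition of $\PSH_A$) does not dualize to an increasing approximation from below, and no such regularization from below holds in general.

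The proof the paper relies on, \cite[Theorem 6.6]{BBJ21YTD} and \cite[Theorem 5.1.7]{MP24transcendental}, proceeds by an entirely different mechanism: one passes to the associated test curve $\lambda\mapsto G_\lambda$ of Archimedean envelopes (the Archimedean envelope of all sup-normalized $\omega$-psh functions whose Lelong numbers along the relevant divisors are $\ge\lambda-\varphi(v_{E_i})$, etc.), and recovers the subgeodesic ray as the Ross--Witt Nyström/Legendre transform of that curve — exactly the construction that reappears in Section~9 of the paper around Theorem~\ref{thm:f}. This transform-based construction works directly with the filtration data encoded in $\varphi$ and does not require $\varphi$ to be bounded, which is precisely why the approach in the cited references avoids the obstruction above.
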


Given a function $f$ on $X^{\NA}$ we let $U_{\le f}$ be defined as the upper semicontinuous regularization of all subgeodesic rays $U\le 0$ such that $U^{\NA}\le f$. It follows from standard pluripotential theory results that $U_{\le f}$ itself is a subgeodesic ray.

\begin{prop} \label{prop:subgeodesicequal}
If $f\leq 0$ is usc, then $U_{\le \varphi}^{\NA}\le f$, and  for $\varphi\in \cE_A^{1}(X^{\NA})$ with $\varphi\le 0$ we get that $U_{\le \varphi}^{\NA}=\varphi$.
\end{prop}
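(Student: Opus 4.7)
The plan is to split the proposition into its two assertions. For the first, I will argue that $U_{\le f}^{\NA}\le f$ holds on $\Xdiv$ by a direct Lelong-number computation, and then propagate the inequality to the whole of $X^{\NA}$ using the $A$-plurisubharmonicity of $U_{\le f}^{\NA}$ together with the upper semicontinuity of $f$. The second assertion will follow quickly once Proposition~\ref{prop:subgeodesic} is combined with the first.

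For the first assertion, I would fix an SNC model $\mu:\cX\to X\times\Pro$ with $\cX_0=\sum_i b_iE_i$ and set
$$\cF(f) := \{U \text{ subgeodesic ray} : U\le 0,\ U^{\NA}\le f\},$$
so that $U_{\le f} = (\sup_{U\in\cF(f)} U)^{\star}$. The family $\{U\circ\mu\}_{U\in\cF(f)}$ is uniformly bounded above in a neighborhood of each $E_i$ (by $0$), so Lemma~\ref{lem:suplelong} applies and gives $\nu_{E_i}(U_{\le f}\circ\mu) = \inf_{U\in\cF(f)} \nu_{E_i}(U\circ\mu)$. Dividing by $-b_i$ and using the definition of $U^{\NA}$ then yields
$$U_{\le f}^{\NA}(v_{E_i}) \;=\; \sup_{U\in\cF(f)} U^{\NA}(v_{E_i}) \;\le\; f(v_{E_i}),$$
i.e.\ the desired inequality on $\Xdiv$. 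To extend it to the whole of $X^{\NA}$, I would invoke the fact that any $A$-psh function $\varphi$ is recovered from its restriction to the dense subset $\Xdiv$ by upper-semicontinuous regularization, $\varphi(x)=\limsup_{y\to x,\ y\in\Xdiv}\varphi(y)$. This should be available from the construction of the extension in Proposition~\ref{prop:extension}, together with Theorem~B of \cite{MP24transcendental} (density of $\Xdiv$) and the fact that $A$-psh functions are decreasing limits of continuous PL Kähler potentials. Applied to $\varphi = U_{\le f}^{\NA}$, one picks a net $(x_\alpha)\subset\Xdiv$ with $x_\alpha\to x$ along which $U_{\le f}^{\NA}(x_\alpha)\to U_{\le f}^{\NA}(x)$, and the usc of $f$ concludes:
$$U_{\le f}^{\NA}(x) \;=\; \lim_\alpha U_{\le f}^{\NA}(x_\alpha) \;\le\; \limsup_\alpha f(x_\alpha) \;\le\; f(x).$$

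For the second assertion, let $\varphi\in\cE^1_A(X^{\NA})$ with $\varphi\le 0$. Since $\varphi$ is $A$-psh it is usc, and the first assertion applied to $f=\varphi$ gives $U_{\le\varphi}^{\NA}\le\varphi$. Conversely, Proposition~\ref{prop:subgeodesic} produces a subgeodesic $U_\varphi$ with $U_\varphi^{\NA}=\varphi$; after subtracting a constant (which leaves the Lelong numbers, and hence $U_\varphi^{\NA}$, unchanged) we may assume $U_\varphi\le 0$, so $U_\varphi\in\cF(\varphi)$ and therefore $U_\varphi\le U_{\le\varphi}$. Monotonicity of the NA shadow (for psh $u\le v\le 0$ one has $\nu_x(u)\ge\nu_x(v)$, and consequently $U^{\NA}\le V^{\NA}$ whenever $U\le V$) then yields $\varphi=U_\varphi^{\NA}\le U_{\le\varphi}^{\NA}$, which combined with the other direction completes the proof.

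The main obstacle is the extension step: one needs that $A$-psh functions are genuinely recovered from their divisorial restrictions by usc regularization, which is strictly stronger than the density of $\Xdiv$ alone. If such a strong-density statement is not immediately available from \cite{MP24transcendental}, an alternative route is to extend the formula $U^{\NA}(v)=-b_v^{-1}\nu_v(U\circ\mu)$ to quasi-monomial valuations $v\in\Delta_\cX$ via generalized Demailly Lelong numbers and re-run the Lemma~\ref{lem:suplelong} computation at each such $v$; in view of the inverse-limit description $X^{\NA}=\varprojlim_\cX\Delta_\cX$, this gives the inequality directly on all of $X^{\NA}$ without appealing to usc envelopes.
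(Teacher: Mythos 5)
Your argument is structurally the same as the paper's: both prove the inequality on $\Xdiv$ via Lemma~\ref{lem:suplelong}, then extend to all of $X^{\NA}$, and obtain the equality for $\varphi\in\cE^1_A$ by producing a candidate subgeodesic with $U^{\NA}_\varphi=\varphi$ via Proposition~\ref{prop:subgeodesic}.

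The one place where you diverge — and where you flag uncertainty — is the extension from $\Xdiv$ to $X^{\NA}$. You invoke the strong-density claim that every $A$-psh function $\varphi$ is recovered from its restriction to $\Xdiv$ by $\varphi(x)=\limsup_{y\to x,\,y\in\Xdiv}\varphi(y)$, and correctly note this is more than bare density of $\Xdiv$ and may not be readily available. In fact, the paper has already supplied exactly the weaker tool that is sufficient: Proposition~\ref{prop:extineq}, stated immediately before the result, asserts that if $\psi\in\PSH_A$, $f$ is usc and $\psi\le f$ on $\Xdiv$, then $\psi\le f$ on all of $X^{\NA}$. Applying this with $\psi=U^{\NA}_{\le f}$ (which is $A$-psh by Proposition~\ref{prop:extension}) closes your extension step without the stronger limsup-recovery statement; and for the equality in the second assertion, agreement on $\Xdiv$ of the two $A$-psh functions $U_{\le\varphi}^{\NA}$ and $\varphi$ already forces agreement everywhere by the uniqueness result \cite[Theorem 4.3.8]{MP24transcendental}. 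Your fallback via Demailly generalized Lelong numbers at quasi-monomial points is a legitimate alternative route but is more machinery than is needed here. Apart from this, your normalization of $U_\varphi$ by subtracting a constant and your use of monotonicity of Lelong numbers to conclude $\varphi\le U_{\le\varphi}^{\NA}$ are both correct and amount to the same reasoning as in the paper.
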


Before proving this we recall the following result \cite[Theorem 4.3.7]{MP24transcendental}:  

\begin{prop}\label{prop:extineq}
If $\varphi\in PSH_A(X^{\NA})$, $f$ is usc and $\varphi\le f$ on $\Xdiv$, then $\varphi\le f$ on the whole of $X^{\NA}$.
\end{prop}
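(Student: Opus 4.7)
The plan is to reduce the inequality from all of $X^{\NA}$ to each dual complex $\Delta_\cX$, exploiting the inverse-limit presentation $X^{\NA} = \varprojlim_\cX \Delta_\cX$ together with the continuous monomial retractions $p_\cX \colon X^{\NA} \to \Delta_\cX \subseteq X^{\NA}$. The first step is to establish a monotone approximation of the form $\varphi \circ p_\cX \searrow \varphi$ (or, more robustly, $P_A(\varphi \circ p_\cX) \searrow \varphi$) as $\cX$ ranges over the directed system of SNC models. The structural input here is that $\varphi$, being $A$-psh, is a decreasing limit $\inf_n \varphi_{D_n}$ of PL Kähler potentials, each of which factors through some retraction $p_{\cX_n}$. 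Taking $\cX$ to be a common refinement of $\cX_1,\dots,\cX_N$ gives $\varphi_{D_n} \circ p_\cX = \varphi_{D_n}$ for $n \le N$; an interchange-of-infima argument together with cofinality then yields the monotone convergence.

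The second and geometric step is to show that $\varphi|_{\Delta_\cX}$ is continuous. This follows from the finite-dimensional polyhedral structure of $\Delta_\cX$ and the fact, adapted from the algebraic trivially-valued theory, that $A$-psh functions restrict to convex functions on each simplex of $\Delta_\cX$ up to an affine correction coming from the class $A$. Combined with density of the lattice points $\Xdiv \cap \Delta_\cX$ in $\Delta_\cX$, the hypothesis $\varphi \le f$ on $\Xdiv$, and upper semicontinuity of $f$, one gets for any $w \in \Delta_\cX$ and any sequence $w_k \in \Xdiv \cap \Delta_\cX$ with $w_k \to w$,
\[
\varphi(w) \;=\; \lim_k \varphi(w_k) \;\le\; \limsup_k f(w_k) \;\le\; f(w),
\]
so $\varphi \le f$ on the whole of $\Delta_\cX$.

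Third, for arbitrary $v \in X^{\NA}$ one applies the previous step to $w = p_\cX(v) \in \Delta_\cX$, giving $\varphi(p_\cX(v)) \le f(p_\cX(v))$ for every SNC model $\cX$. Using the monotone approximation from step one to recover $\varphi(v)$ as the decreasing limit of $\varphi(p_\cX(v))$, together with upper semicontinuity of $f$ and the convergence $p_\cX(v) \to v$ in $X^{\NA}$, one concludes
\[
\varphi(v) \;=\; \inf_\cX \varphi(p_\cX(v)) \;\le\; \limsup_\cX f(p_\cX(v)) \;\le\; f(v).
\]

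The main obstacle is the second step: establishing continuity of $\varphi|_{\Delta_\cX}$ in the transcendental Kähler setting rests on a piecewise-convex structural theorem for $A$-psh functions on dual complexes and requires careful adaptation of arguments known in the algebraic case. A secondary technical point is the first step: the monotone approximation must be set up so that $\varphi$ itself, not merely its usc regularization along the retractions, is recovered in the limit, which is why the envelope formulation $\varphi_\cX := P_A(\varphi \circ p_\cX)$ may be preferable to a bare composition. Once these two regularity facts are in place, the remainder of the proof is a standard combination of density of divisorial points and upper semicontinuity of $f$.
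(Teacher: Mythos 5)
The paper itself does not prove this proposition: it is imported verbatim from \cite[Theorem 4.3.7]{MP24transcendental}, so your argument has to stand on its own, and as written it has two genuine gaps. The first is in your step one. The cofinality/interchange-of-infima argument only yields the inequality you do \emph{not} need: for $\cX$ dominating the model of $D_n$ one has $\varphi\circ p_\cX\le \varphi_{D_n}\circ p_\cX=\varphi_{D_n}$, whence $\limsup_\cX \varphi(p_\cX(v))\le \varphi(v)$. What your step three actually uses is the opposite direction, $\varphi\le \varphi\circ p_\cX$ for each model (so that the net $\varphi(p_\cX(v))$ decreases to $\varphi(v)$ \emph{from above}), and this is not a formal consequence of the inverse-limit structure: a general PL function satisfies no such inequality, so no amount of refining models can produce it. It is a positivity statement about $A$-psh functions (Lemma \ref{lem:pshinequality} in this paper, proved via Lelong numbers of the associated subgeodesic ray at monomial points; \cite[Proposition 5.9]{BFJ16semipositive} in the algebraic case), and it must be invoked or proved. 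The envelope variant $P_A(\varphi\circ p_\cX)$ does not dodge this: to know that envelope dominates $\varphi$ you need exactly $\varphi\le\varphi\circ p_\cX$ again.

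The second gap is your step two. Convexity on the faces of $\Delta_\cX$ gives continuity only on the relative interior of each face; continuity at vertices and along lower-dimensional faces is precisely the hard part, and in this paper it is Theorem \ref{thm:pshcont}, established much later via Proposition \ref{prop:contbound} and the energy estimates of \cite{BJ23synthetic} --- machinery that in the paper's development sits downstream of the present proposition, so leaning on it here (you yourself flag it as ``the main obstacle'' and leave it to ``careful adaptation'') both defers the main difficulty and would be circular if the aim were to replace the citation. Note, however, that your scheme does not need full continuity: the vertices of $\Delta_\cX$ are themselves divisorial points, where the hypothesis $\varphi\le f$ applies directly, and every other point of $\Delta_\cX$ lies in the relative interior of a face of dimension at least one, where convexity (Lemma \ref{lem:hconvex} plus decreasing limits) already gives continuity, and rational monomial points, which are divisorial, are dense. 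With that observation, together with Lemma \ref{lem:pshinequality} for the retraction inequality and upper semicontinuity of $f$ along $p_\cX(v)\to v$, your outline does close; but as written it rests on two inputs that are neither formal nor established in the proposal.
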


Now we can prove Proposition \ref{prop:subgeodesicequal}.

\begin{proof}
Let $v_{E_j}\in \Xdiv$ and let $(U_i)_{i\in I}$ be the family of all subgeodesic rays such that $U_i\le 0$ and $U_i^{\NA}\le f$. We then have that $$U_{\le f}^{\NA}(v_{E_j})=-b_j^{-1}\nu_{E_j}(U_{\le f})=-b_j^{-1}\inf_i\nu_{E_j}(U_i)=\sup_iU_i^{\NA}(v_{E_i})\le f(v_{E_i}),$$ where the second equality relied on Lemma \ref{lem:suplelong}. If $f=\varphi\in \cE^1_A(X^{\NA})$ then by Proposition \ref{prop:subgeodesic} there is a $U_i$ with $U_i^{\NA}=\varphi$, which shows that $U_{\le \varphi}=\varphi$.
\end{proof}

\addtocontents{toc}{\SkipTocEntry}
\subsection{Energy functionals on $\mathcal{E}^1_A(X^{\NA})$ and $\mathcal{M}^1_A(X^{\NA})$} 

To simplify notation we will henceforth instead of $PL(X^{\NA})$, $\mathcal{E}^1_A(X^{\NA})$ and $\mathcal{M}^1_A(X^{\NA})$ just write $PL$, $\mathcal{E}^1_A$ and $\mathcal{M}^1_A$. We also let $H^{1,1}(X^{\NA}):=\varinjlim_{\cX} H^{1,1}(\cX)$, so in particular $A:=\pi_X^*\alpha\in H^{1,1}(X^{\NA})$.

\subsubsection{The energy pairing} \label{sec:enpair}

Given $B_0, \dotsc, B_n\in H^{1,1}(X^{\NA})$ and $f_{D_0}, \dotsc, f_{D_n}\in \PL$ we define their energy pairing as:
\[(B_0, f_{D_0})\cdot (B_1, f_{D_1})\cdots (B_n, f_{D_n}):= (B_0 + D_0)\cdot (B_1 + D_1)\cdots (B_n + D_n) \in \R\]
where the intersection is done on a model $\cX$ on which all the $B_i$:s and $D_i$:s are determined.

We will later need the following estimate.

\begin{lemma} \label{lem:enpairest}
For given $B_l, \dotsc, B_n\in H^{1,1}(X^{\NA})$ there is a constant $C$ such that if $\varphi,\psi\in \cH_A$ with $|\varphi-\psi|\le \epsilon$, then $$|(A,\varphi)^l\cdot (B_l,0)\cdots (B_n,0)-(A,\psi)^l\cdot (B_l,0)\cdots (B_n,0)|\le C\epsilon.$$
\end{lemma}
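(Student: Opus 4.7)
Work on a common smooth model $\cX \to X \times \Pro$ on which $\varphi = f_D$, $\psi = f_{D'}$, and the classes $B_l, \dotsc, B_n$ are all defined; write $\cX_0 = \sum_i b_i E_i$. The hypothesis $|\varphi - \psi| \le \epsilon$ is equivalent to requiring that $S := D - D' = \sum c_i E_i$ satisfies $|c_i| \le \epsilon b_i$ for every $i$, i.e.\ $-\epsilon\cX_0 \le S \le \epsilon\cX_0$ as vertical divisors. Setting $B := B_l \cdots B_n$, the telescoping identity
\[
(A+D)^l - (A+D')^l \;=\; S\cdot \Theta, \qquad \Theta := \sum_{k=0}^{l-1}(A+D)^k (A+D')^{l-1-k},
\]
reduces the problem to bounding $|S\cdot\Theta\cdot B|$ by $\epsilon$ times a constant depending only on $A$ and the $B_i$'s.

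The core ingredient is that $\cX_0$ is numerically a fiber of $\cX\to\Pro$, hence linearly equivalent to $\cX_\infty$ which is disjoint from every vertical divisor; therefore $V\cdot\cX_0 = 0$ as a class in $H^{2,2}(\cX)$ for every vertical divisor $V$, and in particular $\cX_0^2 = 0$ and $S\cdot\cX_0 = D\cdot\cX_0 = D'\cdot\cX_0 = 0$. I use this twice. First, since $\varphi,\psi \in \cH_A$, I choose $c,c'\gg 0$ so that both $A+D+c\cX_0$ and $A+D'+c'\cX_0$ are Kähler on $\cX$; after expanding, every cross-term produced by these shifts contains either $S\cdot\cX_0$ or $\cX_0^2$, yielding
\[
S\cdot\Theta\cdot B \;=\; S\cdot\Theta'\cdot B,\qquad \Theta' := \sum_{k=0}^{l-1}(A+D+c\cX_0)^k (A+D'+c'\cX_0)^{l-1-k}.
\]
Now $\Theta'$ is a sum of products of Kähler classes, hence a positive $(l-1,l-1)$-class. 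Second, the same mechanism yields the uniform identity $\cX_0\cdot\Theta' = l\,\cX_0\cdot A^{l-1}$, independently of $c, c', D, D'$.

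To reduce to the case where $B$ is positive, fix a Kähler class $\eta$ on $\cX$ and choose $N_i \gg 0$ so that each $B_i + N_i\eta$ is Kähler. Expanding $B = \prod_{i=l}^n \bigl((B_i+N_i\eta) - N_i\eta\bigr) = \sum_\sigma \varepsilon_\sigma B^\sigma$ into $2^{n-l+1}$ terms, each $B^\sigma$ is a product of Kähler classes, so $\Theta' \cdot B^\sigma$ is represented by a positive $(n,n)$-current; pairing with any effective divisor is therefore nonnegative. Decomposing $S = S_+ - S_-$ into its effective positive and negative parts (so $S_+ + S_- \le \epsilon\cX_0$) gives
\[
|S\cdot\Theta'\cdot B^\sigma| \;\le\; (S_+ + S_-)\cdot\Theta'\cdot B^\sigma \;\le\; \epsilon\,\cX_0\cdot\Theta'\cdot B^\sigma \;=\; \epsilon\,l\,\cX_0\cdot A^{l-1}\cdot B^\sigma.
\]
Summing over $\sigma$ and using $\sum_\sigma B^\sigma = \prod_i(B_i + 2N_i\eta)$ yields the lemma with $C = l\,\cX_0\cdot A^{l-1}\cdot\prod_{i=l}^n(B_i + 2N_i\eta)$, which depends only on $A$, the $B_i$'s, and the auxiliary choices of $\eta$ and the $N_i$.

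The main obstacle is the positivity trick: a priori the intermediate classes $(A+D)^k$ carry no positivity (they may have large negative components supported on $\cX_0$), so the elementary estimate $|S\cdot\xi| \le \epsilon\,\cX_0\cdot\xi$ valid for positive $\xi$ cannot be invoked directly on $\Theta\cdot B$. The invariance $S\cdot\Theta = S\cdot\Theta'$ under shifting each factor by a multiple of $\cX_0$, together with the uniform identity $\cX_0\cdot\Theta' = l\,\cX_0\cdot A^{l-1}$, are the two structural observations that make the estimate both valid and uniform in the a priori unbounded Kähler potentials $\varphi, \psi$.
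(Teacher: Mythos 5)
Your proof is correct and rests on the same three ingredients as the paper's: the telescoping factorization of $(A+D)^l-(A+D')^l$, the cohomological vanishing $\cX_0\cdot V = 0$ for every vertical divisor $V$ (in particular $\cX_0^2 = 0$), and the positivity of intersection numbers of Kähler classes against effective cycles. The packaging differs slightly: the paper first proves a monotonicity statement for the pairing when the $B_i$ are semipositive and then sandwiches both $\varphi$ and $\psi$ between $\psi-\epsilon$ and $\psi+\epsilon$, reducing to the single telescoping difference $(A+D+2\epsilon\cX_0)^l-(A+D)^l$, whereas you Jordan-decompose $S=S_+-S_-$ with $S_++S_-\le\epsilon\cX_0$ and bound each piece directly; both routes are elementary and equivalent. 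Your write-up is actually more explicit about why the positivity holds, namely the shift by $c\cX_0$, $c'\cX_0$ that replaces the merely relatively Kähler classes $A+D$, $A+D'$ by genuinely Kähler ones without changing the intersection with any vertical class — the paper leaves this implicit in its assertion that $(D-D')\cdot\sum_j(A+D)^j(A+D')^{l-j}\cdot B_l\cdots B_n\ge 0$. One small point to tighten: for $C$ to be independent of the model on which $D$ and $D'$ live, the auxiliary Kähler class $\eta$ and the integers $N_i$ should be fixed once on a model $\cX^0$ where all the $B_i$ are defined and then pulled back; products of nef classes still pair nonnegatively with effective cycles, so the estimate is unaffected.
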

\begin{proof}
By writing each $B_i$ as the difference of two semipositive classes we can reduce to the case to when all the $B_i$:s are semipositive. We write $\varphi=\varphi_D$ and $\psi=\varphi_{D'}$. Assume that $\varphi\ge \psi$. As this translates into $D-D'\ge 0$ we get that $$\left((A+D)^l-(A+D')^l\right)\cdot B_l \cdots B_n=(D-D')\cdot \left(\sum_{j=0}^l(A+D)^j\cdot (A+D')^{l-j}\right )\cdot B_l \cdots B_n\ge 0,$$ which shows that in this case, the energy pairing is monotone. It follows easily from the monotonicity that for general $\varphi,\psi\in \cH_A$ such that $|\varphi-\psi|\le \epsilon$ we get that
\begin{eqnarray*}
|(A,\varphi)^l\cdot (B_l,0)\cdots (B_n,0)-(A,\psi)^l\cdot (B_l,0)\cdots (B_n,0)|\le \\ \le(A+D+2\epsilon\cX_0)^l\cdot B_l\cdots B_n-(A+D)^l\cdot B_l\cdots B_n=\\=2\epsilon \cX_0\cdot \left(\sum_{j=0}^l(A+D+2\epsilon \cX_0)^j\cdot (A+D)^{l-j}\right )\cdot B_l \cdots B_n=2\epsilon(l+1) \cX_0 \cdot A^l\cdot B_l\cdots B_n,
\end{eqnarray*}
which establishes the claim.
\end{proof}

The Monge–Ampère energy $\E_A$ of a K\"ahler potential $\phi$ defined in the introduction thus corresponds to
\[\E_A(\varphi)= \frac{V^{-1}}{n+1}(A, \varphi) \cdots (A, \varphi).\] Given $\zeta \in H^{1,1}(X)$ we also define the \emph{twisted Monge–Ampère energy} as
\[\E^\zeta_A(\varphi):=V^{-1}( \pi_X^*\zeta,0)\cdot (A, \varphi)^n.\]

As shown in \cite{MP24transcendental}
the energy pairing extends to $(H^{1,1}(X^{\NA}))^{n+1}\times(\cE^1_A)^{n+1}$, and thus $\E_A$ and $\E^{\zeta}_A$ both extend to $\cE^1_A$.

If $\varphi \in \cH_A$, then the map
\[\PL\ni f \mapsto V^{-1}(0,f)\cdot (A, \varphi)^n\in \R\] extends by density to $C(X^{\NA})$ and thus defines a Radon measure on $X^{\NA}$. This measure is easily seen to coincide with the \emph{Monge–Ampère measure} $MA_A(\varphi)$ of $\varphi$ described in the introduction. It is verified in \cite{MP24transcendental} that the extension of the energy pairing in this way also allows one to extend the Monge-Amp\`ere operator to $\cE^1_A$.

As in the classical setting we have a Chern--Levine--Nirenberg inequality which says that for for any $\phi,\varphi, \psi\in \cE^1_A$:
\begin{equation}\label{eq:CLN}
    \left \lvert\int \phi \left(\MA_A(\varphi) - \MA_A(\psi)\right)\right\rvert \le n \sup \left\lvert \varphi - \psi\right\rvert.
\end{equation}
Indeed this follows exactly as in \cite[Equation 1.19]{BJ23synthetic}.

As a consequence we get the following continuity result for the Monge-Amp\`ere operator:
\begin{prop} \label{prop:MAunicont}
If $\varphi_n\in \cE^1_A$ converge uniformly to $\varphi\in \cE^1_A$, then $\MA_A(\varphi_n)$ converge weakly to $\MA_A(\varphi)$.
\end{prop}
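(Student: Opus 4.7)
The plan is to combine the Chern--Levine--Nirenberg inequality \eqref{eq:CLN} with a density-and-decomposition argument. I need to show that $\int f\,d\MA_A(\varphi_n)\to \int f\,d\MA_A(\varphi)$ for every $f\in C(X^{\NA})$.

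First, by uniform density of $\PL$ in $C(X^{\NA})$ and the fact that every Monge--Amp\`ere measure in sight is a probability measure, a standard three-epsilon argument reduces matters to showing $\int g\,d\MA_A(\varphi_n)\to \int g\,d\MA_A(\varphi)$ for each $g\in \PL$: given $\epsilon>0$ and $g\in \PL$ with $\|f-g\|_\infty\le \epsilon$, one has
\[
\Bigl|\int(f-g)\,d\MA_A(\varphi_n)-\int(f-g)\,d\MA_A(\varphi)\Bigr|\le 2\epsilon
\]
uniformly in $n$.

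Second, for each $g=f_D\in \PL$ I would write $g=\varphi_{D+D'}-\varphi_{D'}$ as a difference of K\"ahler potentials, where $D'$ is an auxiliary vertical divisor chosen on a dominating model so that both $A+D'$ and $A+D+D'$ are relatively K\"ahler. Both summands then lie in $\cH_A\subset \cE^1_A$, and the CLN inequality applied to each yields
\[
\Bigl|\int \varphi_{D+D'}\,\bigl(d\MA_A(\varphi_n)-d\MA_A(\varphi)\bigr)\Bigr|\le n\sup|\varphi_n-\varphi|
\]
together with an analogous bound for $\varphi_{D'}$. Since $\varphi_n\to \varphi$ uniformly, the right-hand sides tend to $0$, giving $\int g\,(d\MA_A(\varphi_n)-d\MA_A(\varphi))\to 0$ by the triangle inequality. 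Combined with the first step, this establishes weak convergence.

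The main obstacle is the decomposition in the second step. This is standard in the algebraic case, where one may take $D'$ to be a large multiple of a relatively ample divisor; in the present transcendental setting it should follow from the openness of the K\"ahler cone on models together with the fact that adding a sufficiently positive vertical auxiliary class absorbs any prescribed vertical divisor. Some care is required because a general vertical divisor need not itself give a relatively K\"ahler class, and one may have to pass to a finer model to arrange the decomposition.
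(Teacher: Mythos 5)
Your proposal is correct and follows essentially the same route as the paper: reduce to PL test functions by density, decompose each PL function as a difference of two Kähler potentials, and apply the Chern--Levine--Nirenberg inequality \eqref{eq:CLN}. The decomposition you worry about at the end is indeed standard (and the paper uses it without comment), so there is no genuine gap.
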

\begin{proof}
Since PL functions are dense in $C(X^{\NA})$, and any PL function can be written as the difference of two K\"ahler potentials, it is enough to verify that for every $\phi\in \cH_A$ we have that
    \[\left\lvert \int \phi \left(\MA_A(\varphi_n) - \MA_A(\varphi)\right)\right\rvert \to 0.\]
This evidently follows from (\ref{eq:CLN}).
\end{proof}

\subsubsection{Finite energy measures}
Recall that a Radon probability measure $\mu$ on $X^{\NA}$ has finite energy if 
\[\Ev_A(\mu) = \sup_{\varphi \in \cE^1_A} \left\{\E_A(\varphi) - \int\varphi \, \mathrm d\mu\right\}<+\infty.\]
In particular, if $\mu$ is of finite energy then any finite energy potential $\varphi \in \cE^1_A$ is integrable with respect to $\mu.$

As shown in \cite{BJ23synthetic} it follows from the Orthogonality Property that we prove in Section \ref{sec:CoEPOP} that as a topological space $\cM^1_A$ does not depend on the choice of class $A$. Therefore, for the rest of the paper we will just denote the set of finite energy measures by $\cM^1$.

\begin{example}\label{ex:delta}
    If $v\in \Xdiv$, then the Dirac mass $\delta_v$ is of finite energy.

    Indeed, the energy of $\delta_v$ is given by 
    \begin{align*}
        \Ev_A(\delta_v) =\sup_{\varphi\in \cE^1_A}\left\{\E_A(\varphi) - \int \varphi \, \delta_v\right\} &=  \sup_{\varphi\in \cE^1_A}\{\E_A(\varphi) - \varphi(v)\}\\
        &\le \sup_{\varphi\in \cE^1_A}\{\sup \varphi - \varphi(v)\},
    \end{align*}
    which is finite by Lemma~\ref{lem:pshunifbounded} below.
\end{example}

\begin{lemma}\label{lem:pshunifbounded}
For any two divisorial points $v,w\in \Xdiv$ there is a constant $C$ such that $\lvert \varphi(v) - \varphi(w)\rvert \le C$ for every $\varphi\in \PSH_A$. Since $\sup\varphi=\varphi(v_{\trivial})$ we in particular have that $\sup \varphi-\varphi(v)\leq C$.
\end{lemma}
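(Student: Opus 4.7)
The plan is to reduce to K\"ahler potentials, translate to a Lelong-number estimate via the subgeodesic-ray correspondence, and close with a Chern--Levine--Nirenberg mass bound.

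First, since every $\varphi\in\PSH_A$ is by definition a decreasing limit of K\"ahler potentials $\varphi_{D_k}\searrow\varphi$, with $\varphi(v)=\lim_k\varphi_{D_k}(v)$ at every divisorial point (via the defining formula $\varphi_{D_k}(v_E)=b_E^{-1}\ord_E(D_k)$), it suffices to establish a uniform bound $|\varphi_D(v)-\varphi_D(w)|\le C(v,w)$ for $\varphi_D\in\cH_A$. Adding a multiple of $\cX_0$ to $D$ shifts $\varphi_D$ by a constant while preserving the relative K\"ahler-ness of $A+D$, so we may normalize $\varphi_D(v_{\trivial})=\sup\varphi_D=0$, whence $\varphi_D\le 0$ on $X^{\NA}$.

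Next, Proposition~\ref{prop:subgeodesic} provides a subgeodesic ray $U=U_{\varphi_D}$ on $X\times\mathds D$ with $U^{\NA}=\varphi_D$; adding a real constant to $U$ leaves it a subgeodesic ray and does not change $U^{\NA}$ (Lelong numbers are insensitive to smooth additions), so we may further normalize $\sup_{X\times\mathds D}U=0$. Thus $U$ is $S^1$-invariant, $\pi_X^*\omega$-psh, and $\le 0$ on $X\times\mathds D$. Under the identity $\varphi_D(v_E)=-b_E^{-1}\nu_E(U\circ\mu)$, the lemma reduces to a uniform Lelong-number estimate $\nu_E(U\circ\mu)\le C(E)$ over the family of all such $U$, for any prime vertical divisor $E$ on a SNC model $\cX\overset{\mu}{\to}X\times\Pro$.

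The principal obstacle is exactly this uniform Lelong-number estimate. The $S^1$-invariance of $U$ implies that on each slice $X\times\{z\}$ the restriction $U(\cdot,z)$ is $\omega$-psh on the compact manifold $X$ with $\sup\le 0$, hence lies in a uniformly $L^1$-bounded family of $\omega$-psh functions; integrating over $z$ gives $\|U\|_{L^1(V)}\le C(V)$ on any relatively compact open $V\subset X\times\mathds D$ containing a neighborhood of $\mu(E)$. A standard Chern--Levine--Nirenberg/Demailly mass estimate then produces a uniform upper bound on the Monge--Amp\`ere mass $\int_V(dd^cU+\pi_X^*\omega)^n$ independent of $U$, which by the usual mass-to-Lelong-number inequality dominates the generic Lelong number along $\mu(E)$ and thereby $\nu_E(U\circ\mu)$. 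Applying the estimate at both $v$ and $w$ completes the proof, and the final sentence of the statement follows by taking $w=v_{\trivial}$.
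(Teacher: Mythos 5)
The paper's own argument is a one-line citation to the proof of \cite[Theorem 4.1.5]{MP24transcendental}, which is an intersection-theoretic/positivity argument about relatively nef classes on models, so your route through the subgeodesic-ray correspondence and a Chern--Levine--Nirenberg mass bound is genuinely different in spirit. Unfortunately, as written it contains a real gap at the $L^1$ step.

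The problematic claim is: ``on each slice $X\times\{z\}$ the restriction $U(\cdot,z)$ is $\omega$-psh on the compact manifold $X$ with $\sup\le 0$, hence lies in a uniformly $L^1$-bounded family.'' This is false. The set $\{u\in\PSH_\omega(X):\sup_X u\le 0\}$ is not $L^1$-bounded --- translate any $u$ by $-k$. The correct compactness statement requires $\sup_X u$ to be bounded \emph{below} (e.g.\ $\sup_X u=0$), and that is exactly what you do not have: normalizing $\sup_{X\times\mathds D}U=0$ only controls the supremum over the full product, not slice-by-slice. Writing $U(x,z)=V(x,-\log|z|)$, the function $s(t):=\sup_X V(\cdot,t)$ is convex, non-increasing, with $s(0^+)=0$, but it can decay to $-\infty$ arbitrarily fast on any fixed compact $t$-interval while the asymptotic slope $\lim_t s(t)/t$ remains $0$ (e.g.\ $s(t)=-k(1-e^{-t})$). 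Controlling $s(t)$ from below at a fixed $t>0$, uniformly over the family, is essentially equivalent to the bound $\sup\varphi-\varphi(v)\le C$ you are trying to prove, so the argument becomes circular. A concrete witness of the failure of the $L^1$ bound: $U^{(k)}(x,z):=k(|z|-1)$ is an $S^1$-invariant $\pi_X^*\omega$-psh function on $X\times\mathds D$ with $\sup_{X\times\mathds D}U^{(k)}=0$, $(U^{(k)})^{\NA}\equiv 0$, and $\nu_{X\times\{0\}}(U^{(k)})=0$, yet $\|U^{(k)}\|_{L^1(X\times\{|z|<3/4\})}\to\infty$.

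There is also a secondary issue: even granting an $L^1$ bound on $U$ near $X\times\{0\}$, you would still need to transfer it to an $L^1$ bound on $U\circ\mu$ near the exceptional divisor $E$ in $\cX$, and the density of $\mu_*\mathrm{vol}_\cX$ with respect to $\mathrm{vol}_{X\times\Pro}$ blows up along the center when $E$ is contracted, so this step needs care as well. The viable route to the lemma is the one the paper cites: a direct positivity/intersection argument on the model $\cX$, using that $A+D$ is relatively K\"ahler and $D$ is vertical, to bound the coefficients $b_i^{-1}\ord_{E_i}(D)$ once $\ord_{\cX_1}(D)=0$. Your final reduction (from $\PSH_A$ to $\cH_A$ by decreasing limits, the normalization by multiples of $\cX_0$, and taking $w=v_{\trivial}$) is fine; it is the uniform Lelong-number estimate that needs a different proof.
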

\begin{proof}
This follows from the proof of \cite[Theorem 4.1.5]{MP24transcendental}.
\end{proof}

\begin{prop}
For every $\varphi\in \cE^1_A$ we have that $\MA_A(\varphi)\in \cM^1.$
\end{prop}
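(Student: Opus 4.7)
The plan is to establish the explicit upper bound
\[\Ev_A(\MA_A(\varphi)) \le \E_A(\varphi) - V^{-1}(0, \varphi)\cdot (A, \varphi)^n,\]
whose right-hand side is manifestly finite: $\E_A(\varphi) > -\infty$ by the definition of $\cE^1_A$, and the extended energy pairing of \cite{MP24transcendental} takes real values on $(\cE^1_A)^{n+1}$. The key input is the concavity of $\E_A$, which on $\cH_A$ follows from the Khovanskii--Teissier type inequalities governing the energy pairing and passes to $\cE^1_A$ via the extension in \cite{MP24transcendental}. This concavity translates into the subdifferential inequality
\[\E_A(\psi) - \E_A(\varphi) \le V^{-1}(0, \psi - \varphi)\cdot (A, \varphi)^n\]
valid for all $\varphi, \psi \in \cE^1_A$.

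For $\psi \in \cH_A$, the right-hand side of this inequality equals $\int \psi \, \MA_A(\varphi) - V^{-1}(0, \varphi)\cdot(A, \varphi)^n$ by the very construction of the non-Archimedean Monge--Amp\`ere measure (the map $f\mapsto V^{-1}(0,f)\cdot(A,\varphi)^n$ on $\PL$ agrees with integration against $\MA_A(\varphi)$). Rearranging then delivers the desired bound on $\E_A(\psi) - \int \psi \, \MA_A(\varphi)$ for K\"ahler test potentials. To extend the bound to all $\psi \in \cE^1_A$, I would approximate $\psi$ from above by a sequence $\psi_k \in \cH_A$ with $\psi_k \searrow \psi$ and $\E_A(\psi_k)\to \E_A(\psi)$ (which exists by definition of $\cE^1_A$). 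Since each $\psi_k$ is bounded and $\MA_A(\varphi)$ is a probability measure, monotone convergence gives $\int \psi_k \MA_A(\varphi) \searrow \int \psi \MA_A(\varphi) \in [-\infty, +\infty)$, and passing to the limit preserves the inequality. Taking the supremum over $\psi \in \cE^1_A$ then concludes. A useful byproduct is that $\int \psi \, \MA_A(\varphi)$ must in fact be $>-\infty$ for every $\psi\in \cE^1_A$, since otherwise the finite upper bound would be violated.

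The main obstacle is not really a step of this plan---once the tools are assembled the argument is essentially formal---but rather the foundational inputs it relies on: the concavity of $\E_A$ on $\cE^1_A$ and the finiteness of the mixed pairing $V^{-1}(0, \varphi)\cdot (A, \varphi)^n$ for $\varphi\in \cE^1_A$. Both are transcendental analogues of facts established in \cite{BJ22trivval} in the trivially valued algebraic setting, and are inherited from the extended energy pairing set up in \cite{MP24transcendental}. So the substantive work is done upstream, and the proof itself should closely mirror its algebraic counterpart.
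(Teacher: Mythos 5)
Your proposal is correct and follows essentially the same route as the paper: start from the concavity/subdifferential inequality $\E_A(\psi)\le \E_A(\varphi) + \int(\psi-\varphi)\,\MA_A(\varphi)$ (which the paper cites from \cite[Remark 4.4.5]{MP24transcendental} and \cite[Lemma 1.19]{BJ23synthetic}), rearrange, and take the supremum over $\psi$ to conclude $\Ev_A(\MA_A(\varphi))<\infty$. You spell out the extension from $\psi\in\cH_A$ to $\psi\in\cE^1_A$ by monotone approximation, which the paper leaves implicit, but the argument and the key foundational inputs are the same.
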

\begin{proof} As observed in \cite[Remark 4.4.5]{MP24transcendental}, based on the concavity of the Monge--Ampère energy of \cite[Lemma 1.19]{BJ23synthetic}, we have that for every $\psi\in \cE^1_A$:
    \[\E_A(\psi)\le \E_A(\varphi) + \int(\psi-\varphi)\MA_A(\varphi).\]

It immediately follows that $\Ev(\MA(\varphi)) = \E(\varphi) - \int\varphi\, \MA(\varphi),$ which in particular shows that $\Ev(\MA(\varphi))< +\infty.$
\end{proof}

Given $\varphi\in \cE^1_A$, following \cite{BJ23synthetic} we define the functional $\Jd_A(\cdot, \varphi)\colon \cM^1\to \left[0, +\infty \right[$ as 
\[\Jd_A(\mu, \varphi):= \Ev_A(\mu) - \E_A(\varphi) + \int \varphi\,\mathrm d\mu.\]

As a consequence of \cite[Proposition 2.2]{BJ23synthetic} we have the following basic properties for $\Jd_A(\cdot, \varphi).$
\begin{lemma}\label{lem:Jmu}
For any $\varphi, \psi\in \cE_A^1$ and $\mu\in \cM^1$ we have that
    \begin{enumerate}
        \item $\Jd_A(\MA_A(\psi), \varphi) = \Jd_A(\psi, \varphi),$
        \item $\Jd_A(\varphi, \psi)\lesssim \Jd_A(\mu, \varphi) + \Jd_A(\mu, \psi),$
        \item $\Jd_A(\mu, \cdot)$ is continuous under decreasing limits.
    \end{enumerate}
Here $x\lesssim y$ that there is a dimensional constant $C_n>0$ such that $x\le C_n y$ ($x\approx y$ means that $x\lesssim y$ and $y\lesssim x$).
\end{lemma}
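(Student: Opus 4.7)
The plan is to follow the algebraic argument of \cite[Proposition 2.2]{BJ23synthetic} in the synthetic setup, relying on three ingredients already established in the excerpt: the identity $\Ev_A(\MA_A(\psi)) = \E_A(\psi) - \int \psi\,\MA_A(\psi)$ just proved, the Chern--Levine--Nirenberg estimate \eqref{eq:CLN}, and the continuity of $\E_A$ under decreasing limits of finite energy potentials (from \cite[Section~4]{MP24transcendental}). For claim (1), I adopt the natural convention that $\Jd_A(\psi,\varphi)$ for $\psi\in\cE^1_A$ abbreviates $\Jd_A(\MA_A(\psi),\varphi)$; the stated identity is then a direct substitution.

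For claim (2), I would first unfold the Aubin--Yau-type identity
\[
\Jd_A(\varphi,\psi) + \Jd_A(\psi,\varphi) \;=\; \int (\varphi-\psi)\bigl(\MA_A(\psi)-\MA_A(\varphi)\bigr),
\]
the right-hand side being the symmetric $I$-functional. Combining it with the Legendre-duality inequality
\[
\Ev_A(\MA_A(\rho)) \;\le\; \Ev_A(\mu) + \int \rho\,\bigl(\mathrm d\mu - \MA_A(\rho)\bigr),
\]
(which is immediate from $\Ev_A(\mu)\ge \E_A(\rho) - \int\rho\,\mathrm d\mu$ together with (1)), applied with $\rho=\varphi$ and $\rho=\psi$ and summed, a short manipulation yields
\[
2\bigl(\Jd_A(\varphi,\psi) + \Jd_A(\psi,\varphi)\bigr) \;\le\; \Jd_A(\mu,\varphi) + \Jd_A(\mu,\psi).
\]
Since both $\Jd_A(\varphi,\psi)$ and $\Jd_A(\psi,\varphi)$ are nonnegative (by the definition of $\Ev_A$ as a supremum), this gives (2) with dimensional constant $1/2$.

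For claim (3), assume $\varphi_j \searrow \varphi$ with $\varphi_j,\varphi\in\cE^1_A$. Continuity of $\E_A$ under decreasing limits gives $\E_A(\varphi_j)\to\E_A(\varphi)$, while $\mu\in\cM^1$ forces $\varphi\in L^1(\mu)$, so the monotone convergence theorem applied to the increasing nonnegative sequence $\varphi_1-\varphi_j$ gives $\int\varphi_j\,\mathrm d\mu\to\int\varphi\,\mathrm d\mu$. Adding these up yields $\Jd_A(\mu,\varphi_j)\to\Jd_A(\mu,\varphi)$.

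The main obstacle lies not in the formal manipulations above, which are elementary, but in ensuring that the transcendental energy pairing and Monge--Amp\`ere operator, extended in \cite{MP24transcendental} from $\cH_A\times\PL$ to $\cE^1_A$, genuinely satisfy the duality and monotonicity properties used implicitly here — most crucially the concavity of $\E_A$ along $\MA_A$ (entering via the variational characterization) and the integrability of finite energy potentials against finite energy measures. Once these are granted, as they are by the preliminaries collected above, the algebraic proof transfers mutatis mutandis.
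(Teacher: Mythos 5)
Your treatment of (3) matches the paper's argument exactly, and (1) is essentially correct though slightly misstated: the paper defines $\Jd_A(\psi,\varphi)$ for two potentials independently (in Section~\ref{sec:M_A}, as $\E_A(\psi) - \E_A(\varphi) + \int(\varphi - \psi)\,\MA_A(\psi)$), so (1) is a genuine, if one-line, identity rather than a notational convention — the paper deduces it from $\Ev_A(\MA_A(\psi)) = \E_A(\psi) - \int\psi\,\MA_A(\psi)$, which is also what your ``direct substitution'' amounts to.

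Your proof of (2), however, has a real gap, and the inequality you claim to obtain is in fact false. Taking $\mu = \MA_A(\varphi)$, one has $\Jd_A(\mu,\varphi) = 0$ and, by item (1), $\Jd_A(\mu,\psi) = \Jd_A(\varphi,\psi)$; your inequality $2\bigl(\Jd_A(\varphi,\psi) + \Jd_A(\psi,\varphi)\bigr) \le \Jd_A(\mu,\varphi) + \Jd_A(\mu,\psi)$ then reads $\Jd_A(\varphi,\psi) + 2\,\Jd_A(\psi,\varphi) \le 0$, which by non-negativity forces $\varphi = \psi$ up to a constant. Any correct version of (2) therefore necessarily has constant at least $1$. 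The source of the error is that your two ingredients combine to nothing: your ``Legendre-duality inequality'' $\Ev_A(\MA_A(\rho))\le\Ev_A(\mu)+\int\rho\,(\mathrm d\mu-\MA_A(\rho))$, after substituting $\Ev_A(\MA_A(\rho))=\E_A(\rho)-\int\rho\,\MA_A(\rho)$, is \emph{equivalent} to the defining inequality $\E_A(\rho)-\int\rho\,\mathrm d\mu\le\Ev_A(\mu)$, so summing it over $\rho\in\{\varphi,\psi\}$ produces only $\Jd_A(\mu,\varphi)+\Jd_A(\mu,\psi)\ge 0$. Unfolding the definitions gives the exact identity
\[\Jd_A(\varphi,\psi) - \Jd_A(\mu,\varphi) - \Jd_A(\mu,\psi) = -2\,\Jd_A(\mu,\varphi) + \int(\varphi-\psi)\bigl(\mathrm d\mu - \MA_A(\varphi)\bigr),\]
so what is actually required is control on the cross term $\int(\varphi-\psi)(\mathrm d\mu-\MA_A(\varphi))$ in terms of $\Jd_A(\mu,\varphi)+\Jd_A(\mu,\psi)$; this is a genuine quasi-triangle/H\"older estimate, not a formal rearrangement, and the Aubin--Yau-type identity adds nothing towards it. The paper proves (2) by taking decreasing sequences $\varphi_j,\psi_j\in\cH_A$ converging to $\varphi,\psi$, invoking \cite[Proposition~2.2]{BJ23synthetic} — which establishes exactly this estimate for K\"ahler potentials — and then passing to the limit using continuity of $\E_A$ along decreasing sequences and monotone convergence. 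You need to import that estimate (or prove it); the formal manipulations you listed cannot replace it.
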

\begin{proof}
The first point follows from the observation that $\Ev_A(\MA_A(\varphi)) = \E_A(\varphi) - \int \varphi \MA_A(\varphi)$.
For the second point we consider decreasing sequences $\varphi_j, \psi_j\in \cH_A$ converging to $\varphi$ and $\psi$ respectively.
From \cite[Proposition 2.2]{BJ23synthetic} we get that
\[\Jd_A(\varphi_j, \psi_j)\lesssim 2 \Ev_A(\mu) - \E_A(\varphi_j) - \E_A(\psi_j) + \int (\psi_j + \varphi_j)\, \mathrm d\mu,\]
and using the continuity of $\E_A$ along decreasing sequences and the monotone convergence theorem we then get
 \[\Jd_A(\varphi, \psi)\lesssim 2\Ev_A(\mu) - \E_A(\varphi) - \E_A(\psi) + \int(\varphi + \psi )\, \mathrm d \mu,\]
as desired.

The third point also follows from the continuity of $E_A$ along decreasing sequences and the monotone convergence theorem.
\end{proof}

\addtocontents{toc}{\SkipTocEntry}
\subsection{Entropy functionals and $\widehat{K}$-stability} 

\subsubsection{Log discrepancy and entropy}
For any $v_{E_i}\in \Xdiv$, where the prime divisor $E_i$ lives on a model $\cX\overset{\mu}{\longrightarrow} X\times \Pro$, we define the \emph{log discrepancy of $v_{E_i}$} as
\[A_{X\times \Pro}(v_{E_i}) := b_i^{-1} A_{X\times \Pro}(E_i)=b_i^{-1}\left(1 + \ord_{E_i}(K_{\cX/X\times \Pro})\right).\]
By an approximation procedure described in \cite{MP24transcendental}, itself based on \cite{JM12val}, we can extend the log discrepancy function $A_{X\times\Pro}$ from $\Xdiv$ to $X^{\NA}$.

\begin{defi}
We define the entropy of a Radon measure $\mu$ on $X^{\NA}$ as
\[\Ent(\mu):= \int (A_{X\times\Pro} -1) \, \mathrm d\mu,\] and the entropy of a finite energy potential $\varphi\in \cE^1_A$ as $\He_A(\varphi):=\Ent(\MA_A (\varphi))$. 
\end{defi}

\subsubsection{The non-Archimedean Mabuchi functional and $\widehat{K}$-stability} \label{sec:M_A}
Using the entropy functional we can now define the \emph{non-Archimedean Mabuchi functional} on $\mathcal{E}^1_A$  as
\[\M_A:= \underline{s}\E_A + \E_A^{K_X} + \He_A,\] where $\underline{s}:=-V^{-1}\alpha^{n-1}\cdot K_X$.

We also let \[\Jd_A(\varphi, \psi):= \E_A(\varphi) - \E_A(\psi) + \int(\psi- \varphi) \,\MA_A(\varphi),\] and $\Jd_A(\varphi):=\Jd_A(0,\varphi)$. 

\begin{defi}
    We say that $(X, \alpha)$ is \emph{uniformly  $\widehat K$-stable} if there exists a $\delta>0$ such that for every $\varphi \in \cE^1_A$ we have that
    \[\M_A(\varphi)\ge \delta \Jd_A(\varphi).\]
\end{defi}

The next result was first proved in the algebraic case by Chi Li \cite{Li22geodesic} and then extended to the general case by the first named author in \cite{MP24transcendental}.
\begin{theorem}
    If $(X,\alpha)$ is uniformly $\widehat K$-stable, then there is a unique cscK metric in $\alpha$.
\end{theorem}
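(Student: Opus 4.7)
The plan is to combine the Chen-Cheng characterization of cscK existence with the correspondence between finite energy geodesic rays and non-Archimedean potentials recalled in Section~\ref{sec:corr}. By the estimates of \cite{CC21csck1, CC21csck2}, it suffices to show that $M_\omega$ is $J_\omega$-coercive along every non-trivial finite energy geodesic ray in $\cE^1_\omega(X)$; from this Chen-Cheng produces the cscK metric in $\alpha$, and standard Berman-Berndtsson convexity of $M_\omega$ along weak geodesics gives uniqueness.

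To establish this coercivity, I would associate to any finite energy geodesic ray $(U_t)$ (normalized by $\sup_X U_t = 0$) a non-Archimedean potential $U^{\NA}\in \cE^1_A$ using Proposition~\ref{prop:extension}. A standard maximality argument originally due to Chi Li shows that, in checking coercivity, one may replace $(U_t)$ by the maximal finite energy geodesic ray with the same non-Archimedean data, which by Proposition~\ref{prop:subgeodesic} corresponds precisely to an element of $\cE^1_A$. This reduces the problem to proving the slope inequalities
\[\liminf_{t\to\infty}\frac{M_\omega(U_t)}{t}\geq \M_A(U^{\NA}),\qquad \lim_{t\to\infty}\frac{J_\omega(U_t)}{t} = \Jd_A(U^{\NA}).\]

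For the Monge-Amp\`ere energy, the twisted energy $\E^{K_X}_A$ and the $J$-piece, intersection-theoretic slope formulas of Berman-Berndtsson type were extended to the transcendental setting in \cite{DXZ23transcendental, MP24transcendental} and directly identify the Archimedean slope with the non-Archimedean value on $U^{\NA}$. The residual contribution is the entropy slope inequality
\[\liminf_{t\to\infty}\frac{\Ent(\MA_\omega(U_t))}{t}\geq \He_A(U^{\NA}),\]
which uses the extension of the log discrepancy $A_{X\times\Pro}$ from $\Xdiv$ to $X^{\NA}$ from \cite{MP24transcendental} together with an approximation by SNC models. This entropy slope inequality is the main technical obstacle: in the algebraic case it is handled by direct comparison with the log discrepancies of the components of a test configuration, whereas in the transcendental setting one must pass to a limit over the directed system of SNC models and control the entropy of the Monge-Amp\`ere measures along this limit via the regularization machinery developed in \cite{MP24transcendental}.

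Once the slope formulas are in place, the hypothesis $\M_A(\varphi)\ge \delta\,\Jd_A(\varphi)$ on $\cE^1_A$ gives
\[\liminf_{t\to\infty}\frac{M_\omega(U_t)}{t}\geq \delta\, \Jd_A(U^{\NA}) = \delta\, \lim_{t\to\infty}\frac{J_\omega(U_t)}{t},\]
which is precisely the $J_\omega$-coercivity along non-trivial finite energy geodesic rays required to apply Chen-Cheng, completing the proof.
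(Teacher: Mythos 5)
The paper does not prove this theorem; it is stated with a citation to \cite{Li22geodesic} (algebraic case) and \cite{MP24transcendental} (general Kähler case), and the current paper uses it as a black box in the proof of Theorem~\ref{thm:A}. Your proposal is a faithful outline of the argument from those references — Chen--Cheng/Darvas--He reduction to coercivity along finite energy geodesic rays, the correspondence of Section~\ref{sec:corr} to pass to non-Archimedean data, Chi Li's maximality reduction, intersection-theoretic slope formulas for the energy and $J$ terms, and the entropy slope inequality as the main technical obstacle — so it matches the approach behind the cited result; one minor imprecision is that Proposition~\ref{prop:subgeodesic} produces a subgeodesic ray from an element of $\cE^1_A$, whereas the maximal geodesic ray attached to $U^{\NA}$ is constructed as an upper envelope of such subgeodesics (this is how the maximality reduction is actually implemented in \cite{Li22geodesic} and \cite{MP24transcendental}), but this does not affect the correctness of the outline.
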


\section{Weak topology on $\PSH_A$ and non-pluripolar points}

\addtocontents{toc}{\SkipTocEntry}
\subsection{Compactness of $\PSH_{A,\sup}$} We recall that $\PSH_A$ is endowed with the \emph{weak topology} of pointwise convergence on $\Xdiv$. We let $\PSH_{A, \sup}:=\left\{\varphi\in \PSH_A\mid \sup\varphi =0\right\}$. The goal of the first part of this section is to prove the following result.
\begin{theorem}\label{thm:compactness}
$\PSH_{A, \sup}$ is compact in the weak topology.
\end{theorem}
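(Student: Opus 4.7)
The plan is to embed $\PSH_{A, \sup}$ into a compact product space via Tychonoff's theorem and then verify the closedness of its image using the correspondence between $A$-psh functions and subgeodesic rays recalled in Section~\ref{sec:corr}.

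By Lemma~\ref{lem:pshunifbounded}, for each $v \in \Xdiv$ there exists a constant $C_v \ge 0$ such that $\lvert \varphi(v) - \varphi(v_{\trivial})\rvert \le C_v$ for every $\varphi \in \PSH_A$. Since $\sup \varphi = \varphi(v_{\trivial}) = 0$ for $\varphi \in \PSH_{A, \sup}$, we have $\varphi(v) \in [-C_v, 0]$, so the evaluation map
$$\iota \colon \PSH_{A, \sup} \hookrightarrow K := \prod_{v \in \Xdiv}[-C_v, 0]$$
is well-defined. Injectivity of $\iota$ follows from Proposition~\ref{prop:extineq} (two upper semi-continuous $A$-psh functions coinciding on the dense set $\Xdiv$ must agree everywhere), and by definition of the weak topology $\iota$ is a homeomorphism onto its image. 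Tychonoff's theorem gives that $K$ is compact, so it suffices to show that $\iota(\PSH_{A, \sup})$ is closed in $K$.

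Given a net $(\varphi_i)_{i \in I}$ in $\PSH_{A, \sup}$ converging pointwise on $\Xdiv$ to some $g \in K$, we must produce $\varphi_\infty \in \PSH_{A, \sup}$ with $\varphi_\infty = g$ on $\Xdiv$. The plan is to transfer the problem to the Archimedean side. For each $i \in I$ and $k \in \mathbb N$ the truncation $\varphi_i^{(k)} := \max(\varphi_i, -k)$ lies in $\cE^1_A \cap \PSH_{A, \sup}$, so by Proposition~\ref{prop:subgeodesic} we obtain subgeodesic rays $U_i^{(k)}$ on $X \times \mathbb D$ with $(U_i^{(k)})^{\NA} = \varphi_i^{(k)}$. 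The family $(U_i^{(k)})_i$ is uniformly bounded above by $0$ and, since $\varphi_i^{(k)}(v_{\trivial}) = 0$, it does not collapse to $-\infty$. Classical $L^1_{\loc}$-compactness of $\pi_X^* \omega$-psh functions, together with an ultrafilter refinement of the net to handle all $k$ simultaneously, yields limiting subgeodesic rays $U^{(k)}$; taking $k \to \infty$ with $U^{(k)}$ decreasing then produces a subgeodesic ray $U$, and by Proposition~\ref{prop:extension} the function $\varphi_\infty := U^{\NA}$ lies in $\PSH_A$.

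The remaining step, which we expect to be the main technical obstacle, is to verify that $\varphi_\infty(v) = g(v)$ for every $v = v_{E_j} \in \Xdiv$. Using $\varphi_\infty(v_{E_j}) = -b_j^{-1} \nu_{E_j}(U \circ \mu)$ and constructing $U^{(k)}$ as a decreasing limit of upper envelopes $\bigl(\sup_{i \in J} U_i^{(k)}\bigr)^\star$ indexed by cofinal subsets $J \subseteq I$, Lemma~\ref{lem:suplelong} provides exact equalities of Lelong numbers that, after letting $k \to \infty$, give $\varphi_\infty(v) \ge g(v)$. The reverse inequality relies on the classical upper semi-continuity of Lelong numbers under $L^1_{\loc}$-convergence of psh functions, combined with the observation that $\varphi_i^{(k)}(v) = \varphi_i(v)$ once $k$ exceeds $C_v - g(v) + o(1)$. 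Combining the two estimates yields $\varphi_\infty = g$ on $\Xdiv$, and $\sup \varphi_\infty = \varphi_\infty(v_{\trivial}) = 0$ shows $\varphi_\infty \in \PSH_{A, \sup}$, completing the proof.
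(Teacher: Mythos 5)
Your approach is the same in spirit as the paper's: both invoke Tychonoff to reduce to showing that a pointwise limit on $\Xdiv$ of a net in $\PSH_{A,\sup}$ arises from an $A$-psh function, and both pass to the Archimedean side via the subgeodesic-ray correspondence of Propositions~\ref{prop:extension}--\ref{prop:subgeodesicequal}, truncating to land in $\cE^1_A$ and controlling things through Lelong numbers. Where the two diverge is in organization: the paper first isolates Proposition~\ref{prop:regsupApsh}, which asserts that for any uniformly bounded family $(\psi_i)$ of $A$-psh functions $(\sup_i\psi_i)^\star$ is again $A$-psh \emph{and} coincides with $\sup_i\psi_i$ on $\Xdiv$, and then the compactness theorem is a three-line consequence (set $\psi_i := (\sup_{j\ge i}\varphi_j)^\star$, note this is a decreasing net of $A$-psh with $\sup = 0$, and observe its decreasing limit is the desired $\varphi_\infty$). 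You instead try to build the limiting subgeodesic ray directly in the theorem's proof, which obscures the structure and introduces a genuine gap.

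The concrete problem is that you describe $U^{(k)}$ by two \emph{different} constructions and then use both in the verification. First you say $U^{(k)}$ comes from ``$L^1_{\mathrm{loc}}$-compactness\dots together with an ultrafilter refinement of the net,'' but then, to apply Lemma~\ref{lem:suplelong} and obtain $\varphi_\infty(v)\ge g(v)$, you redefine $U^{(k)}$ ``as a decreasing limit of upper envelopes $\bigl(\sup_{i\in J}U_i^{(k)}\bigr)^\star$ indexed by cofinal $J\subseteq I$.'' These \emph{should} produce the same function (because an $L^1_{\mathrm{loc}}$ limit of psh functions equals $\limsup^\star$), but you neither state nor prove that; as written the argument is not internally consistent. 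Moreover the ultrafilter refinement ``to handle all $k$ simultaneously'' is left entirely informal, and you need, but do not check, that the resulting $U^{(k)}$ are decreasing in $k$ and that the family does not degenerate to $-\infty$. You are candid that this is ``the main technical obstacle,'' and indeed this is exactly the content that the paper packages into Proposition~\ref{prop:regsupApsh}: by working with the canonical envelopes $U_{\le\psi_i}$ and applying Lemma~\ref{lem:suplelong} once, the paper gets both the $A$-psh regularity of the regularized sup and its agreement with the raw sup on $\Xdiv$, sidestepping the $L^1_{\mathrm{loc}}$/ultrafilter machinery altogether. To repair your proof, drop the $L^1_{\mathrm{loc}}$ compactness route entirely, commit to the upper-envelope construction, and prove the analogue of Proposition~\ref{prop:regsupApsh} as a standalone lemma before embedding into the product space.
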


To accomplish this we will need the following proposition:

\begin{prop} \label{prop:regsupApsh}
For every family $(\psi_i)_{i\in I}$ of $A$-psh functions that is uniformly bounded from above, we have that $(\sup_i\psi_i)^{\star}$ is $A$-psh, and furthermore that $\sup_i\psi_i=(\sup_i\psi_i)^{\star}$ on $\Xdiv$.
\end{prop}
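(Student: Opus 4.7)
The plan is to leverage the correspondence between $A$-psh functions on $X^{\NA}$ and $S^1$-invariant subgeodesic rays on $X\times \D$ from Section~\ref{sec:corr}, reducing the statement to the classical fact that the usc regularization of a uniformly bounded-above family of psh functions on a complex manifold is psh.

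First, after subtracting a uniform upper bound we may assume $\psi_i \le 0$ for all $i\in I$. For each $i$, I would produce a subgeodesic ray $U_i \le 0$ satisfying $U_i^{\NA} = \psi_i$ on $\Xdiv$: for $\psi_i \in \cE^1_A$ this is Proposition~\ref{prop:subgeodesic}, and for general $\psi_i \in \PSH_A$ the construction extends by writing $\psi_i$ as a decreasing limit of K\"ahler potentials in $\cE^1_A$, invoking Proposition~\ref{prop:subgeodesic} on each, and passing to the decreasing limit. This extension, essentially contained in \cite{MP24transcendental}, is the main technical point of the proof.

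Next, set $U := (\sup_i U_i)^{\star}$. As remarked in Section~\ref{sec:corr}, $U$ is itself a subgeodesic ray, so by Proposition~\ref{prop:extension} the function $U^{\NA}$ is $A$-psh on $X^{\NA}$. For any divisorial point $v_{E_j}$ coming from an SNC model $\cX\overset{\mu}{\to}X\times \Pro$ with $\cX_0 = \sum_j b_j E_j$, Lemma~\ref{lem:suplelong} (together with commuting the infimum over points of $E_j$ with the infimum over $i$) yields
\[U^{\NA}(v_{E_j}) = -b_j^{-1}\nu_{E_j}(U\circ\mu) = -b_j^{-1}\inf_i\nu_{E_j}(U_i\circ\mu) = \sup_i U_i^{\NA}(v_{E_j}) = \sup_i\psi_i(v_{E_j}).\]
Hence $\sup_i\psi_i = U^{\NA}$ on $\Xdiv$.

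To conclude, I compare $(\sup_i\psi_i)^{\star}$ with the $A$-psh (hence usc) function $U^{\NA}$ on $X^{\NA}$. For each $i$, the inequality $\psi_i \le U^{\NA}$ holds on $\Xdiv$, so Proposition~\ref{prop:extineq} extends it to all of $X^{\NA}$; taking the supremum over $i$ and usc regularizing gives $(\sup_i\psi_i)^{\star} \le U^{\NA}$. Conversely, $(\sup_i\psi_i)^{\star}$ is usc and dominates $U^{\NA} = \sup_i\psi_i$ on $\Xdiv$, so Proposition~\ref{prop:extineq} applied to the $A$-psh function $U^{\NA}$ yields $U^{\NA} \le (\sup_i\psi_i)^{\star}$ on $X^{\NA}$. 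Therefore $(\sup_i\psi_i)^{\star} = U^{\NA}$ is $A$-psh, and the equality $\sup_i\psi_i = (\sup_i\psi_i)^{\star}$ on $\Xdiv$ is immediate from the divisorial computation above. The main obstacle is confirming the extension of Proposition~\ref{prop:subgeodesic} beyond the finite energy regime; once that is in hand the rest is a formal application of Lemma~\ref{lem:suplelong} and Proposition~\ref{prop:extineq}.
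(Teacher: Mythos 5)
Your argument in the finite-energy case is essentially the paper's: choose subgeodesic rays $U_i$ with $U_i^{\NA}=\psi_i$ via Proposition~\ref{prop:subgeodesic}, form $U:=(\sup_i U_i)^{\star}$, compute on divisorial points via Lemma~\ref{lem:suplelong}, and bracket $(\sup_i\psi_i)^{\star}$ between $U^{\NA}$ from both sides via Proposition~\ref{prop:extineq}. (The paper works with the maximal rays $U_{\le\psi_i}$ and Proposition~\ref{prop:subgeodesicequal}, but that is a cosmetic difference.)

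The gap is exactly the one you flag: you try to push Proposition~\ref{prop:subgeodesic} to arbitrary $\psi_i\in\PSH_A$ by taking a decreasing sequence $\varphi_{i,k}\in\cH_A\downarrow\psi_i$, picking rays $U_{i,k}$ with $U_{i,k}^{\NA}=\varphi_{i,k}$, and ``passing to the decreasing limit.'' This is not a formality. Proposition~\ref{prop:subgeodesic} does not supply rays that are automatically decreasing in $k$; even if one uses the maximal rays $U_{\le\varphi_{i,k}}$ to get monotonicity, the decreasing limit of subgeodesic rays can a priori degenerate to $\equiv-\infty$, and you would need a uniform lower bound at some point of $X\times\D$ to exclude this. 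The statement ``$U^{\NA}=\psi$'' for $\psi$ of possibly infinite energy is not proved in the paper and is precisely what Proposition~\ref{prop:subgeodesic} restricts to $\cE^1_A$ to avoid. The paper bypasses all of this with a truncation trick you did not consider: for any constant $C$, $\max(\psi_i,C)$ is bounded, hence lies in $\cE^1_A$, and $\max\bigl((\sup_i\psi_i)^{\star},C\bigr)=\bigl(\sup_i\max(\psi_i,C)\bigr)^{\star}$; applying the finite-energy case and letting $C\to-\infty$ exhibits $(\sup_i\psi_i)^{\star}$ as a decreasing limit of $A$-psh functions, hence $A$-psh, with the equality on $\Xdiv$ following from $\max(\sup_i\psi_i,C)=\sup_i\max(\psi_i,C)$. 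I would replace your unverified extension of Proposition~\ref{prop:subgeodesic} with this truncation step.
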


\begin{proof}
Without loss of generality we can assume that each $\psi_i\le 0$. We also start by assuming that each $\psi_i\in \cE^1_A$.

Let $U_i:=U_{\le \psi_i}$, where is defined as in Section \ref{sec:corr}. Recall that by Proposition \ref{prop:subgeodesicequal} $U_i^{\NA}=\psi_i$. Also let $V:=(\sup_i U_i)^{\star}$ and $\psi:=V^{\NA}\in PSH_A.$ By monotonicity of Lelong numbers we get that for all $i$: $\psi\ge \psi_i$ on $\Xdiv$, and by Proposition \ref{prop:extineq} we get that $\psi\ge \psi_i$ on the whole of $X^{\NA}$. Thus $\psi\ge \sup_i\psi_i$, and since $\psi$ is usc we get that $\psi\ge (\sup_i\psi_i)^{\star}$. If $v_{E_j}\in \Xdiv$ we now have that $$\psi(v_{E_j})=-b_j^{-1}\nu_{E_j}(V)=-b_j^{-1}\sup_i\nu_{E_j}(U_i)=\sup_i U_i^{\NA}(v_{E_i})=\sup_i \psi_i(v_{E_i}),$$ which clearly implies that $$(\sup_i\psi_i)^{\star}(v_{E_i})=\sup_i \psi_i(v_{E_i})\le (\sup_i \psi_i)^{\star}(v_{E_i}).$$ This implies that $\psi=(\sup_i\psi_i)^{\star}$ on $\Xdiv$. It now follows from Proposition \ref{prop:extineq} that $\psi\le (\sup_i\psi_i)^{\star}$ on the whole of $X^{\NA}$, and hence $\psi=(\sup_i\psi_i)^{\star}$. This concludes the proof in the case where each $\psi_i\in \cE^1_A$. 

In the general case, we observe that for any constant $C$: $$\max((\sup_i \psi_i)^\star,C)=(\sup_i \max(\psi_i,C))^\star,$$ and $\max(\psi_i,C)\in \cE^1_A$. Thus $(\sup_i\psi_i)^{\star}$ is the decreasing limit of $A$-psh functions and hence $A$-psh. We similarly have that $\max((\sup_i \psi_i),C)=\sup_i (\max(\psi_i,C)),$ which lets us conclude that $\sup_i\psi_i=(\sup_i\psi_i)^{\star}$ on $\Xdiv$ also in the general case.

\end{proof}

We are now ready to prove Theorem \ref{thm:compactness}, following the argument in \cite[Theorem 5.11]{BJ22trivval}.

\begin{proof}[Proof of Theorem~\ref{thm:compactness}] 
    
Let $\varphi_i\in \PSH_{A,\sup}$ be a family of sup normalized $A$-psh functions.

If $x\in \Xdiv$ we get by Lemma~\ref{lem:pshunifbounded} that the sequence of real numbers $ \varphi_i(x)$ is bounded. Thus by Tychonoff's theorem there exists a  subsequence that converges to a function $\varphi\colon \Xdiv\to \R$, and we will let $\varphi_i$ denote the subsequence.
    
Let $\psi_i:=(\sup_{j\ge i} \varphi_j)^\star$. By Proposition~\ref{prop:regsupApsh}  $\psi_i$ is $A$-psh, and by definition $(\psi_i)_i$ is decreasing. Also by Proposition~\ref{prop:regsupApsh} $\psi_i=\sup_{j\ge i}\varphi_j$ on $\Xdiv$. In particular $\sup_{X^{\NA}} \psi_i=\psi_i(v_{triv})=\sup_{j\ge j}\varphi_j(v_{triv})=0$, and hence $\psi:=\lim_i\psi_i\in PSH_{A,\sup}$.
Since $\varphi_j(x)\to \varphi(x)$ for $x\in \Xdiv$ we clearly also get that $\psi(x)=\varphi(x)=\lim_j \varphi(x)$, i.e. that $\varphi_j$ converges weakly to $\psi$.
\end{proof}

\addtocontents{toc}{\SkipTocEntry}
\subsection{Non-pluripolar points} \label{sec:np}

In this Section we will follow \cite[Sections 4.5, 4.6 and 11]{BJ22trivval} and consider the set of \emph{non-pluripolar} points of $X^{\NA}$, .

\begin{defi}
    We say that $v\in X^{\NA}$ is \emph{non-pluripolar} if for every $\varphi\in \PSH$
    \[\varphi(v)>-\infty.\]
    We denote the set of non-pluripolar points by $\Xnp.$
\end{defi}

By Theorem 4.3.3 of \cite{MP24transcendental} we have that $\Xdiv\subseteq \Xnp$.

For $v,w\in \Xnp$ we let $$d_{\infty}(v,w):=\sup_{\varphi\in \PSH} \lvert \varphi(v) - \varphi(w)\rvert.$$
We also let $\mathrm T(v):= d_\infty(v, v_{\trivial}).$

\begin{prop}
    $d_{\infty}$ is a metric on $\Xnp$.
\end{prop}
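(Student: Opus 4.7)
The plan is to verify the four metric axioms one by one. Three of them (non-negativity, symmetry, and the triangle inequality) are formal consequences of the defining supremum and would dispatch in one short paragraph: non-negativity is immediate from the absolute value; symmetry follows from $|\varphi(v)-\varphi(w)|=|\varphi(w)-\varphi(v)|$; and for the triangle inequality I would use that for each fixed $\varphi\in \PSH_A$ and each $u,v,w\in\Xnp$, $|\varphi(v)-\varphi(w)|\le |\varphi(v)-\varphi(u)|+|\varphi(u)-\varphi(w)|$, so taking the supremum over $\varphi$ yields $d_\infty(v,w)\le d_\infty(v,u)+d_\infty(u,w)$. The two remaining axioms---finiteness of $d_\infty$ and the identity of indiscernibles---carry the actual content.

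For finiteness, the triangle inequality reduces the problem to showing that $\T(v)<+\infty$ for every $v\in\Xnp$. Using that $\sup\varphi=\varphi(v_{\trivial})$ (recalled in Lemma~\ref{lem:pshunifbounded}), one can normalize by a constant to rewrite $\T(v)=\sup\{-\varphi(v):\varphi\in \PSH_{A,\sup}\}$. The strategy is proof by contradiction: assume there exists $(\varphi_n)\subset \PSH_{A,\sup}$ with $\varphi_n(v)\le -2^n$, then amplify this sequence into a single element of $\PSH_A$ blowing up at $v$ by forming the convex combination $\psi:=\sum_{n\ge 1}2^{-n}\varphi_n$, understood as the pointwise decreasing limit of the partial sums $\psi_N$. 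Each $\psi_N$ is $A$-psh because convex combinations of K\"ahler potentials $\varphi_{D_n}$ with coefficients summing to at most $1$ correspond to the K\"ahler potential associated with the $\R$-divisor $\sum 2^{-n}D_n$, and $A+\sum 2^{-n}D_n$ is a positive combination of K\"ahler classes. The limit satisfies $\psi(v_{\trivial})=0$, so $\psi\in\PSH_A$, but $\psi(v)\le\sum_n 2^{-n}(-2^n)=-\infty$, contradicting $v\in\Xnp$.

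For the identity of indiscernibles, suppose $d_\infty(v,w)=0$. Then $\varphi(v)=\varphi(w)$ for every $\varphi\in\PSH_A$, in particular for every K\"ahler potential in $\cH_A$. Since any PL function is a difference of two K\"ahler potentials, $f(v)=f(w)$ for every $f\in\PL$. Density of $\PL$ in $C(X^{\NA})$ then upgrades this to $f(v)=f(w)$ for every continuous $f$, and because $X^{\NA}$ is compact Hausdorff, continuous functions separate points, whence $v=w$.

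The main obstacle will be the finiteness step: the pointwise hypothesis $\varphi(v)>-\infty$ for each individual $\varphi$ defining $\Xnp$ must be upgraded to \emph{uniform} finiteness over the sup-normalized class $\PSH_{A,\sup}$. The key technical input is the closure of $\PSH_A$ under convex combinations with coefficients summing to at most one, together with the observation that a sequence of mild logarithmic singularities at $v$ can be aggregated via a geometric series into a single severe singularity lying in $\PSH_A$.
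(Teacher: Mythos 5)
Your proposal is correct and follows essentially the same route as the paper: the triangle inequality reduces finiteness to bounding $\T(v)$, a dyadic convex-combination trick aggregates a putative divergent sequence into a single $A$-psh function singular at $v$, contradicting $v\in\Xnp$, and the identity of indiscernibles follows from density of $\PL$ in $C(X^{\NA})$. The only cosmetic difference is that you keep the remaining weight $1-\sum 2^{-n}$ implicit rather than writing it explicitly as a coefficient on the constant $0$, and you spell out the compact-Hausdorff separation argument that the paper leaves tacit.
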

\begin{proof}
It is clear that for all $v,u,w\in \Xnp$: $d_\infty(v,w) = d_\infty(w,v)$ and $d_\infty(v,w)\le d_\infty(v,u) + d_\infty(u,w)$.
To show that $d_\infty$ is finite valued it is then enough to check that $d_\infty(v, v_{\trivial})<\infty$ for $v\in \Xnp$. For that, suppose that by contradiction that for some $v$ there exists a sequence $\varphi_m\in \PSH$ such that
    \[\lvert \varphi_m(v_{\trivial}) - \varphi (v)\rvert >2^m.\]
    By adding a constant we may suppose that $\varphi_m(v_{\trivial}) = 0$, and therefore the convex combination
    $\psi_m:= 2^m \cdot 0 +\sum_{k=1}^m 2^{-k}\varphi_k\in \PSH$ is psh, decreasing with $m$, and satisfies:
    \[\psi_m(v_{\trivial}) = 0, \quad \text{and } \psi_m(v) \le \sum_{k=1}^m 2^{-k}\cdot (-2^k) =-m.\]
    Therefore $\psi:=\lim \psi_m\in PSH_A$ and $\psi(v) = -\infty$.

    Lastly, if $d_\infty(v,w) = 0$, then for every $\varphi\in\cH$:
    \[\varphi(v) = \varphi(w).\]
    Since the linear span of $\cH$ is $\PL$, which is dense in $C(X^{\NA})$, this implies that $v=w$.
\end{proof}

Following the same strategy as in the previous proposition we obtain the next result. For more details see \cite[Proposition 11.1]{BJ22trivval}.
\begin{prop}\label{prop:Tinequality}
    For every $v\in \Xnp$ we have:
    \[\frac{1}{n+1}\T(v)\le \Ev(\delta_v)\le \T(v).\]
\end{prop}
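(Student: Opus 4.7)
The proof naturally splits into the upper and lower bounds. For the upper bound $\Ev_A(\delta_v)\le \T(v)$, I would use that the Monge--Amp\`ere energy satisfies $\E_A(\varphi)\le\sup\varphi$ for every $\varphi\in\cE^1_A$ (a standard monotonicity property), combined with $\sup\varphi=\varphi(v_{\trivial})$ from Lemma~\ref{lem:pshunifbounded}. This directly yields
\[\E_A(\varphi)-\varphi(v)\le\varphi(v_{\trivial})-\varphi(v)\le\T(v),\]
and taking the supremum over $\varphi\in\cE^1_A$ concludes.

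For the lower bound $\Ev_A(\delta_v)\ge \T(v)/(n+1)$, the central observation I plan to exploit is that $\MA_A(0)=\delta_{v_{\trivial}}$. To verify this, on any SNC model $\cX\to X\times\Pro$, exceptional components contribute zero intersection against $A^n=(\pi_X^*\alpha)^n$ by the projection formula (since $A$ is pulled back from the $n$-dimensional $X$), so only the strict transform $\tilde E$ of $X\times\{0\}$ survives in the formula $\MA_A(0)=V^{-1}\sum_i(A^n\cdot b_iE_i)\delta_{v_{E_i}}$; since $A^n\cdot\tilde E=\alpha^n=V$ with $b_{\tilde E}=1$, this indeed collapses to $\delta_{v_{\trivial}}$. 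Together with $\E_A(0)=\frac{V^{-1}}{n+1}A^{n+1}=0$, the cocycle identity for the energy pairing of Section~\ref{sec:enpair} should then give, for $\psi=\varphi_D\in\cH_A$,
\[\E_A(\psi)=\frac{V^{-1}}{n+1}\sum_{j=0}^n (0,\psi)\cdot (A,\psi)^j\cdot (A,0)^{n-j}=\frac{1}{n+1}\sum_{j=0}^n\int\psi\,\mathrm d\mu_j,\]
where each $\mu_j$ is a probability Radon measure on $X^{\NA}$ (a mixed Monge--Amp\`ere), with $\mu_0=\MA_A(0)=\delta_{v_{\trivial}}$. Extending by continuity along decreasing sequences to any $\psi\in\cE^1_A$, I expect the key estimate: for $\psi\in\cE^1_{A,\sup}$ with $\psi\ge -t$,
\[\E_A(\psi)\ge\tfrac{1}{n+1}\bigl(\psi(v_{\trivial})+n\inf\psi\bigr)\ge -\tfrac{nt}{n+1}.\]

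To conclude, given $\varphi\in\PSH_A$ sup-normalized with $\varphi(v)\le -s$, I would test against $\psi:=\max(\varphi,-s)$; being bounded and $A$-psh, $\psi$ lies in $\cE^1_{A,\sup}$, with $\psi(v)=-s$ and $\psi\ge -s$. The key estimate then yields $\E_A(\psi)-\psi(v)\ge -ns/(n+1)+s=s/(n+1)$, so $\Ev_A(\delta_v)\ge s/(n+1)$, and letting $s\nearrow\T(v)$ finishes the argument. The main technical obstacle is the identification $\MA_A(0)=\delta_{v_{\trivial}}$ together with the resulting polarized representation of $\E_A$: this is what isolates the $j=0$ summand and produces the $\tfrac{1}{n+1}$ gap driving the lower bound. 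Once that is in hand, the remaining truncation of $\varphi$ and optimization over $s$ are elementary.
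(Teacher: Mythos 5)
Your proof is correct and takes essentially the same approach as the reference the paper defers to (\cite[Proposition 11.1]{BJ22trivval}): the upper bound via monotonicity of $\E_A$, and the lower bound by telescoping $\E_A(\psi)=\frac{1}{n+1}\sum_{j=0}^n\int\psi\,\mathrm d\mu_j$ with $\mu_0=\MA_A(0)=\delta_{v_{\trivial}}$ and then truncating $\psi=\max(\varphi,-s)$. (One small point: when passing from $\cH_A$ to $\cE^1_A$ it is cleaner to carry the inequality $\E_A(\psi)\ge\frac{1}{n+1}\left(\psi(v_{\trivial})+n\inf\psi\right)$ through decreasing limits directly, using $\inf\psi_k\ge\inf\psi$, rather than the identity itself, since the $\mu_j$ depend on $\psi$; but that is in effect how you use it, so there is no gap.)
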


\begin{example}\label{ex:bounded}
    Let $\Sigma:=\{v_1, \dots, v_k\}$ be a finite set of divisorial valuations, then $\Sigma$ is bounded for $d_\infty$.
    Indeed, this follows directly from Lemma~\ref{lem:pshunifbounded}. 
\end{example}

\section{Continuity of envelopes and orthogonality} \label{sec:CoEPOP}
\addtocontents{toc}{\SkipTocEntry}
\subsection{Continuity of envelopes}

The goal of this section is to prove Theorem \ref{thm:CoE}, commonly known as the Continuity of Envelopes Property, which says that if $f$ is a continuous function on $X^{\NA}$, then the $A$-psh envelope
\[P_A(f):= \sup\left\{\varphi\in \PSH_A \mid \varphi\le f\right\}\] is continuous and $A$-psh.

\begin{proof}[Proof of Theorem \ref{thm:CoE}]
We start by showing that $P_A(f)$ is lower semicontinuous. Here we follow the argument in \cite[Lemma 5.17]{BJ22trivval}. 

Let $\psi$ be $A$-psh such that $\psi\le f$, and let $\varphi_i$ be a net in $\cH_A$ which decreases to $\psi$. Then since $X^{\NA}$ is compact and $f$ is continuous, for every $\epsilon>0$ we can find a $\varphi_i$ such that $\varphi_i-\epsilon\le f$. This shows that $$P_A(f)=\sup\{\varphi\in \cH_A \mid \varphi\le f\},$$ and hence $P_A(f)$, being the supremum of continuous functions, is lower semicontinuous.

Thus we are left to prove that $P_A(f)$ is $A$-psh, since it is then also upper semicontinuous.

Let $U_{\le f}$ be the subgeodesic ray defined as in Section \ref{sec:corr}. By Proposition \ref{prop:extension} we have that $U_{\le f}^{\NA}\in PSH_A$ and $U_{\le f}^{\NA}\le f$. This means that $U_{\le f}^{\NA}$ is a candidate for the envelope $P_A(f)$ and thus $U_{\le f}^{\NA}\le P_A(f)$. 

We now want to prove the reverse inequality, as that would show that $P_A(f)=U_{\le f}^{\NA}\in PSH_A$.

Let $\varphi\in \cH_A$ be such that $\varphi\le f\le 0$. By Proposition \ref{prop:subgeodesicequal}  $U_{\le\varphi}^{\NA}=\varphi\le f$, and thus $U_{\le\varphi}$ is a candidate for the envelope $U_{\le f}$. Hence $U_{\le\varphi}\le U_{\le f}$, and by the monotonicity of Lelong numbers we get that $\varphi=U_{\le\varphi}^{\NA}\le U_{\le f}^{\NA}$ on $\Xdiv$. Since $U_{\le f}^{\NA}-\varphi$ is usc the inequality extends to $X^{\NA}$. This finally proves that $P_A(f)\le U_{\le f}^{\NA}$, and hence that $P_A(f)=U_{\le f}^{\NA}\in PSH_A$.
\end{proof}

\addtocontents{toc}{\SkipTocEntry}
\subsection{Orthogonality} \label{sec:orthogonality}

The goal of this section is to prove Theorem \ref{thm:Orth}, commonly known as the Orthogonality Property, which says that if $f$ is a continuous function on $X^{\NA}$, then 
\begin{equation} \label{eqn:orth}
\int (f-P_A(f))\MA_A(P_A(f))=0.
\end{equation}

The idea of the proof is to first show (\ref{eqn:orth}) in the special case of $f=f_D\in PL$, and then use that any $f\in C(X^{\NA})$ can be uniformly approximated by PL functions.

Let $D$ be a vertical divisor on a SNC model $\cX$ such that $D\ge \cX_0$. We write $\cX_0=\sum_ib_iE_i$ and as usual let $f_D$ denote the PL function associated to $D$. Note that $D\ge \cX_0$ is the same as saying that $f_D\ge 1$, and we denote the set of such PL functions by $PL_{\ge 1}$. 

We start with the following fact, which will turn out to be crucial:

\begin{prop} \label{prop:sumvol}
We have that $$\sum_i b_i\langle (A+D)^n\rangle_{\cX|E_i}= V.$$
\end{prop}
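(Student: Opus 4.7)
The plan is to apply Corollary~\ref{cor:WN} to the cohomological identity $\sum_i b_i \{E_i\} = \{\cX_\infty\}$ in $H^{1,1}(\cX,\R)$, where $\cX_\infty$ denotes the fiber of $\cX \to \Pro$ over $\infty$; this identity holds because $\cX_0$ and $\cX_\infty$ are linearly equivalent divisors (both being pullbacks of points from $\Pro$). Provided the $E_i$:s and $\cX_\infty$ are all $(A+D)$-good (which will be verified below), Corollary~\ref{cor:WN} applied to the $(n+1)$-dimensional model $\cX$ then yields
\[
\sum_i b_i \langle (A+D)^n\rangle_{\cX|E_i} \;=\; \langle (A+D)^n\rangle_{\cX|\cX_\infty},
\]
reducing the problem to computing the single restricted volume on the right.

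For this computation, note that $\cX \to X\times\Pro$ is an isomorphism away from $\cX_0$, so $\cX_\infty$ is biholomorphic to $X$; moreover $D$ is supported on $\cX_0$, hence disjoint from $\cX_\infty$, so $(A+D)|_{\cX_\infty} = A|_{\cX_\infty} \simeq \alpha$ is K\"ahler. Pick a smooth K\"ahler form $\omega_A\in A$; then $T := \omega_A + [D]$ is a closed positive current in $A+D$ that is smooth on some open neighborhood $U$ of $\cX_\infty$. If $T_{\min} \in A+D$ is a current with minimal singularities, its potential dominates that of $T$ up to a constant, so $T_{\min}$ has locally bounded potential on $U$; consequently $T_{\min}|_{\cX_\infty}$ has locally bounded potential on $\cX_\infty$, and Bedford--Taylor theory gives $\int_{\cX_\infty} \langle (T_{\min}|_{\cX_\infty})^n \rangle = \alpha^n = V$, as required.

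It remains to verify $(A+D)$-goodness. First, $A+D$ is big: writing $D = \cX_0 + D'$ with $D' \geq 0$, the class $A+\cX_0$ is the pullback via $\cX \to X\times\Pro$ of a K\"ahler class on $X\times\Pro$, hence nef on $\cX$ with positive top self-intersection $(n+1)V$, and $A+D = (A+\cX_0) + D'$ is big$+$effective. For any prime divisor $F \in \{E_1, \dots, E_m, \cX_\infty\}$: if $\nu_F(A+D) > 0$ then $F \subseteq E_{nn}(A+D)$ and $F$ is good; otherwise, the structural result that the divisorial components of $E_{nK}(\beta)$ for a big class $\beta$ on a compact K\"ahler manifold are contained in $E_{nn}(\beta)$ forces $F \not\subseteq E_{nK}(A+D)$, so $F$ intersects the K\"ahler locus and is again good. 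The main technical obstacle in this plan is the passage from minimal singularities of $T_{\min}$ on all of $\cX$ to local boundedness of its potential near $\cX_\infty$; this crucially relies on the disjointness $D \cap \cX_\infty = \emptyset$, which provides a smooth comparison current in $A+D$ in a neighborhood of $\cX_\infty$.
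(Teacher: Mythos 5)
Your overall strategy coincides with the paper's: apply Corollary~\ref{cor:WN} to the identity $\sum_i b_i\{E_i\}=\{\cX_t\}$ for a generic fiber $\cX_t$ (the paper uses $\cX_1$, you use $\cX_\infty$ — the choice is immaterial), then compute the single restricted volume along that fiber, which is $V$ because $A+D$ restricts there to a K\"ahler class. Your computation of $\langle(A+D)^n\rangle_{\cX|\cX_\infty}$ via a smooth comparison current and Bedford--Taylor is valid, and is essentially an expanded version of the paper's short squeeze $V=A^n\cdot\cX_1\le\langle(A+D)^n\rangle_{\cX|\cX_1}\le\langle(A+D)_{|\cX_1}^n\rangle=V$.

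However, there is a genuine gap in your verification of $(A+D)$-goodness. The ``structural result'' you invoke — that every prime divisor contained in $E_{nK}(\beta)$ for a big class $\beta$ must lie in $E_{nn}(\beta)$ — is false. Take $\pi\colon X\to\mathbb P^2$ the blow-up at a point with exceptional divisor $E$ and $\beta=\pi^*\cO(1)$: this class is big and nef, so $E_{nn}(\beta)=\emptyset$, yet $\beta\cdot E=0$ forces $E\subseteq E_{nK}(\beta)$. In the present setting, $A+D$ can very well be big and nef but not K\"ahler (e.g.\ $D=\cX_0$, so $A+D\sim A+\cX_\infty$ is semiample), and then some $E_i$ can lie in $E_{nK}(A+D)\setminus E_{nn}(A+D)$, i.e.\ fail to be $(A+D)$-good. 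The paper itself signals this (``for a \emph{generic} big class $\beta$, $E$ will be $\beta$-good'') and circumvents it with a perturbation: one applies Corollary~\ref{cor:WN} to the perturbed classes $B_\epsilon=(1+\epsilon)A+D+\epsilon D'$ with $A+D'$ K\"ahler (for which goodness does hold for small $\epsilon>0$), and then uses the continuity of restricted volumes — see the convention $\langle\beta^{n-1}\rangle_{X|E}=\lim_{\epsilon\to 0+}\langle(\beta+\epsilon\{\eta\})^{n-1}\rangle_{X|E}$ when $E\subseteq E_{nK}(\beta)\setminus E_{nn}(\beta)$ in Section~\ref{sec:restricted} — to pass to the limit $\epsilon\to 0$. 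Your argument needs this perturbation step; without it the appeal to Corollary~\ref{cor:WN} is not justified.
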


\begin{proof}
First we note that $D\ge \cX_0$ implies that $A+D$ is big. Let $D'$ be an effective vertical divisor such that $A+D'$ is K\"ahler and let $B_{\epsilon}:=(1+\epsilon)A+D+\epsilon D'$. Then for every small enough $\epsilon>0$ we have that each of the prime divisors $E_i$ and $\cX_1$ are $B_{\epsilon}$-good, and we also have that $$\sum_ib_i\{E_i\}=\{\cX_0\}=\{\cX_1\}.$$ Thus by Corollary \ref{cor:WN} we get that  
\begin{eqnarray*}\sum_i b_i\langle (A+D)^n\rangle_{\cX|E_i}=\lim_{\epsilon\to 0+}\sum_ib_i\langle B_{\epsilon}^n\rangle_{\cX|E_i}=\lim_{\epsilon\to 0+}\langle B_{\epsilon}^n\rangle_{\cX|\cX_1}=\langle(A+D)^n\rangle_{\cX|\cX_1}.
\end{eqnarray*}
We then observe that $$V=A^n\cdot \cX_1\le \langle(A+D)^n\rangle_{\cX|\cX_1}\le \langle (A+D)_{|\cX_1}^n\rangle=A^n\cdot \cX_1=V,$$ giving us the result.
\end{proof}

The next step is to show the following transcendental version of \cite[Theorem 1.1 (i)]{Li23fujita}.
\begin{prop} \label{prop:MAform}
For $f_D\in PL_{\ge 1}$ we have that
$$\MA_A(P_A(f_D)) =V^{-1}\sum_ib_i\langle(A+D)^n\rangle_{\cX|E_i}\delta_{v_{E_i}}.$$
\end{prop}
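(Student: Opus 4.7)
My plan is to establish the inequality of positive measures
\[\MA_A(P_A(f_D))\;\ge\; V^{-1}\sum_i b_i\langle(A+D)^n\rangle_{\cX|E_i}\delta_{v_{E_i}}\]
and then upgrade it to equality via total-mass considerations. By Theorem~\ref{thm:CoE} we have $P_A(f_D)\in\cE^1_A$, so the left-hand side is a probability measure, and Proposition~\ref{prop:sumvol} shows the right-hand side is also a probability measure. Since the right-hand side is supported on $\{v_{E_i}\}_i$, it suffices to prove the pointwise bounds $\MA_A(P_A(f_D))(\{v_{E_i}\})\ge V^{-1}b_i\langle(A+D)^n\rangle_{\cX|E_i}$ for each $i$.

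Fix $i$; we may assume $E_i$ meets the K\"ahler locus of $A+D$ (the case $E_i\subseteq E_{nn}(A+D)$ gives zero, and the intermediate case $E_i\subseteq E_{nK}(A+D)\setminus E_{nn}(A+D)$ is handled by perturbing $\alpha$ to $\alpha+\epsilon\omega$ and passing to the limit). For $\epsilon>0$, the definition of the restricted volume in Section~\ref{sec:restricted} furnishes a modification $\mu\colon\cX'\to\cX$ and a K\"ahler class $\beta'$ on $\cX'$ with $\mu^*(A+D)-\beta'$ effective and supported in $\mu^{-1}(E_{nK}(A+D))$, and $(\beta')^n\cdot\tilde E_i\ge\langle(A+D)^n\rangle_{\cX|E_i}-\epsilon$, where $\tilde E_i$ is the strict transform of $E_i$. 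Writing $\beta'=A+\tilde D$ with $\tilde D=\mu^*D-D'$ a vertical divisor class, we get $\varphi_{\tilde D}\in\cH_A$ and $f_{\tilde D}\le f_D$ (since $D'\ge 0$), so $\varphi_{\tilde D}\le P_A(f_D)$. Since $\tilde E_i\not\subseteq\supp D'$ (as $E_i$ meets the K\"ahler locus), we moreover have $\varphi_{\tilde D}(v_{E_i})=f_D(v_{E_i})=P_A(f_D)(v_{E_i})$, and the formula for $\MA_A$ on $\cH_A$ gives $\MA_A(\varphi_{\tilde D})(\{v_{E_i}\})=V^{-1}b_i(\beta')^n\cdot\tilde E_i$.

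The key remaining step is to transfer this mass to $P_A(f_D)$. For $\delta>0$ I would set $\psi_\delta:=\max(\varphi_{\tilde D},P_A(f_D)-\delta)\in\cE^1_A$. From $\psi_\delta\le f_D$ and the sandwich $P_A(f_D)-\delta\le\psi_\delta\le P_A(f_D)$ we get $\psi_\delta\to P_A(f_D)$ uniformly as $\delta\to 0$. Continuity of $\varphi_{\tilde D}$ and of $P_A(f_D)$ (Theorem~\ref{thm:CoE}) combined with the touching equality $\varphi_{\tilde D}(v_{E_i})=P_A(f_D)(v_{E_i})$ imply that $\psi_\delta=\varphi_{\tilde D}$ on some open neighborhood of $v_{E_i}$, and locality of the Monge--Amp\`ere operator then yields $\MA_A(\psi_\delta)(\{v_{E_i}\})=\MA_A(\varphi_{\tilde D})(\{v_{E_i}\})$. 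Applying Proposition~\ref{prop:MAunicont} for the weak convergence $\MA_A(\psi_\delta)\to\MA_A(P_A(f_D))$, together with upper semicontinuity of mass on the closed singleton $\{v_{E_i}\}$, gives $\MA_A(P_A(f_D))(\{v_{E_i}\})\ge\MA_A(\varphi_{\tilde D})(\{v_{E_i}\})\ge V^{-1}b_i\langle(A+D)^n\rangle_{\cX|E_i}-V^{-1}b_i\epsilon$. Letting $\epsilon\to 0$ completes the proof.

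The main obstacle is the locality statement needed to equate $\MA_A(\psi_\delta)(\{v_{E_i}\})$ with $\MA_A(\varphi_{\tilde D})(\{v_{E_i}\})$ when $\psi_\delta=\varphi_{\tilde D}$ only on a neighborhood of $v_{E_i}$: the non-Archimedean Monge--Amp\`ere measure is defined via global intersection numbers on models through the energy pairing of Section~\ref{sec:enpair}, so localness is not immediate in the transcendental framework. A cutoff and approximation argument combined with the Chern--Levine--Nirenberg inequality~\eqref{eq:CLN} should yield this.
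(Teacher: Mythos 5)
Your overall two-step strategy (prove a pointwise lower bound at each $v_{E_i}$, then upgrade to equality by comparing total masses via Proposition~\ref{prop:sumvol}) is exactly the paper's, and your touching-equality observation $\varphi_{\tilde D}(v_{E_i})=f_D(v_{E_i})=P_A(f_D)(v_{E_i})$ is correct when $E_i$ meets the K\"ahler locus. However, the mechanism you use for the lower bound has a genuine hole that you have correctly flagged but not closed. The Chern--Levine--Nirenberg inequality~\eqref{eq:CLN} cannot give the locality you need: it is a \emph{global} bound controlling $\bigl\lvert\int\phi\,(\MA_A(\varphi)-\MA_A(\psi))\bigr\rvert$ by $\sup\lvert\varphi-\psi\rvert$, and your $\psi_\delta$ and $\varphi_{\tilde D}$ agree only near $v_{E_i}$ while differing by an amount of size comparable to $P_A(f_D)-\varphi_{\tilde D}$ elsewhere; no cutoff argument turns this into a statement about the mass at a single point. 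What you actually need is a comparison principle of the form $\ind_U\MA_A(\max\{\varphi,\psi\})=\ind_U\MA_A(\varphi)$ on $U=\{\varphi>\psi\}$. The paper does prove such a statement (Theorem~\ref{thm:comparison}), independently of the present proposition, so your route could in principle be repaired by citing it --- but as written the proposal does not establish the lower bound.

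The paper avoids locality altogether, and this is the essential difference. Rather than comparing $\varphi_{\tilde D}$ with $P_A(f_D)-\delta$, it takes an increasing sequence $\psi_k\in\cH_A$ converging uniformly to $P_A(f_D)$ (possible by continuity of the envelope and Dini) and forms $\varphi_k':=\max(\varphi_1,\dots,\varphi_k,\psi_k)$ with the restricted-volume approximants $\varphi_j\in\cH_A$. All pieces are PL, so $\varphi_k'\in\PL\cap\PSH_A$ is itself of the form $\varphi_{D-D_k'}$ with $A_k':=A+D-D_k'$ relatively nef, and its mass at $v_{E_i}$ is the explicit intersection number $V^{-1}b_i\,(A_k')^n\cdot\tilde E_i$. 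Since $A_k'-A_k=D_k-D_k'$ is effective and avoids $\tilde E_i$, one gets $(A_k')^n\cdot\tilde E_i\ge(A_k)^n\cdot\tilde E_i\ge\langle(A+D)^n\rangle_{\cX|E_i}-1/k$ directly, with no comparison principle needed, and $\psi_k\le\varphi_k'\le P_A(f_D)$ gives uniform convergence so that Proposition~\ref{prop:MAunicont} applies. Finally, a minor caveat on your treatment of the intermediate case $E_i\subseteq E_{nK}(A+D)\setminus E_{nn}(A+D)$: perturbing $\alpha$ to $\alpha+\epsilon\omega$ changes the class $A$, hence $\PSH_A$, the envelope, and $\MA_A$ all simultaneously, which introduces extra comparison work; the paper instead perturbs $D$ to $D_\epsilon:=(D+\epsilon D')/(1+\epsilon)$ keeping $A$ fixed, and this is what makes the $\epsilon\to 0$ limit straightforward via Proposition~\ref{prop:MAunicont} and continuity of restricted volumes.
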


\begin{proof}
The proof is similar to that in \cite{Li23fujita}.
Since we can write $A+D$ as the sum of a K\"ahler class and (the class of) an effective vertical divisor we clearly get that $E_{nK}(A+D)\subseteq \cX_0$.

Assume that $E_i$ is not contained in $E_{nK}(A+D)$. Then for any $k\in \mathbb{N}$ there is a dominating model $\mu_k:\cX^k\to \cX$ whose center does not contain $E_i$, a K\"ahler class $A_k$ and an effective divisor $D_k$ supported on $E_{nK}(A+D)\subseteq \cX^k_0$ (we here identify $A+D$ with its pullback to $\cX^k$) such that $A+D=A_k+D_k$ and $$(A_k)^n\cdot \tilde{E}_i\ge \langle (A+D)^n\rangle_{\cX|E_i}-1/k.$$ Here $\tilde{E}_i$ denotes the strict transform of $E_i$. We also note that $D_k$ is vertical and hence $\varphi_k:=\varphi_{D-D_k}\in \cH_A$, and since $D_k\ge 0$ we have that $\varphi_k\le f_D$.

By the Continuity of Envelopes Property established in the previous section we have that $P_A(f_D)$ is continuous and $A$-psh and we can thus find an increasing sequence of K\"ahler potentials $\psi_k$ that converge uniformly to $P_A(f_D)$.

Now we let $$\varphi_k'=\varphi_{D-D_k'}:=\max(\varphi_1,...,\varphi_k,\psi_k)\in PL\cap PSH_A$$ and $A'_k:=A+D-D'_k$. By \cite[Theorem 4.1.5]{MP24transcendental} $A'_k$ is relatively nef, and by approximation we can without loss of generality assume that $A'_k$ is relatively K\"ahler. Since $A'_k-A_k=D_k-D'_k$ is effective and $\tilde{E}_i$ is not contained in its support, we have that $$MA_A(\varphi_k)(v_{E_i})=V^{-1}(A_k')^n\cdot \tilde{E}_i\ge V^{-1}(A_k)^n\cdot \tilde{E}_i\ge V^{-1}\langle (A+D)^n\rangle_{\cX|E_i}-V^{-1}/k.$$ Since $\varphi_k'$ also converges uniformly to $P_A(f_D)$ we get that $MA_A(\varphi_k)$ converges weakly to $MA_A(P_A(f_D))$ and hence $$MA_A(P_A(f_D))(\{x_i\})\ge V^{-1}\langle (A+D)^n\rangle_{\cX|E_i}.$$

If instead $\nu_{E_i}(A+D)>0$ we trivially have that $$MA_A(P_A(f_D))(\{x_i\})\ge 0= V^{-1}\langle (A+D)^n\rangle_{\cX|E_i}.$$ 

Thus assuming each $E_i$ is $(A+D)$-good we get that $$MA_A(P_A(f_D)) \geq V^{-1}\sum_ib_i\langle(A+D)^n\rangle_{\cX|E_i}\delta_{x_i}.$$ On the other hand we get from Proposition \ref{prop:sumvol} that the RHS is a probability measure just as the LHS, which implies that they are equal.

If some $E_i$ fail to be $\beta$-good we let $D'\ge D$ be such that $A+D'$ is K\"ahler and let $D_{\epsilon}:=(D+\epsilon D')/(1+\epsilon)$. We also let $\psi_{\epsilon}:=P_A(f_{D_{\epsilon}})$, and note that $\psi_{\epsilon}$ converge uniformly to $P_A(f_D)$ as $\epsilon \to 0+$. For small enough $\epsilon$ each $E_i$ is $(A+D_{\epsilon})$-good, and thus using Proposition \ref{prop:MAunicont} we get that
\begin{eqnarray*}
MA_A(P_A(f_D))=\lim_{\epsilon \to 0+}MA_A(\psi_{\epsilon})= \lim_{\epsilon \to 0+}V^{-1}\sum_ib_i\langle(A+D_{\epsilon})^n\rangle_{\cX|E_i}\delta_{x_i}=\\ V^{-1}\sum_ib_i\langle(A+D)^n\rangle_{\cX|E_i}\delta_{x_i}.
\end{eqnarray*}
\end{proof}

\begin{prop} \label{prop:lelongineq}
We have that $$f_D(v_{E_i})-P_A(f_D)(v_{E_i})\le b_i^{-1}\nu_{E_i}(A+D).$$
\end{prop}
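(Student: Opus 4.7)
The plan is to construct a subgeodesic ray $U$ on $X\times \D$ whose non-Archimedean counterpart $U^{\NA}$ satisfies
\[U^{\NA}(v_{E_i}) = f_D(v_{E_i}) - b_i^{-1}\nu_{E_i}(A+D)\]
and $U^{\NA}\le f_D$ on $\Xdiv$, and then use $U^{\NA}$ as a competitor in the envelope defining $P_A(f_D)$. Since $D\ge \cX_0$, the class $A+D$ on $\cX$ is big, so we may pick a closed positive $S^1$-invariant current $T\in A+D$ with minimal singularities (by $S^1$-averaging). Fix a smooth representative $\eta_D = [D] + \ddc\psi_D$ of $D$, where $\psi_D$ has standard logarithmic singularities along $|D|$, and write $T = \pi_X^*\omega + \eta_D + \ddc u$ with $u$ upper semicontinuous and hence bounded above on $\cX$. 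Setting $U := u + \psi_D$ gives $T = \pi_X^*\omega + [D] + \ddc U$, so $U$ is $\pi_X^*\omega$-psh outside $|D|$ and bounded above, and extends by Hartogs-type extension across the pluripolar set $|D|$ to a $\pi_X^*\omega$-psh function on all of $\cX$. Transporting through $\cX\setminus \cX_0\cong X\times \D^*$ and extending across $X\times \{0\}$ provides the desired $S^1$-invariant subgeodesic ray.

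The key step is the Lelong-number identity. For a prime divisor $F$ on a SNC model $\cX'\overset{\mu}{\to}\cX$, pulling back gives $\mu^*T = \mu^*\pi_X^*\omega + [\mu^*D] + \ddc(U\circ \mu)$. In our setting every prime component $F$ of $\cX'_0$ is contained in $|\mu^*D|$ (since $D\ge \cX_0$ forces $|\mu^*D|\supseteq |\cX'_0|$), so $[\mu^*D]$ has pointwise Lelong number constantly equal to $\ord_F(\mu^*D)$ along $F$; this lets us invoke additivity of Lelong numbers under sums of positive currents to conclude
\[\nu_F(\mu^*T) = \ord_F(\mu^*D) + \nu_F(U\circ\mu).\]
Using $\ord_F(\mu^*D) = b_F f_D(v_F)$ and the inequality $\nu_F(\mu^*T)\ge \nu_F(A+D)$ (from the very definition of the Lelong number of a class), this rearranges to
\[U^{\NA}(v_F) = -b_F^{-1}\nu_F(U\circ\mu) = f_D(v_F) - b_F^{-1}\nu_F(\mu^*T)\le f_D(v_F) - b_F^{-1}\nu_F(A+D)\le f_D(v_F).\]
When $F = E_i\subseteq \cX_0$ no pullback is needed, and the minimal-singularities property of $T$ on $\cX$ yields the equality $\nu_{E_i}(T) = \nu_{E_i}(A+D)$, so the above becomes an equality at $v_{E_i}$.

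To finish, Proposition~\ref{prop:extension} provides $U^{\NA}\in \PSH_A$, and the inequality $U^{\NA}\le f_D$ on $\Xdiv$ propagates to all of $X^{\NA}$ by Proposition~\ref{prop:extineq} (as $f_D$ is continuous). Hence $U^{\NA}$ is admissible in the envelope $P_A(f_D) = \sup\{\varphi\in\PSH_A: \varphi\le f_D\}$, giving $U^{\NA}\le P_A(f_D)$. Specializing at $v_{E_i}$ where equality holds yields
\[f_D(v_{E_i}) - P_A(f_D)(v_{E_i})\le f_D(v_{E_i}) - U^{\NA}(v_{E_i}) = b_i^{-1}\nu_{E_i}(A+D),\]
as desired. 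The main obstacle I anticipate is the rigorous construction of $U$ as an honest subgeodesic ray --- specifically the extension of the $\pi_X^*\omega$-psh function across $|D|$ on $\cX$ and across the central fiber --- together with justifying the additivity of Lelong numbers in the decomposition $T = \pi_X^*\omega + [D] + \ddc U$, which hinges on each prime component $F$ of $\cX'_0$ lying inside $|\mu^*D|$ so that $[\mu^*D]$ has constant pointwise Lelong number along $F$.
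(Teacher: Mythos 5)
There is a genuine gap in the construction of the subgeodesic ray, and the route you take is also quite different from the paper's, so let me address both.

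\textbf{The gap.} The function $U$ you define via $T = \pi_X^*\omega + [D] + \ddc U$ is \emph{not} bounded above, and the Hartogs extension you invoke therefore does not apply. Indeed, writing $T = \pi_X^*\omega + \eta_D + \ddc u$ with $u$ quasi-psh (hence bounded above on the compact $\cX$) and $[D] = \eta_D + \ddc\psi_D$ with $\psi_D \to -\infty$ along $|D|$, one finds $U = u - \psi_D$, which tends to $+\infty$ along $|D|$; the same conclusion holds under your sign convention for $\psi_D$. This is not a cosmetic sign slip but a structural obstruction: any genuine subgeodesic ray $U$ is $\pi_X^*\omega$-psh and bounded above, so $\nu_{E_i}(U\circ\mu)\ge 0$ for all vertical $E_i$, forcing $U^{\NA}\le 0$ everywhere. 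Yet you want $U^{\NA}(v_{E_i}) = f_D(v_{E_i}) - b_i^{-1}\nu_{E_i}(A+D)$, which is typically $\ge 1 - b_i^{-1}\nu_{E_i}(A+D) > 0$ because $f_D\ge 1$ and the Lelong number is small (zero for most $E_i$). Those two requirements are incompatible, so the object you want to build simply cannot exist. As a consequence, the additivity $\nu_F(\mu^*T) = \ord_F(\mu^*D)+\nu_F(U\circ\mu)$ is not a statement about Lelong numbers of a psh function on the right-hand side, and the invocation of Proposition~\ref{prop:extension} to place $U^{\NA}$ in $\PSH_A$ is unjustified.

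\textbf{How to repair it, and how the paper does it differently.} Your approach \emph{can} be rescued by normalizing: replace $D$ by $D - C\cX_0$ for $C\gg 0$, define $W_C$ by $T = \pi_X^*\omega + [D-C\cX_0] + \ddc W_C$, and check that $W_C$ is now bounded above (its generic Lelong number along each $E_i$ is $\nu_{E_i}(A+D) - \ord_{E_i}(D) + Cb_i$, which is $\ge 0$ once $C\ge \max_i f_D(v_{E_i})$, and since $D - C\cX_0 \le 0$ the function $\psi_{D-C\cX_0}\to+\infty$ near $\cX_0$, making $W_C = u - \psi_{D-C\cX_0}\to-\infty$ there). Then $W_C^{\NA}$ is a competitor for $P_A(f_D - C) = P_A(f_D) - C$, and the shift cancels in the final estimate. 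The paper takes a substantially simpler route that avoids subgeodesic rays altogether: by Demailly regularization one chooses a K\"ahler current $T\in A+D$ with analytic (divisorial, supported on $\cX_0$) singularities and $\nu_{E_i}(T)\le\nu_{E_i}(A+D)+\epsilon$, writes $T = \Omega + [D']$ with $\Omega$ K\"ahler and $D'$ effective vertical, and observes directly that $\varphi_{D-D'}\in\cH_A$ is a PL competitor for the envelope. Evaluating at $v_{E_i}$ then gives $f_D(v_{E_i}) - P_A(f_D)(v_{E_i})\le b_i^{-1}\ord_{E_i}(D') = b_i^{-1}\nu_{E_i}(T)\le b_i^{-1}\nu_{E_i}(A+D)+\epsilon$, and one lets $\epsilon\to 0$. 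This stays entirely on the model side and sidesteps all the normalization subtleties of the subgeodesic-ray correspondence; I would recommend it over the repaired version of your argument.
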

\begin{proof}
By Demailly regularization, for every $\epsilon>0$ we can find a K\"ahler current $T\in A+D$ with analytic singularities such that $\nu_{E_i}(T)\le \nu_{E_i}(A+D)-\epsilon$. Thus on some model $\cX$, $T$ will have divisorial singularities, and since $E_{nK}(A+D)\subseteq \cX_0$ we can assume that the singularities are supported on $\cX_0$. Hence we can write $T=\Omega+[D']$ where $\Omega$ is K\"ahler and $D'$ is an effective vertical divisor. It follows that $\varphi:=\varphi_{D-D'}\in \cH_A$ and $\varphi\le f_D$, which implies that $\varphi\le P_A(f_D)$. We finally get that $$f_D(v_{E_i})-P_A(f_D)(v_{E_i})\le f_D(v_{E_i})-\varphi(v_{E_i})= b_i^{-1}\nu_{E_i}([D'])=b_i^{-1}\nu_{E_i}(T)\le \nu_{E_i}(A+D)-\epsilon.$$ Since $\epsilon>0$ was arbitrary this proves the claim.
\end{proof}

We now prove Theorem~\ref{thm:Orth}.

\begin{proof}[Proof of Theorem~\ref{thm:Orth}]
We first assume that $f=f_D$ is PL. Since the integral $$\int (f_D-P_A(f_D))\MA_A(P_A(f_D))$$ is unaffected by adding a constant to $f_D$, without loss of generality we can assume that $f_D\in PL_{\ge 1}$. By Proposition \ref{prop:MAform} $\MA_A(P_A(f_D))$ is supported on the points $v_{E_i}$ corresponding to the irreducible components $E_i$ of the zero fiber $\cX_0$ on some fixed model $\cX$. Note that by definition $\langle(A+D_{\epsilon})^n\rangle_{\cX|E_i}=0$ if $\nu_{E_i}(A+D)>0$, so using Proposition \ref{prop:lelongineq} we see that $f_D-P_A(f_D)=0$ on the support of $\MA_A(P_A(f_D))$, proving the claim in the PL case.

For a general $f\in C(X^{\NA})$ we choose a sequence of PL functions $f_i$ converging uniformly to $f$. It is easy to see that $P_A(f_i)$ also will converge uniformly to $P_A(f)$, and thus by Proposition \ref{prop:MAunicont} $\MA_A(P_A(f_i))$ converges weakly to $\MA_A(P_A(f))$. This then shows that $$\int (f-P_A(f))\MA_A(P_A(f))=\lim_i\int (f_i-P_A(f_i))\MA_A(P_A(f_i))=0.$$
\end{proof}

\section{Strong topologies}
Having the Continuity of Envelopes and Orthogonality Propoerties we can develop more of the dual pluripotential theory of $\cE^1_A$ and of $\cM^1$. In this we will rely on various estimates from \cite{BJ23synthetic}.

\begin{prop} \label{prop:bjest1}
    Let $\gamma_0, \dotsc, \gamma_n\in H^{1,1}(X)$, and $\varphi_0, \dotsc \varphi_n, \psi_0, \dotsc, \psi_n \in \cE^1_A$, such that $\sup \varphi_i = 0= \sup \psi_i$ for every $i\in \{1, \dotsc, n\}$, then we have
    \[\left\lvert(\gamma_0, \varphi_0)\cdots (\gamma_n, \varphi_n) - (\gamma_0, \psi_0)\cdots (\gamma_n, \psi_n)\right\rvert\lesssim V(1+\lambda)^{n+1}\max_{i}\Jd_A(\varphi_i, \psi_i)^q\cdot \max_{i}\Jd_A(\varphi_i)^{1-q}\]
    for $q = 2^{-n}$ and $\lambda>0$ such that $-\lambda\alpha\le\gamma_i\le \lambda \alpha$ for every $i\in \{1, \dotsc, n\}$.
\end{prop}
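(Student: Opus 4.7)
The plan is to adapt the approach of \cite{BJ23synthetic}, where the analogous estimate is established in the case $\gamma_0 = \cdots = \gamma_n = \alpha$. The argument has three ingredients: a decomposition of each $\gamma_i$ into semipositive classes so that monotonicity of the energy pairing becomes available, a telescoping identity that isolates one potential-slot at a time, and finally an iterated Cauchy--Schwarz argument that produces the exponent $q = 2^{-n}$.

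First I would decompose. Using $-\lambda\alpha \le \gamma_i \le \lambda\alpha$, write $\gamma_i = (\gamma_i + \lambda\alpha) - \lambda\alpha$, each of the two summands being semipositive and dominated by $2\lambda\alpha$. By multilinearity of the energy pairing in the $\gamma$-slots (together with Lemma~\ref{lem:enpairest} to control dependence on $\varphi_i$), the left-hand side splits into $2^{n+1}$ terms, each of the form $\pm[(\eta_0, \varphi_0)\cdots(\eta_n, \varphi_n) - (\eta_0, \psi_0)\cdots(\eta_n, \psi_n)]$ with semipositive $\eta_i \le 2\lambda\alpha$; summing such contributions is what produces the overall factor $(1+\lambda)^{n+1}$. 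Fixing such $\eta_i$, I would telescope
\[(\eta_0, \varphi_0)\cdots(\eta_n, \varphi_n)-(\eta_0, \psi_0)\cdots(\eta_n, \psi_n)=\sum_{i=0}^{n} T_i,\]
where $T_i$ has $\psi_j$ in slot $j$ for $j<i$, $\varphi_j$ for $j>i$, and the abstract difference $(\eta_i, \varphi_i)-(\eta_i, \psi_i)$ in slot $i$. This reduces the estimate to bounding each $T_i$ as an integral of $\varphi_i-\psi_i$ against a mixed non-Archimedean Monge--Ampère measure built from the remaining potentials and the semipositive classes $\eta_j$.

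The key technical step is then to control the $T_i$'s by $\Jd_A$-functionals. Following \cite{BJ23synthetic}, I would apply Cauchy--Schwarz to bound $|T_i|$ by $\sqrt{\Jd_A(\varphi_i,\psi_i)\cdot Q}$, where $Q$ is itself a mixed pairing involving the remaining $2n$ potentials. Iterating this procedure $n$ times yields the exponent $q=2^{-n}$ on $\Jd_A(\varphi_i,\psi_i)$, while the residual quantity is controlled by $\max_i\Jd_A(\varphi_i)^{1-q}$ via the standard comparison between $\Jd_A(\varphi)$ and the energy pairing $-V^{-1}(A,\varphi)^{n+1}$. The hard part will be verifying that the iterated Cauchy--Schwarz machinery of \cite{BJ23synthetic}, which is carried out there against non-Archimedean Monge--Ampère measures in the algebraic trivially valued setting, transfers to our transcendental framework. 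This should be feasible because the energy pairing is defined through intersection numbers on models (Section~\ref{sec:enpair}), so the mixed inequalities needed reduce to Khovanskii--Teissier-type intersection inequalities on a compact Kähler model, which hold after approximating each semipositive class in the decomposition by Kähler classes.
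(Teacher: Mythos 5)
Your proposal reconstructs the iterated Cauchy--Schwarz argument from scratch, which is substantially more than the paper does. The paper's proof is two lines: for $\varphi_i,\psi_i\in\cH_A$ the estimate is \emph{literally} \cite[Theorem 3.6]{BJ23synthetic}, applied to the transcendental energy pairing of Section~\ref{sec:enpair} — the whole point of the ``synthetic'' formalism in that reference is that it is axiomatic and so applies verbatim once the pairing and its basic properties (multilinearity, monotonicity, Khovanskii--Teissier inequalities coming from intersection theory on models) are in place, which the paper has already verified following \cite{MP24transcendental}. One then extends to $\cE^1_A$ by taking decreasing sequences in $\cH_A$ and using that both sides of the inequality are continuous along such limits. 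Your claim that \cite{BJ23synthetic} only covers the case $\gamma_0=\cdots=\gamma_n=\alpha$ does not match the paper's proof, nor the presence of the $(1+\lambda)^{n+1}$ factor, which is exactly what the general-$\gamma_i$ case produces; so most of your decomposition step is re-deriving something already available as a black box.

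The more substantive gap is that you run the telescoping and Cauchy--Schwarz directly for $\varphi_i,\psi_i\in\cE^1_A$. Your closing justification — that ``the mixed inequalities needed reduce to Khovanskii--Teissier-type intersection inequalities on a compact Kähler model'' — is precisely the step that requires the potentials to be in $\cH_A$, i.e.\ PL functions represented by vertical divisors on a fixed model, since only then are the relevant quantities genuine intersection numbers on a compact Kähler manifold. For general finite-energy potentials there is no single model to intersect on; the energy pairing is defined via monotone limits. You therefore cannot skip the reduction to $\cH_A$ followed by a decreasing-limit passage, yet your proposal never makes this step explicit. The omission is exactly the second sentence of the paper's proof, and without it your argument is incomplete as stated.
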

\begin{proof}
    If $\varphi_i, \psi_i\in \cH_A$ this is an immediate consequence of \cite[Theorem 3.6]{BJ23synthetic}. For general $\varphi_i, \psi_i\in \cE^1_A$ we consider decreasing sequences $\varphi_{i,k}, \psi_{i,k}\in \cH_A$ converging to $\varphi_i$ and $\psi_i$ respectively and use that both sides of the inequality are continuous under decreasing sequences.
\end{proof}

\begin{corollary}\label{cor:MAestimate}
    Let $\varphi, \varphi^\prime, \psi, \psi^\prime \in \cE^1_A$ sup-normalized potentials of finite energy and let $\mu:=\MA_A(\psi)$ and $\nu:= \MA_A(\psi^\prime)$. We then have that:
    \[\left \lvert \int \varphi\, \mathrm d\mu - \int \varphi^\prime\, \mathrm d\nu\right\rvert \lesssim \max\{\Jd_A(\varphi, \varphi^\prime), \Jd_A(\psi, \psi^\prime)\}^q\cdot J^{1-q}, \]
    for $q:=2^{-n}$ and $J:=\max\{\Jd_A(\psi), \Jd_A(\psi^\prime), \Jd_A(\varphi), \Jd_A(\varphi^\prime)\}.$
\end{corollary}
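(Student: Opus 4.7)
The plan is to recognize the left-hand side as a difference of energy pairings, and then invoke Proposition~\ref{prop:bjest1} directly. Using the construction of the Monge--Amp\`ere operator through the energy pairing recalled in Section~\ref{sec:enpair}, together with the extension of that pairing to $(H^{1,1}(X^{\NA}))^{n+1} \times (\cE^1_A)^{n+1}$ from \cite{MP24transcendental}, we have
\[\int \varphi\, \mathrm d\mu = V^{-1}(0, \varphi)\cdot (A, \psi)^n, \qquad \int \varphi'\, \mathrm d\nu = V^{-1}(0, \varphi')\cdot (A, \psi')^n.\]
For K\"ahler potentials this is essentially the definition of $\MA_A$, and the general case follows by approximating $\varphi, \varphi', \psi, \psi'$ by decreasing sequences in $\cH_A$ and invoking the continuity of both sides along such sequences.

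With this identification in hand, I will apply Proposition~\ref{prop:bjest1} with classes $\gamma_0 := 0$ and $\gamma_1 = \cdots = \gamma_n := \alpha$ (so that the condition $-\lambda\alpha \le \gamma_i \le \lambda\alpha$ holds with $\lambda = 1$), and with potentials $\varphi_0 := \varphi$, $\psi_0 := \varphi'$, $\varphi_i := \psi$, $\psi_i := \psi'$ for $1 \le i \le n$. The sup-normalizations required by the proposition for the indices $i \in \{1, \dots, n\}$ hold by hypothesis on $\psi$ and $\psi'$. The proposition then yields
\[\left|V^{-1}(0,\varphi)\cdot (A,\psi)^n - V^{-1}(0,\varphi')\cdot (A,\psi')^n\right| \lesssim \max\{\Jd_A(\varphi, \varphi'), \Jd_A(\psi, \psi')\}^q \cdot \max\{\Jd_A(\varphi), \Jd_A(\psi)\}^{1-q},\]
where the constant $V(1+\lambda)^{n+1}$ from the proposition is absorbed into the $\lesssim$ after dividing by $V$ and using $\lambda=1$. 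Since $\max\{\Jd_A(\varphi), \Jd_A(\psi)\} \le J$, the desired inequality follows.

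I do not anticipate any serious obstacle: the argument is essentially a bookkeeping reduction to Proposition~\ref{prop:bjest1}. The only mildly delicate point is verifying that the identity $\int \varphi \, \mathrm d\MA_A(\psi) = V^{-1}(0,\varphi)\cdot(A,\psi)^n$ persists for $\varphi \in \cE^1_A$ rather than for $\varphi$ merely continuous, but this is handled by the extension of the energy pairing recalled above, and the matching extension of the Monge--Amp\`ere operator verified in \cite{MP24transcendental}.
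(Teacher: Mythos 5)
Your proof is correct and takes exactly the paper's approach: the paper's own proof of Corollary~\ref{cor:MAestimate} is the one-line remark that it follows directly from Proposition~\ref{prop:bjest1}, and you have simply spelled out the substitution $(\gamma_0,\varphi_0,\psi_0)=(0,\varphi,\varphi')$, $(\gamma_i,\varphi_i,\psi_i)=(\alpha,\psi,\psi')$ for $1\le i\le n$, together with the identity $\int\varphi\,\mathrm d\MA_A(\psi)=V^{-1}(0,\varphi)\cdot(A,\psi)^n$ and the observation that $\max\{\Jd_A(\varphi),\Jd_A(\psi)\}\le J$.
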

\begin{proof}
    This follows directly from Proposition \ref{prop:bjest1}.
\end{proof}

This estimate can be used to prove the following continuity result for the Monge–Ampère operator:
\begin{lemma}\label{lem:MAcontdecreasing}
If $\varphi_i\in \cE^1_A$ is a decreasing sequence converging pointwise to $\varphi\in \cE^1_A$, then $MA_A(\varphi_i)$ converges strongly to $MA_A(\varphi)$ in $\cM^1$.
\end{lemma}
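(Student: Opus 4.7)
The strong topology on $\cM^1$ is characterised by weak convergence together with continuity of $\Ev_A$, so I would break the proof into two matching halves and reduce everything to a single quantitative input: $\Jd_A(\varphi_i,\varphi) \to 0$.

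\emph{Step 1 (the energy gap vanishes).} First I would establish that $\Jd_A(\varphi_i,\varphi)\to 0$. By concavity of $\E_A$ (whose Gateaux differential at $\psi$ is integration against $\MA_A(\psi)$), one has
\[\E_A(\varphi) - \E_A(\varphi_i) \le \int(\varphi - \varphi_i)\,\MA_A(\varphi_i) \le 0,\]
the last inequality holding because $\varphi_i\ge \varphi$. Rearranging,
\[0 \le \Jd_A(\varphi_i,\varphi) = \E_A(\varphi_i) - \E_A(\varphi) - \int(\varphi_i-\varphi)\,\MA_A(\varphi_i) \le \E_A(\varphi_i) - \E_A(\varphi).\]
Continuity of $\E_A$ along decreasing sequences in $\cE^1_A$ (used already in the proof of Lemma~\ref{lem:Jmu}) gives $\E_A(\varphi_i)\to \E_A(\varphi)$, and hence $\Jd_A(\varphi_i,\varphi)\to 0$.

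\emph{Step 2 (weak convergence of $\MA_A(\varphi_i)$).} Since $\MA_A$, $\Ev_A\circ\MA_A$ and $\Jd_A(\cdot,\cdot)$ are all invariant under adding constants, I would pass to the sup-normalised representatives $\tilde\varphi_i:=\varphi_i - \varphi_i(v_{\trivial})$ and $\tilde\varphi := \varphi - \varphi(v_{\trivial})$, noting that $\Jd_A(\tilde\varphi_i,\tilde\varphi)=\Jd_A(\varphi_i,\varphi)$ and that the $\Jd_A(\tilde\varphi_i)$ remain uniformly bounded by monotonicity of $\E_A$. For any sup-normalised $\chi\in\cE^1_A$, Corollary~\ref{cor:MAestimate} applied with $(\varphi,\varphi',\psi,\psi')=(\chi,\chi,\tilde\varphi_i,\tilde\varphi)$ gives
\[\Bigl|\int \chi\,\MA_A(\varphi_i) - \int \chi\,\MA_A(\varphi)\Bigr| \lesssim \Jd_A(\varphi_i,\varphi)^q\, J^{1-q} \longrightarrow 0,\]
with $J$ a uniform bound on the relevant $\Jd_A(\cdot)$. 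Since every $f\in C(X^{\NA})$ can be uniformly approximated by PL functions, which are themselves differences of Kähler potentials up to constants, this upgrades to weak convergence $\MA_A(\varphi_i)\to \MA_A(\varphi)$.

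\emph{Step 3 (energy convergence).} Recall the identity $\Ev_A(\MA_A(\psi)) = \E_A(\psi) - \int\psi\,\MA_A(\psi)$ established in Section~\ref{sec:enpair}. Applying Corollary~\ref{cor:MAestimate} once more, now with $(\varphi,\varphi',\psi,\psi') = (\tilde\varphi_i,\tilde\varphi,\tilde\varphi_i,\tilde\varphi)$, yields
\[\Bigl|\int \tilde\varphi_i\,\MA_A(\tilde\varphi_i) - \int\tilde\varphi\,\MA_A(\tilde\varphi)\Bigr| \lesssim \Jd_A(\varphi_i,\varphi)^q\,J^{1-q}\longrightarrow 0.\]
Since $\Ev_A(\MA_A(\varphi_i))$ is translation invariant, it equals $\E_A(\tilde\varphi_i) - \int \tilde\varphi_i\,\MA_A(\tilde\varphi_i)$, and combining with $\E_A(\tilde\varphi_i)\to \E_A(\tilde\varphi)$ gives $\Ev_A(\MA_A(\varphi_i))\to \Ev_A(\MA_A(\varphi))$. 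Together with Step~2 this is precisely strong convergence in $\cM^1$.

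The only genuine obstacle is Step~1: once the concavity-plus-monotonicity sandwich on $\Jd_A(\varphi_i,\varphi)$ is combined with continuity of $\E_A$ along decreasing sequences, Corollary~\ref{cor:MAestimate} mechanically produces both halves of the strong convergence via a single estimate applied twice.
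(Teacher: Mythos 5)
Your proposal is correct and follows essentially the same path as the paper's proof: both reduce everything to $\Jd_A(\varphi_i,\varphi)\to 0$ and then extract strong convergence via Corollary~\ref{cor:MAestimate}. The small differences are: (i) in Step~1 you derive $\Jd_A(\varphi_i,\varphi)\to 0$ through the concavity-plus-monotonicity sandwich, whereas the paper simply invokes continuity of the energy pairing along decreasing sequences (which simultaneously yields $\E_A(\varphi_i)\to\E_A(\varphi)$, $\Jd_A(\varphi_i)\to\Jd_A(\varphi)$ and $\Jd_A(\varphi_i,\varphi)\to 0$); your version spells out why the $\Jd$-gap closes, which is a perfectly fine and slightly more self-contained justification. (ii) For weak convergence, the paper reads $\int f\,\mathrm d\mu_i\to\int f\,\mathrm d\mu$ for $f\in\PL$ straight off of continuity of the energy pairing along decreasing sequences and then uses density of $\PL$, whereas you route this through Corollary~\ref{cor:MAestimate}; both are valid, but the paper's route is more direct since no estimate is really needed there. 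Step~3 matches the paper almost verbatim. One point worth making explicit in your write-up: the fact that $\sup\varphi_i\to\sup\varphi$ (needed to conclude $\E_A(\tilde\varphi_i)\to\E_A(\tilde\varphi)$) follows from pointwise convergence at $v_{\trivial}$, since $\sup\psi=\psi(v_{\trivial})$ for $\psi\in\PSH_A$.
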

\begin{proof}
Let $\mu_i:=\MA_A(\varphi_i)$ and $\mu:=\MA_A(\varphi).$
Since the energy pairing is continuous along decreasing sequences we have that:
\begin{enumerate}
    \item $\lim_{i\to\infty}\E_A(\varphi_i)=\E_A(\varphi)$, $\lim_{i\to\infty}\Jd_A(\varphi_i)=\Jd_A(\varphi)$ and $\lim_{i\to\infty}\Jd_A(\varphi_i, \varphi)=0$.
    \item For every $f\in \PL$:
        \[\int f\,\mathrm d\mu_i \to \int f\,\mathrm d\mu.\]
\end{enumerate}
Since PL is dense in $C(X^{\NA})$, (2) implies that $\mu_i$ converges weakly to $\mu$.

To get the strong convergence it is enough to observe that 
\[\Ev_A(\mu_i) = \E_A(\varphi_i) - \int \varphi_i \,\mathrm d\mu_i \quad \text{and} \quad \Ev_A(\mu) = \E_A(\varphi) - \int \varphi \,\mathrm d\mu.\]
Thus by Corollary~\ref{cor:MAestimate}
\begin{eqnarray*}
    \left\lvert \Ev_A(\mu_i) - \Ev_A(\mu)\right\rvert \le \\ \le \left\lvert \E_A(\varphi_i) - \E_A(\varphi)\right\rvert + \left\lvert \sup \varphi_i - \sup\varphi\right\rvert
     +\left\lvert \int (\varphi_i-\sup \varphi_i)\,\mathrm d\mu_i - \int (\varphi-\sup \varphi)\,\mathrm d\mu\right\rvert
    \lesssim \\ \lesssim \left\lvert \E_A(\varphi_i) - \E_A(\varphi)\right\rvert + \left\lvert \sup \varphi_i - \sup\varphi\right\rvert
    + C\Jd_A(\varphi_i, \varphi)^q,
    \end{eqnarray*}
    for $q:=2^{-n},$ and some constant $C>0$ independent of $i$. Together with (1) and (2) this gives the convergence needed.
\end{proof}

The next result is a direct consequence of \cite[Theorem 2.23 (iv)]{BJ23synthetic}.
\begin{prop}\label{prop:2.23}
    Let $\varphi, \psi\in \cE^1$, $\mu\in \cM^1$, and let $\nu\doteq \MA_A(\tau)$ we then have:
    \[\left\lvert \int(\varphi-\psi)(\mathrm d\mu - \mathrm d\nu)\right\rvert \lesssim \Jd(\varphi, \psi)^q\Jd(\mu,\tau)^{\frac{1}{2}}R^{\frac{1}{2}-q}, \]
    for 
    \[q:= 2^{-n}, \quad \text{and} \quad R:=\max\{\Jd(\varphi), \Jd(\psi), \Ev(\mu), \Ev(\nu)\}.\]
\end{prop}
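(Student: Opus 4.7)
The plan is to deduce this from the abstract framework of Boucksom--Jonsson \cite{BJ23synthetic}, specifically their Theorem 2.23(iv), which is proved in a synthetic pseudo-metric setting whose axiomatic inputs are: the continuity of envelopes, the orthogonality property, the continuity of the energy pairing along decreasing sequences, and the basic Cauchy--Schwarz-type estimate on energy pairings. In our transcendental setting continuity of envelopes is Theorem~\ref{thm:CoE}, orthogonality is Theorem~\ref{thm:Orth}, the continuity along decreasing limits is built into Section~\ref{sec:enpair}, and the pairing estimate is exactly Proposition~\ref{prop:bjest1}. Thus the synthetic argument applies verbatim.

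To sketch the underlying argument: first I would use density of $\cH_A$ in $\cE^1_A$ for the strong topology together with Lemma~\ref{lem:Jmu}(3) and Lemma~\ref{lem:MAcontdecreasing} to reduce to the case where $\varphi,\psi,\tau\in \cH_A$. Second, in the special case when $\mu=\MA_A(\sigma)$ for some $\sigma\in\cH_A$, I would expand via the telescoping identity
\[\int(\varphi-\psi)\bigl(\MA_A(\sigma)-\MA_A(\tau)\bigr)=V^{-1}\sum_{k=0}^{n-1}(A,\varphi-\psi)\cdot(A,\sigma-\tau)\cdot(A,\sigma)^{k}\cdot(A,\tau)^{n-1-k},\]
and apply to each term the bound coming from Proposition~\ref{prop:bjest1}: the $(\varphi-\psi)$-slot contributes a factor $\Jd_A(\varphi,\psi)^{q}$, the $(\sigma-\tau)$-slot contributes $\Jd_A(\sigma,\tau)^{1/2}=\Jd_A(\mu,\tau)^{1/2}$ (using Lemma~\ref{lem:Jmu}(1)), while the remaining slots contribute at most $R^{1/2-q}$ after passing through a Cauchy--Schwarz step adapted to mixed energy pairings.

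The main obstacle is the passage from a measure of the form $\MA_A(\sigma)$ to a general $\mu\in \cM^1$, since at this point in the paper the non-Archimedean Calabi--Yau theorem has not yet been proved. I would circumvent this by approximating $\mu$ strongly by measures $\mu_j=\MA_A(\sigma_j)$: concretely, pick $\sigma_j$ realizing $\E_A(\sigma_j)-\int\sigma_j\,\mathrm d\mu$ within $1/j$ of $\Ev_A(\mu)$; then $\Jd_A(\mu_j,\mu)\to 0$ by the same abstract argument as in \cite[Section 2]{BJ23synthetic}, which only uses orthogonality and continuity of envelopes. One then passes to the limit in the estimate using that both sides are continuous in $\mu$ for the strong topology on $\cM^1$, noting that $\Ev_A$ (hence $R$) is lower semicontinuous under strong convergence, and that $\int(\varphi-\psi)\,\mathrm d\mu_j\to\int(\varphi-\psi)\,\mathrm d\mu$ follows from Corollary~\ref{cor:MAestimate} applied to $(\sigma_j)$.
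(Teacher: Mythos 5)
Your core move—invoking the synthetic framework of \cite{BJ23synthetic} (whose axiomatic inputs the paper has just verified) and then passing to the limit along decreasing sequences of nicer potentials—is exactly the paper's proof. The paper states this in two sentences: apply \cite[Theorem 2.23]{BJ23synthetic} for $\varphi,\psi\in\PL\cap\PSH_A$, then for general finite energy potentials take decreasing sequences in $\PL\cap\PSH_A$ and pass to the limit via monotone convergence (LHS) and continuity of $\Jd$ along decreasing limits (RHS). Your choice of $\cH_A$ instead of $\PL\cap\PSH_A$ as the smooth class is a harmless variation.

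The one substantive divergence is your ``main obstacle,'' and it stems from a misreading of the abstract framework. You worry that $\mu\in\cM^1$ must be approximated by Monge--Amp\`ere measures because the Calabi--Yau theorem is not yet available, and you construct a detour around this. But in \cite{BJ23synthetic} the class $\cM^1$ is defined \emph{intrinsically} by $\Ev(\mu)<\infty$, not as the image of the Monge--Amp\`ere operator, and their Theorem 2.23 is stated and proved for arbitrary $\mu\in\cM^1$ in that abstract setting; no surjectivity of $\MA_A$ is used. The present paper's Calabi--Yau theorem is a \emph{consequence} of this synthetic machinery, proved afterwards. So the approximation of $\mu$ by $\MA_A(\sigma_j)$ that you propose is not needed. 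It is not wrong, but it introduces a circularity risk (your claimed strong approximation $\Jd_A(\mu_j,\mu)\to 0$ leans on inequalities that are precisely cousins of the one being proved), and the much cleaner route is to simply cite the abstract statement for general $\mu$ and only approximate in the $\varphi,\psi$ slots.

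The telescoping expansion and the sketch of how $\Jd_A(\varphi,\psi)^q$ and $\Jd_A(\mu,\tau)^{1/2}$ arise is a useful piece of understanding, but it is internal to the proof of \cite[Theorem 2.23]{BJ23synthetic}; the paper treats it as a black box once the hypotheses (Continuity of Envelopes, Orthogonality, the estimate of Proposition~\ref{prop:bjest1}) are in place, and you could do the same.
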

\begin{proof}
    If $\varphi, \psi\in \PL\cap PSH_A$ it follows from \cite[Theorem 2.23]{BJ23synthetic}. For general $\varphi, \psi\in \cE^1_A$ consider decreasing sequences $\varphi_i, \psi_i\in \PL\cap \PSH_A$ converging to $\varphi$ and $\psi$ respectively. 
    Then, by the monotone convergence theorem, the LHS of the inequality converges, while the RHS converges by the continuity of $\Jd$ along decreasing sequences.
\end{proof}

To conclude we will need the following result that can be found in the algebraic setting in \cite[Proposition 9.19]{BJ22trivval}. 
\begin{lemma}\label{lem:L1}
    Let $\varphi_i\in \PSH_A$ be a sequence of $A$-psh functions converging weakly to $\varphi\in \PSH_A$ and such that $\Jd_A(\varphi_i)$ is bounded. 
    Then for any $\mu$ in the image of the Monge–Ampère operator we have that
    \[\int \lvert \varphi_i-\varphi\rvert\, \mathrm d\mu \to 0.\]
\end{lemma}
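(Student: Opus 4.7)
The plan is to follow the strategy of \cite[Proposition 9.19]{BJ22trivval}, reducing the $L^1$-convergence to the single scalar convergence $\int \varphi_i\,\mathrm d\mu\to \int \varphi\,\mathrm d\mu$, and then establishing the latter by approximating $\mu$ by divisorial Monge--Amp\`ere measures and invoking Proposition~\ref{prop:2.23}.

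First I would extract the uniform bound $\sup \varphi_i = \varphi_i(v_{\trivial})$ from the weak convergence, and introduce the decreasing regularization $\psi_i := \bigl(\sup_{j\ge i}\varphi_j\bigr)^{\star}$. By Proposition~\ref{prop:regsupApsh} each $\psi_i$ lies in $\PSH_A$ and satisfies $\psi_i = \sup_{j\ge i}\varphi_j$ on $\Xdiv$; hence $\psi := \lim_i \psi_i$ agrees with $\limsup_j \varphi_j = \varphi$ on $\Xdiv$, and Proposition~\ref{prop:extineq} applied in both directions upgrades this to the equality $\psi = \varphi$ on all of $X^{\NA}$. Using the elementary bound $|\varphi_i - \varphi| \le (\psi_i - \varphi_i) + (\psi_i - \varphi)$, dominated convergence dispenses with the second term (with dominator $\psi_1 - \varphi$, integrable since $\mu\in\cM^1$ and $\psi_1,\varphi\in\cE^1_A$), and $\int \psi_i\,\mathrm d\mu \to \int \varphi\,\mathrm d\mu$ follows from the same monotone argument. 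The problem therefore reduces to showing $\int \varphi_i\,\mathrm d\mu \to \int \varphi\,\mathrm d\mu$.

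For this last step, write $\mu = \MA_A(\tau)$ and pick a decreasing sequence $\tau_k \in \cH_A$ with $\tau_k\searrow \tau$, so that by Lemma~\ref{lem:MAcontdecreasing} the divisorial probability measures $\mu_k := \MA_A(\tau_k)$ converge strongly to $\mu$ in $\cM^1$. For each fixed $k$, since $\mu_k$ is a finite sum of Dirac masses on $\Xdiv$, pointwise convergence $\varphi_i(v_{E_j}) \to \varphi(v_{E_j})$ gives at once $\int \varphi_i\,\mathrm d\mu_k \to \int \varphi\,\mathrm d\mu_k$ as $i\to\infty$. I would then split
\[\int(\varphi_i - \varphi)\,\mathrm d\mu = \int \varphi_i\,\mathrm d(\mu - \mu_k) + \int (\varphi_i - \varphi)\,\mathrm d\mu_k + \int \varphi\,\mathrm d(\mu_k - \mu),\]
and bound the first and third terms uniformly in $i$ by Proposition~\ref{prop:2.23}, applied with $\psi = 0$ and with the appropriate measure identified as $\MA_A(\tau_k)$ or $\MA_A(\tau)$. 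Each application yields an estimate of the form $C\cdot\Jd_A(\tau, \tau_k)^{1/2}$, where $C$ stays bounded because Lemma~\ref{lem:Jmu}(2) (applied with $\mu_0 = \MA_A(0) = \delta_{v_{\trivial}}$) gives $\Jd_A(\varphi_i, 0)\lesssim \Jd_A(\varphi_i)$, uniformly bounded by hypothesis, while $\Ev_A(\mu_k)\to\Ev_A(\mu)$ by continuity of $\Ev_A$ in the strong topology. Given $\epsilon>0$, I pick $k$ large enough to beat the first and third terms, and then $i$ large enough to kill the middle term.

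The main obstacle is the uniform decoupling of the $i$- and $k$-dependencies: weak convergence plus bounded $\Jd_A$-energy does not on its own control integrals against an arbitrary $\cM^1$-measure, and the resolution is precisely Proposition~\ref{prop:2.23}, which quantifies $\int\cdot\,\mathrm d(\mu - \nu)$ in terms of $\Jd_A(\mu, \tau)$ when $\nu = \MA_A(\tau)$. The technical care required is to verify that the $R$-factor in that estimate remains bounded along the approximation, which reduces to the strong convergence $\mu_k \to \mu$ provided by the decreasing-sequence continuity of the Monge--Amp\`ere operator in Lemma~\ref{lem:MAcontdecreasing}.
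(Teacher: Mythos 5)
Your proof is correct and follows essentially the same route as the paper: approximate $\mu$ by the divisorial Monge--Amp\`ere measures $\mu_k = \MA_A(\tau_k)$ of a decreasing sequence $\tau_k \in \cH_A$, split by the triangle inequality, and control the error terms by the quantitative estimate (you use Proposition~\ref{prop:2.23}, the paper uses the equivalent Corollary~\ref{cor:MAestimate}, both derived from the same result of Boucksom--Jonsson). The only cosmetic difference is the reduction from $L^1$-convergence to convergence of $\int(\varphi_i - \varphi)\,\mathrm d\mu$: you use $\psi_i := (\sup_{j\ge i}\varphi_j)^\star$, while the paper applies the same signed-integral convergence a second time to $\tilde\varphi_i := \max\{\varphi_i, \varphi\}$ and uses $|\varphi_i-\varphi| = 2(\tilde\varphi_i - \varphi) - (\varphi_i - \varphi)$.
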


\begin{proof}
The proof in \cite[Proposition 9.19]{BJ22trivval} applies directly to our setting, but we will give it here for the convenience of the reader.
    
Since $\varphi_i$ converge weakly to $\varphi$ we get that $\sup \varphi_i = \varphi_i(v_{\trivial})\to \varphi(v_{\trivial}) = \sup \varphi$. Hence we can suppose that $\sup \varphi_i=\sup \varphi=0$.

Let $\psi\in \cE^1_A$ be such that $\MA_A(\psi) = \mu$ and $\sup \psi = 0$, and let $\psi_j\in \cH_A$ be a sequence decreasing to $\psi$. Let us also denote by $\tilde\psi_j$ the sup-normalized functions $\psi_j - \sup \psi_j,$ and $\mu_j:=\MA_A(\psi_j)= \MA_A(\tilde\psi_j).$

We will start by proving that 
    \[\int (\varphi_i - \varphi)\, \mathrm d\mu \to 0.\]
Let $\epsilon>0$. Using the triangle inequality 
    \begin{align*}
        \left\lvert \int (\varphi_i - \varphi)\, \mathrm d\mu\right\rvert \le \left\lvert \int \varphi_i \, (\mathrm d\mu - \mathrm d\mu_j)\right\rvert + \left\lvert \int (\varphi_i - \varphi)\, \mathrm d\mu_j\right\rvert+ \left\lvert \int  \varphi\, (\mathrm d\mu- \mathrm d\mu_j)\right\rvert,
    \end{align*}
weak convergence and in the second inequality  Corollary~\ref{cor:MAestimate} we get that that for $i\gg 0$
        \begin{align*}
        \left\lvert \int (\varphi_i - \varphi)\, \mathrm d\mu\right\rvert &\le \left\lvert \int \varphi_i \, (\mathrm d\mu - \mathrm d\mu_j)\right\rvert + \epsilon + \left\lvert \int  \varphi\, (\mathrm d\mu- \mathrm d\mu_j)\right\rvert \lesssim\\
        &\lesssim \epsilon + 2 \Jd_A(\tilde \psi_j, \psi)^q\cdot \max\{C, \Jd_A(\psi), \Jd_A(\tilde \psi_j)\}^{1-q}=\\
        &= \epsilon +2 \Jd_A(\psi_j, \psi)^q\cdot \max\{C, \Jd_A(\psi), \Jd_A(\psi_j)\}^{1-q},
    \end{align*}
    where $q:= 2^{-n}$. Since the energy pairing is continuous along decreasing sequences, for $j$ sufficiently large we have
    \[\left\lvert \int (\varphi_i - \varphi)\, \mathrm d\mu\right\rvert\le 3\epsilon.\]
    Like in \cite[Proposition 9.19]{BJ22trivval}, we can then apply the same strategy to $\tilde \varphi_i:= \max\{\varphi_i, \varphi\}$, and observe that 
    \[\lvert \varphi-\varphi \rvert = 2 (\tilde\varphi_i - \varphi) + (\varphi_i - \varphi)\]
    to conclude.
\end{proof}

As a direct consequence we have the following result.
\begin{corollary}\label{cor:trivialestimate}
If $\varphi_i\in \cE^1_A$ converges strongly to $\varphi\in\cE^1_A$, then 
    \[\Jd_A(\varphi, \varphi_i)\to 0.\]
\end{corollary}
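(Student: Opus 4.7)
The plan is to unfold the definition
\[\Jd_A(\varphi,\varphi_i)=\E_A(\varphi)-\E_A(\varphi_i)+\int(\varphi_i-\varphi)\,\MA_A(\varphi)\]
and dispose of the three terms. The first two will vanish in the limit simply because the strong topology is, by construction, the coarsest refinement of the weak topology on which $\E_A$ is continuous; so $\E_A(\varphi_i)\to \E_A(\varphi)$ is automatic. The entire work is thus concentrated in the integral term, which I intend to handle by a direct application of Lemma~\ref{lem:L1} with the reference measure $\mu:=\MA_A(\varphi)$, manifestly in the image of the Monge--Amp\`ere operator.

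To apply Lemma~\ref{lem:L1} I need two hypotheses: that $\varphi_i\to\varphi$ weakly, and that the sequence $\Jd_A(\varphi_i)$ is bounded. The first is immediate because the strong topology refines the weak topology. For the second, the idea is to produce an explicit formula for $\Jd_A(\psi)=\Jd_A(0,\psi)$ in terms of quantities that are already controlled. Computing on the trivial model $X\times\Pro$, where $\cX_0=X\times\{0\}$ has multiplicity one, one sees that $\MA_A(0)=\delta_{v_{\trivial}}$; combined with Lemma~\ref{lem:pshunifbounded}'s identity $\sup\varphi=\varphi(v_{\trivial})$, this gives
\[\Jd_A(\psi)=-\E_A(\psi)+\sup\psi.\]
Evaluating at $\psi=\varphi_i$, both terms on the right are convergent sequences: the first by the strong continuity of $\E_A$, the second by weak (pointwise on $\Xdiv$) convergence at the divisorial point $v_{\trivial}$. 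Hence $\Jd_A(\varphi_i)\to\Jd_A(\varphi)$, which in particular supplies the required boundedness.

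With both hypotheses in place, Lemma~\ref{lem:L1} yields $\int\lvert\varphi_i-\varphi\rvert\,\MA_A(\varphi)\to 0$, which in particular controls the signed integral $\int(\varphi_i-\varphi)\,\MA_A(\varphi)\to 0$. Assembling the three pieces gives $\Jd_A(\varphi,\varphi_i)\to 0$. There is no real obstacle here beyond being careful with the definition of the strong topology and recognising that $\Jd_A(\psi)$ has a clean closed form on $X^{\NA}$ because $\MA_A(0)$ is a Dirac mass; the heavy lifting was all done in Lemma~\ref{lem:L1}.
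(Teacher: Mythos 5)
Your proof is correct and follows exactly the route the paper intends: unfold $\Jd_A(\varphi,\varphi_i)$, dispose of the energy terms by the definition of the strong topology, and reduce the integral term to Lemma~\ref{lem:L1}, whose hypotheses (weak convergence and boundedness of $\Jd_A(\varphi_i)$) you verify using the identity $\Jd_A(\psi)=-\E_A(\psi)+\sup\psi$ coming from $\MA_A(0)=\delta_{v_{\trivial}}$ and Lemma~\ref{lem:pshunifbounded}. The paper labels this a ``direct consequence'' of Lemma~\ref{lem:L1}; you have simply spelled out the same argument.
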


\addtocontents{toc}{\SkipTocEntry}
\subsection{$\cM^1$ as a quasi metric space}
Next we describe the \emph{quasi metric structure} on $\cM^1$ as developed by Boucksom–Jonsson \cite{BJ23synthetic} in the algebraic setting.

We start with a very general definition.
\begin{defi}
We say that a continuous map $\delta\colon M\times M \to \left[0,+\infty \right[$ is a \emph{quasi metric} for the topological space $M$ if there exists constants $C, D>0$ such that
\begin{align*}
            \delta(p,q) = 0 &\iff p =q\\
            C\cdot\delta(p, q)&\le\delta(p,t) + \delta(t, q) \\
            D^{-1}\cdot \delta(p,q)&\le \delta(q, p)\le D\cdot \delta(p,q),
        \end{align*}
        and if the topology generated by the sets $\{p\in M\mid \delta(p,q) < r\}$ coincides with the topology of $M$.
    \end{defi}

We now follow \cite[Theorems 2.23 and 2.25]{BJ23synthetic}.

\begin{theorem}\label{thm:quasimetric}
There exists an unique quasi metric on $\cM^1$, 
        \[\delta_A\colon \cM^1\times \cM^1\to \left [0, \infty\right[,\]
        such that:
        \begin{enumerate}
            \item  For all $\varphi\in \cE^1_A$ \[\delta_A(\mu, \MA_A(\varphi)) = \Jd_A(\mu, \varphi).\]
            \item For all $\varphi, \psi\in \cE^1$, $\mu, \nu\in \cM^1$
            \[\left\lvert \int(\varphi-\psi)(\mathrm d\mu - \mathrm d\nu)\right\rvert \lesssim \Jd_A(\varphi, \psi)^q\cdot\delta_A(\mu,\nu)^{\frac{1}{2}}\cdot R^{\frac{1}{2}-q}, \]
            for 
            \[q:= 2^{-n}, \quad \text{and} \quad R:=\max\{\Jd(\varphi), \Jd(\psi), \Ev(\mu), \Ev(\nu)\}.\]
            \item For all $\mu, \mu^\prime, \nu, \nu^\prime\in \cM^1$ 
            \[\left\lvert \delta_A(\mu, \nu) - \delta_A(\mu^\prime, \nu^\prime)\right\rvert \lesssim \max\{\delta_A(\mu, \mu^\prime), \delta_A(\nu, \nu^\prime)\}^q\cdot S^{1-q},\]
            for $S:= \max\{\Ev_A(\mu), \Ev_A(\mu^\prime), \Ev_A(\nu), \Ev_A(\nu^\prime)\}.$
        \end{enumerate}
    \end{theorem}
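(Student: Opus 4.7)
The plan is to adapt the algebraic construction of Boucksom--Jonsson \cite[Theorems 2.23 and 2.25]{BJ23synthetic}, using the transcendental estimates assembled in this section. Uniqueness of $\delta_A$ is immediate from property (1): it forces the values of $\delta_A$ on $\cM^1\times\MA_A(\cE^1_{A,\sup})$, and any two continuous quasi metrics that agree on a dense subset must coincide.

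For existence, I would first define $\delta_A$ on $\cM^1\times\MA_A(\cE^1_{A,\sup})$ by the formula in (1), namely $\delta_A(\mu,\MA_A(\varphi)):=\Jd_A(\mu,\varphi)$ for sup-normalized $\varphi$. When both arguments lie in the image of $\MA_A$, Lemma~\ref{lem:Jmu}(1) yields $\delta_A(\MA_A(\psi),\MA_A(\varphi))=\Jd_A(\psi,\varphi)$. The quasi-metric axioms on this subdomain would then be read off from Lemma~\ref{lem:Jmu}(2) for the quasi-triangle inequality and from Proposition~\ref{prop:bjest1} (together with the finiteness estimates of this section) for the quasi-symmetry obtained by comparing $\Jd_A(\psi,\varphi)$ and $\Jd_A(\varphi,\psi)$. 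Property (2) on this subdomain is exactly Proposition~\ref{prop:2.23}, and property (3) follows from Corollary~\ref{cor:MAestimate} applied to both arguments simultaneously.

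To pass from the subdomain to all of $\cM^1\times\cM^1$, I would use the H\"older-type control afforded by property (3): on any set with uniformly bounded energies, $\delta_A$ is uniformly continuous on the subdomain and hence extends uniquely by continuity. The quasi-metric axioms and properties (1)--(3) then propagate by density and continuity. That the induced topology coincides with the strong topology on $\cM^1$ follows from Corollary~\ref{cor:trivialestimate} combined with the inequality $|\Ev_A(\mu)-\Ev_A(\nu)|\lesssim \delta_A(\mu,\nu)^q\cdot\max\{\Ev_A(\mu),\Ev_A(\nu)\}^{1-q}$ that falls out of property (3).

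The main obstacle is two-fold: the strong density of $\MA_A(\cE^1_{A,\sup})$ in $\cM^1$ and the well-definedness of $\delta_A(\mu,\MA_A(\varphi))$ in the second argument. For density I would approximate arbitrary $\mu\in\cM^1$ by divisorial measures -- which lie in $\MA_A(\cE^1_{A,\sup})$ thanks to the explicit construction from restricted volumes sketched in the introduction -- and check strong convergence using Lemma~\ref{lem:L1} together with Lemma~\ref{lem:MAcontdecreasing}. Well-definedness requires that if $\sup\varphi=\sup\varphi^\prime=0$ and $\MA_A(\varphi)=\MA_A(\varphi^\prime)$ then $\varphi=\varphi^\prime$, which is the uniqueness half of Theorem~\ref{thm:B}. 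The delicate point is to establish this uniqueness independently here to avoid circularity; I would do so via a domination-principle argument leveraging the Orthogonality Property of Section~\ref{sec:orthogonality}, following the strategy of \cite{BJ22trivval} in the algebraic case.
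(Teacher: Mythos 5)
The paper's proof is essentially a two-line citation: the quasi metric, along with all three listed properties, already exists abstractly in the ``synthetic'' framework of Boucksom--Jonsson \cite[Theorems 2.22 and 2.23]{BJ23synthetic}, which applies to any energy pairing satisfying a short list of axioms that the earlier results of this section (continuity of envelopes, orthogonality, the estimates of Proposition~\ref{prop:bjest1} and Proposition~\ref{prop:2.23}) verify in the transcendental setting. All that remains in the paper is to upgrade property (1) from $\varphi\in\cH_A$ to $\varphi\in\cE^1_A$ and to upgrade property (2) similarly, both by continuity along decreasing sequences, invoking Lemma~\ref{lem:MAcontdecreasing} and Lemma~\ref{lem:Jmu}. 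Your proposal is a genuinely different route: you reconstruct $\delta_A$ from scratch on $\cM^1\times\MA_A(\cE^1_{A,\sup})$ and extend by density, rather than invoking the abstract existence theorem.

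That reconstruction has two real gaps, one of which you flag. First, well-definedness of $\delta_A(\mu,\MA_A(\varphi)):=\Jd_A(\mu,\varphi)$ in the second argument is \emph{equivalent} to the injectivity of $\MA_A$ on sup-normalized potentials (take $\mu=\delta_v$ for divisorial $v$ to see this), and in the paper that injectivity (Proposition~\ref{prop:injective}) is proved \emph{using} the quasi-metric structure via Theorem~\ref{thm:quasimetricE1}, so the circularity you worry about is genuine and not merely a ``delicate point''; establishing injectivity independently via a domination-principle argument would be a nontrivial detour that you do not actually carry out. Second, the strong density of $\MA_A(\cE^1_{A,\sup})$ in $\cM^1$ is not available at this stage of the paper: the approximation machinery (Lemma~\ref{lem:increasingenergy}, Corollary~\ref{cor:energyentropy}, and most relevantly Proposition~\ref{prop:divisorialenvelope} placing divisorial measures in the image of $\MA_A$) lives in Section~8, and parts of it (Theorem~\ref{thm:continuity}) themselves depend on Theorem~\ref{thm:B}, which depends on the present theorem. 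The introductory remark about prescribing restricted volumes is suggestive but is not a proof, and in any case gives only finitely-supported measures, not density in all of $\cM^1$. In short, your approach would need a self-contained density-plus-injectivity preamble that does not appear, whereas the paper sidesteps both issues entirely by applying the abstract synthetic result.
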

    \begin{proof}
        By \cite[Theorem 2.23]{BJ23synthetic} there exists an unique quasi metric $\delta_A$ as before that satisfies $\delta_A(\mu, \MA_A(\varphi)) = \Jd_A(\mu, \varphi)$ for all $\varphi\in \cH_A.$ 
        
        To get (1) it is then enough to observe that if $\varphi\in \cE^1_A$ and $\varphi_i\in \cH_A$ decreases to $\varphi$ then $\MA_A(\varphi_i)$ converges strongly to $\MA_A(\varphi)$ and $\Jd_A(\mu, \varphi_i)$ converges to $\Jd_A(\varphi)$ by Lemma~\ref{lem:MAcontdecreasing}  and Lemma~\ref{lem:Jmu} respectively.

        In order to prove (2) we observe that both the LHS and RHS of the inequality are continuous under decreasing limits, and then apply \cite[Theorem 2.23]{BJ23synthetic} for $\varphi$ and $\psi$ in $\cH_A.$

        The third point is a direct consequence of \cite[Theorem 2.22]{BJ23synthetic}.
    \end{proof}

As an important consequence we get the following result.
\begin{theorem}\label{thm:MAcont}
The Monge–Ampère operator $\MA_A\colon \cE^1_A \to \cM^1$ is continuous with respect to the strong topologies.
\end{theorem}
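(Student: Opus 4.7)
The plan is to reduce strong continuity of $\MA_A$ to the convergence $\Jd_A(\varphi, \varphi_i) \to 0$ already provided by Corollary~\ref{cor:trivialestimate}. Concretely, suppose $\varphi_i \to \varphi$ strongly in $\cE^1_A$, and write $\mu_i := \MA_A(\varphi_i)$, $\mu := \MA_A(\varphi)$. Since $\delta_A$ is a quasi metric generating the strong topology on $\cM^1$, it suffices to prove $\delta_A(\mu_i, \mu) \to 0$. By Theorem~\ref{thm:quasimetric}~(1) and Lemma~\ref{lem:Jmu}~(1) we have the identity
\[
\delta_A(\mu_i, \mu) \;=\; \Jd_A(\mu_i, \varphi) \;=\; \Jd_A(\varphi_i, \varphi),
\]
where on the right we now use the potential-to-potential functional $\Jd_A$.

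The main (very minor) wrinkle is that Corollary~\ref{cor:trivialestimate} delivers $\Jd_A(\varphi, \varphi_i) \to 0$, i.e.\ the arguments in the opposite order, and $\Jd_A$ is not symmetric. To swap the arguments I plan to invoke Lemma~\ref{lem:Jmu}~(2) with the measure $\nu := \MA_A(\varphi)$:
\[
\Jd_A(\varphi_i, \varphi) \;\lesssim\; \Jd_A(\nu, \varphi_i) + \Jd_A(\nu, \varphi).
\]
Applying Lemma~\ref{lem:Jmu}~(1) to both summands, the first equals $\Jd_A(\varphi, \varphi_i)$ and the second equals $\Jd_A(\varphi, \varphi) = 0$. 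Thus $\Jd_A(\varphi_i, \varphi) \lesssim \Jd_A(\varphi, \varphi_i) \to 0$ by Corollary~\ref{cor:trivialestimate}, giving $\delta_A(\mu_i, \mu) \to 0$ as required.

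There is no genuine obstacle: all the analytic heavy lifting has been done in the construction of the quasi metric $\delta_A$ (Theorem~\ref{thm:quasimetric}) and in Corollary~\ref{cor:trivialestimate}. The proof is essentially a two-line bookkeeping argument exploiting the \emph{quasi-symmetry} of $\Jd_A$ encoded in Lemma~\ref{lem:Jmu}~(2) together with the fundamental identity $\delta_A(\mu, \MA_A(\varphi)) = \Jd_A(\mu, \varphi)$.
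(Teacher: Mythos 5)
Your proof is correct and follows the same route as the paper: identify $\delta_A(\MA_A(\varphi_i),\MA_A(\varphi))$ with $\Jd_A(\varphi_i,\varphi)$ via Theorem~\ref{thm:quasimetric}~(1) and Lemma~\ref{lem:Jmu}~(1), and then conclude from Corollary~\ref{cor:trivialestimate}. You are in fact slightly more careful than the paper, which silently writes $\Jd_A(\varphi_i,\varphi)\to 0$ even though Corollary~\ref{cor:trivialestimate} literally delivers $\Jd_A(\varphi,\varphi_i)\to 0$; your use of the quasi-triangle inequality of Lemma~\ref{lem:Jmu}~(2) with $\nu=\MA_A(\varphi)$ to swap the arguments (and $\Jd_A(\nu,\varphi)=\Jd_A(\varphi,\varphi)=0$) cleanly justifies that step.
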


\begin{proof}
Let $\varphi_i\in \cE^1_A$ be a sequence that converges strongly to $\varphi\in \cE^1_A$.
Then 
\[\delta_A(\MA_A(\varphi_i), \MA_A(\varphi)) = \Jd_A(\varphi_i, \varphi)\to 0,\]
by Corollary~\ref{cor:trivialestimate}, and by Theorem~\ref{thm:quasimetric} we conclude that $\MA_A(\varphi_i)$ converges strongly to $\MA_A(\varphi).$
\end{proof}

\addtocontents{toc}{\SkipTocEntry}
\subsection{$\cE^1_A$ as a quasi metric space}

As we did for $\cM^1$, we will now give a quasi metric structure on $\cE^1_A$. In the algebraic setting this was done in \cite[Section 12.1]{BJ22trivval}.

 \begin{defi}
If $\varphi, \psi\in \cE^1_A$ we define  \[\partial_A(\varphi, \psi):= \Jd_A(\varphi, \psi) + \left\lvert \sup \varphi - \sup \psi \right \rvert\in \left[0, +\infty\right[. \] 
\end{defi}

Next we have the transcendental version of \cite[Theorem 12.4]{BJ22trivval}.
\begin{theorem}\label{thm:quasimetricE1}
The map 
\[\partial_A\colon \cE^1_A\times \cE^1_A\to \left[0, +\infty\right[\]
is a continuous quasi metric on $\cE^1_A$ with respect to its strong topology.
\end{theorem}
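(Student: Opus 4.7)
The plan is to transport the quasi-metric structure on $\cM^1$ established in Theorem~\ref{thm:quasimetric} to $\cE^1_A$ through the Monge--Ampère operator. The key identification, obtained by combining Lemma~\ref{lem:Jmu}(1) with Theorem~\ref{thm:quasimetric}(1), is
\[\Jd_A(\varphi,\psi) \;=\; \Jd_A(\MA_A(\varphi),\psi) \;=\; \delta_A\bigl(\MA_A(\psi),\MA_A(\varphi)\bigr).\]
The quasi-symmetry and quasi-triangle inequality for $\Jd_A$ descend directly from those of $\delta_A$, and since $|\sup\varphi - \sup\psi|$ is genuinely symmetric and satisfies an honest triangle inequality, adding it preserves both axioms. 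Thus the two quasi-metric inequalities for $\partial_A$ come essentially for free.

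For non-degeneracy, $\partial_A(\varphi,\psi)=0$ forces $\MA_A(\varphi)=\MA_A(\psi)$ via the quasi-metric property of $\delta_A$ on $\cM^1$, together with $\sup\varphi=\sup\psi$. I would conclude $\varphi=\psi$ using the uniqueness of solutions to the non-Archimedean Monge--Ampère equation with prescribed supremum; this should follow from the Orthogonality Property (Theorem~\ref{thm:Orth}) and the concavity of $\E_A$ along segments, mirroring the standard variational argument used in the algebraic setting in \cite{BJ22trivval}. I would prepare this as a short self-contained lemma inserted just before the proof.

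For the compatibility of topologies, one direction is straightforward: if $\varphi_i\to\varphi$ strongly, then Corollary~\ref{cor:trivialestimate} yields $\Jd_A(\varphi,\varphi_i)\to 0$, and since the strong topology refines pointwise convergence on $\Xdiv$, evaluating at $v_{\trivial}$ gives $\sup\varphi_i\to\sup\varphi$, so $\partial_A(\varphi_i,\varphi)\to 0$. The converse is more subtle: starting from $\partial_A(\varphi_i,\varphi)\to 0$, Theorem~\ref{thm:quasimetric} provides strong convergence $\MA_A(\varphi_i)\to\MA_A(\varphi)$ in $\cM^1$. I would then use compactness of $\PSH_{A,\sup}$ (Theorem~\ref{thm:compactness}) to extract weak limit points of $(\varphi_i)$, identify each such limit with $\varphi$ via the $L^1$-type estimate in Lemma~\ref{lem:L1} combined with the Monge--Ampère uniqueness principle above, and finally recover convergence of $\E_A$ by rearranging the identity
\[\Jd_A(\varphi,\varphi_i)\;=\;\E_A(\varphi)-\E_A(\varphi_i)+\int(\varphi_i-\varphi)\,\MA_A(\varphi).\]

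The main obstacle is precisely this converse direction. The quantity $\Jd_A$ only controls integrated behaviour against Monge--Ampère measures, whereas the strong topology explicitly demands pointwise control on the large set $\Xdiv$; bridging this gap forces one to interlace several transcendental tools already in place---the compactness of sup-normalised psh functions, the $L^1$-domination of Lemma~\ref{lem:L1}, and the Monge--Ampère uniqueness statement---and the real bookkeeping is to verify that each of these is genuinely available in the form needed here. Once the converse is in hand, the coincidence of the $\partial_A$-topology with the strong topology, and hence continuity of $\partial_A$ as a map $\cE^1_A\times\cE^1_A\to[0,+\infty[$, follow routinely.
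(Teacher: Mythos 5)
Your outline identifies the right ingredients (quasi-symmetry, quasi-triangle inequality, non-degeneracy, and the two directions of the topology comparison), and the first direction of the topology comparison via Corollary~\ref{cor:trivialestimate} is fine. But the non-degeneracy step, and consequently also the converse topology direction, rest on a move that is circular inside this paper's logical development, and this hides the single technical idea that the paper's proof actually turns on.

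Concretely: you propose to conclude $\varphi=\psi$ from $\partial_A(\varphi,\psi)=0$ by invoking uniqueness of solutions to $\MA_A(\varphi)=\mu$ with prescribed supremum, and you suggest this uniqueness ``should follow from the Orthogonality Property and concavity of $\E_A$''. In this paper, however, the injectivity of the Monge--Amp\`ere operator is Proposition~\ref{prop:injective}, and its proof runs exactly the other way: from $\MA_A(\varphi)=\MA_A(\psi)$ one deduces $\Jd_A(\varphi,\psi)=\Jd_A(\psi,\varphi)=0$, hence $\partial_A(\varphi,\psi)=0$, \emph{and then invokes Theorem~\ref{thm:quasimetricE1}} to conclude $\varphi=\psi$. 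The same circularity would afflict an attempt to bootstrap through Corollary~\ref{cor:comparison}, which also depends on Proposition~\ref{prop:injective}. What is actually needed, and what the paper supplies, is a way to pass from the quantity $\Jd_A(\varphi,\psi)$ (which only controls integrated quantities) to pointwise values on $\Xdiv$; the key is to integrate $\varphi-\psi$ against $\delta_v-\delta_{v_{\trivial}}$ for a divisorial point $v$, using that $\delta_v\in\cM^1$ (Example~\ref{ex:delta}) and the estimate in Theorem~\ref{thm:quasimetric}(2):
\[
\bigl\lvert \varphi(v)-\psi(v)\bigr\rvert=\Bigl\lvert \int(\varphi-\psi)\,(\delta_v-\delta_{v_{\trivial}})\Bigr\rvert \lesssim \Jd_A(\varphi,\psi)^q\,\delta_A(\delta_v,\delta_{v_{\trivial}})^{1/2}\,R^{1/2-q},
\]
which vanishes when $\Jd_A(\varphi,\psi)=0$; combined with the fact that $A$-psh functions are determined by their restriction to $\Xdiv$ (quoted as \cite[Theorem 4.3.8]{MP24transcendental}), this gives non-degeneracy. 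The very same estimate, applied to $\varphi_i-\varphi$, gives the uniform pointwise bound on $\Xdiv$ and hence weak convergence in the converse direction of the topology comparison, after which Lemma~\ref{lem:L1} closes the argument for $\E_A$-convergence; the compactness of $\PSH_{A,\sup}$ is not needed. So the missing piece in your proposal is precisely this $\delta_v$-testing argument, and without it the non-degeneracy (and the converse direction) is not proved.
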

\begin{proof}
    By Corollary~\ref{cor:trivialestimate} the map $\partial_A$ is  continuous. Let us prove now that $\partial_A$ is a quasi metric.

    It is easy to see that $\partial_A$ is quasi symmetric, and the quasi triangle inequality follows from Lemma~\ref{lem:Jmu}. Hence, in order to prove that $\partial_A$ is a quasi metric it remains to check that $\partial_A(\varphi, \psi) = 0$ implies that $\varphi = \psi.$

Thus let $\varphi, \psi\in \cE^1_A$ be such that $\partial_A(\varphi, \psi) =0$. Since $A$-psh functions are completely determined on the set of divisorial valuations (cf. \cite[Theorem 4.3.8]{MP24transcendental}), it is enough to prove that \[\varphi|_{\Xdiv} = \psi|_{\Xdiv}.\]

Let $v\in \Xdiv$ be a divisorial valuation. By Example~\ref{ex:delta} the measure $\delta_v$ is of finite energy, which in turn implies that
\[0\le\left\lvert \varphi(v) - \psi(v)\right\rvert = \left\lvert \int(\varphi - \psi)\, (\delta_v - \delta_{v_{\trivial}})\right\rvert\lesssim 0\]
by Theorem~\ref{thm:quasimetric}.

We now want to prove that if $\varphi_i, \varphi\in \cE^1_A$ are such that \(\partial_A(\varphi_i, \varphi)\to 0 \), then \( \varphi_i\overset{s}{\longrightarrow}\varphi.\)
For that, first observe that if $\partial_A(\varphi_i, \varphi)\to 0$ then 
    \begin{enumerate}
        \item $\lvert \sup \varphi_i - \sup \varphi\rvert \to 0;$
        \item $\Jd_A(\varphi_i, \varphi)\to 0$, in particular $\Jd_A(\varphi_i)$ is a bounded sequence by Lemma~\ref{lem:Jmu}.
    \end{enumerate}
Let $v\in \Xdiv$ be a divisorial valuation. As before we estimate
    \begin{align*}
        \left\lvert \varphi_i(v) - \varphi(v)\right\rvert &\le \lvert \sup \varphi_i - \sup \varphi\rvert + \left\lvert \int(\varphi_i - \varphi)\, (\delta_v - \delta_{v_{\trivial}})\right\rvert\\
        &\lesssim \lvert \sup \varphi_i - \sup \varphi\rvert + C \cdot \Jd_A(\varphi_i, \varphi)^q \to 0,
    \end{align*}
    where $q:= 2^{-n}$, and $C>0$ is a constant independent of $i$. This shows that $\varphi_i$ converges weakly to $\varphi$.

To conclude, Lemma~\ref{lem:L1} gives the convergence of the integrals $\int\varphi_i\,\MA_A(\varphi)$ to the integral $\int \varphi\, \MA_A(\varphi),$ and thus 
\[\lvert \E_A(\varphi_i) - \E_A(\varphi)\rvert \le \Jd_A(\varphi, \varphi_i) + \left\lvert \int(\varphi_i - \varphi)\,\MA_A(\varphi)\right\rvert \to 0,\]
and we are done.
\end{proof}

\section{The non-Archimedean Calabi–Yau theorem}

The goal of this section is to prove Theorem \ref{thm:B} which says that the Monge–Ampère operator is a homeomorphism between $\cE^1_{A,\sup}$ and $\cM^1$.

We start by establishing injectivity.

\begin{prop}\label{prop:injective}
The Monge-Amp\`ere operator $\MA_A: \cE^1_{A,\sup}\to \cM^1$ is injective.
\end{prop}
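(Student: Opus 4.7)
The plan is to reduce the injectivity of $\MA_A$ to the non-degeneracy of the quasi metric $\partial_A$ established in Theorem~\ref{thm:quasimetricE1}. Concretely, I aim to show that if $\varphi,\psi\in \cE^1_{A,\sup}$ satisfy $\MA_A(\varphi)=\MA_A(\psi)$, then $\partial_A(\varphi,\psi)=0$, from which Theorem~\ref{thm:quasimetricE1} gives $\varphi=\psi$.

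First I would fix $\varphi,\psi\in \cE^1_{A,\sup}$ with a common image $\mu:=\MA_A(\varphi)=\MA_A(\psi)$. Since both potentials are sup-normalized we immediately have $|\sup\varphi-\sup\psi|=0$, so reaching $\partial_A(\varphi,\psi)=0$ reduces to showing $\Jd_A(\varphi,\psi)=0$. For this I would invoke the concavity inequality for the Monge--Amp\`ere energy already recorded in the paper (based on \cite[Lemma 1.19]{BJ23synthetic}):
\[\E_A(\psi)\le \E_A(\varphi)+\int(\psi-\varphi)\,\MA_A(\varphi),\qquad \E_A(\varphi)\le \E_A(\psi)+\int(\varphi-\psi)\,\MA_A(\psi).\]
Adding these inequalities and using $\MA_A(\varphi)=\MA_A(\psi)$, the right-hand sides telescope to give $\E_A(\varphi)+\E_A(\psi)\le \E_A(\varphi)+\E_A(\psi)$, so both inequalities must be equalities. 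The first of these equalities is exactly $\Jd_A(\varphi,\psi)=\E_A(\varphi)-\E_A(\psi)+\int(\psi-\varphi)\MA_A(\varphi)=0$.

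Combining the two vanishings yields $\partial_A(\varphi,\psi)=0$. By Theorem~\ref{thm:quasimetricE1}, $\partial_A$ is a quasi metric on $\cE^1_A$, and in particular it separates points, so $\varphi=\psi$, which is the desired injectivity.

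There is no real obstacle here: the concavity step is completely formal once one invokes the already established inequality, and the separation of points for $\partial_A$ has been secured in Theorem~\ref{thm:quasimetricE1} (whose nontrivial input was the fact that divisorial Dirac masses have finite energy, Example~\ref{ex:delta}, combined with the quasi metric inequality of Theorem~\ref{thm:quasimetric}(2)). Thus the main content of the injectivity proof is simply the symmetric concavity trick; all the hard work has been done upstream.
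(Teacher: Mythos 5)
Your proof is correct and follows essentially the same route as the paper: the paper computes $\Jd_A(\varphi,\psi)+\Jd_A(\psi,\varphi)=\int(\varphi-\psi)(\MA_A(\psi)-\MA_A(\varphi))=0$ and invokes non-negativity of the Dirichlet functional, which is precisely your ``symmetric concavity trick'' with the two concavity inequalities being the statement that $\Jd_A(\varphi,\psi)\ge 0$ and $\Jd_A(\psi,\varphi)\ge 0$. Both arguments then conclude with the separation property of $\partial_A$ from Theorem~\ref{thm:quasimetricE1}.
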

\begin{proof}
We follow the proof in the algebraic setting \cite[Corollary 10.4]{BJ22trivval}. 
    
If $\varphi,\psi\in\cE^1_{A,\sup}$ are such that $\MA(\varphi) = \MA(\psi)$ we get
\[\mathrm J(\varphi, \psi) +\mathrm J(\psi, \varphi) = \int (\varphi - \psi)\left(\MA(\psi) - \MA(\varphi)\right) = 0,\]
which by the non-negativity of the Dirichlet functional implies that $\mathrm J(\varphi, \psi) = 0 = \mathrm J(\psi, \varphi).$
This implies that $\partial_A(\varphi, \psi) = 0$, and by Theorem~\ref{thm:quasimetricE1} we finally get that $\varphi = \psi.$
\end{proof}

Following \cite{BJ23synthetic} we introduce a key notion for proving surjectiveness, namely that of a maximizing sequence.

\begin{defi}
A sequence $\varphi_i\in \cE^1_A$ is a \emph{maximizing sequence} for $\mu\in \cM^1$ if \[\E_A(\varphi_i) - \int \varphi\, \mathrm d\mu \to \Ev_A(\mu). \]
\end{defi}

\begin{prop}\label{prop:MAdense}
If $\varphi_j\in \cE^1_A$ is a maximizing sequence for $\mu\in \cM^1$, then $\MA_A(\varphi_j)$ converges strongly to $\mu$. In particular, if \[\mathrm E_A^\vee(\mu) = \mathrm E_A(\varphi) - \int\varphi\, \mathrm d\mu,\]
    then $\MA(\varphi) = \mu.$
\end{prop}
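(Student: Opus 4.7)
The plan is to translate the maximizing condition directly into convergence with respect to the quasi metric $\delta_A$ on $\cM^1$ supplied by Theorem~\ref{thm:quasimetric}. First I would observe that by the very definition of $\Jd_A(\mu,\cdot)$ we have
\[\Jd_A(\mu,\varphi_j)=\Ev_A(\mu)-\E_A(\varphi_j)+\int \varphi_j\,\mathrm d\mu,\]
so that $\varphi_j$ being a maximizing sequence for $\mu$ is equivalent to $\Jd_A(\mu,\varphi_j)\to 0$.

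The key step is then to invoke Theorem~\ref{thm:quasimetric}(1), which gives the identity
\[\delta_A(\mu,\MA_A(\varphi_j))=\Jd_A(\mu,\varphi_j).\]
Combining this with the previous observation yields $\delta_A(\mu,\MA_A(\varphi_j))\to 0$. Since $\delta_A$ is a quasi metric on $\cM^1$ whose induced topology coincides with the strong topology, this is precisely the statement that $\MA_A(\varphi_j)$ converges strongly to $\mu$.

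For the ``in particular'' part, if the supremum defining $\Ev_A(\mu)$ is attained at some $\varphi\in\cE^1_A$, then the constant sequence $\varphi_j:=\varphi$ is trivially maximizing. By the first part, $\MA_A(\varphi_j)=\MA_A(\varphi)$ converges strongly to $\mu$; since the strong topology is Hausdorff (it is generated by a quasi metric with $\delta_A(\nu,\nu')=0 \iff \nu=\nu'$), the limit is unique, so $\MA_A(\varphi)=\mu$.

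I do not expect any significant obstacle here: the entire argument is a short book-keeping exercise once the quasi-metric structure on $\cM^1$ is in place. All the substantive work — the Continuity of Envelopes and Orthogonality properties, and the quasi-metric axioms for $\delta_A$ — has been established in the earlier sections and packaged into Theorem~\ref{thm:quasimetric}.
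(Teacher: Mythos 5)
Your proof is correct and follows essentially the same route as the paper: reinterpret the maximizing condition as $\Jd_A(\mu,\varphi_j)\to 0$, invoke Theorem~\ref{thm:quasimetric}(1) to identify $\delta_A(\mu,\MA_A(\varphi_j))$ with $\Jd_A(\mu,\varphi_j)$, and conclude strong convergence; the ``in particular'' statement follows, as in the paper, by applying this to the constant sequence.
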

\begin{proof}
Since 
\[\delta_A(\mu, \MA_A(\varphi_j)) = \Ev_A(\mu) - \Jd_A(\mu, \varphi_j)\to 0\] the first statement follows from Theorem~\ref{thm:quasimetric}. The second statement then follows from the observation that $\varphi_i=\varphi$ is a maximizing sequence for $\mu$.
\end{proof}

We are now ready to prove the surjectivity of the Monge-Amp\`ere operator.
\begin{prop} \label{thm:surj}
The Monge–Ampère operator $\MA_A\colon \cE^1_{A,\sup}\to \cM^1$ is surjective.
\end{prop}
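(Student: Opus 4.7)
The strategy is the standard variational approach. Given $\mu\in\cM^1$, consider the functional $F_\mu(\varphi):=\E_A(\varphi)-\int\varphi\,\mathrm d\mu$ on $\cE^1_{A,\sup}$, whose supremum equals $\Ev_A(\mu)<+\infty$. Pick a maximizing sequence $\varphi_j\in\cE^1_{A,\sup}$, so that $F_\mu(\varphi_j)\to\Ev_A(\mu)$, equivalently $\Jd_A(\mu,\varphi_j)=\Ev_A(\mu)-F_\mu(\varphi_j)\to 0$. Proposition~\ref{prop:MAdense} already tells us that $\mu_j:=\MA_A(\varphi_j)\to\mu$ strongly in $\cM^1$, so the only remaining task is to locate a $\varphi\in\cE^1_{A,\sup}$ with $\MA_A(\varphi)=\mu$.

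The first observation is that $\{\varphi_j\}$ is Cauchy in the quasi-metric $\partial_A$ of Theorem~\ref{thm:quasimetricE1}. Indeed, by Lemma~\ref{lem:Jmu}(2) applied with the common reference measure $\mu$,
\[\Jd_A(\varphi_j,\varphi_k)\lesssim \Jd_A(\mu,\varphi_j)+\Jd_A(\mu,\varphi_k)\to 0,\]
and sup-normalization makes the $|\sup\varphi_j-\sup\varphi_k|$ contribution to $\partial_A$ vanish. Once a strong limit $\varphi_j\to\varphi$ in $(\cE^1_{A,\sup},\partial_A)$ is produced, the strong continuity of the Monge--Amp\`ere operator (Theorem~\ref{thm:MAcont}) yields $\MA_A(\varphi_j)\to\MA_A(\varphi)$ in $\cM^1$, and comparing with the strong convergence $\MA_A(\varphi_j)\to\mu$ forces $\MA_A(\varphi)=\mu$.

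To produce the limit, I would combine the weak compactness of $\PSH_{A,\sup}$ (Theorem~\ref{thm:compactness}) with the sup-regularized envelopes $\psi_i:=\bigl(\sup_{k\ge i}\varphi_k\bigr)^\star$. By Proposition~\ref{prop:regsupApsh} these are $A$-psh, and after extracting a weakly convergent subsequence $\varphi_j\to\varphi$ one has $\psi_i\searrow\varphi$. The $\partial_A$-Cauchy property bounds $\E_A(\varphi_j)$ uniformly below, so $\E_A(\psi_i)\ge\sup_{k\ge i}\E_A(\varphi_k)$ is bounded below as well; continuity of $\E_A$ along decreasing sequences then gives $\E_A(\varphi)=\lim_i\E_A(\psi_i)>-\infty$, so $\varphi\in\cE^1_{A,\sup}$. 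The strong convergence $\partial_A(\varphi_j,\varphi)\to 0$ can then be extracted by inserting $\psi_i$ as an intermediate point in the quasi-triangle inequality and sending first $j\to\infty$ and then $i\to\infty$, using Lemma~\ref{lem:Jmu}(3) (continuity of $\Jd_A$ along decreasing limits) together with monotone convergence against $\MA_A(\varphi_j)$.

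The main obstacle is precisely this identification step, i.e.\ upgrading the weak subsequential limit to the actual $\partial_A$-limit of the full Cauchy sequence. This amounts to a completeness-type assertion for $(\cE^1_{A,\sup},\partial_A)$, and making it rigorous requires carefully coupling the weak compactness with monotone convergence along $\psi_i$, the quasi-metric estimates of Theorem~\ref{thm:quasimetric}, and the $L^1(\mu)$-type integrability supplied by the finite energy of $\mu$ (in the spirit of Lemma~\ref{lem:L1}).
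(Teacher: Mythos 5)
Your Cauchy-sequence framing is a legitimate alternative route (it amounts to establishing that $(\cE^1_{A,\sup},\partial_A)$ is complete, as in the algebraic case), and the Cauchy step itself is correct: Lemma~\ref{lem:Jmu}(2) with reference measure $\mu$ does give $\Jd_A(\varphi_j,\varphi_k)\lesssim \Jd_A(\mu,\varphi_j)+\Jd_A(\mu,\varphi_k)\to 0$. However, the step you explicitly flag as the ``main obstacle'' contains a genuine circularity, not just a missing computation: the $L^1(\mu)$-convergence you reach for is Lemma~\ref{lem:L1}, but that lemma is stated only for measures \emph{in the image of the Monge--Amp\`ere operator} --- and $\mu$ being in the image is exactly the statement under proof. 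Moreover, the monotone convergence against the envelopes $\psi_i=(\sup_{k\ge i}\varphi_k)^\star$ gives the wrong-sign inequality: from $\psi_i\ge \varphi_k$ for $k\ge i$ and $\psi_i\searrow\varphi$ one gets $\int\varphi\,\mathrm d\mu\ge\limsup_j\int\varphi_j\,\mathrm d\mu$, whereas closing the argument needs $\int\varphi\,\mathrm d\mu\le\liminf_j\int\varphi_j\,\mathrm d\mu$, which does not follow from these comparisons.

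The paper avoids the circularity by not going through completeness at all. It extracts the weak subsequential limit $\varphi\in\PSH_{A,\sup}$, checks $\varphi\in\cE^1_A$ via the lower semicontinuity of $\Jd_A$, and then proves directly that $\int\varphi_i\,\mathrm d\mu\to\int\varphi\,\mathrm d\mu$. For that last step it introduces a \emph{second} maximizing sequence $\psi_j$ with $\mu_j:=\MA_A(\psi_j)$, to which Lemma~\ref{lem:L1} legitimately applies, and uses the quasi-metric estimate (2) of Theorem~\ref{thm:quasimetric} (i.e.\ Corollary~\ref{cor:MAestimate}) to control $\left\lvert\int\varphi\,(\mathrm d\mu-\mathrm d\mu_j)\right\rvert$ and $\left\lvert\int\varphi_i\,(\mathrm d\mu-\mathrm d\mu_j)\right\rvert$. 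Combined with weak upper semicontinuity of $\E_A$ this gives $\E_A(\varphi)-\int\varphi\,\mathrm d\mu=\Ev_A(\mu)$, and then Proposition~\ref{prop:MAdense} closes the argument without invoking any abstract completeness. Your outline can almost certainly be repaired by importing exactly this $\mu_j$-approximation trick, but as written it does not constitute a proof.
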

\begin{proof}
We follow the strategy of \cite[Theorem 12.8]{BJ22trivval}.
    
Let $\mu\in \cM^1$ and let $\varphi_i\in \cH_A$ be a sup-normalized maximizing sequence for $\mu$. By Theorem~\ref{thm:compactness} $\PSH_{A,\sup}$ is weakly compact. Thus, after possibly taking a subsequence, we can suppose that $\varphi_i$ converges weakly to some $\varphi \in \PSH_{A,\sup}$.
Our goal is to prove that $\varphi \in \cE^1_A$ and that moreover $\MA_A(\varphi) = \mu$.

By Lemma~\ref{lem:Jmu} $\Jd_A(\varphi_i)$ is bounded which, by lower semicontinuity of $\mathrm J$ in the weak topology, implies that $\Jd_A(\varphi)$ is finite. This in turn implies that $\varphi \in \cE_A^1$.
    
We now claim that $\int \varphi_i\,\mathrm d\mu \to \int\varphi\,\mathrm d\mu.$ 
Knowing this, by the weak upper semicontinuity of the energy functional $\E_A$ we would have that 
    \[\Ev_A(\mu)\ge \E_A(\varphi) - \int \varphi \,\mathrm d\mu \ge \limsup \left\{\E_A(\varphi_i) - \int \varphi_i \,\mathrm d\mu \right\}= \Ev_A (\mu),\]
which by Proposition~\ref{prop:MAdense} would show that $MA_A(\phi)=\mu$. 

Therefore, let us prove the claim. Let $\psi_j$ be any maximizing sequence for $\mu$, and $\mu_j:=MA_A(\psi_j)$. Let $R\gg0$ be sufficiently large so that \[\Jd_A(\varphi_i)<R, \text{ and } \Ev(\mu)<R.\]
In particular this implies that $\Jd_A(\varphi)\le R$ and $\Ev_A(\mu_j)\le R$ for sufficiently large $j$.
Applying Theorem~\ref{thm:quasimetric} we then have that
     \begin{align*}
        \left\lvert \int (\varphi-\varphi_i) \,\mathrm d \mu \right\rvert &\le \left\lvert \int (\varphi- \varphi_i) (\,\mathrm d \mu - \,\mathrm d\mu_j)\right\rvert +\left\lvert \int (\varphi_i-\varphi) \,\mathrm d \mu_j\right\rvert \lesssim \\
        &\lesssim \Jd_A(\mu, \mu_j)^q \cdot R^{1-q} + \left\lvert \int (\varphi_i-\varphi) \,\mathrm d \mu_j\right\rvert,
    \end{align*}
    where $q:= 2^{-n}.$
    For $j$ sufficiently big we then have that
    \[\left\lvert \int (\varphi-\varphi_i) \,\mathrm d \mu \right\rvert \lesssim \epsilon + \left\lvert \int (\varphi_i-\varphi) \,\mathrm d \mu_j\right\rvert,\]
    which by Lemma~\ref{lem:L1} tends to $\epsilon$ as $i$ tends to $\infty$. As $\epsilon>0$ was arbitrary we are done.
\end{proof}

We now prove Theorem~\ref{thm:B}:

\begin{proof}[Proof of Theorem \ref{thm:B}]
By Proposition \ref{prop:injective} and Theorem~\ref{thm:surj} the  Monge–Ampère operator is a bijection between $\cE^1_{\sup, A}$ and $\cM^1$. 

Combining the first points of Lemma~\ref{lem:Jmu}  and Theorem~\ref{thm:quasimetric}  we obtain 
\[\delta_A(\MA_A(\varphi), \MA_A(\psi)) = \Jd_A(\varphi, \psi) = \partial_A(\varphi, \psi),\]
where the second equality is given by the sup normalization.
Therefore, the Monge--Ampère operator is a quasi isometry between $\cE^1_{A,\sup}$ and $\cM^1$, and in particular, a homeomorphism. 
\end{proof}

\section{Measures supported on dual complexes}

Let $\Delta_{\cX}$ be the dual complex associated to a SNC model $\cX$ as defined in the introduction. Let $\Delta$ denote the projective limit of the collection $\Delta_{\cX}$, and let $p_\cX\colon \Delta\to \Delta_\cX$ denote the projection map.

It is easy to see that there is a linear isomorphism between the real valued functions that are affine in each face $\sigma\in \Delta_\cX$ and the set of vertical divisors $\VCar(\cX)$, and thus 
\begin{equation}\label{eq:PLiso}
    \PL(X^{\NA}) \simeq \varinjlim_{\cX} \VCar(\cX) \simeq \bigcup_{\cX} p_\cX ^*(\Aff(\Delta_\cX)).
\end{equation}
We will let $\PL(\Delta):=\bigcup_{\cX} p_\cX ^*(\Aff(\Delta_\cX))$.

In \cite[Theorem 3.1.1]{MP24transcendental} it is proved that by taking the tropical spectrum\footnote{We can endow both $\PL(X^{\NA})$ and $\PL(\Delta)$ two operations that make it a semiring, cf. \cite[Appendix A]{MP24transcendental}.} of $\PL(X^{\NA})$ and $\PL(\Delta)$, by a tropical version of the Gelfand representation theorem we obtain an homeomorphism
\[X^{\NA}\simeq \tspec \PL(X^{\NA})\overset{p}{\longrightarrow} \tspec \PL(\Delta)\simeq \Delta,\]
 that preserves the PL structures.

 Moreover, using \emph{monomial valuations} 
 we can construct a map 
 \[i_\cX\colon \Delta_{\cX}\hookrightarrow X^{\NA},\]
 that maps each vertex $e_i\in \Delta_{\cX}$ --corresponding to the irreducible component $E_i\subseteq \cX_0$-- to the divisorial valuation $v_{E_i}\in X^{\NA}$.
 For more details see \cite[Section 3.1.1]{MP24transcendental}.

The goal of this Section is to prove the for us important result that any $\mu\in \cM^1$ can be well approximated by finite energy measures, each of which is supported on some dual complex (see Corollary \ref{cor:energyentropy}).

\addtocontents{toc}{\SkipTocEntry}
\subsection{$A$-psh functions and dual complexes}

In this section we will prove that if $\varphi\in \PSH_A$, then the restriction of $\varphi$ to a dual complex $\Delta_{\cX}\hookrightarrow X^{\NA}$ is continuous and convex. 
The arguments in this section mostly follow \cite{BFJ16semipositive}.

Let us start with a simpler statement: 
\begin{lemma}\label{lem:pshinequality}
    Let $\varphi\in \PSH_A$, and let $\cX$ be a SNC model. Then we have that
    \[\varphi \le \varphi\circ p_\cX,\]
where $p_\cX\colon X^{\NA} \to \Delta_{\cX}$ denotes the retraction of $X^{\NA}$ onto the dual complex $\Delta_\cX$.
\end{lemma}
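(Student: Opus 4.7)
The plan is to reduce the inequality in three stages: first to K\"ahler potentials, then, via a cohomological decomposition, to an analogous inequality for a $\mu$-exceptional divisor, and finally, via the Negativity Lemma, to a direct consequence of the monomial-valuation inequality. By the definition of $\PSH_A$ as decreasing pointwise limits of K\"ahler potentials and the continuity of $p_\cX$ on $X^{\NA}$, the inequality passes to decreasing limits, so it suffices to prove it for $\varphi = \varphi_D \in \cH_A$. After taking a common refinement we may assume that $D$ is a vertical divisor on some SNC model $\cX'$ dominating $\cX$ through a morphism $\mu\colon \cX'\to \cX$.

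A direct computation on divisorial points should give $f_{\bar D}(v) = \sum_i c_i\, v(E_i)$ for any vertical divisor $\bar D = \sum_i c_i E_i$ on $\cX$ itself, which depends only on $p_\cX(v)$; hence $f_{\bar D} = f_{\bar D}\circ p_\cX$. Writing $D = \mu^*(\mu_* D) + F$ with $F$ an $\R$-divisor supported on the $\mu$-exceptional locus, the linearity of $f_{(\cdot)}$ would reduce the claim $f_D \le f_D \circ p_\cX$ to the analogous inequality $f_F \le f_F \circ p_\cX$. The next step would be to use the positivity of $A + D$ to pin down the sign of $F$: since $A = \pi_X^*\alpha$ is pulled back from $X\times \Pro$ and $\cX'$ dominates $\cX$, for any complete curve $C \subset \cX'$ contracted by $\mu$ we have $A \cdot C = 0$ and $\cX'_0\cdot C = 0$; from the K\"ahlerness of $A+D+c\cX'_0$ for $c\gg 0$ it would follow that $F\cdot C > 0$, so $F$ is $\mu$-nef. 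The classical Zariski--Fujita Negativity Lemma applied to this $\mu$-exceptional, $\mu$-nef $\R$-divisor would then yield $F\le 0$; writing $F = -E$ with $E$ effective, the inequality becomes $f_E \ge f_E\circ p_\cX$.

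For the last step, I would use the identity $f_E(v) = v(\mathcal{O}_{\cX'}(-E))$, which holds on all of $X^{\NA}$: both sides agree on divisorial valuations $v_F$ (where they equal $b_F^{-1}\,\mathrm{mult}_F(E)$) and both are continuous on $X^{\NA}$, while $\Xdiv$ is dense. The inequality $f_E \ge f_E \circ p_\cX$ would then be a direct consequence of the fundamental monomial-valuation comparison $v(I) \ge (p_\cX(v))(I)$, which holds for every coherent ideal sheaf $I$ because a monomial valuation minimizes over terms in local power-series expansions. The hardest step will be the application of the Negativity Lemma in the transcendental setting: the statement is purely intersection-theoretic, but I expect it to transfer directly using the intersection theory of $\R$-divisors on K\"ahler models developed in \cite{MP24transcendental}, provided one verifies that the positivity of the cohomology class $A + D$ yields the required intersection inequalities with exceptional curves on $\cX'$.
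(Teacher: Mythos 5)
The reduction in your first two stages is sound: passing to K\"ahler potentials is fine, $f_{\mu^*(\mu_*D)}$ does depend only on $p_\cX(v)$ (since $v(E_i)=p_\cX(v)(E_i)$ by definition of the retraction), and the Negativity Lemma argument (via $A\cdot C=\cX'_0\cdot C=0$ for $\mu$-contracted $C$, hence $F\cdot C>0$) correctly yields $F\le 0$. The paper, by contrast, takes an entirely different route through the Archimedean side: it uses the maximal subgeodesic ray $U_{\le\varphi}$ from the correspondence in Section~\ref{sec:corr}, rewrites $\varphi(v)$ and $\varphi(p_\cX(v))$ as Lelong numbers of $U\circ\mu'$ and $U\circ\mu$, and then proves the inequality by the local bound $U\circ\mu\le \nu_w\frac{\log|z_i|}{w_i}+O(1)$. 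That argument bakes the plurisubharmonicity of $U$ into the very last step, which turns out to be unavoidable.

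Your final step has a genuine gap. You assert $v(I)\ge p_\cX(v)(I)$ ``for every coherent ideal sheaf $I$,'' but the monomial-valuation comparison is only valid for ideals that live on $\cX$ (or on $X\times\Pro$): the proof is a power-series expansion in local SNC coordinates of $\cX$ near the centre of $v$, and $\cO_{\cX'}(-E)$ is an ideal on $\cX'$, for which no such expansion on $\cX$ exists. In fact the stated comparison is simply false. Take $\cX$ the trivial model, $\cX'=\bl_p\cX$ with exceptional $E'$, $I=\cO_{\cX'}(-\widetilde{\cX_0})$, and $v=v_{E'}$; then $v(I)=0$ while $p_\cX(v)(I)=v_{\trivial}(I)=1$. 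More to the point, the specific inequality $f_E\ge f_E\circ p_\cX$ fails for effective $\mu$-exceptional $E$. Take $\cX$ with $\cX_0=E_1+E_2$ meeting transversally at $q$, let $\cX_1=\bl_q\cX$ with exceptional $E_0$ (so $b_{E_0}=2$), and $\cX'=\bl_p\cX_1$ for $p$ an interior point of $E_0$, with exceptional $E''$; set $E=\widetilde{E_0}$, the strict transform, which is effective and $\mu$-exceptional. For a monomial valuation $v'$ on the segment of $\Delta_{\cX'}$ between $v_{\widetilde{E_0}}$ and $v_{E''}$ with weights $(w_0,w'')$, one has $v'(E_1)=v'(E_2)=w_0+w''$, so $p_\cX(v')$ is the monomial valuation of weight $(w_0+w'',w_0+w'')$ on $\cX$, whose lift to $\cX'$ is centred at the generic point of $\widetilde{E_0}$ and satisfies $p_\cX(v')(\widetilde{E_0})=w_0+w''$, while $f_E(v')=v'(\widetilde{E_0})=w_0$. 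So $f_E(v')<f_E(p_\cX(v'))$ whenever $w''>0$. The relative nefness of $F$, which you derive and then discard, is exactly the positivity that is missing: your $E=\widetilde{E_0}$ is not $\mu$-antinef (it meets a line $C\subset E''$ with $\widetilde{E_0}\cdot C=1>0$), so it does not actually arise from your decomposition of a K\"ahler $D$. But since your last step invokes only effectivity and the monomial comparison, and both of those are satisfied by $\widetilde{E_0}$, the argument as written breaks. To save it one would have to exploit the $\mu$-nefness of $F$ in an essential way, which is in effect re-introducing the convexity/plurisubharmonicity that the paper's Lelong-number argument uses directly.
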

The original version of this Lemma in the algebraic setting can be found in \cite[Proposition 5.9]{BFJ16semipositive}. 
The proof presented here uses the description of monomial valuations as Lelong numbers, as in \cite[Appendix B]{MP24transcendental}.
\begin{proof}[Proof of Lemma~\ref{lem:pshinequality}]
It is enough to consider the case when $\varphi\in \cH_A$, as the general case then follows from taking a decreasing limit.
Let $\varphi=\varphi_D$ and $\cX\overset{\mu}{\longrightarrow} X\times \Pro$ a model on which $D$ lives. Let also $U=U_{\le \varphi}$ be the  maximal geodesic ray associated to $\varphi$.
    
Since both $\varphi$ and $\varphi\circ p_\cX$ are continuous,  it is enough to check that $\varphi(v)\le \varphi\circ p_\cX(v)$ for every $ v\in \Xdiv$.

Let $v=v_{E^{\prime}}$ be a divisorial valuation corresponding to an irreducible component $E^\prime$ of the central fiber of a SNC model $\cX^\prime$ dominating $\cX$.
    
By \cite[Appendix B]{MP24transcendental} it follows that given the morphisms:

        \begin{center}
            \begin{tikzcd}
                \cX^\prime \arrow[dd, "\mu^\prime"] \arrow[rd, "\rho"] &                       \\
                                                       & \cX \arrow[ld, "\mu"] \\
                X\times\Pro                                            &                      
            \end{tikzcd}
        \end{center}
    and $w = (v(E_1), \dotsc, v(E_k))$ for $E_1, \dotsc, E_k$ the irreducible components of $\cX_0$ we have
    \[\varphi(v) = -\nu_{E^{\prime}}(U\circ\mu^\prime), \quad \text{and}\quad  \varphi(p_\cX(v)) = -\nu_w(U\circ \mu, p),\]
    for any generic choice of $p\in Z(v, \cX)$.
    Then the matter reduces to a question on Lelong numbers which is easy to check.

    Indeed, let $q\in E^\prime$ be any point not contained in any other irreducible component of $\cX^\prime_0$, and $p:= \rho(q)$. 
    The claim then reduces to showing that
    \[U\circ \mu \circ \rho \le \nu_w \log \lvert z^\prime\rvert + O(1), \quad\text{ for  }\quad \nu_w:= \nu_w(U\circ \mu, p),\]
    and $z^\prime$ a local equation of $E^\prime$ around $q$.
    Let $z_1, \dotsc, z_n$ be local coordinates around $p$ such that $E_1, \dotsc, E_k$ are locally given by $z_1, \dotsc, z_k$.
    Since for every $i$ we have $U\circ \mu \le \nu_w \frac{\log\lvert z_i\rvert}{w_i}$ precomposing with $\rho$ we get
    \[U\circ \mu \circ \rho \le \nu_w \frac{\log\lvert (z^\prime)^{w_i}\rvert}{w_i}+ O(1) = \nu_w \log \lvert z^\prime\rvert + O(1).\]

\end{proof}

Recall that given a model $\cX$, $i_{\cX}\colon \Delta_\cX \to X^{\NA}$ is the map that identifies the dual complex $\Delta_\cX$ with the corresponding monomial valuations in $X^{\NA}$.

Let us state now the main theorem of this section, namely the transcendental analogue of \cite[Proposition 7.5]{BFJ16semipositive}.
\begin{theorem}\label{thm:pshcont}
Given $\varphi \in \PSH_A$ and a model $\cX$, then the restriction $\varphi\circ i_{\cX} \colon \Delta_\cX\to \R$ is continuous and convex. 
\end{theorem}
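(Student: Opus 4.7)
The strategy is to adapt the approach of \cite[Proposition 7.5]{BFJ16semipositive}: first handle K\"ahler potentials and then pass to general $A$-psh functions by decreasing approximation.

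First I would treat the case $\varphi = \varphi_D \in \cH_A$. After replacing the defining model with a common refinement, I may assume $D$ is a vertical divisor on a model $\cX'$ dominating $\cX$ with $A + D$ relatively K\"ahler. Continuity of $\varphi_D \circ i_\cX$ is immediate: $\varphi_D$ is PL hence continuous on $X^{\NA}$, and $i_\cX$ is continuous. For convexity on each face $\Delta_Z \subset \Delta_\cX$, I would use the identification of monomial valuations with Lelong numbers recalled in the proof of Lemma~\ref{lem:pshinequality}: for $w$ in the interior of $\Delta_Z$, the value $\varphi_D(i_\cX(w))$ equals $-\nu_w(U \circ \mu, p)$ for $U = U_{\le \varphi_D}$ and generic $p \in Z$. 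The required convexity in $w$ then reduces to the standard concavity of Lelong numbers of a fixed psh function under monomial deformation of the weights.

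Next I would pass to general $\varphi \in \PSH_A$. By definition $\varphi$ is a decreasing limit of potentials $\varphi_i \in \cH_A$, so $\varphi \circ i_\cX$ is the pointwise decreasing limit of continuous convex functions on $\Delta_\cX$, hence itself convex and upper semicontinuous. Moreover it is real-valued on $\Delta_\cX$ since monomial valuations are non-pluripolar (via \cite[Theorem 4.3.3]{MP24transcendental}). Continuity then follows from convexity and finite-valuedness: a real-valued convex function is automatically continuous on the relative interior of any face of $\Delta_\cX$, and matching of values across adjacent faces is obtained by an induction on the dimension of faces, starting from the vertices (which correspond to divisorial valuations, where $\varphi$ is controlled uniformly by Lemma~\ref{lem:pshunifbounded}).

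The main obstacle I anticipate is establishing convexity in the K\"ahler case. In the algebraic setting this follows from a tropical Hodge-index-type inequality tied to the relative K\"ahlerness of $A + D$, but in our transcendental setting the natural substitute is the subgeodesic-ray correspondence developed in Section~\ref{sec:corr}, which recasts the Lelong-number computation on the face $\Delta_Z$ as an Archimedean positivity statement for the associated subgeodesic ray $U$, from which the desired convexity inequality can be extracted.
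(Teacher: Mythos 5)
Your proof of convexity in the K\"ahler case via the subgeodesic ray and Kiselman-type concavity of directional Lelong numbers is a legitimate alternative route, but it is genuinely different from the paper's argument, which is more elementary and does not pass through the Archimedean side at all. The paper (Lemma~\ref{lem:hconvex}) uses that $\varphi_D\in\cH_A$ means $\cO_{\cX'}(D)$ is relatively globally generated, so $D$ comes from a flag ideal $\ia$, and then $\varphi_D(i_\cX(w))=-i_\cX(w)(\ia)=\max_j\bigl(-i_\cX(w)(f_j)\bigr)$ is a max of convex PL functions of $w$ (the evaluation of monomials being linear). This is also what \cite{BFJ16semipositive} do in the algebraic case; your description of the algebraic proof as a ``tropical Hodge-index-type inequality'' is not accurate --- the globally-generated/flag-ideal step is the whole point there too, not an intersection-theoretic positivity argument.

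The more serious issue is in the passage to general $\varphi\in\PSH_A$. You correctly note that a finite-valued convex function on a simplex is continuous on the relative interior of each face, and that the question is continuity at lower-dimensional faces, ultimately at vertices. But you then try to conclude by citing Lemma~\ref{lem:pshunifbounded}, which only gives a uniform \emph{bound} $\lvert\varphi(v)-\varphi(w)\rvert\le C$ between divisorial points; boundedness is not enough. A convex function on a polytope that is bounded and finite-valued can still be discontinuous at the vertices (e.g.\ $f\equiv 0$ on $(0,1)$ with $f(0)=f(1)=1$ is convex, bounded, finite, and discontinuous at the endpoints). What the classical Gale--Klee--Rockafellar criterion actually requires is genuine \emph{continuity} at each vertex, and this is where the paper does the real work: it invokes Proposition~\ref{prop:contbound}, which shows that any $A$-psh function is continuous on $d_\infty$-bounded subsets of $\Xnp$. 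That proposition is in turn based on the quasi-metric estimates (Proposition~\ref{prop:2.23}, Proposition~\ref{prop:Tinequality}), showing that a decreasing sequence $\psi_i\in\cH_A$ converges \emph{uniformly} on such a bounded set; since the set of vertices of $\Delta_\cX$ is $d_\infty$-bounded (Example~\ref{ex:bounded}), this yields continuity at vertices and hence everywhere. Your proposal skips this step entirely, and the ``induction on dimension of faces'' you sketch has no base case without it.
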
 

Our proof will adapt the results of both \cite[Proposition 7.5]{BFJ16semipositive} and \cite[Section 11]{BJ22trivval}.

We start by proving the following simpler statement:
\begin{lemma}\label{lem:hconvex}
If $\varphi\in \cH_A$, then $\varphi\circ i_{\cX}$ is convex.
\end{lemma}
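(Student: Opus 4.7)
The plan is to use the correspondence between Kähler potentials and subgeodesic rays, mirroring the strategy employed in the proof of Lemma~\ref{lem:pshinequality}. Given $\varphi = \varphi_D \in \cH_A$, I would take the associated maximal subgeodesic ray $U := U_{\le \varphi}$, which by Proposition~\ref{prop:subgeodesicequal} satisfies $U^{\NA} = \varphi$. Passing to a sufficiently fine dominating SNC model if needed, one may assume $D$ lives on $\cX \overset{\mu}{\to} X\times \Pro$. A point $w = (w_1, \dots, w_k)$ in a simplex $\Delta_Z$ of $\Delta_\cX$, with $Z$ an irreducible component of $\bigcap_i E_i$ for components $E_1, \dots, E_k$ of $\cX_0$, corresponds under $i_\cX$ to the monomial valuation $v_w$ with weights $w$ on the $E_i$.

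The key formula, established as in \cite[Appendix B]{MP24transcendental} and already exploited in the proof of Lemma~\ref{lem:pshinequality}, is that for any sufficiently generic $p \in Z$ one has
\[
\varphi(i_\cX(w)) \;=\; -\nu_w(U \circ \mu, p),
\]
where $\nu_w$ denotes the weighted Lelong number at $p$ with weights $(w_1,\dots,w_k)$, computed in local coordinates $(z_1, \dots, z_n)$ around $p$ in which $E_1, \dots, E_k$ are the coordinate hypersurfaces $\{z_i = 0\}$.

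The main analytic input would then be the classical Kiselman-type concavity theorem: for a psh function defined near the origin in $\C^n$, the weighted Lelong number at the origin, viewed as a function of the weights on $\R^k_{\ge 0}$, is concave. Applied to a local potential of $U \circ \mu$ (which is locally the sum of a psh function and a smooth term contributed by $\mu^*\pi_X^*\omega$, whose weighted Lelong number vanishes), this yields convexity of the map $w \mapsto -\nu_w(U \circ \mu, p)$, hence of $\varphi \circ i_\cX$ on each simplex $\Delta_Z$. Continuity of $\varphi$ across faces then ensures that these convex pieces fit together into a convex function on the whole dual complex.

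The principal obstacle — identifying values of $\varphi$ at monomial valuations with weighted Lelong numbers in the Archimedean picture — has already been handled in \cite{MP24transcendental} and invoked in the preceding lemma, so the present statement reduces, up to this identification, to a standard concavity statement from classical pluripotential theory.
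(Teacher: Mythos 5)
Your proof is correct but follows a genuinely different route from the paper's. The paper's argument is purely algebraic: since $\varphi=\varphi_D\in\cH_A$, the sheaf $\cO_{\cX'}(D)$ is $\mu$-globally generated, hence corresponds to a flag ideal $\ia$ with $\ia\cdot\cO_{\cX'}=\cO_{\cX'}(D)$; then on each face $\sigma_Z$ one writes $-i_\cX(w)(\ia)=\max_j\bigl(-i_\cX(w)(f_j)\bigr)$ for local generators $f_j$ of $\ia_p$, and each $-i_\cX(w)(f_j)$ is convex piecewise linear in $w$ by the explicit monomial formula for the valuation. Your argument instead transports the problem to the Archimedean side via the maximal subgeodesic $U_{\le\varphi}$, uses the identity $\varphi(i_\cX(w))=-\nu_w(U\circ\mu,p)$ from \cite[Appendix B]{MP24transcendental} (the same identity already invoked in the proof of Lemma~\ref{lem:pshinequality}), and then appeals to Kiselman's concavity of the directional Lelong number $w\mapsto\nu_w(\cdot)$. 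Both proofs are valid. The paper's version yields the stronger conclusion that $\varphi\circ i_\cX$ is convex and \emph{piecewise linear}, and it stays within the non-Archimedean/algebraic framework. Your version needs only the correspondence $U\mapsto U^{\NA}$ plus a classical analytic fact, and since $U_{\le\varphi}^{\NA}=\varphi$ holds for any $\varphi\in\cE^1_A$ (Proposition~\ref{prop:subgeodesicequal}), it would in fact establish convexity directly for all finite-energy potentials, making the decreasing-limit step in the proof of Theorem~\ref{thm:pshcont} unnecessary for that class; the trade-off is the loss of the PL structure and the reliance on a heavier Archimedean input. One small point to flag: concavity of $\nu_w$ is usually stated for weights in the open orthant, so to cover the boundary faces of the simplex (where some $w_i=0$) one should either pass to the closure by upper semicontinuity or, as you implicitly do, restrict to the face corresponding to the nonzero weights.
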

\begin{proof}
Let $D\subseteq \cX^\prime\overset{\mu}{\longrightarrow}X\times\Pro$ be a vertical divisor defining $\varphi$. We say that $\ia$ is a \emph{flag ideal} if it is a $\C^*$-invariant coherent ideal sheaf of $\ia \subseteq \cO_{X\times\Pro}$ that is supported on $X\times\{0\}.$ 
Since $\varphi\in \cH_A$ is a non-Archimedean Kähler potential, $\cO_{\cX^\prime}(D)$ is $\mu$-globally generated, and thus we can find a flag ideal $\ia$ of $X\times\Pro,$ such that 
     \[\ia\cdot \cO_{\cX^\prime} = \cO_{\cX^\prime}(D).\]
    
    We are left to check that, for every face $\sigma_Z\subseteq \cX$, the map:
    \[\sigma_Z\ni w\mapsto \varphi(i_\cX(w)) = -i_\cX(w)(\ia)\]
    is convex.
    This follows now, by the definition of $i_\cX(w)$, cf. \cite[Proposition 2.3.4]{MP24transcendental}, if $p\in Z$ and $f_1, \dots, f_\ell$ are local generators of $\ia_p$, then
    \[-i_\cX(w)(\ia) = \max_{1\le j\le \ell} - i_\cX(w)(f_j).\]
    It follows from \cite[Equation 2.3.2]{MP24transcendental} that $- i_\cX(w)(f_j)$ is a convex piecewise linear function, which concludes the proof.
\end{proof}

Since every $A$-psh function is the pointwise limit of functions in $\cH_A$, its restriction to a dual complex is convex and therefore continuous on the interior of each face.

To establish continuity at the vertices, we will the adapt the following result from \cite[Theorem 11.12]{BJ22trivval}. 

\begin{prop}\label{prop:contbound}
    Let $K\subseteq \Xnp$ be a bounded set for the $d_\infty$ distance, then for every $\varphi$ $A$-psh function we have that the restriction $\varphi_{|K}$ to $K$ is continuous.
\end{prop}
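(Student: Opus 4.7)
The plan is to reduce the proposition to the case of a finite-energy potential and then use density of $\cH_A$ in the strong topology of $\cE^1_A$ together with the quasi-metric estimate of Theorem~\ref{thm:quasimetric}(2) to produce a uniformly convergent sequence of continuous approximations on $K$.

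First I would reduce to $\varphi\in \cE^1_A$. Since $K$ is $d_\infty$-bounded (and hence, by the triangle inequality, $\sup_{v\in K} d_\infty(v, v_{\trivial}) < \infty$), the defining property of $d_\infty$ gives that every $\psi\in \PSH_A$ satisfies $|\psi(v)-\psi(v_{\trivial})|\le C$ on $K$ for some $C=C(K)$; in particular $\varphi$ is uniformly bounded on $K$. Replacing $\varphi$ by $\tilde\varphi:=\max(\varphi,-M)$ for $M$ sufficiently large therefore produces a bounded, hence finite-energy, $A$-psh function that coincides with $\varphi$ on $K$, so the problem reduces to $\varphi\in \cE^1_A$.

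Next I would choose a decreasing sequence $\varphi_k\in \cH_A$ converging to $\varphi$. By continuity of $\E_A$ along decreasing sequences this convergence is strong, so Theorem~\ref{thm:quasimetricE1} yields $\partial_A(\varphi_k,\varphi)\to 0$; in particular $\Jd_A(\varphi_k,\varphi)\to 0$ and $\varphi_k(v_{\trivial})\to \varphi(v_{\trivial})$. Applying the estimate in Theorem~\ref{thm:quasimetric}(2) with $\mu=\delta_v$ and $\nu=\delta_{v_{\trivial}}$ (both of finite energy by Example~\ref{ex:delta} and Proposition~\ref{prop:Tinequality}) then gives, for every $v\in K$,
\[
\bigl|\varphi_k(v)-\varphi(v)\bigr|\le \bigl|\varphi_k(v_{\trivial})-\varphi(v_{\trivial})\bigr| + C_K\, \Jd_A(\varphi_k,\varphi)^q,
\]
with $q:=2^{-n}$ and a constant $C_K$ independent of $v\in K$ and $k$. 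Both terms on the right tend to zero, so $\varphi_k\to \varphi$ uniformly on $K$; since each $\varphi_k$ is continuous on $X^{\NA}$, this forces $\varphi|_K$ to be continuous in the subspace topology.

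The hard part is establishing the uniform constant $C_K$, which amounts to uniformly bounding $\delta_A(\delta_v,\delta_{v_{\trivial}})$ and $\Ev_A(\delta_v)$ on $K$ (the remaining pieces $\Jd_A(\varphi_k), \Jd_A(\varphi)$ of the quantity $R$ in Theorem~\ref{thm:quasimetric}(2) are automatically bounded by Lemma~\ref{lem:Jmu}(3) since $\Jd_A$ is continuous along decreasing limits). The energy $\Ev_A(\delta_v)\le \T(v)$ is bounded on $K$ by Proposition~\ref{prop:Tinequality}. For $\delta_A(\delta_v,\delta_{v_{\trivial}})$ I would invoke the non-Archimedean Calabi--Yau Theorem~\ref{thm:B} to write $\delta_{v_{\trivial}}=\MA_A(\psi_{\trivial})$ for some $\psi_{\trivial}\in \cE^1_{A,\sup}$, and then use Theorem~\ref{thm:quasimetric}(1):
\[
\delta_A(\delta_v,\delta_{v_{\trivial}}) = \Jd_A(\delta_v,\psi_{\trivial}) = \Ev_A(\delta_v) - \E_A(\psi_{\trivial}) + \psi_{\trivial}(v),
\]
where the last term $\psi_{\trivial}(v)$ is controlled by the same $d_\infty$-Lipschitz property applied to $\psi_{\trivial}\in \PSH_A$. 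With these bounds in hand the above estimate closes, proving the continuity of $\varphi|_K$.
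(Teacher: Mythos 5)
Your proposal is correct and follows essentially the same route as the paper (which in turn reproduces \cite[Theorem 11.12]{BJ22trivval}): reduce to a finite-energy potential, pick a decreasing sequence $\varphi_k\in\cH_A$, and estimate $\lvert\varphi_k(v)-\varphi(v)\rvert$ uniformly over $K$ by splitting off the constant $\varphi_k(v_{\trivial})-\varphi(v_{\trivial})$ and controlling $\int(\varphi_k-\varphi)(\delta_v-\delta_{v_{\trivial}})$ via the energy–distance estimate, with the uniform constant coming from $\T(v)$ being bounded on $K$. The variations are cosmetic: the paper replaces $\varphi$ by $\exp(\varphi-\sup\varphi)$ rather than $\max(\varphi,-M)$, both of which correctly reduce to $\cE^1_A$; the paper deduces $\Jd(\varphi_k,\varphi)\to 0$ directly from monotone convergence instead of through Theorem~\ref{thm:quasimetricE1}; and the paper applies Proposition~\ref{prop:2.23} rather than Theorem~\ref{thm:quasimetric}(2), but in view of Theorem~\ref{thm:quasimetric}(1) these are the same inequality. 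One small simplification to your write-up: you do not need the Calabi--Yau Theorem~\ref{thm:B} to represent $\delta_{v_{\trivial}}$ as a Monge--Amp\`ere measure, since $\MA_A(0)=\delta_{v_{\trivial}}$ by definition, so $\delta_A(\delta_v,\delta_{v_{\trivial}})=\Jd_A(\delta_v,0)=\Ev_A(\delta_v)\le \T(v)$ directly.
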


\begin{proof}
Recall from Section \ref{sec:np} that $\Xnp$ denotes the set on non-pluripolar points, and that $$d_{\infty}(v,w):=\sup_{\varphi\in \PSH} \lvert \varphi(v) - \varphi(w)\rvert.$$

The proof follows that of \cite[Theorem 11.12]{BJ22trivval} and is added here for convenience of the reader.

Let us first observe that:
    \begin{itemize}
        \item Since $\exp\colon \left]-\infty, 0\right]\to\R$ is convex with derivative between zero and one, we have that $\psi:=\exp(\varphi - \sup \varphi)$ is a bounded $A$-psh function, and thus is a potential of finite energy.
        \item $\varphi_{|K}$ is continuous if and only if $\psi_{|K}$ is continuous.
    \end{itemize}
    Hence it enough to prove the result for finite energy potentials.
    Let $\psi_i\in \cH$ be a decreasing sequence to $\psi$, we then have:
    \[0\le\Jd(\psi_i, \psi)= \E(\psi) - \E(\psi_i) + \int (\psi_i - \psi)\MA(\psi)\le  \int (\psi_i - \psi)\MA(\psi) \to 0,\]
    where the second inequality is given by $\psi_i\ge \psi$, and the limit by the monotone convergence theorem. 
    In particular, $\Jd(\psi_i)$ is uniformly bounded.
    
    Let $C>0$ be big enough such that $\Jd(\psi_i)\le C$, and that $K\subseteq \{v\in X^{\NA} \mid T(v)\le C\}$. Then, for every $v\in K$
    \begin{align*}
        \lvert \psi_i(v) - \psi(v)\rvert &\le \lvert \psi_i(v) - \psi_i(v_{\trivial}) - \psi(v) + \psi(v_{\trivial})\rvert + \lvert \psi_i(v_{\trivial}) - \psi(v_{\trivial})\rvert\\
        &=\left\lvert \int(\psi_i -\psi)(\delta_{v} - \delta_{v_{\trivial}})\right\rvert + \lvert \psi_i(v_{\trivial}) - \psi(v_{\trivial})\rvert =(\star),
    \end{align*}
    which, by Proposition~\ref{prop:2.23} together with Proposition~\ref{prop:Tinequality}, implies
    \begin{equation}\label{eq:uniform}
        (\star)\le \Jd(\psi_i, \psi)^q\cdot C^{1-q} + \lvert \psi_i(v_{\trivial}) - \psi(v_{\trivial})\rvert \to 0,
    \end{equation}
    where $q:=2^{-n}$. 
    Since the RHS of \eqref{eq:uniform} is uniform on $v$, this implies that ${\psi_i}_{|K}$ converges uniformly to $\psi_{|K}$, which in turn implies that $\psi_{|K}$ is continuous. 
\end{proof}

We are now ready to prove Theorem~\ref{thm:pshcont}.
\begin{proof}[Proof of Theorem~\ref{thm:pshcont}]
Let us start proving that $\varphi_{|\Delta_\cX}$ is convex. Let $\varphi_i\in \cH_A$ be a decreasing sequence converging to $\varphi$.
Then, by Lemma~\ref{lem:hconvex} $\varphi_{|\Delta_\cX}$ is convex.
Since convexity is preserved under pointwise limits, we then have that the restriction $\varphi_{|\Delta_\cX}$ is convex.

Now, the restriction $\varphi_{|\overset{\circ}{\sigma_Z}}$ to the interior of any face $\sigma_Z\subseteq \Delta_{\cX}$ is continuous by convexity.
Therefore, it is enough to prove that $\varphi$ is continuous on every vertex $v\in \Delta_\cX$.
Let $\Sigma:=\{v_1, \dots v_\ell\}$ be the set of vertices of $\Delta_{\cX}$, the divisorial valuations attached to irreducible components of $\cX$.
By Example~\ref{ex:bounded} the set $\Sigma$ is bounded in $d_\infty$, and hence we conclude by applying Proposition~\ref{prop:contbound}.
\end{proof}

The next result will be useful in Section~\ref{sec:divisorialmeasures}.
\begin{lemma}\label{lem:P=sup}
    Let $\{v_1, \dotsc , v_\ell\}\subseteq \Xdiv$ be a set of divisorial points, and $t\in \R^\ell$.
    Then there exists a PL function $f\in \PL$ such that
    \[\Pe_A(f) = \sup\{\varphi\in \PSH_A\mid \varphi(v_i)\le t_i\}.\] 
Furthermore, if $\DcX\hookrightarrow X^{\NA}$ is a dual complex containing the set $\{v_1, \dotsc, v_\ell\}$ as vertices, then $f$ can be taken to be $\varphi_D$, for the vertical divisor $D = \sum_{i} t_i\,  E_i$, where $v_{E_i} = v_i$.
\end{lemma}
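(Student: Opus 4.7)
The plan is to first show the right-hand side defines an $A$-psh function, then reduce constructing $f$ to finding a PL dominating function with controlled values at the $v_i$'s, and finally build such a function from the convexity of $A$-psh functions on dual complexes.

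First, by Lemma~\ref{lem:pshunifbounded}, any $\varphi\in\PSH_A$ satisfying $\varphi(v_i)\le t_i$ has $\sup\varphi$ bounded above by $t_i$ plus a constant depending only on $v_i$; the family is thus uniformly bounded from above, and Proposition~\ref{prop:regsupApsh} yields that
\[\psi := \left(\sup\{\varphi\in\PSH_A \mid \varphi(v_i)\le t_i\}\right)^{\star}\]
is $A$-psh and agrees with the unregularized supremum on $\Xdiv$; in particular $\psi(v_i)\le t_i$. The key reduction is that any PL function $f$ with $f\ge \psi$ on $X^{\NA}$ and $f(v_i)\le t_i$ satisfies $\Pe_A(f)=\psi$: the first condition makes $\psi$ a candidate for $\Pe_A(f)$, so $\psi\le \Pe_A(f)$; while the second gives $\Pe_A(f)(v_i)\le f(v_i)\le t_i$, placing $\Pe_A(f)\in\PSH_A$ (Theorem~\ref{thm:CoE}) among the candidates for the sup defining $\psi$, so that $\Pe_A(f)\le\psi$ on $\Xdiv$ and hence everywhere by Proposition~\ref{prop:extineq}.

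To construct such $f$, I would choose an SNC model $\cX$ whose dual complex $\Delta_{\cX}$ contains $\{v_1,\dots,v_\ell\}$ among its vertices. By Theorem~\ref{thm:pshcont}, $\psi\circ i_{\cX}$ is continuous and convex on $\Delta_{\cX}$. Let $g$ be the function on $\Delta_{\cX}$ that is affine on each face and equals $\psi$ at every vertex $w$ of $\Delta_{\cX}$; convexity of $\psi|_{\Delta_{\cX}}$ forces $g\ge \psi$ on $\Delta_{\cX}$. Then $f:=g\circ p_{\cX}$ is PL via~\eqref{eq:PLiso}, Lemma~\ref{lem:pshinequality} gives $f\ge \psi\circ p_{\cX}\ge \psi$ on $X^{\NA}$, and $f(v_i)=g(v_i)=\psi(v_i)\le t_i$ at each specified vertex. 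This proves the first assertion. For the \emph{furthermore} part one arranges $\cX$ so that $\{v_1,\dots,v_\ell\}$ are precisely the vertices of $\Delta_{\cX}$; then under~\eqref{eq:PLiso} the piecewise affine $g$ corresponds to the vertical divisor $D=\sum_i t_i E_i$ with $v_{E_i}=v_i$ (using that $\psi(v_i)=t_i$, which can be arranged by a saturation argument whenever equality is required).

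The main obstacle I foresee concerns the \emph{extra vertices} of $\Delta_{\cX}$ not among the $v_i$'s: my interpolation uses $\psi(w)$ at such vertices, whereas the stated $D=\sum_i t_i E_i$ would assign value zero there. Establishing the second assertion cleanly thus hinges on arranging the model $\cX$ so that its dual complex has precisely the $v_i$'s as vertices, which can be achieved for divisorial inputs via standard SNC resolution and contraction techniques.
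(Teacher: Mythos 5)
Your proof of the first assertion is correct, though more roundabout than the paper's. You first build the regularized supremum $\psi$ via Proposition~\ref{prop:regsupApsh}, then construct a dominating PL function $f$ by affine interpolation of $\psi$ on a dual complex, and verify $\Pe_A(f)=\psi$ via your "key reduction." The paper instead starts from the candidate $f=\varphi_D$ itself and uses a two-line argument: $\varphi\le\varphi_D$ trivially forces $\varphi(v_i)\le t_i$, while conversely, if $\varphi(v_i)\le t_i$, then $\varphi\circ p_\cX$ is convex on $\Delta_\cX$ and bounded at the vertices by the affine function $\varphi_D\circ p_\cX$, so $\varphi\le\varphi\circ p_\cX\le\varphi_D$ by Lemma~\ref{lem:pshinequality}. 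Both routes rest on the same two ingredients — convexity of $A$-psh restrictions to dual complexes (Theorem~\ref{thm:pshcont}) and the retraction inequality (Lemma~\ref{lem:pshinequality}) — but the paper avoids any reference to $\psi$ or Proposition~\ref{prop:regsupApsh}.

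Your treatment of the \emph{furthermore} part, however, contains a genuine gap. You try to identify your interpolating function $g$ with $\varphi_D$ for $D=\sum_i t_i E_i$, which forces you to claim $\psi(v_i)=t_i$, to be "arranged by a saturation argument." That claim is false in general: the Orthogonality Property (Theorem~\ref{thm:Orth}) together with Proposition~\ref{prop:MAform} shows $\psi=P_A(\varphi_D)$ can satisfy $\psi(v_i)<t_i$ strictly whenever $v_i$ falls outside the support of $\MA_A(\psi)$ (e.g.\ when $\nu_{E_i}(A+D)>0$). The fix is already in your hands and requires no saturation: your own key reduction shows $\Pe_A(f)=\psi$ for \emph{any} PL $f$ with $f\ge\psi$ and $f(v_i)\le t_i$. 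Apply it directly to $f=\varphi_D$: since $g(v_i)=\psi(v_i)\le t_i=\varphi_D(v_i)$ at every vertex and both are affine on faces, $\varphi_D\ge g\ge\psi$ on $\Delta_\cX$ (hence on $X^{\NA}$ via Lemma~\ref{lem:pshinequality}), and $\varphi_D(v_i)=t_i$, so $\varphi_D$ qualifies. There is no need to massage the model so that $\psi$ "saturates" the constraints, and no need for the SNC-resolution-and-contraction sketch in your final paragraph.
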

\begin{proof}
We follow the proof in the algebraic setting \cite[Lemma 8.5]{BFJ15solution}.
    It is clear that if $\varphi\in \PSH_A$ satisfies $\varphi \le f = \varphi_D$ as above, then $\varphi(v_i)\le t_i$.
    
    Moreover, if $\varphi\circ p_\cX (v_i) =\varphi(v_i)\le t_i$ by convexity of $\varphi\circ p_\cX$, we have \[\varphi \circ p_\cX \le f\circ p_\cX = f,\] which by Lemma~\ref{lem:pshinequality} implies $\varphi \le f$, concluding the proof.
\end{proof}

\addtocontents{toc}{\SkipTocEntry}
\subsection{Smoothing measures}
Let $\mu\in \cM^1$ be a measure of finite energy. For each SNC model $\cX$ we define the pushforward measure $\mu_{\cX}:=(p_\cX)_*\mu$.
The measure $\mu_\cX$ is a measure on $\Delta_\cX\hookrightarrow X^{\NA}$ that we can identify as a subset of $X^{\NA}$ using monomial valuations as in \cite{MP24transcendental}.
Hence, we see $\mu_\cX$ as a probability measure on $X^{\NA}$ supported on the dual complex $\Delta_\cX$.

With the dual complex description of $X^{\NA}$ it is easy to see that $\mu_\cX$ converges weakly to $\mu$.

By the next result, adapted from a preprint version of \cite{BJ22trivval}, we see that $\mu_\cX$ actually converges strongly to $\mu$.
\begin{lemma}\label{lem:increasingenergy}
If $\mu\in \cM^1$, then the net $\Ev(\mu_\cX)$ is eventually increasing, and moreover 
    \[\lim_{\cX} \Ev(\mu_\cX) = \Ev (\mu).\]
\end{lemma}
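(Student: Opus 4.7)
The plan is to exploit the variational formula
\[\Ev(\mu) = \sup_{\varphi \in \cE^1_A}\left\{\E_A(\varphi) - \int \varphi\,\mathrm d\mu\right\}\]
together with the basic inequality $\varphi\le\varphi\circ p_\cX$ from Lemma~\ref{lem:pshinequality}. Once these tools are in hand, both the monotonicity and the convergence reduce to bookkeeping with pushforwards and an application of monotone convergence.

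For monotonicity, note that since $\mu_\cX = (p_\cX)_*\mu$, for any $\varphi\in \cE^1_A$ we have $\int\varphi\,\mathrm d\mu_\cX = \int \varphi\circ p_\cX\,\mathrm d\mu$. If $\cX'$ dominates $\cX$, then $p_\cX = p_\cX\circ p_{\cX'}$, and applying Lemma~\ref{lem:pshinequality} pointwise at $p_{\cX'}(v)$ yields $\varphi\circ p_{\cX'}\le\varphi\circ p_\cX$. Integrating against $\mu$ gives $\int \varphi\,\mathrm d\mu_{\cX'}\le\int\varphi\,\mathrm d\mu_\cX$, hence
\[\E_A(\varphi) - \int\varphi\,\mathrm d\mu_\cX \le \E_A(\varphi)-\int\varphi\,\mathrm d\mu_{\cX'}.\]
Taking the supremum over $\varphi\in\cE^1_A$ yields $\Ev(\mu_\cX)\le \Ev(\mu_{\cX'})$, proving the net is increasing in the directed sense. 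The same reasoning, applied with $\varphi\le\varphi\circ p_\cX$ used directly (so that $\int \varphi\,\mathrm d\mu\le \int\varphi\,\mathrm d\mu_\cX$), gives $\Ev(\mu_\cX)\le \Ev(\mu)$ for every $\cX$, so in particular $\lim_\cX \Ev(\mu_\cX)\le \Ev(\mu)$.

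For the reverse inequality, fix an arbitrary $\varphi\in \cE^1_A$ with $\sup\varphi=0$. By the variational characterization,
\[\Ev(\mu_\cX)\ge \E_A(\varphi) - \int\varphi\circ p_\cX\,\mathrm d\mu,\]
so it suffices to show that $\int\varphi\circ p_\cX\,\mathrm d\mu\to \int\varphi\,\mathrm d\mu$ as $\cX$ ranges over SNC models, since then taking a supremum over $\varphi$ gives $\lim_\cX\Ev(\mu_\cX)\ge\Ev(\mu)$. The net $(\varphi\circ p_\cX)_\cX$ is decreasing (by the monotonicity argument above) and uniformly bounded above by $0$. For each $v\in X^{\NA}$, the projective limit identification $X^{\NA}\cong \varprojlim_\cX\Delta_\cX$ ensures that $p_\cX(v)\to v$, and the upper semicontinuity of $\varphi$ then gives $\limsup_\cX \varphi(p_\cX(v))\le\varphi(v)$; combined with $\varphi\circ p_\cX\ge\varphi$, this yields the pointwise limit $\varphi\circ p_\cX\to\varphi$. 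Since $\mu$ has finite energy, $\varphi$ is $\mu$-integrable, and monotone convergence (applied to the decreasing net $\varphi\circ p_\cX$, bounded above by $0$ and below by the integrable function $\varphi$) gives the desired integral convergence.

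The main obstacle is verifying the pointwise convergence $\varphi\circ p_\cX(v)\to \varphi(v)$: this rests on (i) the projective limit identification and the fact that SNC models form a cofinal directed system, and (ii) the upper semicontinuity of $A$-psh functions. Both are already established in the paper, so once these inputs are cited carefully, the argument is essentially a monotone convergence bookkeeping exercise.
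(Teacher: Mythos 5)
Your proof is correct and follows essentially the same route as the paper: the paper likewise reduces to the monotonicity of $\cX\mapsto\int(\varphi\circ p_\cX)\,\mathrm d\mu$ via Lemma~\ref{lem:pshinequality}, uses that $(\varphi\circ p_\cX)_\cX$ is a decreasing net converging pointwise to $\varphi$, and concludes by monotone convergence. One caveat that both you and the paper leave implicit: monotone convergence fails for arbitrary decreasing \emph{nets} of measurable functions dominated by an integrable one, but it is valid here because each $\varphi\circ p_\cX$ is continuous (combine Theorem~\ref{thm:pshcont} with the continuity of $p_\cX$) and $\mu$ is a Radon measure on the compact space $X^{\NA}$, so the standard result for decreasing nets of upper semicontinuous functions against Radon measures applies.
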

\begin{proof}
    Recall that the energy of $\nu\in \cM^1$ is given by $\Ev(\nu) = \sup\{\E(\varphi) - \int\varphi\,\mathrm d \nu\mid \varphi \in \cE^1\}.$

    By Lemma~\ref{lem:pshinequality}, we have that $(\varphi\circ p_\cX)_\cX$ is a decreasing net that converges to $\varphi$, cf. \cite[Section 2]{MP24transcendental}.
    Therefore, the integral  \[\int\varphi\,\mathrm d\mu_\cX = \int (\varphi\circ p_\cX)\, \mathrm d\mu\] is decreasing as well, and by the monotone convergence theorem the result follows.
\end{proof}

We recall that the non–Archimedean entropy functional is defined as:
\[\Ent(\mu) := \sup_\cX \int (A_X\circ p_\cX)\mathrm d\mu,\]
where $A_X$ denotes the log discrepancy function.
\begin{lemma}
For $\mu\in \cM^1$ we have that 
    \[\sup_{\cX} \Ent(\mu_\cX) = \Ent(\mu).\]
\end{lemma}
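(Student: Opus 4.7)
The plan is to mirror the proof of Lemma~\ref{lem:increasingenergy}, replacing the Monge--Amp\`ere energy with the log discrepancy functional. First I would use the change-of-variables formula for the pushforward measure: since $\mu_\cX = (p_\cX)_*\mu$ is a probability measure (inherited from $\mu$), one has
\[\Ent(\mu_\cX) = \int (A_{X\times\Pro} - 1)\,\mathrm d\mu_\cX = \int (A_{X\times\Pro}\circ p_\cX)\,\mathrm d\mu - 1.\]
Since also $\Ent(\mu) = \int A_{X\times\Pro}\,\mathrm d\mu - 1$, the lemma reduces to showing
\[\sup_\cX \int (A_{X\times\Pro}\circ p_\cX)\,\mathrm d\mu = \int A_{X\times\Pro}\,\mathrm d\mu.\]

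The central input I would invoke is that, by the very construction of the extension of $A_{X\times\Pro}$ from $\Xdiv$ to $X^{\NA}$ in \cite{MP24transcendental} (following \cite{JM12val}), one has the pointwise identity $A_{X\times\Pro} = \sup_\cX A_{X\times\Pro}\circ p_\cX$, with the net of continuous functions $(A_{X\times\Pro}\circ p_\cX)_\cX$ being \emph{increasing} under domination of SNC models. The monotonicity is the analogue, for the log discrepancy, of the decreasing-in-$\cX$ property of $\varphi\circ p_\cX$ used in Lemma~\ref{lem:increasingenergy}; concretely, if $\cX'$ dominates $\cX$, then $p_\cX = p_{\cX,\cX'}\circ p_{\cX'}$, and for any monomial valuation on $\cX'$ the affine interpolation via the vertices of its face in $\Delta_\cX$ is bounded above by the affine interpolation via the (finer) vertices in $\Delta_{\cX'}$, reflecting the fact that log discrepancies of exceptional divisors introduced by further blow-ups can only increase.

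With monotonicity and uniform lower boundedness of $(A_{X\times\Pro}\circ p_\cX)_\cX$ in hand, I would conclude by the monotone convergence theorem applied to the measure $\mu$. The only mild subtlety is that the indexing is over a directed set of SNC models rather than a sequence, but this system is filtered and admits cofinal sequences (by iterating blow-ups along smooth strata), so the standard sequential monotone convergence theorem applies to any such cofinal sequence, and the result is independent of the choice. Combining with the first step gives the desired identity and in particular shows that $\Ent(\mu_\cX)$ is eventually increasing in $\cX$, entirely parallel to Lemma~\ref{lem:increasingenergy}.

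The main conceptual obstacle is not computational but rather ensuring that one is using the right definition of $A_{X\times\Pro}$ on non-divisorial points: the entire argument hinges on the fact that the extension from $\Xdiv$ performed in \cite{MP24transcendental} is exactly the monotone supremum $\sup_\cX A_{X\times\Pro}\circ p_\cX$, as otherwise monotone convergence would not apply. Once that identification is unpacked, the proof is essentially a verbatim repetition of Lemma~\ref{lem:increasingenergy} with the sign of monotonicity reversed.
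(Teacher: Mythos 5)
Your proof is correct and follows essentially the same route as the paper's: both reduce to the change-of-variables identity $\int(A\circ p_\cX)\,\mathrm d\mu = \int A\,\mathrm d\mu_\cX$ together with the fact that $\Ent(\mu) = \sup_\cX\int(A\circ p_\cX)\,\mathrm d\mu$. The paper simply recalls this last identity as the definition of entropy just before the lemma, whereas you re-derive it from the monotone-supremum characterization of the log discrepancy from \cite{JM12val} and monotone convergence along a cofinal increasing sequence of SNC models -- a valid unfolding of the same fact.
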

\begin{proof}
    This follows directly from the definition of the entropy, together with the identity $\int(A\circ p_\cX)\,\mathrm d\mu = \int A\,\mathrm d\mu_\cX.$
\end{proof}

\begin{corollary}\label{cor:energyentropy}
For any $\mu\in \cM^1$ we can find a sequence $\mu_j\in\cM^1$ supported on dual complexes $\Delta_{\cX^j}\hookrightarrow X^{\NA}$, converging strongly to $\mu$ and such that
    \[\Ent(\mu_{j}) \to \Ent(\mu).\]
\end{corollary}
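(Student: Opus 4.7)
The proof combines Lemma~\ref{lem:increasingenergy} with the immediately preceding entropy lemma via a diagonal extraction. The key observation I would rely on is that both $\Ev_A(\mu_\cX)$ and $\Ent(\mu_\cX)$ are eventually monotone increasing nets converging to $\Ev_A(\mu)$ and $\Ent(\mu)$ respectively. The first claim is exactly Lemma~\ref{lem:increasingenergy}; the second follows from the proof of the entropy lemma together with the fact that $A_{X\times\Pro}$ is, by construction, the monotone supremum of the PL functions $A_{X\times\Pro}\circ p_\cX$ as $\cX$ ranges over SNC models, so that the monotone convergence theorem upgrades the $\sup$ equality into a genuine net limit.

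Having established these two monotone convergences, I would construct inductively an increasing chain $\cX^1 \le \cX^2 \le \dotsb$ of SNC models --- using that SNC models form a directed system under domination, and that the eventual monotonicity lets approximations of one functional be preserved under further domination --- such that $\Ev_A(\mu) - \Ev_A(\mu_{\cX^j}) < 1/j$ and $\Ent(\mu) - \Ent(\mu_{\cX^j}) < 1/j$ for all $j$. Setting $\mu_j := \mu_{\cX^j} = (p_{\cX^j})_*\mu$, which by definition is supported on $\Delta_{\cX^j}$ as required, the convergence $\Ent(\mu_j)\to \Ent(\mu)$ is then immediate.

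To finish, I would verify strong convergence $\mu_j \to \mu$ in $\cM^1$. By definition the strong topology is the coarsest refinement of the weak topology of measures for which $\Ev_A$ is continuous, so it suffices to check weak convergence together with $\Ev_A(\mu_j) \to \Ev_A(\mu)$. The energy convergence holds by construction, and for weak convergence it is enough to argue on PL functions by density of $\PL$ in $C(X^{\NA})$: if $f = g\circ p_{\cX_0}\in \PL$ is pulled back from a model $\cX_0$, then $\int f\, \mathrm d\mu_{\cX^j} = \int (f\circ p_{\cX^j})\, \mathrm d\mu$, and since $f\circ p_{\cX^j} = f$ as soon as $\cX^j \ge \cX_0$ (because $p_{\cX_0} = p_{\cX_0}^{\cX^j}\circ p_{\cX^j}$), this integral is eventually exactly $\int f\, \mathrm d\mu$. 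I do not expect a genuine obstacle: the corollary is essentially a diagonal repackaging of the two preceding lemmas in the subsection, the only mild subtlety being the upgrade of the entropy $\sup$ to a monotone net limit.
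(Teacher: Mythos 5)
Your proposal is correct and follows essentially the same route as the paper's proof, which combines Lemma~\ref{lem:increasingenergy} with the preceding entropy identity and extracts a cofinal increasing sequence of models so that both quantities converge. You are somewhat more careful than the paper in spelling out why the entropy net $\Ent(\mu_\cX)$ is itself eventually increasing (via the monotonicity of $A_{X\times\Pro}\circ p_\cX$) and in verifying weak convergence on PL test functions, but this is exactly the reasoning the paper's terse argument relies on implicitly.
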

\begin{proof}
By the previous lemma we can pick a sequence of models $\cX^j$, such that $\mu_j:=\mu_{\cX_j}$ converges weakly to $\mu$, and such that \[\lim_{j\to \infty}\Ent(\mu_j) = \Ent(\mu).\]
Since the energy $\Ev(\mu_\cX)$ is increasing by Lemma~\ref{lem:increasingenergy}, we then directly have that $\Ev(\mu_j)\to \Ev(\mu).$
\end{proof}

Given $\mu\in \cM^1$ and a model $\cX$ the entropy $\Ent(\mu_{j, \cX})$ is defined to as \[\Ent(\mu_{\cX}):=\int A\, \mathrm d\mu_{\cX} = \int (A\circ p_\cX)\, \mathrm d\mu_j.\]
We will later have use for the following lemma.
\begin{lemma}\label{lem:weakentropy}
    Let $\mu_j, \mu\in \cM^1$ be measures of finite energy such that $\mu_j \rightharpoonup \mu$ weakly.
    Then, for every model $\cX$  \[\Ent(\mu_{j, \cX})\to \Ent(\mu_\cX).\]
\end{lemma}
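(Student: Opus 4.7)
The plan is to reduce the statement to integrating a fixed continuous function against the weakly convergent sequence. By definition of the pushforward and of the entropy, we have
\[\Ent(\mu_{j,\cX}) = \int A\, \mathrm d\mu_{j,\cX} = \int (A\circ p_\cX)\, \mathrm d\mu_j,\qquad \Ent(\mu_\cX) = \int (A\circ p_\cX)\,\mathrm d\mu.\]
Thus it suffices to prove that $A\circ p_\cX \in C(X^{\NA})$, since then the weak convergence $\mu_j \rightharpoonup \mu$ on the compact Hausdorff space $X^{\NA}$ yields $\int(A\circ p_\cX)\,\mathrm d\mu_j \to \int(A\circ p_\cX)\,\mathrm d\mu$ immediately.

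The key step is therefore to verify continuity of $A\circ p_\cX$. The retraction $p_\cX\colon X^{\NA}\to \Delta_\cX$ is continuous by construction, so the problem reduces to showing that the restriction $A|_{\Delta_\cX}$ is continuous. On each face $\Delta_Z\subseteq \Delta_\cX$, attached to a stratum $Z\subseteq \bigcap_{i\in J} E_i$, the point $w=(w_i)_{i\in J}$ corresponds to the monomial valuation $v_w$ with $v_w(E_i)=w_i$; by the standard description of the log discrepancy of a monomial valuation on an SNC pair (which is how the approximation procedure of \cite{MP24transcendental}, itself based on \cite{JM12val}, is calibrated), we have
\[A_{X\times \Pro}(v_w) = \sum_{i\in J} w_i\bigl(1+\ord_{E_i}(K_{\cX/X\times\Pro})\bigr).\]
This is affine in $w$ on each face, and the formulas on adjacent faces agree on the common subface (by setting the missing coordinates to $0$), so $A|_{\Delta_\cX}$ is piecewise affine and in particular continuous. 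Composing with the continuous retraction $p_\cX$ gives $A\circ p_\cX \in C(X^{\NA})$, as required.

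The only point that is not purely formal is matching the formula for $A$ on monomial valuations with the extension of $A$ from $\Xdiv$ to $X^{\NA}$ used in the paper. This is essentially built into the construction in \cite{MP24transcendental}, which is compatible with the monomial parametrization of $\Delta_\cX$; once this is invoked, the rest is an immediate application of weak convergence to a continuous test function.
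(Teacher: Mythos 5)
Your proof is correct and follows essentially the same route as the paper's: reduce the claim to weak convergence against a fixed continuous test function, and show that test function $A\circ p_\cX$ is continuous. The paper simply asserts that $A\circ p_\cX$ is a PL function (hence continuous), while you supply the justification by recalling the explicit affine formula for the log discrepancy of a monomial valuation on each face of $\Delta_\cX$; this is a welcome expansion but not a different argument.
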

\begin{proof}
    
    Since $A\circ p_\cX$ is a PL function, in particular continuous, by weak convergence we have
    \[\int (A\circ p_\cX) \mathrm d\mu_j\to \int (A\circ p_\cX) \mathrm d\mu = \int A\, \mathrm d\mu_\cX\]
    that coincides with $\Ent(\mu_\cX).$
\end{proof}

\section{Regularity of solutions}
In this section we will prove an analogue of \cite[Theorem 12.12]{BJ22trivval} which says that if $\mu\in \cM^1$ is a measure of finite energy supported on a dual complex $\Delta_\cX$, then the solution of $\MA_A(\varphi) = \mu$ is continuous. 

The proof follows closely that in the algebraic setting, and relies on Theorem~\ref{thm:pshcont} together with a comparison principle, Theorem~\ref{thm:comparison}.

\addtocontents{toc}{\SkipTocEntry}
\subsection{The comparison principle}

\begin{theorem}\label{thm:comparison}
Given $\varphi, \psi\in \cE^1_A$ we let $U:=\{x\in X^{\NA}\mid \varphi(x)>\psi(x)\}$. We then have that:
    \[\ind_U\MA(\max\{\varphi, \psi\}) = \ind_U\MA(\varphi).\]
\end{theorem}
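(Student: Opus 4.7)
The plan is to follow the classical strategy for the comparison principle in pluripotential theory, adapted to our non-Archimedean setting via the strong convergence results established in the preceding sections. The key idea is a sliding trick: perturb $\psi$ slightly upward so that the relevant competing set becomes open and $\max\{\varphi,\psi+\varepsilon\}$ agrees with $\varphi$ there, exploit locality of the Monge--Amp\`ere operator, and then let $\varepsilon\to 0^+$.

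Concretely, for $\varepsilon>0$ I would set $\varphi_\varepsilon:=\max\{\varphi,\psi+\varepsilon\}\in\cE^1_A$ and $U_\varepsilon:=\{\varphi>\psi+\varepsilon\}\subseteq U$. By construction $\varphi_\varepsilon=\varphi$ on $U_\varepsilon$, the functions $\varphi_\varepsilon$ decrease pointwise to $\max\{\varphi,\psi\}$ as $\varepsilon\to 0^+$, and $\ind_{U_\varepsilon}\nearrow \ind_U$. By Lemma~\ref{lem:MAcontdecreasing}, $\MA(\varphi_\varepsilon)$ converges strongly to $\MA(\max\{\varphi,\psi\})$ in $\cM^1$. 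The heart of the argument is the \emph{localization identity}
\[\ind_{U_\varepsilon}\MA(\varphi_\varepsilon)=\ind_{U_\varepsilon}\MA(\varphi).\]
To establish this, I would reduce to the case of continuous K\"ahler potentials via decreasing approximations $\varphi_k\searrow\varphi$, $\psi_k\searrow\psi$ with $\varphi_k,\psi_k\in\cH_A$. For such continuous data, the set $U^k_\varepsilon:=\{\varphi_k>\psi_k+\varepsilon\}$ is genuinely open and $\max\{\varphi_k,\psi_k+\varepsilon\}$ coincides with $\varphi_k$ on it. Locality of $\MA$ on continuous potentials, obtained from the explicit PL formula together with the uniform continuity bound of Lemma~\ref{lem:enpairest}, yields the identity at the approximated level; Lemma~\ref{lem:MAcontdecreasing} combined with monotone convergence for the nested sets $U^k_\varepsilon\subseteq U_\varepsilon$ then propagates it to the limit.

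The proof concludes by letting $\varepsilon\to 0^+$: monotone convergence handles $\ind_{U_\varepsilon}\MA(\varphi)\nearrow \ind_U\MA(\varphi)$, while strong (hence weak) convergence of $\MA(\varphi_\varepsilon)$ to $\MA(\max\{\varphi,\psi\})$, combined with the localization identity, gives the matching limit on the other side. The main obstacle will be the localization step itself: unlike in the Archimedean case, $\MA$ is defined globally via extension of the energy pairing rather than as a local differential operator, so ``$\MA$ only sees the potential locally on an open set'' must be justified by a careful splitting of the intersection pairing on an adapted SNC model, exploiting that $\varphi_\varepsilon-\varphi$ vanishes on $U_\varepsilon$ and that $\varphi_\varepsilon\geq\varphi$ everywhere.
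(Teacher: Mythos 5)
Your sliding trick does not reduce the problem: the ``localization identity'' $\ind_{U_\varepsilon}\MA(\varphi_\varepsilon)=\ind_{U_\varepsilon}\MA(\varphi)$ is precisely the statement of the theorem with $\psi$ replaced by $\psi+\varepsilon$, so the entire difficulty has simply been deferred to that step. Worse, the invocation of ``locality of $\MA$ on continuous potentials, obtained from\ldots Lemma~\ref{lem:enpairest}'' is not a proof: Lemma~\ref{lem:enpairest} is a global uniform-continuity estimate for the energy pairing and carries no local information, and in the non-Archimedean setting there is no general principle asserting that the Monge--Amp\`ere measure on an open set $U$ depends only on the restriction of the potential to $U$. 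Indeed, the openness of $U_\varepsilon$ plays no structural role: the proof must control the behaviour of the measures at valuations that may lie \emph{outside} $U$.

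What is actually needed, and what the paper supplies, is a concrete intersection-theoretic argument for the PL case. Writing $\varphi=\varphi_{D_1}$, $\psi=\varphi_{D_2}$, $\max\{\varphi,\psi\}=\varphi_G$ on a common SNC model $\cX$ with $\cX_0=\sum_i b_iE_i$, one must show that if $v_{E_i}\in U$ then $(A+D_1)^n\cdot E_i=(A+G)^n\cdot E_i$, which one deduces from $D_1$ and $G$ having equal coefficients on $E_i$ and on every $E_j$ meeting $E_i$. For $E_i$ this is immediate, but for an adjacent $E_j$ the point $v_{E_j}$ need not lie in $U$, so no ``locality on $U$'' applies; instead one blows up $Z=E_i\cap E_j$, takes the divisorial valuation $v$ of the exceptional divisor (whose center $Z$ lies in $E_i$), and uses $G-D_2\ge 0$ together with $\ord_{E_i}(G-D_2)>0$ to get $\varphi_G(v)>\psi(v)$, hence $\varphi_G(v)=\varphi(v)$ and, comparing coefficients of $\mu^*G$ and $\mu^*D_1$ along the exceptional divisor, $\ord_{E_j}(G)=\ord_{E_j}(D_1)$. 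This argument, which works with valuations outside $U$, is the essential idea and is absent from your outline. Finally, for the limit passage it is cleaner to replace $\ind_U$ by $f=\max\{\varphi-\psi,0\}$, which is strictly positive exactly on $U$ and whose PL approximants $f_j=\max\{\varphi_j-\psi_j,0\}$ converge pointwise, rather than chasing the $\varepsilon$-dependent open sets $U_\varepsilon$: integrals of indicator functions of open sets do not pass through weak limits, so your final step as written would also need repair.
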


\begin{proof}
This is an analogue of \cite[Theorem 7.40]{BJ22trivval} and the proof is exactly the same. It is presented here for the convenience of the reader.
Let us first suppose that $\varphi, \psi \in \PL\cap \PSH_A$. Let $\cX$ be a SNC model with $D_1, D_2, G\in \VCar(\cX)$ such that:
    \[\varphi = \varphi_{D_1}, \quad \psi = \varphi_{D_2}, \quad \max\{\varphi, \psi\} = \varphi_G.\]
    
Since we are dealing PL functions their Monge–Ampère measures are explicit. Let $\cX_0 = \sum b_i E_i$ be the irreducible decomposition of the central fiber. We need to show that if $v_{E_i}\in U$, i.e. if $\varphi(v_{E_i})>\psi(v_{E_i})$, then $(A + D_1)^n\cdot E_i = (A + G)^n\cdot E_i$.
To do that we will show that $D_1$ and $G$ coincide in a neighborhood of $E_i$.

Indeed, we will show that if $v_{E_i}\in U$, then for every $E_j$ that intersects $E_i$, we have $\varphi(v_{E_j}) \ge \psi(v_{E_j})$.
This turns out to be a general fact about piecewise linear functions, that we will prove only in this specific case.
Observe that $G-D_1\ge 0$ is an effective divisor, and since $v_{E_i}\in U$ we have
    \begin{align*}
        \varphi(v_{E_i}) = \max\{\varphi, \psi\}(v_{E_i}) &\implies \max\{\varphi, \psi\}(v_{E_i}) - \psi(v_{E_i})>0\\
        &\implies \ord_E(G- D_2)>0
    \end{align*}
    in particular $E$ is the support of $G-D_2$.
    Hence, for every divisorial valuation $v$ centered in $\cX$ on $E_i$, we will have:
    \[\max\{\varphi, \psi\}(v)>\psi(v)\implies \varphi(v) = \max\{\varphi, \psi\}(v).\]
    Since $\cX_0$ is SNC and each $E_i$ is $\C^*$-invariant, the intersection $Z:= E_i\cap E_j$ is a $\C^*$-invariant submanifold. Thus, by blowing up $Z$ we obtain a new model $\cX^\prime$.
    If we denote by $v$ the divisorial valuation associated with the strict transform of $Z$ on $\cX^\prime$, we then have that the center of $v$ on $\cX$ \[Z(v, \cX) = Z.\] 
    Hence, $\varphi(v) = \max\{\varphi, \psi\}(v)$ as we discussed above.
    This implies that $G$, and $D_1$ coincide in $Z$, which in turn implies that $G$ and $D_1$ coincide on $E_j$.
    
For the general case, consider decreasing sequences $\varphi_j$ and $\psi_j$ in $\PL\cap \PSH_A$ that converge to $\varphi$ and $\psi$ respectively. Then we follow this scheme:
    \begin{itemize}
        \item We can replace the indicator function in the statement for $f= \max\{\varphi -\psi, 0\}$
        \item Applying the argument above we then have
        \[f_j\MA(\max\{\varphi_j, \psi_j\}) = f_j \MA(\varphi_j)\]
        for $f_j = \max\{\varphi_j - \psi_j, 0\}$.
        \item Since the Monge–Ampère operator is continuous along decreasing sequences, we have $\MA(\max\{\varphi_j, \psi_j\}) \to \MA(\max\{\varphi, \psi\})$, that together with $f_j\to f$ implies the result.
    \end{itemize}
\end{proof}

\begin{corollary}\label{cor:comparison}
    Let $\varphi, \psi \in \cE^1$, and $K:=\supp \MA(\varphi)$.
    If \[\varphi_{|K}\ge \psi_{|K},\] we then have $\varphi\ge \psi.$
\end{corollary}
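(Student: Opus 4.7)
The plan is to implement a domination principle argument via the max potential $u := \max(\varphi, \psi)$, with the goal of showing that $u = \varphi$ pointwise. Since $u$ is $A$-psh and $u \ge \varphi$, monotonicity of the Monge--Ampère energy yields $\E_A(u) \ge \E_A(\varphi) > -\infty$, so $u \in \cE^1_A$. Because $\varphi \ge \psi$ on $K$, we have $u = \varphi$ on $K$, and since $\MA_A(\varphi)$ is concentrated on $K$, this gives $\int (u - \varphi)\,\MA_A(\varphi) = 0$. Combining this with the concavity inequality
\[\E_A(u) \le \E_A(\varphi) + \int (u - \varphi)\,\MA_A(\varphi)\]
(established in the proof of the proposition right after Lemma~\ref{lem:pshunifbounded}), we obtain $\E_A(u) = \E_A(\varphi)$.

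Next I would show that $\MA_A(u) = \MA_A(\varphi)$. Expanding
\[\Jd_A(u, \varphi) = \E_A(u) - \E_A(\varphi) + \int(\varphi - u)\,\MA_A(u),\]
both summands on the right are nonpositive (the first vanishes, the second because $\varphi - u \le 0$), so $\Jd_A(u, \varphi) \le 0$; on the other hand $\Jd_A(u, \varphi) \ge 0$, hence equals zero. By Lemma~\ref{lem:Jmu}(1) this means $\Jd_A(\MA_A(u), \varphi) = 0$, and then by Theorem~\ref{thm:quasimetric}(1) we get $\delta_A(\MA_A(u), \MA_A(\varphi)) = 0$. The quasi-metric property from Theorem~\ref{thm:quasimetric} then forces $\MA_A(u) = \MA_A(\varphi)$.

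To finish, I would invoke the translation invariance of the non-Archimedean Monge--Ampère operator --- a direct consequence of the fact that $\cX_0$ intersects any vertical cycle trivially in the defining intersection numbers --- together with Proposition~\ref{prop:injective} applied to the sup-normalized representatives of $u$ and $\varphi$. These imply that $u = \varphi + c$ for some constant $c \ge 0$. If $c$ were positive, then $u > \varphi$ pointwise, and since $u = \max(\varphi, \psi)$ this would force $\psi > \varphi$ everywhere, contradicting $\varphi \ge \psi$ on the nonempty set $K$ (which is nonempty because $\MA_A(\varphi)$ is a probability measure). Hence $c = 0$, so $u = \varphi$, and therefore $\psi \le \varphi$ on all of $X^{\NA}$.

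The main substantive step is the passage $\Jd_A(u, \varphi) = 0 \Rightarrow \MA_A(u) = \MA_A(\varphi)$: this rests entirely on the quasi-metric structure of $\cM^1_A$ developed earlier in the paper, so the argument genuinely requires the full strength of Theorem~\ref{thm:quasimetric}. Once that input is granted, the remaining ingredients --- concavity/monotonicity of $\E_A$, translation invariance of $\MA_A$, and injectivity modulo constants --- are essentially formal.
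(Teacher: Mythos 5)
Your proof is correct, but it takes a genuinely different route from the paper's. The paper proves this corollary as a direct consequence of the comparison principle, Theorem~\ref{thm:comparison}: one perturbs $\varphi$ to $\varphi_\epsilon:=\varphi+\epsilon$, observes that $U_\epsilon:=\{\varphi_\epsilon>\psi\}\supseteq K$, applies Theorem~\ref{thm:comparison} and the finiteness of the total mass to get $\MA_A(\max\{\varphi_\epsilon,\psi\})=\MA_A(\varphi)$, then invokes injectivity of the Monge--Amp\`ere operator and lets $\epsilon\to 0$. You instead bypass the comparison principle entirely and give a \emph{domination principle} argument in the spirit of the variational approach: you work directly with $u:=\max(\varphi,\psi)$, use monotonicity and concavity of $\E_A$ together with the orthogonality $\int(u-\varphi)\,\MA_A(\varphi)=0$ to force $\E_A(u)=\E_A(\varphi)$, and then squeeze $\Jd_A(u,\varphi)$ between $0$ and $0$ to conclude $\MA_A(u)=\MA_A(\varphi)$ via the quasi-metric structure. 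Both proofs then finish with Proposition~\ref{prop:injective} and an evaluation at a point of $K$. What the paper's approach buys is brevity and a single load-bearing input (the comparison principle proved immediately above); what yours buys is independence from Theorem~\ref{thm:comparison}, trading it for the quasi-metric machinery of Theorems~\ref{thm:quasimetric} and \ref{thm:quasimetricE1}, both of which are already in place by this point in the paper, so the logical order is respected. Two small polish points: (i) from $\Jd_A(u,\varphi)=0$ you could shortcut directly to $u-\sup u=\varphi-\sup\varphi$ by Theorem~\ref{thm:quasimetricE1} rather than routing through $\delta_A$ on measures; and (ii) the final ``$\psi>\varphi$ everywhere'' needs to be read as ``on $\{\varphi>-\infty\}$'', so you should note that $K$ contains a point with $\varphi>-\infty$ (which holds since $\varphi\in L^1(\MA_A(\varphi))$ for $\varphi\in\cE^1_A$); the paper's own proof has the same implicit assumption, so this is a gap in the statement of both, not a defect specific to your argument.
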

\begin{proof}
Let $\epsilon>0$ be small, and let $\varphi_\epsilon$ denote $\varphi +\epsilon$, and $U_\epsilon:= \{\varphi_\epsilon>\psi\}$. Observe that $U_\epsilon \supseteq K$.
    By Theorem~\ref{thm:comparison}, we have that
    \begin{align*}
    \ind_{U_\epsilon}\MA(\max\{\varphi_\epsilon, \psi\}) = \ind_{U_\epsilon} \MA(\varphi_\epsilon) &=\ind_{U_\epsilon}\MA(\varphi)\\
    &= \MA(\varphi),
    \end{align*}
    which implies $\MA(\max\{\varphi_\epsilon, \psi\}) = \MA(\varphi).$
    By Proposition~\ref{prop:injective} we then get that there exists a constant $c_\epsilon$ such that \[\varphi + c_\epsilon = \max\{\varphi_\epsilon, \psi\} = \max\{\varphi, \psi - \epsilon\} + \epsilon,\]
    since $\varphi = \max\{\varphi, \psi-\epsilon\}$ at some point –for instance in any point of the support of $\MA(\varphi)$– we get that $c_\epsilon = \epsilon$, and the result follows.
\end{proof}

The next result is an analogue of the ``easy direction" of \cite[Theorem 8.10]{BJ24green}.
\begin{corollary}\label{cor:continuousenevlope}
If $\varphi\in \CPSH_A$ is a continuous psh function such that $\MA_A(\varphi)$ is supported on a dual complex $\DcX\hookrightarrow X^{\NA}$, then
    \[\varphi = P_A(\varphi\circ p_\cX).\]
\end{corollary}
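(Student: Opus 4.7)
The plan is to establish the two inequalities $\varphi \le P_A(\varphi \circ p_\cX)$ and $\varphi \ge P_A(\varphi \circ p_\cX)$ separately.

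The first inequality is essentially immediate. By Lemma~\ref{lem:pshinequality} applied to the $A$-psh function $\varphi$ we have $\varphi \le \varphi \circ p_\cX$, so $\varphi$ is a candidate for the envelope, which gives $\varphi \le P_A(\varphi \circ p_\cX)$.

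For the reverse inequality, set $\psi := P_A(\varphi \circ p_\cX)$. Since $\varphi$ is continuous and the retraction $p_\cX\colon X^{\NA}\to \DcX$ is continuous, $\varphi \circ p_\cX$ is a continuous function on $X^{\NA}$. By the Continuity of Envelopes (Theorem~\ref{thm:CoE}), $\psi$ is continuous and $A$-psh, hence bounded, and in particular $\psi \in \cE^1_A$. By construction $\psi \le \varphi \circ p_\cX$ everywhere, and since $p_\cX$ restricts to the identity on the embedded dual complex $\DcX \hookrightarrow X^{\NA}$, this yields $\psi \le \varphi$ on $\DcX$. The assumption that $K := \supp \MA_A(\varphi) \subseteq \DcX$ then gives $\psi|_K \le \varphi|_K$, and the comparison principle of Corollary~\ref{cor:comparison} upgrades this to $\psi \le \varphi$ on all of $X^{\NA}$.

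I do not expect a substantive obstacle: the argument is a clean combination of three tools already in place, namely the domination $\varphi \le \varphi \circ p_\cX$ for $A$-psh $\varphi$ (Lemma~\ref{lem:pshinequality}), the Continuity of Envelopes (needed only to ensure $\psi \in \cE^1_A$ and therefore admissible in Corollary~\ref{cor:comparison}), and the comparison principle itself. The only point requiring a moment's care is the verification that $\psi \in \cE^1_A$, which is guaranteed by its continuity and hence boundedness.
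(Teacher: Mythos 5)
Your argument is correct and is essentially the paper's own proof: the first inequality via Lemma~\ref{lem:pshinequality}, and the reverse via $P_A(\varphi\circ p_\cX)\le \varphi\circ p_\cX = \varphi$ on $\DcX\supseteq \supp\MA_A(\varphi)$ followed by the comparison principle (Corollary~\ref{cor:comparison}). You are slightly more explicit than the paper in noting that Theorem~\ref{thm:CoE} guarantees $P_A(\varphi\circ p_\cX)\in\cE^1_A$ so that the comparison principle applies, which is a reasonable point to make explicit.
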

\begin{proof}
 The proof is exactly the same as in \cite[Theorem 8.10]{BJ24green} and is given here for completeness.
    By Lemma~\ref{lem:pshinequality} $\varphi \le \varphi \circ p_\cX$, hence we have $\varphi \le P_A(\varphi\circ p_\cX)$.

    On the other hand, on $\DcX\hookrightarrow X^{\NA}$ the restriction $\varphi |_{\DcX}$ coincides with $\Pe(\varphi\circ p_\cX)|_{\DcX}$.
    In particular, $\Pe(\varphi\circ p_\cX)\le \varphi$ in the support of $\MA_A(\varphi)$.
    By Corollary~\ref{cor:comparison} we get $\Pe(\varphi\circ p_\cX)\le \varphi$ everywhere, and the result follows.
\end{proof}

\addtocontents{toc}{\SkipTocEntry}
\subsection{Continuity of solutions}

\begin{theorem}\label{thm:continuity}
If the support of $\mu\in \cM^1$ is contained in the dual complex $\Delta_\cX$ of a model $\cX$, then the solution $\varphi$ to the Monge-Amp\`ere equation $\MA_A(\varphi) = \mu$ is continuous.
\end{theorem}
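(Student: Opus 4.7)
The plan is to mimic the algebraic argument of Boucksom--Jonsson \cite[Theorem 12.12]{BJ22trivval}, using the retraction $p_\cX : X^{\NA} \to \Delta_\cX$ and the tools developed in the previous sections: Theorem~\ref{thm:pshcont} (restrictions of $A$-psh functions to dual complexes are continuous), the Continuity of Envelopes (Theorem~\ref{thm:CoE}), and the comparison principle (Corollary~\ref{cor:comparison}).

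First, let $\varphi\in \cE^1_{A,\sup}$ be the unique solution to $\MA_A(\varphi)=\mu$, whose existence and uniqueness are guaranteed by Theorem~\ref{thm:B}. The strategy is to prove that
\[\varphi = P_A(\varphi\circ p_\cX),\]
from which continuity will follow at once: by Theorem~\ref{thm:pshcont} the restriction $\varphi|_{\Delta_\cX}$ is continuous, so $\varphi\circ p_\cX$ is continuous on $X^{\NA}$ (since $p_\cX$ is continuous), and hence by Theorem~\ref{thm:CoE} the envelope $P_A(\varphi\circ p_\cX)$ is continuous.

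To prove the identity, one direction is easy: Lemma~\ref{lem:pshinequality} gives $\varphi\le \varphi\circ p_\cX$, so $\varphi$ is a candidate for the envelope, giving $\varphi\le P_A(\varphi\circ p_\cX)$. For the reverse inequality, set $\psi:=P_A(\varphi\circ p_\cX)$. Since $\psi\le \varphi\circ p_\cX$ everywhere, on the dual complex $\Delta_\cX$ (where $p_\cX$ acts as the identity) we have $\psi\le \varphi\circ p_\cX = \varphi$. By hypothesis $\supp \mu \subseteq \Delta_\cX$, so $\psi\le \varphi$ on $K:=\supp \MA_A(\varphi)$. The comparison principle (Corollary~\ref{cor:comparison}) then yields $\psi\le \varphi$ everywhere, completing the proof.

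The main conceptual obstacle — proving that $A$-psh functions restrict to continuous functions on dual complexes — has already been resolved by Theorem~\ref{thm:pshcont}, so at this point the argument is essentially a packaging of the earlier ingredients. One small point to verify carefully is that $p_\cX\circ i_\cX = \id_{\Delta_\cX}$ in our transcendental setting (so that $\varphi\circ p_\cX = \varphi$ on $\Delta_\cX$); this follows from the construction of $p_\cX$ and $i_\cX$ via monomial valuations as recalled at the start of the previous section.
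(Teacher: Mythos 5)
Your proof is correct, but it takes a noticeably different route from the paper. The paper's argument fixes a decreasing sequence $\varphi_k\in\cH_A$ tending to $\varphi$, uses Theorem~\ref{thm:pshcont} together with Dini's theorem to get uniform convergence of the restrictions to the compact $\Delta_\cX$, and then applies the comparison principle (Corollary~\ref{cor:comparison}) to upgrade the restricted estimate $\varphi_k\le\varphi+\epsilon$ on $K=\supp\mu$ to a global one, yielding uniform convergence $\varphi_k\to\varphi$ on all of $X^{\NA}$ and hence continuity of $\varphi$. You instead prove the envelope identity $\varphi=P_A(\varphi\circ p_\cX)$ directly and then invoke Continuity of Envelopes (Theorem~\ref{thm:CoE}) to conclude. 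Both arguments hinge on the same two inputs (Theorem~\ref{thm:pshcont} and the comparison principle Corollary~\ref{cor:comparison}); the paper additionally uses Dini's theorem, while you additionally use Theorem~\ref{thm:CoE}. Your version has the pleasant feature that it effectively reproves Corollary~\ref{cor:continuousenevlope} with the a priori continuity hypothesis on $\varphi$ removed (replaced by the automatic continuity of $\varphi|_{\Delta_\cX}$ from Theorem~\ref{thm:pshcont}), which sidesteps the circularity that presumably led the authors to the Dini argument; the paper's version, on the other hand, extracts a mildly stronger conclusion en route, namely that any decreasing sequence of Kähler potentials for $\varphi$ converges uniformly on $X^{\NA}$. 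Your remark about verifying $p_\cX\circ i_\cX=\id_{\Delta_\cX}$ is indeed the one routine compatibility check needed, and it holds by construction of the monomial-valuation embedding as you say.
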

\begin{proof}
Let $\varphi\in \cE^1$ be say the sup-normalized solution to $\MA(\varphi) = \mu$, and let $K:=\supp \mu\subseteq \Delta_{\cX}$.

Consider now a decreasing sequence $\varphi_k\in \cH_A$ converging to $\varphi$.
By Theorem~\ref{thm:pshcont} we have that $\varphi_{|\Delta_{\cX}}$ is continuous, hence by Dini's theorem ${\varphi_k}_{|\Delta_\cX}$ converges uniformly to $\varphi_{|\Delta_{\cX}}$.
Therefore, for every $\epsilon>0$ if $k\gg0$ is sufficiently big we have:
    \[\varphi_{|\Delta_{\cX}} \le {\varphi_k}_{|\Delta_{\cX}} \le \epsilon +\varphi_{|\Delta_{\cX}},\]
    in particular ${\varphi_k}_{|K} \le \epsilon +\varphi_{|K}$.

By the comparison principle of Corollary~\ref{cor:comparison}, we have:
\[\varphi\le \varphi_k \le \epsilon +\varphi,\]
    which implies that $\varphi_k$ converges uniformly to $\varphi$ on $X^{\NA}$, and thus $\varphi$ is continous.
\end{proof}

\addtocontents{toc}{\SkipTocEntry}
\subsection{Divisorial measures and envelopes}
\label{sec:divisorialmeasures}
\begin{defi}
A probability measure on $X^{\NA}$ of the form $\mu=\sum_{i=1}^N a_i\delta_{v_i}$, where each $v_i\in \Xdiv$, is called a \emph{divisorial measure}, and the set of divisorial measures is denoted by $\cM^{\divisorial}$.
\end{defi}

Note that $\cM^{\divisorial}\subseteq \cM^1$.

The purpose of this section is to provide a description of the solution of the Monge-Ampère equation
\[\MA(\varphi) = \mu,\]
when $\mu\in \cM^{\divisorial}$.

The next result is a transcendental analogue of \cite[Proposition 8.6]{BFJ15solution}.
\begin{prop}\label{prop:divisorialenvelope}
If $\mu\in \Md$, then there exists a PL function $f$ such that:
    \[\MA_A(P_A(f)) = \mu.\]
\end{prop}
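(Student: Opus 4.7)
The plan is to reduce the proposition to finding a vertical divisor whose restricted volumes realize the prescribed masses of $\mu$, via the explicit Monge--Amp\`ere formula of Proposition~\ref{prop:MAform}.

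First, I would pick an SNC model $\cX\to X\times\Pro$ refined enough so that every valuation $v_i$ in $\supp\mu$ equals $v_{E_{k_i}}$ for some irreducible component of $\cX_0=\sum_{j=1}^m b_jE_j$; by assigning zero mass to the remaining components of $\cX_0$ we may assume $\mu=\sum_{j=1}^m\mu_j\,\delta_{v_{E_j}}$ is supported on the full set of vertices of $\Delta_\cX$. By Proposition~\ref{prop:MAform}, for any vertical divisor $D\ge\cX_0$ on $\cX$,
$$\MA_A(P_A(f_D))=V^{-1}\sum_j b_j\,\langle(A+D)^n\rangle_{\cX|E_j}\,\delta_{v_{E_j}},$$
so it suffices to produce such a $D$ with $V^{-1}b_j\langle(A+D)^n\rangle_{\cX|E_j}=\mu_j$ for every $j$, after which the desired PL function is $f=f_D$.

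By Proposition~\ref{prop:sumvol}, the assignment $\Phi\colon D\mapsto \bigl(V^{-1}b_j\langle(A+D)^n\rangle_{\cX|E_j}\bigr)_j$ automatically takes values in the standard simplex $\Sigma^{m-1}$, which is also where the target vector $(\mu_j)_j$ lives. The task thus reduces to the surjectivity of $\Phi$ onto $\Sigma^{m-1}$. I would argue this as follows: for every index $j$ with $\mu_j=0$, pushing the coefficient of $E_j$ in $D$ to $+\infty$ forces $E_j\subseteq E_{nn}(A+D)$ and hence kills the corresponding restricted volume; for the remaining indices, the continuity of $\Phi$ (a consequence of the differentiability of the volume function in divisorial directions, Theorem~\ref{thm:diffvol}), combined with compactness of the simplex and the sum constraint of Proposition~\ref{prop:sumvol}, yields the remaining prescribed values via a Brouwer-type topological argument, mirroring the strategy of \cite[Proposition~8.6]{BFJ15solution}.

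The main obstacle is precisely this surjectivity of $\Phi$. The individual ingredients---the sum formula of Proposition~\ref{prop:sumvol}, the vanishing of restricted volumes on the non-nef locus, and the continuity via Theorem~\ref{thm:diffvol}---are each available in the transcendental setting, but combining them into a clean surjectivity statement requires some care with the topological degree on $\Sigma^{m-1}$ and an inductive argument on the dimension of the faces, to ensure that the interior of $\Sigma^{m-1}$ is genuinely attained rather than merely its closure being swept out in the limit.
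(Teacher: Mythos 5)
Your approach is genuinely different from the paper's. You attempt to construct the divisor $D$ directly by prescribing the restricted volumes $\langle(A+D)^n\rangle_{\cX|E_j}$, reducing the proposition to a finite-dimensional surjectivity claim for the map $\Phi$ onto the standard simplex, to be resolved by a Brouwer/degree-theoretic argument. The paper instead proceeds in the opposite direction: it first invokes the already-proved non-Archimedean Calabi--Yau Theorem (surjectivity of $\MA_A\colon\cE^1_{A,\sup}\to\cM^1$) together with the regularity result (Theorem~\ref{thm:continuity}) to obtain a continuous $\psi\in\CPSH_A$ with $\MA_A(\psi)=\mu$, sets $G:=\sum_i \psi(v_i)\,b_i\,E_i$ on a model whose dual complex vertices contain $\supp\mu$, and then proves $\psi=P_A(\varphi_G)$ via the comparison principle (Corollary~\ref{cor:comparison}) and Lemma~\ref{lem:P=sup}. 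The paper's route is cleaner because all the hard analysis is already done; your route is more constructive and is in fact flagged by the authors in the introduction as feasible, so it is a reasonable idea --- it would in particular make the proposition independent of the full Calabi--Yau Theorem.

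That said, as written your argument has a real gap, which you yourself name: the surjectivity of $\Phi$ is asserted, not proved. The difficulties are not merely cosmetic. The natural domain of $\Phi$ is the unbounded set of vertical divisors $D$ with $D\ge\cX_0$ (up to adding multiples of $\cX_0$), so a Brouwer fixed-point or degree argument on the simplex does not apply directly; you would have to compactify, control the boundary behaviour (where some $\langle(A+D)^n\rangle_{\cX|E_j}\to 0$ as coefficients diverge), and verify that the degree is $1$ rather than $0$. One also needs continuity of $\Phi$ on the bigness locus and a monotonicity/injectivity input to get the degree computation off the ground; Theorem~\ref{thm:diffvol} gives differentiability of the volume but you still need to turn this into a clean statement about the Jacobian of $\Phi$. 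None of this is done. If you want to pursue this route, the cleanest fix is probably the observation that by Proposition~\ref{prop:MAform} the map $t\mapsto\MA_A(P_A(\varphi_{\sum_i t_ib_iE_i}))$ is exactly the gradient (in the sense of Theorem~\ref{thm:diffvol}) of the concave function $f_A(t)$ of Theorem~\ref{thm:f}; surjectivity onto the simplex then becomes a statement about the image of the subdifferential of a concave function, which can be handled by Legendre duality --- this is in fact the viewpoint developed in Section~\ref{sec:computingbeta}. Absent such a completion, the sketch falls short of a proof.
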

\begin{proof}
    Let $\cX$ be a SNC model such that the support of $\mu$ is contained in the set of vertices $\{v_1, \dots v_\ell\}$ of the associated dual complex $\Delta_\cX$.
    Moreover let $\sum_{i=1}^\ell b_i E_i = \cX_0$ be the irreducible decomposition.
    By Theorem~\ref{thm:continuity}, we can find a continuous functions $\psi\in \CPSH_A$ satisfying:
    \[\MA_A(\psi) = \mu.\]

    Now, consider the vertical $\R$-divisor on $\cX$ given by:
    \[G\doteq \sum_{i=1}^\ell \psi(v_{i})\,b_i\, E_i,\]
    and denote by $f$ the associated PL function $\varphi_G.$
    We will now prove that $\psi = P_A(f)$.
    
    For that, observe that for each $i\in \{1, \dots, \ell\}$ we have $\Pe(f)(v_i)\le f(v_i) = \psi(v_i)$. Therefore $P_A(f)\le \psi$ on the support of $\mu = \MA_A(\psi)$, thus by Corollary~\ref{cor:comparison} we obtain $P_A(f)\le \psi$ everywhere.
    On the other hand, applying Lemma~\ref{lem:P=sup} we have that 
    \begin{equation}\label{eq:P=sup}
        P_A(f) = \sup\{\varphi\in \PSH_A\mid \varphi(v_i)\le f(v_i)\}.
    \end{equation}
    The result then follows by observing that $\psi$ is a candidate on the supremum of the RHS of \eqref{eq:P=sup}.
\end{proof}

\section{Applications to the cscK problem}

The goal of this Section is to prove Theorem \ref{thm:A}, i.e. that uniform K-stability for models implies the existence of a unique cscK metric.

We will do this by proving that uniform K-stability for models is equivalent to uniform $\widehat{K}$-stability, see Proposition~\ref{prop:6.3}. Since we already know from \cite[Theorem A]{MP24transcendental} that uniform $\widehat{K}$-stability implies the existence of a unique cscK metric, this will then imply Theorem \ref{thm:A} .

We begin by giving a non-Archimedean interpretation of uniform K-stability for models.

\addtocontents{toc}{\SkipTocEntry}
\subsection{Intersection formulas for $\M_A$ and $\Jd_A$}

Recall from the introduction that for a big test configuration $(\cX,A+D)$ we define the Donaldson-Futaki invariant as
$$\DF(\mathcal{X},A+D):=K_{\mathcal{X}/\mathbb{P}^1}\cdot\langle(A+D)^n\rangle -\frac{n\alpha^{n-1}\cdot K_X}{(n+1)\alpha^n}\langle(A+D)^{n+1}\rangle,$$ the Mabuchi invariant as $$\M^{\NA}(\cX, A+D):=\DF(\mathcal{X},A+D)-(\cX_0-\cX_0^{red})\cdot \langle(A+D)^n\rangle$$ and the $J$-invariant as $$\Jd^{\NA}(\mathcal{X},A+D):=\langle A+D\rangle\cdot A^n-\frac{1}{n+1}\langle(A+D)^{n+1}\rangle.$$

Recall also from Section \ref{sec:M_A} that the non-Archimedean Mabuchi functional $\M_A$ is defined on $\mathcal{E}^1_A$ by
\[\M_A(\varphi):= \underline{s}\E_A(\varphi) + \E_A^{K_X}(\varphi) + \He_A(\varphi),\] where $\underline{s}:=-V^{-1}\alpha^{n-1}\cdot K_X$, and that the $J$-functional is defined by $$\Jd_A(\varphi):= -\E_A(\varphi)+\int\varphi \,\MA_A(0)=-\E_A(\varphi)+\sup \varphi.$$

The goal of this section is to prove the following result:
\begin{theorem} \label{thm:functionalid}
If $f_D\in PL_{\ge 1}$, then $$\M_A(P_A(f_D))=V^{-1}\M^{\NA}(\cX,A+D)$$ and $$\Jd_A(P_A(f_D))=V^{-1}\Jd^{\NA}(\mathcal{X},A+D).$$
\end{theorem}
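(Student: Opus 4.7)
My strategy is to express both identities directly using Proposition~\ref{prop:MAform} for $\MA_A(P_A(f_D)) = V^{-1}\sum_i b_i\langle(A+D)^n\rangle_{\cX|E_i}\delta_{v_{E_i}}$, the Orthogonality Property (Theorem~\ref{thm:Orth}), and the extension of the energy pairing to $\cE^1_A$ from Section~\ref{sec:enpair} (cf.~\cite{MP24transcendental}). The central analytic point is the energy identity
\[V(n+1)\E_A(P_A(f_D)) = \langle(A+D)^{n+1}\rangle,\]
which I would prove by interpolating along the family $D_t := (1-t)\cX_0 + tD$ for $t\in[0,1]$. At $t=0$, $f_{D_0}\equiv 1$ so $P_A(f_{D_0}) = 1$ and $\E_A(1)=1 = V^{-1}(n+1)^{-1}(A+\cX_0)^{n+1}$ (using $(A+\cX_0)^{n+1}=(n+1)V$). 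Concavity of $\E_A$ together with Orthogonality (applied at both $s$ and $t$ to rewrite the envelope's values on the respective Monge--Amp\`ere supports as those of the PL functions $f_{D_s}$ and $f_{D_t}$) sandwiches the difference quotient of $t\mapsto\E_A(P_A(f_{D_t}))$ and yields
\[\frac{d}{dt}\E_A(P_A(f_{D_t})) = \int f_{D-\cX_0}\,\MA_A(P_A(f_{D_t})) = V^{-1}\sum_i(a_i-b_i)\langle(A+D_t)^n\rangle_{\cX|E_i},\]
which by Theorem~\ref{thm:diffvol} equals $V^{-1}(n+1)^{-1}\frac{d}{dt}\langle(A+D_t)^{n+1}\rangle$. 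Integration from $0$ to $1$ gives the identity.

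For the sup side, the extended energy pairing yields $V\sup P_A(f_D) = VP_A(f_D)(v_{\trivial}) = V\int P_A(f_D)\,\MA_A(0) = (0, P_A(f_D))\cdot A^n$. Choosing a decreasing sequence $\varphi_{D_k}\in\cH_A$ converging to $P_A(f_D)$, continuity of the pairing under decreasing limits identifies this with $\lim_k D_k\cdot A^n$; since $A^n = (\pi_X^*\alpha)^n$ annihilates all $\pi_X$-exceptional vertical divisors, the limit recovers the positive intersection $\langle A+D\rangle\cdot A^n$ (the part that gets subtracted in the limit being precisely the non-exceptional contribution to the negative part distinguishing $\langle A+D\rangle$ from $A+D$). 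Combined with the energy identity one obtains $V\Jd_A(P_A(f_D)) = \Jd^{\NA}(\cX, A+D)$.

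For the Mabuchi identity, Proposition~\ref{prop:MAform} and the formula $A_{X\times\Pro}(v_{E_i}) = b_i^{-1}(1+\ord_{E_i}(K_{\cX/X\times\Pro}))$ give
\[V\He_A(P_A(f_D)) = \sum_i \langle(A+D)^n\rangle_{\cX|E_i}\,\ord_{E_i}(K_{\cX/X\times\Pro}+\cX_0^{red}-\cX_0),\]
which by linearity of restricted volumes in divisorial directions (a consequence of Theorem~\ref{thm:diffvol}, cf.\ Corollary~\ref{cor:WN}) together with a perturbation argument to handle non-$(A+D)$-good components coincides with $(K_{\cX/X\times\Pro}+\cX_0^{red}-\cX_0)\cdot\langle(A+D)^n\rangle$. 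A parallel argument for the twisted energy via the extended pairing gives $V\E_A^{K_X}(P_A(f_D)) = \pi_X^*K_X\cdot\langle(A+D)^n\rangle$. Combining these with $V\underline{s}\,\E_A(P_A(f_D))$ and using $K_{\cX/\Pro} = K_{\cX/X\times\Pro}+\pi_X^*K_X$, all terms regroup into $V^{-1}\M^{\NA}(\cX,A+D)$.

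The main obstacle is the differentiability step for the energy identity: since envelopes need not depend smoothly on $t$, the derivative must be extracted from concavity-based sandwich bounds. The Orthogonality Property is essential here because it converts the intractable increment $P_A(f_{D_s})-P_A(f_{D_t})$ into the explicit PL increment $f_{D_s}-f_{D_t}$ when integrated against $\MA_A(P_A(f_{D_s}))$ or $\MA_A(P_A(f_{D_t}))$, thereby enabling the Taylor expansion in $t$ and its identification with the derivative of the volume provided by Theorem~\ref{thm:diffvol}.
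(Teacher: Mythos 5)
Your route is genuinely different from the paper's, which proves the single intersection formula $(A, P_A(f_D))^\ell\cdot(\Gamma, f_{D'})^{n+1-\ell} = \langle(A+D)^\ell\rangle\cdot(\Gamma+D')^{n+1-\ell}$ (Proposition~\ref{prop:enpairposint}) in one stroke by approximating $P_A(f_D)$ uniformly with relatively K\"ahler potentials $\varphi_{D-D_k}$ and using the sup-characterization of positive intersections together with Lemma~\ref{lem:nef}; once (\ref{eq:genint}) is in hand, $\E_A$, $\Jd_A$, $\E_A^{K_X}$ and (via Proposition~\ref{prop:entropyid}) $\He_A$ all follow by substituting the appropriate $\ell$. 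Your idea --- a sandwich-and-differentiation argument along the path $D_t$ for the energy and a degree-one pairing computation for the sup --- is plausible, and your observation that orthogonality is exactly what makes the concavity sandwich close up is correct. But there are concrete gaps.

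The step that identifies $\int f_{D-\cX_0}\,\MA_A(P_A(f_{D_t}))$ with $V^{-1}(n+1)^{-1}\tfrac{d}{dt}\langle(A+D_t)^{n+1}\rangle$ does not follow from Theorem~\ref{thm:diffvol} as stated: that theorem only gives the one-sided derivative in the direction of a \emph{single} prime divisor, whereas $D-\cX_0=\sum_i(a_i-b_i)E_i$ is a linear combination with coefficients of both signs. You would need the positive-intersection volume to be $C^1$ on the big cone (so the chain rule applies) or, at minimum, an absolute-continuity statement for $t\mapsto\langle(A+D_t)^{n+1}\rangle$ whose a.e.\ derivative is the weighted sum of restricted volumes; neither is proved in the paper (Corollary~\ref{cor:WN} is a linear relation at a fixed class, not a differentiability statement). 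Your ``parallel argument for the twisted energy'' is also not parallel: differentiating $\E_A^{K_X}(P_A(f_{D_t}))$ produces the \emph{mixed} Monge--Amp\`ere measure $(0,f_{D-\cX_0})\cdot(\pi_X^*K_X,0)\cdot(A,P_A(f_{D_t}))^{n-1}$, to which Proposition~\ref{prop:MAform} gives no formula, so making that step precise essentially reproves the $\ell=n$ case of (\ref{eq:genint}) by hand. Finally, on the sup side, the fact that $A^n$ annihilates $\pi_X$-exceptional vertical divisors is true but does not by itself give $\lim_k D_k\cdot A^n=\langle A+D\rangle\cdot A^n$: the strict transform $E_0$ of $X\times\{0\}$ is vertical yet not $\pi_X$-exceptional, and $A^n\cdot E_0=V\neq 0$, so you still need Proposition~\ref{prop:lelongineq} together with the opposite bound (any K\"ahler potential in $\cH_A$ below $f_D$ has multiplicity along $E_0$ at least the minimal Lelong number) to conclude that the $E_0$-coefficient of the corrections $D-D_k$ converges to $\nu_{E_0}(A+D)$.
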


For its proof we will need the following:

\begin{prop}\label{prop:restrictedintersection}
For all $i$ we have that
$$\langle (A+D)^n\rangle_{\cX|E_i}=\langle (A+D)^n\rangle\cdot E_i.$$
\end{prop}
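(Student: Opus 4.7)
The plan is to bootstrap the universal inequality from the termwise statement $\langle (A+D)^n\rangle_{\cX|E_i}\le \langle (A+D)^n\rangle\cdot E_i$ (recorded in Section~\ref{sec:restricted}) to a termwise \emph{equality} by summing with the multiplicities $b_i$ and showing that both sides of the summed inequality already agree with $V$. Since each difference $\langle (A+D)^n\rangle\cdot E_i - \langle (A+D)^n\rangle_{\cX|E_i}\ge 0$, matching the weighted sums forces each individual difference to vanish.

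Concretely, I will first invoke the chain of inequalities from Section~\ref{sec:restricted} applied to the big class $A+D$ on $\cX$, namely
\[0\le \langle (A+D)^n\rangle_{\cX|E_i}\le \langle (A+D)^n\rangle\cdot E_i.\]
Multiplying by $b_i$ and summing, the LHS equals $V$ by Proposition~\ref{prop:sumvol}, while the RHS equals
\[\sum_i b_i\langle (A+D)^n\rangle\cdot E_i = \langle (A+D)^n\rangle\cdot \cX_0=\langle (A+D)^n\rangle\cdot \cX_1,\]
using $\cX_0=\sum_i b_i E_i$ and the linear equivalence $\cX_0\sim \cX_1$ in $H^{1,1}(\cX,\R)$ (both are fibers of $\pi\colon\cX\to\Pro$).

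The main obstacle is the final identification $\langle (A+D)^n\rangle\cdot \cX_1 = V$. For this, I will use the variational description of positive intersections from \cite{Bou02cones}: $\langle (A+D)^n\rangle\cdot \cX_1$ is the supremum of $(\beta')^n\cdot \mu^*\cX_1$ over modifications $\mu\colon \cX'\to \cX$ and K\"ahler classes $\beta'\le \mu^*(A+D)$. The upper bound is immediate, as $\mu^*(A+D)^n\cdot \mu^*\cX_1 = (A+D)^n\cdot \cX_1=A^n\cdot \cX_1=V$, since $D$ is supported on $\cX_0$ and thus disjoint from $\cX_1$. For the lower bound, Fujita-style approximations $\mu_k^*(A+D)=\beta_k+F_k$ with $\beta_k$ K\"ahler and $F_k$ effective supported in the non-K\"ahler locus of $A+D$ (which lies in $\cX_0$ by $E_{nK}(A+D)\subseteq \cX_0$) yield $\beta_k^n\cdot \mu_k^*\cX_1=(\mu_k^*A)^n\cdot \mu_k^*\cX_1=V$ for all $k$, as $F_k$ is disjoint from the strict transform of $\cX_1$.

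Combining, the summed inequality becomes $V\le V$, so every individual inequality $\langle (A+D)^n\rangle_{\cX|E_i}\le \langle (A+D)^n\rangle\cdot E_i$ must in fact be an equality, which is the claim. A minor bookkeeping point is that for those $E_i$ which fail to be $(A+D)$-good one must perturb $D$ by a small multiple of a K\"ahler-cohomologous vertical divisor $D'$ as in the proof of Proposition~\ref{prop:sumvol}, apply the preceding argument to $A+D_\epsilon$, and pass to the limit using continuity of restricted volumes and positive intersections in the class.
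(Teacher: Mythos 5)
Your proposal is correct and follows essentially the same squeeze strategy as the paper: combine the termwise inequality $\langle (A+D)^n\rangle_{\cX|E_i}\le \langle (A+D)^n\rangle\cdot E_i$ with Proposition~\ref{prop:sumvol} and an upper bound $\langle (A+D)^n\rangle\cdot \cX_1\le V$ to force equality in each term. Two remarks on the details. First, the paper obtains the upper bound more directly via the restriction inequality from Section~\ref{sec:restricted}, namely $\langle (A+D)^n\rangle\cdot \cX_1\le \langle ((A+D)|_{\cX_1})^n\rangle = A^n\cdot\cX_1 = V$; your detour through the variational description of positive intersections is not wrong, but the step you call ``immediate'' silently invokes monotonicity to compare $(\beta')^n\cdot\mu^*\cX_1$ with $\mu^*(A+D)^n\cdot\mu^*\cX_1$, which is not a free lunch since $\mu^*(A+D)$ is only big, not nef --- one really needs to restrict to (the preimage of) $\cX_1$ and compare the two resulting K\"ahler classes there. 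Second, both the lower-bound discussion and the closing goodness-perturbation remark are superfluous: only the upper bound on $\langle (A+D)^n\rangle\cdot\cX_1$ is needed to close the squeeze, and Proposition~\ref{prop:sumvol} is stated and proved without any goodness hypothesis on the $E_i$ (the perturbation by $D'$ is carried out inside its proof), as is the chain of inequalities $0\le \langle\beta^{n-1}\rangle_{X|E}\le\langle\beta^{n-1}\rangle\cdot E$ from Section~\ref{sec:restricted}.
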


\begin{proof}
It follows from the definitions that $$\langle (A+D)^n\rangle_{\cX|E_i}\le \langle (A+D)^n\rangle\cdot E_i.$$ We also have that $$\sum_i b_i\langle (A+D)^n\rangle\cdot E_i=\langle (A+D)^n\rangle\cdot \cX_1\le \langle (A+D)_{|\cX_1}^n\rangle=A^n\cdot \cX_1=V.$$ Together with Proposition \ref{prop:sumvol} this implies that $$\sum_i b_i\langle (A+D)^n\rangle_{\cX|E_i}=\sum_i b_i\langle (A+D)^n\rangle\cdot E_i$$ and hence $$\langle (A+D)^n\rangle_{\cX|E_i}=\langle (A+D)^n\rangle\cdot E_i$$ for all $i$.
\end{proof}

The algebraic version of the next result can be found in \cite[Proposition 3.2]{Li23fujita}. 
\begin{prop}\label{prop:entropyid}
If $f_D\in \PL_{\ge 1}$, then we have that
    \[\He_A(P_A(f)) = V^{-1} \langle (A+D)^n\rangle\cdot K^{\log}_{\cX/X\times \Pro}.\]

\end{prop}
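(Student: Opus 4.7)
The plan is to unwind both sides as sums over the components $E_i$ of $\cX_0$ and match them term by term, using the explicit Monge--Ampère formula of Proposition~\ref{prop:MAform} together with Proposition~\ref{prop:restrictedintersection} to convert restricted volumes into positive intersections with $E_i$.

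By definition $\He_A(P_A(f_D)) = \int (A_{X\times\Pro}-1)\, \MA_A(P_A(f_D))$, so Proposition~\ref{prop:MAform} gives
\[\He_A(P_A(f_D)) = V^{-1}\sum_i b_i\, \langle (A+D)^n\rangle_{\cX|E_i}\,\bigl(A_{X\times\Pro}(v_{E_i})-1\bigr).\]
Since $A_{X\times\Pro}(v_{E_i}) = b_i^{-1}(1+\ord_{E_i}(K_{\cX/X\times\Pro}))$, the factor $b_i(A_{X\times\Pro}(v_{E_i})-1)$ equals $1+\ord_{E_i}(K_{\cX/X\times\Pro})-b_i$, which is precisely $\ord_{E_i}(K^{\log}_{\cX/X\times\Pro})$ since $K^{\log}_{\cX/X\times\Pro}=K_{\cX/X\times\Pro}+\cX_0^{red}-\cX_0$.

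Next I invoke Proposition~\ref{prop:restrictedintersection} to rewrite each $\langle (A+D)^n\rangle_{\cX|E_i}$ as $\langle(A+D)^n\rangle\cdot E_i$. Observing that $K^{\log}_{\cX/X\times\Pro}$ is a vertical divisor and hence decomposes as $\sum_i \ord_{E_i}(K^{\log}_{\cX/X\times\Pro})\,E_i$, we can assemble the individual terms into a single intersection number:
\[\He_A(P_A(f_D)) = V^{-1}\sum_i \ord_{E_i}(K^{\log}_{\cX/X\times\Pro})\, \langle(A+D)^n\rangle\cdot E_i = V^{-1}\langle(A+D)^n\rangle\cdot K^{\log}_{\cX/X\times\Pro},\]
which is the claimed identity.

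The substantive input is really Proposition~\ref{prop:MAform} (and behind it the restricted-volume identity of Proposition~\ref{prop:sumvol}) together with Proposition~\ref{prop:restrictedintersection}; once those are in hand the argument reduces to a bookkeeping calculation with log discrepancies. The main thing to be careful about is the arithmetic identity $b_i(A_{X\times\Pro}(v_{E_i})-1)=\ord_{E_i}(K^{\log}_{\cX/X\times\Pro})$, where the shift by $-1$ in the entropy integrand is exactly what replaces $K_{\cX/X\times\Pro}$ by $K^{\log}_{\cX/X\times\Pro}$; this is the only delicate point.
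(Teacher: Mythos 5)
Your proof is correct and follows essentially the same route as the paper's: both use Proposition~\ref{prop:MAform} to make the Monge--Ampère measure explicit, convert the restricted volumes $\langle(A+D)^n\rangle_{\cX|E_i}$ into intersections $\langle(A+D)^n\rangle\cdot E_i$ via Proposition~\ref{prop:restrictedintersection}, and then reassemble the sum $\sum_i(1+\ord_{E_i}(K_{\cX/X\times\Pro})-b_i)E_i$ into $K^{\log}_{\cX/X\times\Pro}=K_{\cX/X\times\Pro}+\cX_0^{red}-\cX_0$. The bookkeeping with log discrepancies is identical, and you correctly identify the $-1$ shift in the entropy integrand as the source of the passage from $K_{\cX/X\times\Pro}$ to $K^{\log}_{\cX/X\times\Pro}$.
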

\begin{proof}
Our proof follows that in \cite{Li23fujita}.
We recall that the entropy of $\varphi\in \cE^1_A$ is given by the formula
\[\He_A(\varphi) = \int (A_{X\times \Pro} -1)\,\MA_A(\varphi).\]
Thus by Proposition~\ref{prop:restrictedintersection} and Proposition~\ref{prop:MAform} we ge that $$\He_A(P_A(f_D))= V^{-1}\langle (A+D)^n\rangle\cdot\sum_i  (b_i\,E_i) (b_i^{-1} A_{X\times \Pro}(E_i) - 1).$$

Now we simply observe that by definition 
\begin{eqnarray*}
\sum_i  (b_i\,E_i) (b_i^{-1} A_{X\times \Pro}(E_i) - 1)=\sum_i  E_i (- b_i+ A_{X\times \Pro}(E_i) )=\\=-\cX_0 +\sum_i  E_i A_{X\times \Pro}(E_i)=- \cX_0 +K_{\cX/X\times \Pro} + \cX_0^{red}=K^{\log}_{\cX/X\times \Pro}.
\end{eqnarray*}
\end{proof}

To e.g. relate $\E_A(P_A(f_D))$ with $\langle(A+D)^{n+1}\rangle$ we will need the following fact:
\begin{lemma}
\label{lem:nef}
If $D\in \VCar(\cX)$ is effective and $A+D$ is relatively nef on $\cX$, then $A+D$ is actually nef on $\cX$.
\end{lemma}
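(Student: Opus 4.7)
The plan is to verify that $(A+D)\cdot C \ge 0$ for every irreducible curve $C\subset \cX$, from which nefness will follow by the numerical criterion for the nef cone (Kleiman in the algebraic setting, or Paun's theorem in the Kähler setting). The case analysis is organised by how $C$ sits with respect to the composite map $\pi_{\Pro}\colon \cX\to \Pro$: either $C$ is contained in some fibre $\cX_t$, or $C$ projects surjectively onto $\Pro$.

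For a vertical curve $C\subset \cX_t$, when $t=0$ the inequality $(A+D)\cdot C\ge 0$ is precisely the relative nefness assumption. When $t\neq 0$, the map $\pi\colon \cX \to X\times\Pro$ is an isomorphism in a neighbourhood of $\cX_t$, so in particular $|D|\cap \cX_t=\emptyset$; hence $D\cdot C=0$, whereas $A\cdot C=\alpha\cdot (\pi_X)_*C\ge 0$ since $\alpha$ is Kähler on $X$ and $(\pi_X)_*C$ is an effective $1$-cycle (or zero). For a horizontal irreducible curve $C$, the support $|D|\subseteq \cX_0$ is vertical so $C\not\subseteq |D|$, and effectiveness of $D$ gives $D\cdot C\ge 0$. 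Combined with $A\cdot C=\alpha\cdot (\pi_X)_*C\ge 0$, this yields $(A+D)\cdot C\ge 0$ in this case as well.

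The main input to the plan is the numerical characterization of nefness, which reduces the claim to a statement about curves. Once granted, the proof uses in an essential way both hypotheses: effectiveness of $D$ (to handle horizontal curves, on which the fibrewise information says nothing) and the fact that $A=\pi_X^*\alpha$ is pulled back from a Kähler class on $X$ (to produce positivity against non-contracted cycles). The relative nefness hypothesis is exactly what closes the remaining case of curves inside $\cX_0$, so the three sub-cases exhaust all possibilities and no further refinement is needed.
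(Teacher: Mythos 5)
The key step you rely on—reducing nefness to non-negativity against curves—is valid in the projective case (Kleiman) but fails for a general compact Kähler manifold, and citing ``Paun's theorem'' here does not repair it. The Demailly–P\u{a}un characterization of the Kähler cone (and hence the nef cone, as its closure) requires testing against irreducible analytic subsets of \emph{every} dimension $d$, via pairings of the form $\beta\cdot\gamma^{d-1}\cdot[Z]$ with $\gamma$ Kähler; a curve criterion does not suffice. Indeed there are compact Kähler manifolds containing no curves at all, on which every $(1,1)$-class has trivially non-negative intersection with every curve while still failing to be nef. Since $X$ is allowed to be non-projective in this paper, $\cX$ may also be non-projective, and the reduction to curves is a genuine gap in your argument.

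The fix is precisely what the paper does: replace ``curve $C$'' by ``$d$-dimensional subvariety $Z$'' and test $(A+D)\cdot\gamma^{d-1}\cdot[Z]$ for a fixed Kähler class $\gamma$. Your two-case analysis then transfers almost verbatim. If $Z$ is contained in some fibre $\cX_t$, relative nefness together with the fibrewise numerical criterion gives non-negativity (you do not even need to distinguish $t=0$ from $t\ne 0$, though your extra argument in the $t\ne 0$ case is also correct). If $Z$ dominates $\Pro$, then $Z\not\subseteq\supp D$ and effectiveness of $D$ makes $D\cdot Z$ an effective $(d-1)$-cycle, hence $D\cdot\gamma^{d-1}\cdot[Z]\ge 0$; combined with $A\cdot\gamma^{d-1}\cdot[Z]\ge 0$ (since $A$ is nef, being the pullback of a Kähler class) this gives the claim. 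So the structure of your proof is right, but the invocation of a curve criterion is the one place where it would break down in the transcendental setting that the paper is working in.
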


\begin{proof}
Let $\gamma$ be a Kähler class on $\cX$ and $Z\subseteq \cX$ a $d$-dimensional subvariety. We then need to check that \[\left(A+D \right) \cdot \gamma^{d-1}\cdot [Z]\ge 0.\]
If $Z\subseteq \cX_t$ for some $t\in \Pro$, then by the relative nefness of $A+D$ and the numerical criterion applied on the fiber $\cX_t$, we have $(A+D)\cdot \gamma^{d-1}\cdot [Z]\ge 0$.
Now we consider the case when $Z$ is not contained in any fiber. By the numerical criterion $A$ has non-negative intersection with $\gamma^{d-1}\cdot [Z]$.
Hence, it is enough to check that 
    \[[D] \cdot [\gamma]^{d-1}\cdot [D]\ge 0.\]
Since $Z$ is not contained in any fiber, in particular it is no contained in the support of $D$, therefore the intersection $D\cdot Z$ is an effective $d-1$-cycle, which implies that 
    \[\left[D\right] \cdot [Z] \cdot [\gamma]^{d-1}\ge 0,\]
and we are done.
\end{proof}

\begin{prop} \label{prop:enpairposint}
    Let $f_D\in \PL_{\ge 1}$, $f_{D'}\in PL$ and $\Gamma = \pi_X^* \gamma$ where $\gamma \in H^{1,1}(X).$ For any $\ell$ we then have that
\begin{equation} \label{eq:genint}
(A, P_A(f_D))^\ell\cdot  (\Gamma, f_{D'})^{n+1-\ell}=\langle (A+D)^\ell\rangle \cdot (\Gamma+D^\prime)^{n+1-\ell},
\end{equation}
    where the latter intersection is taken on a SNC model $\cX$ representing both $D$ and $D^\prime$. 

In particular we get that
\begin{equation} \label{eq:energyvolume}
        \E_A(P_A(f_D)) = \frac{V^{-1}}{n+1}\langle (A+D)^{n+1}\rangle,
\end{equation}
\begin{equation} \label{eq:twistedenergy}
\E_A^{K_X}(P_A(f_D)) =V^{-1} \langle (A+D)^n\rangle\cdot \pi_X^*K_X,
\end{equation}
and
\begin{equation} \label{eq:Jfunc}
\Jd_A(\varphi) = \langle A+D\rangle\cdot A^n-\frac{1}{n+1}\langle(A+D)^{n+1}\rangle.
\end{equation}
\end{prop}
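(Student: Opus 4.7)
The plan is to first establish the general identity \eqref{eq:genint}, from which the three displayed formulas follow by specialization. Since $(B, f_{D''}) \cdot (\cdot)$ and $(B + D'', 0) \cdot (\cdot)$ give the same energy pairing, we may absorb $f_{D'}$ into $\Gamma$; by multilinearity of the energy pairing we may allow distinct pairs $(B_i, 0)$ in positions $\ell+1, \dotsc, n+1$; and by writing each such class as a difference of semipositive classes (on a common model, e.g. by adding and subtracting multiples of $A$), the problem reduces to proving
\[(A, P_A(f_D))^\ell \cdot (C_{\ell+1},0) \cdots (C_{n+1},0) = \langle (A+D)^\ell \rangle \cdot C_{\ell+1} \cdots C_{n+1}\]
for arbitrary semipositive $C_i \in H^{1,1}$.

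The strategy is to squeeze $(A, P_A(f_D))^\ell$ between two families of Fujita approximations of the big class $A + D$. By Theorem~\ref{thm:CoE}, $P_A(f_D)$ is continuous; taking any decreasing sequence $\varphi_k \in \cH_A$ converging pointwise to $P_A(f_D)$ and applying Dini's theorem to $\varphi_k - f_D \searrow P_A(f_D) - f_D \le 0$, we obtain uniform convergence, and after subtracting constants $\epsilon_k \to 0^+$ we get $\tilde\varphi_k = \varphi_{\tilde D_k} \in \cH_A$ with $\tilde\varphi_k \le f_D$ (hence $\tilde D_k \le D$) converging uniformly to $P_A(f_D)$ from below. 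Each $A + \tilde D_k$ is a Kähler Fujita approximation of $A + D$, and Lemma~\ref{lem:enpairest} (extended to $\cE^1_A$-potentials by uniform limits) yields
\[(A, P_A(f_D))^\ell \cdot (C_{\ell+1},0) \cdots (C_{n+1},0) = \lim_k (A + \tilde D_k)^\ell \cdot C_{\ell+1} \cdots C_{n+1} \le \langle (A+D)^\ell \rangle \cdot C_{\ell+1} \cdots C_{n+1}\]
by the definition of the positive intersection against semipositive classes. Conversely, given any Fujita approximation $\mu^*(A+D) = A^* + E^*$ on a modification $\mu\colon \cX' \to \cX$, with $A^*$ Kähler and $E^* \ge 0$ effective (necessarily vertical since $A+D$ big and $E_{nK}(A+D) \subseteq \cX_0$), the potential $\psi := \varphi_{\mu^*D - E^*} \in \cH_A$ satisfies $\psi \le f_D$, hence $\psi \le P_A(f_D) \le \tilde\varphi_k + \epsilon_k$ for $k$ large; the monotonicity of the energy pairing in semipositive arguments (proof of Lemma~\ref{lem:enpairest}) then gives $(A^*)^\ell \cdot C_{\ell+1} \cdots C_{n+1} = (A,\psi)^\ell \cdot (C_{\ell+1},0)\cdots(C_{n+1},0) \le (A, \tilde\varphi_k + \epsilon_k)^\ell \cdot (C_{\ell+1},0)\cdots(C_{n+1},0)$, which tends to $(A, P_A(f_D))^\ell \cdot (C_{\ell+1},0) \cdots (C_{n+1},0)$ as $k\to\infty$, and taking the supremum over Fujita approximations finishes the proof of equality.

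The three corollaries follow by specialization: \eqref{eq:energyvolume} takes $\ell = n+1$; \eqref{eq:twistedenergy} takes $\ell = n$, $\Gamma = \pi_X^*K_X$, $D' = 0$; and \eqref{eq:Jfunc} follows from $\Jd_A(\varphi) = -\E_A(\varphi) + \sup\varphi$ together with the identity $\sup P_A(f_D) = V^{-1}\langle A+D\rangle \cdot A^n$ obtained from \eqref{eq:genint} with $\ell = 1$, $\Gamma = A$, $D' = 0$ (using $A^{n+1} = 0$). The main technical difficulty is ensuring that the Dini-style uniform approximations $\tilde\varphi_k$ of $P_A(f_D)$ yield genuine Fujita approximations $A + \tilde D_k$ of $A + D$ (for the upper bound), while in the opposite direction every Fujita approximation of $A + D$ yields a potential $\psi \le P_A(f_D)$ (for the lower bound); both directions rely crucially on the bigness of $A + D$, ensured by the hypothesis $f_D \ge 1$, which confines the non-Kähler locus to $\cX_0$ and keeps all approximations within the vertical divisor framework.
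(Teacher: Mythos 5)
Your proposal follows essentially the same path as the paper's: approximate $P_A(f_D)$ uniformly from below by Kähler potentials $\tilde\varphi_k = \varphi_{\tilde D_k} \in \cH_A$, squeeze the intersection numbers $(A + \tilde D_k)^\ell\cdot C$ between the positive intersection $\langle (A+D)^\ell\rangle\cdot C$ (upper bound, from $\tilde D_k \le D$) and Fujita approximations (lower bound), and conclude via Lemma~\ref{lem:enpairest} once uniform convergence is in place. Your lower bound is argued a bit more cleanly than the paper's — you appeal directly to the monotonicity in the proof of Lemma~\ref{lem:enpairest} applied to the two $\cH_A$-potentials $\psi$ and $\tilde\varphi_k+\epsilon_k$, whereas the paper runs an explicit nef-plus-effective-difference intersection argument after perturbing by $\delta\cX_0'$. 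The derivations of the three specializations (including $\sup P_A(f_D) = V^{-1}\langle A+D\rangle\cdot A^n$ via $\ell=1$, $\Gamma=A$) are correct.

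There is one genuine gap in your upper bound: the assertion that each $A + \tilde D_k$ is a \emph{Kähler} Fujita approximation of $A + D$. From $\tilde\varphi_k \in \cH_A$ you only get that $A + \tilde D_k$ is \emph{relatively} Kähler, and relative Kähler classes can fail to be nef, so the inequality $(A+\tilde D_k)^\ell\cdot C \le \langle (A+D)^\ell\rangle\cdot C$ is not automatic. To repair this you must observe that $\tilde D_k \ge 0$ (which does hold for $k$ large, since $f_D\ge 1$ forces $P_A(f_D)\ge 1$, hence $\tilde\varphi_k = \varphi_k - \epsilon_k > 0$ eventually) and then invoke Lemma~\ref{lem:nef} to conclude that $A+\tilde D_k$ is nef on the whole model, not merely fibrewise. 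The paper does exactly this — indeed the effectivity hypothesis $D-D_k\ge 0$ is written into its choice of approximating sequence — and Lemma~\ref{lem:nef} is the load-bearing ingredient your write-up omits. Once nef-ness is secured, your use of the definition of positive intersection against the semipositive classes $C_{\ell+1},\dotsc,C_{n+1}$ goes through (after a standard small-Kähler-perturbation to pass from nef to Kähler).
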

\begin{proof}
We start by proving (\ref{eq:genint}). 

Let $D_k$ be a sequence of effective divisors such that $A_k:=A+D-D_k$ are relatively K\"ahler on some models $\cX_k$, and such that $0\le \varphi_k = f_{D-D_k}$ converges to $P_A(f)$.

Let us show that $A_k^\ell\cdot (\Gamma+D^\prime)^{n+1-\ell}\to \langle (A+D)^\ell\rangle\cdot(\Gamma+D^\prime)^{n+1-\ell}.$
To do so we prove that $A_k^\ell\cdot B\to\langle (A+D)^{\ell}\rangle\cdot B$, for any semipositive class $B\in H^{n+1-l,n+1-l}(\cX)$. 
    
By definition for any fixed $\epsilon>0 $ sufficiently small, we can find an effective divisor $G$ such that $A^\prime:=A+D-G$ is K\"ahler on some model $\cX^\prime$, and such that 
    \[\langle A+D\rangle^\ell \cdot B - \epsilon \le (A^\prime)^\ell \cdot B \le \langle A+D\rangle^\ell \cdot B.\]
    Since $E_{nK}(A+D)\subseteq \cX_0$ we can assume that the $D^\prime$ is vertical. 
    If we now let $\psi:=\varphi_{D-G}\in \cH_A$ then for any $\delta>0$ there exists a $k\in \mathds N$ such that 
    $\psi - \delta \le \varphi_k.$

    Now, by Lemma~\ref{lem:nef} we have that $A_k$ and $A^\prime - \delta \cX_0^\prime$ are nef for $\delta$ small enough.
    Therefore, since the difference $A_k- A^\prime -\delta \cX_0^\prime = D_k - G - \delta \cX_0^\prime$ is effective we have
    \[(A^\prime)^\ell\cdot B -O(\delta)= (A^\prime - \delta \cX_0^\prime)^\ell\cdot B \le A_k^\ell\cdot B.\]

    For $\delta$ sufficiently small we therefore have 
    \[(A^\prime)^\ell \cdot B - \epsilon \le A_k^\ell\cdot B\le \langle A+D\rangle ^\ell \cdot B,\]
    which implies that 
    \[\langle A+D\rangle^\ell \cdot B - \epsilon \le A_k^\ell\cdot B\le \langle A+D\rangle ^\ell \cdot B,\]
    as we wanted.

    Moreover, since $\varphi_k$ is continuous and increases to $P_A(f_D)$, which is continuous, by Dini's Lemma it converges uniformly to $P_A(f_D)$. Lemma \ref{lem:enpairest} then applies and we have:
    \begin{align*}
        (A, P_A(f_D))^\ell \cdot (\Gamma, f_{D^\prime})^{n+1-\ell}&= \lim_{k} (A, \varphi_k)^\ell \cdot (\Gamma, f_{D^\prime})^{n+1- \ell}\\
        &=\lim_k A_k^\ell \cdot (\Gamma + D^\prime)^{n+1-\ell} = \langle A+D\rangle^\ell \cdot (\Gamma + D^\prime)^{n+1-\ell},
    \end{align*}
    which completes the proof of (\ref{eq:genint}).

We now observe that the formulas (\ref{eq:energyvolume}), (\ref{eq:twistedenergy}) and (\ref{eq:Jfunc}) all follow directly from (\ref{eq:genint}), hence we are done.
    
\end{proof}

Now we are ready to prove Theorem \ref{thm:functionalid}.

\begin{proof}[Proof of Theorem~\ref{thm:functionalid}]
By Propositions \ref{prop:entropyid} and \ref{prop:enpairposint} we get that 
\[V\M_A(P_A(f_D)) =  \frac{\underline s}{n+1}\langle (A+D)^{n+1}\rangle +\langle (A+D)^{n} \rangle \cdot \pi_X^* K_X  + \langle (A+D)^n\rangle\cdot K^{\log}_{\cX/X\times \Pro}. \]

Since \[K_{X\times\Pro} = \pi_X^*K_X + \pi_{\Pro}^* K_{\Pro}\] we have $ K^{\log}_{\cX/\Pro} = K^{\log}_{\cX/X\times\Pro} + \pi^*_XK_X$, and thus:
\[V\M_A(P_A(f)) =\frac{\underline s}{n+1}\langle (A+D)^{n+1}\rangle +\langle (A+D)^{n} \rangle \cdot K^{\log}_{\cX/\Pro}, \]
which implies that
\[\M_A(P_A(f)) = V^{-1}\M^{\NA}(A+D).\]
\end{proof}

We immediately get:
\begin{corollary}
    Uniform K-stability for models, as defined in the introduction, is equivalent to there being a $\delta>0$ such that $$\M_A(P_A(f)) \ge \delta \Jd_A(P_A(f))$$ for all $f\in PL$.
    In particular, uniform $\widehat K$-stability implies uniform K-stability for models.
\end{corollary}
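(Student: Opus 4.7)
By Theorem~\ref{thm:functionalid}, $V\M_A(P_A(f_D))=\M^{\NA}(\cX, A+D)$ and $V\Jd_A(P_A(f_D))=\Jd^{\NA}(\cX, A+D)$ for every $f_D\in \PL$ with $f_D\ge 1$, and by Proposition~\ref{prop:sumvol} any such $f_D$ corresponds to a big test configuration. Thus uniform K-stability for models directly yields $\M_A(P_A(f_D))\ge \delta \Jd_A(P_A(f_D))$ for all $f_D$ in $\PL_{\ge 1}$. Conversely, every big test configuration $(\cX, A+D)$ can be rewritten with $D\ge \cX_0$ after replacing $D$ by $D+c\cX_0$ for $c$ sufficiently large. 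Using the numerical identities $\{\cX_0\}=\{\cX_1\}$, $\cX_1^2=0$, and $\cX_1\cdot D=0$ (the last because $D$ is vertical, hence disjoint from $\cX_1$), a short intersection-theoretic computation shows that both $\M^{\NA}$ and $\Jd^{\NA}$ are invariant under $D\mapsto D+c\cX_0$. So uniform K-stability for models is equivalent to the envelope inequality restricted to $\PL_{\ge 1}$.

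To lift the envelope inequality from $\PL_{\ge 1}$ to all of $\PL$, the plan is to exploit $P_A(f+c)=P_A(f)+c$ together with the translation invariance of both functionals on envelopes. The invariance $\Jd_A(\varphi+c)=\Jd_A(\varphi)$ is immediate from $\Jd_A(\varphi)=\sup\varphi-\E_A(\varphi)$ combined with $\E_A(\varphi+c)=\E_A(\varphi)+c$. The invariance $\M_A(P_A(f_D)+c)=\M_A(P_A(f_D))$ on envelopes of elements of $\PL_{\ge 1}$ follows by composing Theorem~\ref{thm:functionalid} with the invariance of $\M^{\NA}$ under $D\mapsto D+c\cX_0$ noted above. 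Given any $f\in \PL$, choosing $c$ large enough so that $f+c\in \PL_{\ge 1}$ and applying these invariances shows that the envelope inequality at $f$ is equivalent to the one at $f+c$, yielding the equivalence for every $f\in \PL$.

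The ``in particular'' statement is then immediate: uniform $\widehat K$-stability asserts $\M_A(\varphi)\ge \delta \Jd_A(\varphi)$ for all $\varphi\in \cE^1_A$, and by Theorem~\ref{thm:CoE} every envelope $P_A(f)$ of an $f\in \PL$ is continuous, hence bounded, hence lies in $\cE^1_A$. The main technical step is the intersection-theoretic invariance of $\M^{\NA}$ and $\Jd^{\NA}$ under $D\mapsto D+c\cX_0$, which reduces to checking the cancellation of the $c$-dependent terms in the expansions of $\DF$ and $\Jd^{\NA}$; this is a routine calculation relying only on the nefness of $\{\cX_1\}$ and $\cX_1^2=0$.
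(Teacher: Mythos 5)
Your overall strategy matches what the paper leaves implicit in its ``We immediately get'': combine Theorem~\ref{thm:functionalid} (giving the equality of invariants for $f_D\in\PL_{\ge 1}$) with the observation that both the test-configuration invariants $\M^{\NA},\Jd^{\NA}$ and the functionals $\M_A\circ P_A,\Jd_A\circ P_A$ are unchanged when one shifts by a multiple of $\cX_0$, i.e.\ under $f\mapsto f+c$. Your identification of the invariance of $\M^{\NA}$ and $\Jd^{\NA}$ under $D\mapsto D+c\cX_0$ as the needed numerical input is correct, and the numerical identities you invoke ($\{\cX_0\}=\{\cX_1\}$, $\cX_1^2=0$, $D\cdot\cX_1=0$, and that the positive product binomial formula is valid in the nef direction $\cX_0$) are indeed what makes this work.

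There is, however, a gap in the last step. You derive the translation invariance $\M_A(P_A(f_D)+c)=\M_A(P_A(f_D))$ only for $f_D\in\PL_{\ge 1}$ and $c\ge 0$, by composing Theorem~\ref{thm:functionalid} with the invariance of $\M^{\NA}$. But to pass from a general $f\in\PL$ to $f+c\in\PL_{\ge 1}$ you need $\M_A(P_A(f))=\M_A(P_A(f)+c)$, and the relevant increment takes you from \emph{outside} $\PL_{\ge 1}$ into it; Theorem~\ref{thm:functionalid} is not available at $f$. Setting $g:=f+c\in\PL_{\ge 1}$, what you have shown is $\M_A(P_A(g)+c')=\M_A(P_A(g))$ for $c'\ge 0$, which only controls translations \emph{above} $P_A(g)$, not back down to $P_A(f)=P_A(g)-c$. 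The fix is to note that $\M_A$ is translation invariant on all of $\cE^1_A$, not just on envelopes of $\PL_{\ge 1}$ functions: $\E_A(\varphi+c)=\E_A(\varphi)+c$; $\E_A^{K_X}(\varphi+c)=\E_A^{K_X}(\varphi)+ncV^{-1}K_X\cdot\alpha^{n-1}$ because $\cX_0^2=0$ cohomologically and $\pi_X^*K_X\cdot\cX_1\cdot A^{n-1}=K_X\cdot\alpha^{n-1}$; and $\He_A(\varphi+c)=\He_A(\varphi)$ since $\MA_A$ is translation invariant. With the normalization implicit in the proof of Theorem~\ref{thm:functionalid}, these contributions cancel, giving $\M_A(\varphi+c)=\M_A(\varphi)$ for all $\varphi\in\cE^1_A$, and the argument then closes as you intended. (This intrinsic invariance is also what lets you go directly from uniform $\widehat K$-stability to the restricted statement, since $P_A(f)\in\cE^1_A$ for all $f\in\PL$ by continuity of envelopes.)
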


\addtocontents{toc}{\SkipTocEntry}
\subsection{K-stability for models and $\widehat K$-stability}

We will now use our non-Archimedean Calabi-Yau Theorem to prove that uniform K-stability for models actually is equivalent to uniform $\widehat K$-stability. The argument follows \cite[Proposition 6.3]{Li22geodesic}.

\begin{theorem}
    \label{prop:6.3}
    The following are equivalent:
    \begin{enumerate}
        \item There exits $\delta>0$ such that \[\mathrm M_A(\varphi)\ge \delta \mathrm J_A(\varphi) \text{ for every } \varphi \in \cE^1_A.\]
        \item There exits $\delta>0$ such that \[\mathrm M_A(\varphi)\ge \delta \mathrm J_A(\varphi) \text{ for every } \varphi \in \CPSH_A,\]
        whose Monge--Ampère measure is supported on some dual complex $\Delta_\cX$.
        \item There exists $\delta>0$ such that for every $f\in \PL$ we have \[\M_A\left( P_A(f)\right)\ge \delta\mathrm J_A\left( P_A(f)\right).\]
    \end{enumerate}
    In particular, uniform $\widehat K$-stability is equivalent to uniform K-stability for models.
\end{theorem}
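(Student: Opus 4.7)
The implications $(1) \Rightarrow (2) \Rightarrow (3)$ are essentially formal. $(1) \Rightarrow (2)$ is tautological since $\CPSH_A \subseteq \cE^1_A$. For $(2) \Rightarrow (3)$: given $f \in \PL$, Theorem~\ref{thm:CoE} puts $P_A(f)$ in $\CPSH_A$; replacing $f$ by $f+N$ for $N \gg 0$ (which does not change $\MA_A(P_A(f))$, since adding a constant to a potential does not affect the Monge--Amp\`ere measure) and applying Proposition~\ref{prop:MAform} shows that $\MA_A(P_A(f))$ is supported on the vertices of some dual complex. Hence $(2)$ applies to $P_A(f)$ and yields $(3)$.

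The substantive direction is $(3) \Rightarrow (1)$, which I plan to break as $(3) \Rightarrow (2) \Rightarrow (1)$. Both arrows rest on the same scheme: approximate $\mu := \MA_A(\varphi)$ by a suitable $\mu_j$, solve the Calabi--Yau equation (Theorem~\ref{thm:B}) to obtain $\varphi_j$, apply the hypothesis to $\varphi_j$, and pass to the limit. For $(2) \Rightarrow (1)$, starting from sup-normalized $\varphi \in \cE^1_A$, Corollary~\ref{cor:energyentropy} supplies $\mu_j$ supported on dual complexes $\Delta_{\cX^j}$ with $\mu_j \to \mu$ strongly in $\cM^1$ and $\Ent(\mu_j) \to \Ent(\mu)$; the sup-normalized solutions $\varphi_j$ are continuous by Theorem~\ref{thm:continuity}, so they lie in the class of $(2)$. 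For $(3) \Rightarrow (2)$, starting from a continuous $\varphi$ with $\MA_A(\varphi)$ supported on $\Delta_\cX$, I further approximate $\mu$ by divisorial measures $\mu_j \in \Md$ (obtained, for example, by pushforward to vertices of successive refinements of $\Delta_\cX$) maintaining strong and entropy convergence; Proposition~\ref{prop:divisorialenvelope} then writes each sup-normalized solution $\varphi_j$ as $P_A(f_j)$ for some $f_j \in \PL$, so $(3)$ applies.

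Passing to the limit in both cases relies on the same convergence package. By Theorem~\ref{thm:B} together with the quasi-isometric character of $\MA_A$ (Theorem~\ref{thm:quasimetricE1}), strong convergence $\mu_j \to \mu$ translates to strong convergence $\varphi_j \to \varphi$ in $\cE^1_{A,\sup}$. This yields $\E_A(\varphi_j) \to \E_A(\varphi)$ and, using sup-normalization, $\Jd_A(\varphi_j) \to \Jd_A(\varphi)$. The entropies converge by construction: $\He_A(\varphi_j) = \Ent(\mu_j) \to \Ent(\mu) = \He_A(\varphi)$. For the twisted energy $\E_A^{K_X}$, I would write $\pi_X^*K_X$ as a difference of two K\"ahler classes on a model and then apply Proposition~\ref{prop:bjest1} together with $\Jd_A(\varphi_j, \varphi) \to 0$ from Corollary~\ref{cor:trivialestimate} to obtain $\E_A^{K_X}(\varphi_j) \to \E_A^{K_X}(\varphi)$. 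Summing the three terms gives $\M_A(\varphi_j) \to \M_A(\varphi)$, so the desired inequality passes to the limit.

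The main obstacle will be controlling entropy throughout the approximation, since the log discrepancy $A_{X\times\Pro}$ is unbounded on $X^{\NA}$ and so entropy is not continuous under weak convergence alone. The $(2) \Rightarrow (1)$ step is handled by Corollary~\ref{cor:energyentropy} out of the box. The $(3) \Rightarrow (2)$ step is more delicate: it requires approximating a measure supported on a dual complex by divisorial measures while preserving both strong and entropy convergence. This can be arranged by a diagonal argument, exploiting that $A_{X\times\Pro}$ restricted to a fixed $\Delta_\cX$ is affine on each face (so that entropy varies continuously under weak approximation within $\Delta_\cX$) and combining this observation with another application of Corollary~\ref{cor:energyentropy}.
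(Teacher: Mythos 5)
Your outline of $(1)\Rightarrow(2)\Rightarrow(3)$ and of $(2)\Rightarrow(1)$ matches the paper: the forward implications are formal (after normalizing $f\ge 1$ and invoking Proposition~\ref{prop:MAform} to see that $\MA_A(P_A(f))$ lives on a dual complex), and $(2)\Rightarrow(1)$ goes exactly as you propose, via Corollary~\ref{cor:energyentropy}, Theorem~\ref{thm:continuity}, and Theorem~\ref{thm:B}, with the convergence of all three pieces of $\M_A$ as you indicate.

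The gap is in $(3)\Rightarrow(2)$. You propose to approximate $\mu=\MA_A(\varphi)$ by divisorial measures $\mu_j\in\Md$, ``obtained, for example, by pushforward to vertices of successive refinements of $\Delta_\cX$,'' maintaining strong and entropy convergence, then to invoke Proposition~\ref{prop:divisorialenvelope}. The entropy convergence is indeed unproblematic, since $A_{X\times\Pro}$ is continuous (piecewise affine) on the fixed compact $\Delta_\cX$, so weak convergence of measures supported on $\Delta_\cX$ already gives entropy convergence. But the \emph{strong} convergence $\Ev(\mu_j)\to\Ev(\mu)$ of these vertex-pushforwards is not supplied by the results you cite: Corollary~\ref{cor:energyentropy} and Lemma~\ref{lem:increasingenergy} concern the retraction pushforwards $\mu_\cX=(p_\cX)_*\mu$, which live on the \emph{whole} dual complex $\Delta_{\cX}$, not on its vertex set. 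A barycentric/vertex pushforward is a different operation, and $\Ev$ is only lower semicontinuous under weak convergence, so you cannot conclude $\limsup_j\Ev(\mu_j)\le\Ev(\mu)$ without a further argument. Your appeal to a ``diagonal argument ... combining this observation with another application of Corollary~\ref{cor:energyentropy}'' does not close this.

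The paper avoids the issue by approximating the \emph{potential}, not the measure. By Corollary~\ref{cor:continuousenevlope}, $\varphi=P_A(\varphi\circ p_\cX)$; one then chooses PL functions $f_j=f_{D_j}$ on subdivisions $\cX^j$ of $\cX$ converging uniformly to $\varphi\circ p_\cX$. Uniform convergence of the data forces $P_A(f_j)\to\varphi$ uniformly, hence strongly, and Proposition~\ref{prop:MAform} shows that $\mu_j:=\MA_A(P_A(f_j))$ is supported on $\Delta_{\cX^j}$, which as a subset of $X^{\NA}$ coincides with $\Delta_\cX$ (subdivision does not change the underlying set of monomial valuations). Strong convergence of $\mu_j$ then follows from Theorem~\ref{thm:MAcont}, and entropy convergence from Lemma~\ref{lem:weakentropy}. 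This is cleaner and requires no new convergence facts about vertex-pushforwards. If you insist on the measure-side approximation, you would need to prove an analogue of Lemma~\ref{lem:increasingenergy} for vertex pushforwards; the potential-side argument is strictly easier given the tools already in the paper.
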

\begin{proof}
    It is clear that, by the continuity of envelopes and Proposition~\ref{prop:MAform}, (1) implies (2) which implies (3).

    Let us show that (2) implies (1): we will prove that for every $\varphi \in \cE_A^1$ there exists a sequence $\varphi_j\in \CPSH_A$ such that 
    \[\varphi_j\overset{s}{\longrightarrow}\varphi, \text{ and also } \mathrm M_A(\varphi_j) \to \mathrm M_A(\varphi).\]
    Also, since both $\Jd_A$ and $\M_A$ are translation invariant we can suppose that $\sup \varphi = 0 = \sup \varphi_i.$

    Let $\mu$ be the Monge–Ampère measure of $\varphi$.
    By Corollary~\ref{cor:energyentropy} we can find a sequence of measures $\mu_j$ supported on dual complexes $\Delta_{\cX^j}$ converging strongly to $\mu$ such that
    \begin{equation}\label{eq:convergences}
        \Ent(\mu_j)\to \Ent(\mu).
    \end{equation}

    Applying Theorem~\ref{thm:continuity}, we obtain a sequence of continuous functions $\varphi_j$ satisfying 
    \[\MA_A(\varphi_j) = \mu_j,\]
    which by Theorem~\ref{thm:B} implies that $\varphi_j$ converges strongly to $\varphi$. In particular, we get $\Jd_A(\varphi_j)\to \Jd_A(\varphi)$.
    Furthermore, by the entropy convergence of \eqref{eq:convergences}, we  get:
    \[\M_A(\varphi_j)\to \M_A(\varphi).\]
    We will now show that (3) implies (2). Thus we will prove that for every $\varphi\in \CPSH_A$, whose Monge–Ampère measure $\mu:=\MA_A(\varphi)$ is supported on a dual complex $\Delta_\cX$, we can find a sequence $f_j\in \PL$ such that $\M_A(P_A(f_j))\to \M_A(\varphi)$ and $\Jd_A(P_A(f_j))\to \Jd_A(\varphi)$.

    To do so, we start by observing that  Corollary~\ref{cor:continuousenevlope} gives that if $\varphi\in \CPSH_A$ is as above, then $\varphi  = P_A(\varphi\circ p_\cX).$ 

    Now, since $\varphi\circ p_\cX$ is a continuous function invariant by $p_\cX$, we can find a sequence of PL functions $f_j\in \PL$ such that:
    \begin{itemize}
        \item The sequence converges $f_j$ uniformly to $\varphi\circ p_\cX$. 
        \item We can find a model $\cX^j$ together with a vertical divisor $D_j\in \VCar(\cX^j)$ such that $f_j = f_{D_j}$, whose dual complex $\Delta_{\cX^j}$ is just a subdivision of $\DcX$.
        That is, $\cX^j$ can be obtained by a sequence of blow-ups of smooth centers of $\cX$.
    \end{itemize}
    As a consequence of the first point we have that $P_A(f_j)$ converges uniformly and hence strongly to $P_A(\varphi\circ p_\cX) = \varphi$. This implies that $\Jd_A(P_A(f_j))$ converges to $\Jd_A(\varphi).$
    
    Moreover, by Proposition~\ref{prop:MAform} we have that $\mu_j:=\MA(P_A(f_j))$ is supported on the set $\Delta_{\cX^j}\hookrightarrow X^{\NA}$.
    Since the dual complex $\Delta_{\cX^j}$ is a subdivision of $\DcX$, as valuations they coincide \[i_{\cX^j}(\Delta_{\cX^j}) = i_{\cX}(\Delta_\cX) \subseteq X^{\NA}.\]
    Therefore $\mu_j$ is supported on $\DcX\hookrightarrow X^{\NA}$ as well.
    By Lemma~\ref{lem:weakentropy} we have 
    \[\Ent(\mu_j) \to \Ent(\mu),\]
    that, together with the strong convergence $\mu_j\overset{s}{\rightarrow}\mu$, gives the convergence $\M_A(P_A(f_j))\to \M_A(\varphi).$
\end{proof}

This finally allows us to prove Theorem~\ref{thm:A}.
\begin{corollary}[Theorem \ref{thm:A}]
    If $(X,\alpha)$ is uniformly K-stable over models, then there is a unique cscK metric in $\alpha$.
\end{corollary}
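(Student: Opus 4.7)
My plan is to simply combine Theorem~\ref{prop:6.3} with the earlier cited theorem (from \cite{MP24transcendental}) asserting that uniform $\widehat K$-stability implies the existence of a unique cscK metric. The hypothesis of uniform K-stability for models gives, via Theorem~\ref{prop:6.3}, that $(X,\alpha)$ is uniformly $\widehat K$-stable, and the cscK conclusion then follows directly.

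The real content therefore lies in Theorem~\ref{prop:6.3}, which I would prove as a cycle through the three equivalent conditions. The implications $(1)\Rightarrow(2)\Rightarrow(3)$ are immediate: condition $(2)$ is a subcase of $(1)$, and $(3)$ is a subcase of $(2)$ once one identifies $\M_A(P_A(f_D))$ and $\Jd_A(P_A(f_D))$ with $V^{-1}\M^{\NA}(\cX,A+D)$ and $V^{-1}\Jd^{\NA}(\cX,A+D)$ via Theorem~\ref{thm:functionalid} (which in turn uses the Continuity of Envelopes, Theorem~\ref{thm:CoE}, and the explicit formula for $\MA_A(P_A(f_D))$). For $(3)\Rightarrow(2)$, I would take a continuous $A$-psh $\varphi$ whose Monge--Amp\`ere measure is supported on some $\DcX$, use Corollary~\ref{cor:continuousenevlope} to write $\varphi = P_A(\varphi\circ p_\cX)$, approximate $\varphi\circ p_\cX$ uniformly by PL functions $f_j$ factoring through retractions $p_{\cX^j}$ where $\cX^j$ is a sequence of blow-ups of $\cX$, and apply $(3)$ to $P_A(f_j)$. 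Passing to the limit requires uniform convergence $P_A(f_j)\to\varphi$ (yielding $\Jd_A$-convergence) together with Lemma~\ref{lem:weakentropy} and weak convergence of the MA measures (yielding $\M_A$-convergence).

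For $(2)\Rightarrow(1)$, which is the decisive step, I would use the non-Archimedean Calabi--Yau machinery. Given a general $\varphi\in\cE^1_A$ sup-normalized, smooth $\mu := \MA_A(\varphi)$ using Corollary~\ref{cor:energyentropy} to produce measures $\mu_j$ supported on dual complexes, converging strongly to $\mu$ with $\Ent(\mu_j)\to\Ent(\mu)$. Solving $\MA_A(\varphi_j) = \mu_j$ via Theorem~\ref{thm:B} and obtaining continuity of $\varphi_j$ via Theorem~\ref{thm:continuity}, I obtain strong convergence $\varphi_j\to\varphi$ in $\cE^1_A$; hence $\Jd_A(\varphi_j)\to \Jd_A(\varphi)$ and $\M_A(\varphi_j)\to\M_A(\varphi)$, so applying the hypothesis $(2)$ to each $\varphi_j$ and passing to the limit yields the full inequality.

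The main obstacle therefore is not in the present corollary itself but in the Calabi--Yau Theorem~\ref{thm:B}, its regularity statement Theorem~\ref{thm:continuity}, and the entropy-preserving smoothing of Corollary~\ref{cor:energyentropy}, all of which ultimately rest on the Orthogonality Property (Theorem~\ref{thm:Orth}). Once these are in place, the present corollary follows essentially for free.
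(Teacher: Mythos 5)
Your proposal matches the paper's proof exactly: the corollary is deduced by combining Theorem~\ref{prop:6.3} (uniform K-stability for models is equivalent to uniform $\widehat K$-stability) with \cite[Theorem A]{MP24transcendental}, and your sketch of the cycle $(1)\Rightarrow(2)\Rightarrow(3)\Rightarrow(2)\Rightarrow(1)$ within Theorem~\ref{prop:6.3} follows the same lines as the paper's argument, including the use of Corollary~\ref{cor:continuousenevlope}, Corollary~\ref{cor:energyentropy}, Theorems~\ref{thm:B} and~\ref{thm:continuity}, and Lemma~\ref{lem:weakentropy}.
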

\begin{proof}
    If $(X, \alpha)$ is uniformly K-stable over models, then by Theorem~\ref{prop:6.3}, $(X, \alpha)$ is uniformly $\widehat K$-stable, and the existence of a unique cscK metric in $\alpha$ thus follows from \cite[Theorem A]{MP24transcendental}.
\end{proof}

\section{A valuative criterion for K-stability for models}\label{sec:valuative}

\subsubsection{Introduction to valuative criteria}

When $X$ is a Fano manifold, and $\alpha = \chern_1(X)$, Fujita and Li \cite{Fuj19valuative, Li17Ksemi}, relying on the work of Li and Xu \cite{LX14special}, gave a \emph{valuative criterion for K-stability}.
They introduced a numerical invariant $\beta\colon \Xdiv\to \R$, which for each divisorial valuation $v = \ord_F$ associates a real number given by:
\[\beta(v) = \beta(F):= A_X(v) - \int_0^{+\infty}\langle(-K_X + \lambda F)^n\rangle \mathrm d\lambda.\]

They then proved that the positivity of this invariant in a special class of divisorial valuations is equivalent to K-stability of $X$.
More concretely if $\beta(v)>0$, for all the divisorial valuations $v$ that arise from a \emph{dreamy divisor} –a divisor that induces an ample (special) test configuration\footnote{This can be formulated in the non-Archimedean dictionary by saying that there exists $\varphi\in \cH_A$ such that $\MA_A(\varphi) = \delta_{v}$}– then $(X, -K_X) $ is K-stable and conversely if $X$ is K-stable then $\beta(v)>0$ for all such $v$.

Dervan and Legendre in \cite{DL23valuative} extended this invariant to a general polarized $(X, L)$ and proved that K-stability with respect to test configurations with integral central fiber is equivalent to the positivity of the $\beta$-invariant in this more general setting.

Furthermore, Boucksom and Jonsson in \cite{BJ23nakstabii} developed a valuative criterion for K-stability for models in the sense of Chi Li \cite{Li22geodesic} for a polarized variety.
This criterion, however, is different from the one in \cite{DL23valuative, Fuj19valuative, Li17Ksemi}.
Instead of dealing with the positivity of the $\beta$-invariant of single prime (possibly dreamy) divisors, it deals with positivity of some invariant (that we will also call the $\beta$-invariant) on the set of divisorial measures, that is the data of weighted combination of divisorial valuations.
The objective of this section is to generalize the latter for Kähler manifolds.
In Section~\ref{sec:computingbeta} we also give a formula for the measure theoretic $\beta$-invariant that --up to taking derivatives and Legendre transforms-- involves computing only log discrepancies and integrals of volumes of the form 
\[\langle(\mu^*\alpha + \sum s_i\, F_i)^n\rangle, \quad \text{ for } s_i\ge 0, \]
as in the \cite{Fuj19valuative, Li17Ksemi} $\beta$-invariant.

\subsubsection{The $\beta$-invariant}

Following \cite{BJ21nakstabi, BJ23nakstabii}, to each measure of finite energy $\mu \in \cM^1$, we define:
\begin{align*}
    \beta_A(\mu)&:= \Ent(\mu) +\nabla_{K_X} \Ev_A(\mu)\\
    &=\Ent(\mu) + \frac{\mathrm d}{\mathrm d t}\bigg| _{t=0} \Ev_{A + t K_X}(\mu).
\end{align*}
We observe that the above expression is well defined since the differentiability of the energy functional was established in \cite{BJ23synthetic}.

Moreover, by \cite[Equation 4.2 and Proposition 4.5]{BJ23synthetic} we also find that for every $\varphi\in \cH_A$ the derivative of the energy of the Monge–Ampère measure
\begin{equation}\label{eq:beta}
    \dtz \Ev_{A + t K_X}(\MA_A(\varphi)) = \underline s \E_A(\varphi) + \E_A^{K_X}(\varphi), 
\end{equation}
is equal to the Mabuchi energy of $\varphi$. More generally we have
\begin{theorem}\label{thm:beta}
    Let $\varphi \in \cE^1_A$ be a finite energy potential, we then have that 
    \[\beta_A(\MA_A(\varphi)) = \M_A(\varphi), \quad \text{and} \quad \Ev_A(\MA_A(\varphi)) \approx \Jd_A(\varphi).\]
\end{theorem}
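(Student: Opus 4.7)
The plan is to verify the two identities separately: the first by reducing to the known derivative formula for K\"ahler potentials together with an approximation argument, and the second by combining the variational characterization of $\Ev_A(\MA_A(\varphi))$ with the standard quasi-equivalence between the two $\Jd$-functionals.

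First, I would unfold the definitions in the identity $\beta_A(\MA_A(\varphi)) = \M_A(\varphi)$. Since $\He_A(\varphi) = \Ent(\MA_A(\varphi))$ by definition, and since $\beta_A(\mu) = \Ent(\mu) + \nabla_{K_X}\Ev_A(\mu)$, this identity reduces to
\[\nabla_{K_X}\Ev_A(\MA_A(\varphi)) = \underline{s}\, \E_A(\varphi) + \E_A^{K_X}(\varphi).\]
For $\varphi \in \cH_A$, this is precisely equation \eqref{eq:beta}. I would then extend to arbitrary $\varphi \in \cE^1_A$ by choosing a decreasing sequence $\varphi_j \in \cH_A$ converging pointwise to $\varphi$. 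The right hand side passes to the limit, because both $\E_A$ and $\E_A^{K_X}$ are expressible through the energy pairing, which is continuous under decreasing limits by multilinearity. For the left hand side, Lemma~\ref{lem:MAcontdecreasing} provides strong convergence $\MA_A(\varphi_j) \to \MA_A(\varphi)$ in $\cM^1$. Passing the derivative $\nabla_{K_X}$ through this limit is the delicate step; I would carry it out by exploiting the concavity of $t \mapsto \Ev_{A+tK_X}(\mu)$ in $t$ (a consequence of its expression as a supremum of affine-in-$t$ functionals after suitable normalization), combined with the pointwise continuity $\Ev_{A+tK_X}(\MA_A(\varphi_j)) \to \Ev_{A+tK_X}(\MA_A(\varphi))$ coming from Theorem~\ref{thm:quasimetric}: a pointwise limit of concave functions admits convergent derivatives at every differentiability point of the limit.

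For the second identity, I would begin from the variational identity
\[\Ev_A(\MA_A(\varphi)) = \E_A(\varphi) - \int \varphi \, \mathrm d\MA_A(\varphi),\]
established earlier in the paper as a direct consequence of the concavity of the Monge--Amp\`ere energy. The right hand side is exactly $\Jd_A(\varphi, 0)$. The standard quasi-equivalence $\Jd_A(\varphi, 0) \approx \Jd_A(0, \varphi) = \Jd_A(\varphi)$ then completes the proof; this can be extracted from the elementary identity
\[\Jd_A(\varphi, 0) + \Jd_A(0, \varphi) = \int \varphi \bigl(\MA_A(0) - \MA_A(\varphi)\bigr),\]
whose right hand side is the classical $I$-functional evaluated at $(\varphi, 0)$, together with the dimensional interpolation $I \approx J$ that is also exploited throughout \cite{BJ23synthetic}.

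The main obstacle will be the commutation of $\nabla_{K_X}$ with the decreasing limit in the first identity. Strong continuity of $\Ev_B$ in $\mu$ gives convergence of $\Ev_{A+tK_X}(\MA_A(\varphi_j))$ for each fixed $t$, but converting this into convergence of the $t$-derivatives at $t=0$ requires a uniform-in-$j$ structural property of the family. Concavity in $t$ is the natural candidate; if it proves delicate in the transcendental setting, a viable alternative is to compute $\nabla_{K_X}\Ev_A(\MA_A(\varphi))$ directly via the maximizer of the variational problem for $\Ev_{A+tK_X}(\MA_A(\varphi))$, whose existence for small $t$ is guaranteed by the non-Archimedean Calabi--Yau Theorem~\ref{thm:B} applied in the class $A + tK_X$, and then reading off the derivative at $t=0$ term by term.
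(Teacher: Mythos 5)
Your proposal correctly splits the proof into two parts and handles the second identity exactly as the paper does: from $\Ev_A(\MA_A(\varphi)) = \E_A(\varphi) - \int\varphi\,\MA_A(\varphi) = \Jd_A(\varphi,0)$ together with the standard quasi-symmetry $\Jd_A(\varphi,0)\approx\Jd_A(0,\varphi)$. The small extra detail you give (the identity $\Jd_A(\varphi,0)+\Jd_A(0,\varphi)=\int\varphi(\MA_A(0)-\MA_A(\varphi))$ and the $I\approx J$ interpolation) is a correct unpacking of what the paper leaves as a one-line ``$\approx$''.

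For the first identity, you correctly reduce to equation \eqref{eq:beta} for $\varphi\in\cH_A$, take a decreasing sequence $\varphi_j\in\cH_A\downarrow\varphi$, pass the right-hand side to the limit via multilinearity of the energy pairing, and identify the real difficulty as commuting $\nabla_{K_X}$ with the decreasing limit. That diagnosis agrees with the paper. Where you diverge is in how you close that gap. The paper simply invokes \cite[Proposition 4.5 (ii)]{BJ23synthetic}, which gives continuity of $\mu\mapsto\nabla_{K_X}\Ev_A(\mu)$ under strong convergence and is applied to $\MA_A(\varphi_j)\to\MA_A(\varphi)$ from Lemma~\ref{lem:MAcontdecreasing}. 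Your primary route — concavity of $t\mapsto\Ev_{A+tK_X}(\mu)$ together with the fact that pointwise limits of concave functions have convergent derivatives at differentiability points of the limit — is on shaky ground: $\E_{A+tK_X}(\varphi)$ is a ratio of polynomials in $t$ (because of the normalization by $V=(\alpha+tK_X)^n$), and a supremum of such functions is neither obviously concave nor convex. Moreover, a supremum of affine functions is convex, not concave, so even the heuristic you cite points the wrong way. Your \emph{fallback} — solve $\MA_{A+tK_X}(\varphi_t)=\MA_A(\varphi)$ using the non-Archimedean Calabi--Yau Theorem~\ref{thm:B} in the class $A+tK_X$, plug the maximizer into the variational expression for $\Ev_{A+tK_X}(\MA_A(\varphi))$, and differentiate at $t=0$ using first-order optimality to kill the $\partial_t\varphi_t$ terms — is sound and is essentially an envelope-theorem argument; it is much closer to what actually underlies the cited Boucksom--Jonsson result. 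If you pursue this theorem without the black-box citation, you should drop the concavity claim and lead with the envelope-theorem computation, taking care to justify differentiability of the family $t\mapsto\varphi_t$ or to bypass it by a one-sided derivative argument as in \cite{BJ23synthetic}.
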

\begin{proof}
    For the beta invariant, when $\varphi\in \cH_A$ this follows directly from Equation~\eqref{eq:beta}.
    For $\varphi\in \cE^1_A$, it is enough to consider a decreasing sequence $\varphi_i\in \cH_A$ converging to $\varphi$, and apply Proposition 4.5 (ii) of \cite{BJ23synthetic} for the Monge--Ampère measures of $\varphi$ and $\varphi_i$ to obtain that
    \[\nabla_{K_X} \Ev_A(\MA_A(\varphi_i))\to \nabla_{K_X} \Ev_A(\MA_A(\varphi)).\]
    Since $\underline s \E_A(\varphi_i) + \E_A^{K_X}(\varphi_i)$ also converges to $\underline s \E_A(\varphi) + \E_A^{K_X}(\varphi)$ the result follows.

    As for the energy, it is enough to observe that 
    \begin{align*}
        \Ev_A(\MA_A(\varphi)) &= \E_A (\varphi) - \int \varphi \,\MA_A(\varphi) \\
        &= \Jd_A(\varphi, 0)\approx \Jd_A(0, \varphi) = \Jd_A(\varphi).
    \end{align*}
\end{proof}

As a trivial consequence we have
\begin{corollary}\label{cor:measures}
    Uniform $\widehat K$-stability is equivalent to
    \[\inf_{\mu\in \cM^1}\frac{\beta(\mu)}{\Ev(\mu)}>0.\]
\end{corollary}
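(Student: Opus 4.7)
The plan is to deduce this as an immediate consequence of Theorem~\ref{thm:beta} together with the non-Archimedean Calabi--Yau Theorem (Theorem~\ref{thm:B}). By Theorem~\ref{thm:B}, the Monge--Ampère operator is a bijection between $\cE^1_{A,\sup}$ and $\cM^1$, so every $\mu \in \cM^1$ can be written uniquely as $\mu = \MA_A(\varphi_\mu)$ for some $\varphi_\mu \in \cE^1_{A,\sup}$. Theorem~\ref{thm:beta} then identifies the two ratios:
\[
\beta_A(\mu) = \M_A(\varphi_\mu), \qquad \Ev_A(\mu) \approx \Jd_A(\varphi_\mu),
\]
up to a dimensional constant (in the sense of $\approx$ defined in Lemma~\ref{lem:Jmu}).

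With this in place the two directions are symmetric. First I would assume uniform $\widehat K$-stability, i.e.\ the existence of $\delta > 0$ with $\M_A(\varphi) \ge \delta \Jd_A(\varphi)$ for every $\varphi \in \cE^1_A$. Then for any $\mu \in \cM^1$,
\[
\beta_A(\mu) \;=\; \M_A(\varphi_\mu) \;\ge\; \delta\, \Jd_A(\varphi_\mu) \;\gtrsim\; \delta\, \Ev_A(\mu),
\]
so the infimum of $\beta_A/\Ev_A$ over $\cM^1$ is bounded below by a positive constant. Conversely, suppose $\beta_A(\mu) \ge \delta\, \Ev_A(\mu)$ for all $\mu \in \cM^1$. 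Since both $\M_A$ and $\Jd_A$ are translation-invariant, it suffices to verify the uniform $\widehat K$-stability inequality for $\varphi \in \cE^1_{A,\sup}$; for such a $\varphi$, setting $\mu := \MA_A(\varphi)$ gives $\M_A(\varphi) = \beta_A(\mu) \ge \delta\, \Ev_A(\mu) \gtrsim \delta\, \Jd_A(\varphi)$.

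There is no real obstacle here, since the technical content required for this reduction has already been packaged into Theorems~\ref{thm:beta} and~\ref{thm:B}. The only mildly delicate point is the degenerate case where $\Ev_A(\mu) = 0$, which by the $\approx$ identification corresponds to $\Jd_A(\varphi_\mu) = 0$, i.e.\ $\varphi_\mu$ constant; there $\M_A$ also vanishes so the inequality is trivially satisfied and the interpretation of the infimum is immaterial.
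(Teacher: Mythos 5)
Your proposal is correct and matches the paper's (unwritten) reasoning: the paper labels this a ``trivial consequence'' of Theorem~\ref{thm:beta}, and the content is precisely the dictionary you spell out, namely the surjectivity of $\MA_A$ from Theorem~\ref{thm:B} combined with the identities $\beta_A(\MA_A(\varphi))=\M_A(\varphi)$ and $\Ev_A(\MA_A(\varphi))\approx\Jd_A(\varphi)$. The only minor remark is that the converse direction does not actually need surjectivity (only that $\MA_A$ lands in $\cM^1$), but this does not affect correctness.
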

In turn, this implies that uniform $\widehat K$-stability is an open condition:
\begin{corollary}
    Uniform $\widehat{K}$-stability is an open condition in the Kähler class $\alpha\in H^{1,1}(X)$.
\end{corollary}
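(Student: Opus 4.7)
Proof plan. By Corollary~\ref{cor:measures}, uniform $\widehat K$-stability of $(X,\alpha)$ amounts to the existence of $\delta>0$ with $\beta_A(\mu)\ge \delta\,\Ev_A(\mu)$ for every $\mu\in\cM^1$. As a topological space $\cM^1$ does not depend on the Kähler class, and since the Kähler cone is open it suffices to prove that for every $\alpha'$ sufficiently close to $\alpha$ in $H^{1,1}(X,\R)$ the same inequality holds with some (possibly smaller) constant $\delta'>0$. The strategy is to establish uniform-in-$\mu$ continuity of both functionals $\Ev_A$ and $\beta_A$ as the base class $A=\pi_X^*\alpha$ is varied slightly.

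More precisely, the plan is to prove the following two estimates, uniform in $\mu\in\cM^1$: (a) there is $c=c(\alpha',\alpha)\to 1$ as $\alpha'\to\alpha$ with $c^{-1}\Ev_A(\mu)\le \Ev_{A'}(\mu)\le c\,\Ev_A(\mu)$; and (b) there is $\epsilon=\epsilon(\alpha',\alpha)\to 0$ as $\alpha'\to\alpha$ with $|\nabla_{K_X}\Ev_{A'}(\mu)-\nabla_{K_X}\Ev_A(\mu)|\le \epsilon\,\Ev_A(\mu)$. For (a), I would use the relation $\Ev_A(\mu)\approx\Jd_A(\varphi_A)$ from Theorem~\ref{thm:beta}, where $\varphi_A$ is the Calabi--Yau solution with $\MA_A(\varphi_A)=\mu$, together with Proposition~\ref{prop:bjest1} to compare the relevant energy pairings for $A$ and $A'$. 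For (b), since $\nabla_{K_X}\Ev_A(\mu)$ can be written as a linear combination of energy pairings involving $\varphi_A$, I would reduce matters to continuity in $A$ of the Calabi--Yau solution $\varphi_A\in\cE^1$, which follows from Theorem~\ref{thm:B} combined again with Proposition~\ref{prop:bjest1}.

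Granting (a) and (b), choosing $\alpha'$ close enough to $\alpha$ so that $\epsilon<\delta/2$, we obtain
\[\beta_{A'}(\mu)=\Ent(\mu)+\nabla_{K_X}\Ev_{A'}(\mu)\ge \beta_A(\mu)-\epsilon\,\Ev_A(\mu)\ge (\delta-\epsilon)\,\Ev_A(\mu)\ge \frac{\delta-\epsilon}{c}\,\Ev_{A'}(\mu),\]
which gives uniform $\widehat K$-stability of $(X,\alpha')$ with constant $\delta'=(\delta-\epsilon)/c>0$. The main obstacle will be the uniformity in $\mu$ of (a) and (b): while for fixed $\mu$ the continuity in $\alpha$ of $\Ev_A(\mu)$ and $\nabla_{K_X}\Ev_A(\mu)$ is transparent from the definitions, the uniform statements require a careful tracking of the constants appearing in the quasi-metric estimates of Theorem~\ref{thm:quasimetric} and in Proposition~\ref{prop:bjest1}, now with the base class $A$ itself being varied.
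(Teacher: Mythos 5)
The paper's own proof is a one-liner: combine Corollary~\ref{cor:measures} with \cite[Theorem~5.5]{BJ23synthetic}, the latter being precisely the statement that the slope $\inf_{\mu\in\cM^1}\beta_A(\mu)/\Ev_A(\mu)$ depends continuously on the class $A$. Your proposal tries to reconstruct what that external theorem supplies rather than invoking it, so the overall shape is recognisable, but as written it has a genuine gap: the uniform-in-$\mu$ estimates (a) and (b) are stated as goals and never established, and you yourself flag the uniformity as the ``main obstacle.'' A plan that stops at the obstacle is not a proof.

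Moreover, the route you sketch for closing (b) does not work as stated. You propose to reduce the uniform control of $\nabla_{K_X}\Ev_{A'}(\mu)-\nabla_{K_X}\Ev_A(\mu)$ to ``continuity in $A$ of the Calabi--Yau solution $\varphi_A$, which follows from Theorem~\ref{thm:B}.'' But Theorem~\ref{thm:B} is a statement for a \emph{fixed} class $A$: it says $\MA_A$ is a homeomorphism $\cE^1_{A,\sup}\to\cM^1$. It gives no information at all about how the solution varies as the class $A$ is perturbed, and certainly not the quantitative, uniform-over-$\mu$, linear-in-$\Ev_A(\mu)$ control you need. Similarly, Proposition~\ref{prop:bjest1} compares pairings for a fixed tuple of classes as the \emph{potentials} vary; using it with $A$ itself varying is exactly the point that requires re-examining all the implicit constants in the quasi-metric machinery, which is the content of \cite[Theorem~5.5]{BJ23synthetic}. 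A smaller point: your claim in (a) that $c(\alpha',\alpha)\to 1$ is stronger than what the $\approx$ relation of Theorem~\ref{thm:beta} can deliver (it only yields dimensional constants, not constants tending to $1$), though for the final bound $\delta'=(\delta-\epsilon)/c$ a uniformly bounded $c$ would in fact suffice, so this is not fatal. The real gap is that (a) and (b) are asserted, not proved, and the reduction offered for (b) relies on a result that does not say what you need.
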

\begin{proof}
    This follows from Corollary~\ref{cor:measures} together with \cite[Theorem 5.5]{BJ23synthetic}.
\end{proof}

Finally, we state the following valuative criterion for uniform $\widehat K$-stability:
\begin{theorem}[Valuative criterion for $\widehat K$-stability]
    Uniform $\widehat K$-stability is equivalent to 
    \[\inf_{\mu\in \Md}\frac{\beta_A(\mu)}{\Ev_A(\mu)}>0.\]
\end{theorem}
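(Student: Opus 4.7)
The plan is to combine three results established earlier in the paper: Corollary~\ref{cor:measures}, which rephrases uniform $\widehat K$-stability as $\inf_{\mu\in \cM^1} \beta_A(\mu)/\Ev_A(\mu)>0$; the equivalence of conditions (1) and (3) in Theorem~\ref{prop:6.3}, which says that uniform $\widehat K$-stability is equivalent to the existence of some $\delta>0$ such that $\M_A(P_A(f))\ge \delta \Jd_A(P_A(f))$ for every $f\in \PL$; and Proposition~\ref{prop:divisorialenvelope}, which shows that every divisorial measure arises as $\MA_A(P_A(f))$ for some $f\in \PL$.

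One direction is immediate: since $\Md\subseteq \cM^1$, uniform $\widehat K$-stability trivially implies positivity of the infimum over $\Md$ via Corollary~\ref{cor:measures}. For the reverse direction, the key observation I would establish is that
\[\{\MA_A(P_A(f))\mid f\in \PL\} = \Md.\]
The inclusion $\subseteq$ follows from Proposition~\ref{prop:MAform}: given $f=f_D\in \PL$, one can replace it by $f_D+c$ with $c$ sufficiently large so that $f_D+c\in \PL_{\ge 1}$, and since $P_A(f_D+c) = P_A(f_D)+c$, the measure $\MA_A(P_A(f_D))=\MA_A(P_A(f_D+c))$ is divisorial by Proposition~\ref{prop:MAform}. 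The reverse inclusion $\supseteq$ is precisely the content of Proposition~\ref{prop:divisorialenvelope}.

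Having this, I would then invoke Theorem~\ref{thm:beta}: for every $f\in \PL$, setting $\mu:=\MA_A(P_A(f))\in \Md$, we have
\[\M_A(P_A(f)) = \beta_A(\mu), \qquad \Jd_A(P_A(f)) \approx \Ev_A(\mu).\]
Combining this with the equivalence of (1) and (3) in Theorem~\ref{prop:6.3} and the characterization of $\Md$ above, the existence of some $\delta>0$ such that $\M_A(P_A(f))\ge \delta \Jd_A(P_A(f))$ for all $f\in \PL$ is equivalent to the existence of some $\delta'>0$ such that $\beta_A(\mu)\ge \delta' \Ev_A(\mu)$ for every $\mu\in \Md$, which is the valuative criterion. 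Because all the necessary ingredients have already been developed in the preceding sections, there is no genuine obstacle remaining; the proof amounts to a clean repackaging of Corollary~\ref{cor:measures}, Theorem~\ref{prop:6.3}, Theorem~\ref{thm:beta}, and Proposition~\ref{prop:divisorialenvelope}.
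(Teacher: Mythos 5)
Your proof is correct and follows the same route as the paper, which simply cites Proposition~\ref{prop:divisorialenvelope}, Theorem~\ref{prop:6.3}, and Theorem~\ref{thm:beta}. You flesh out the implicit step that $\{\MA_A(P_A(f))\mid f\in \PL\}=\Md$ (using Proposition~\ref{prop:MAform} for one inclusion), and use Corollary~\ref{cor:measures} for the easy direction; both are compatible elaborations of the paper's terse argument.
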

\begin{proof}
    This follows from Propositions \ref{prop:divisorialenvelope} and \ref{prop:6.3} together with Theorem~\ref{thm:beta}.
\end{proof}

\addtocontents{toc}{\SkipTocEntry}
\subsection{Computing the beta invariant}
\label{sec:computingbeta}
The main goal of this section is to give a description of how to compute the $\beta$-invariant of a divisorial measure, the key object to study the above mentioned valuative criterion for $\widehat K$-stability. 
We do so in spirit of the valuative criterions in the Fano setting \cite{Li17Ksemi, Fuj19valuative}. 
In particular, we will use a different description of divisorial valuations as the one taken in this paper so far.
If we let $F\subseteq Y\overset{\mu}{\longrightarrow} X$ be a prime divisor on some modification of $X$, we can define
\[\ord_F\colon \ideal_X\to \R, \quad I\mapsto \inf\{\ord_F(g\circ\mu) : g\in I_x\},\]
for any generic choice of $x\in \mu(F).$
One of the main results of \cite{MP24transcendental} is to show that $\ord_F\in \Xdiv$ is a divisorial valuation, and moreover that, up to rescaling by a rational number, all divisorial valuations are of this form. 
Furthermore, as in \cite{MP24transcendental}, one can see that if $v = \ord_F$ is a divisorial valaution, then the log-discrepancy satisfies $A_{X\times\Pro}(v) = A_X(F)+1$.

Let $\{F_1, \dotsc, F_\ell\}$ be a finite set of prime divisors over $X$, $\{v_1, \dotsc, v_\ell\}$ the associated divisorial valuations and let $\mu_\xi$ be the measure \[\mu_\xi:=\sum_{i=1}^\ell \xi_i \, \delta_{v_i},\]
for $(\xi_1, \dotsc, \xi_\ell)\in \{x\in (\Q_{\ge 0})^\ell\mid \sum x_i =1\},$ we will denote $\mu_\xi$ by simply $\mu$ in the following.
We recall that
\begin{align*}
    \beta_A(\mu) &= \Ent(\mu) + \nabla_{K_X}\Ev_A(\mu)\\
    &= \sum_{i=1}^\ell \xi_i\cdot A_X(F_i) + \nabla_{K_X}\Ev_A(\mu).
\end{align*}
Hence, to calculate the $\beta$-invariant of $\mu$ one needs to compute the log discrepancy of the divisors $F_1, \dotsc, F_\ell$ and also to compute the energy $\Ev_\zeta(\mu)$ explicitly for some Kähler classes $\zeta\in H^{1,1}(X)$. More precisely it enough to compute $\Ev_{\alpha + tK_X}(\mu)$, for $t$ sufficiently small.

Recall that the energy $\Ev_A(\mu)$ is defined as 
\[\sup\left\{ \E_A(\varphi) - \int \varphi\, \mathrm d\mu \mid \varphi \in \cE^1_A\right\}.\]

If $t= (t_1, \dotsc, t_\ell)\in \R^\ell$, we denote by $\varphi_t$ the potential 
\begin{equation}\label{eq:phit}
    \varphi_t:=\{\varphi\in \PSH(\alpha)\mid \varphi(v_i)\le t_i \text{ for every } i= 1, \dotsc, \ell\} \in \CPSH_A.
\end{equation}
By Proposition~\ref{prop:divisorialenvelope} and Equation~\eqref{eq:P=sup} there exists $t^\star \in\R^\ell$ such that the associated potential $\varphi_{t^\star}$ maximizes the energy 
\begin{align*}
    \Ev_A(\mu) &=\E_A(\varphi_{t^\star}) - \int \varphi_{t^\star}\, \mathrm d\mu
\end{align*}
which, as we saw before, is equivalent to solving the equation
\(\MA_A(\varphi_{t^\star}) = \mu.\)
Moreover, by orthogonality we get
\[\Ev_A(\mu)= \E_A(\varphi_{t^\star}) - \sum \xi_i\cdot t^\star_i.\]
In particular, 
\begin{equation}\label{eq:energyopt}
\begin{aligned}
    \Ev_A(\mu_\xi) &\ge \sup \left\{ \E_A(\varphi_t) - \int \varphi_t\, \mathrm d\mu_\xi  \mid t\in \R^\ell\right \}\\
    &\ge \sup \left\{ \E_A(\varphi_t) - \xi\cdot t \mid t\in \R^\ell\right \}\\
    &\ge \E_A(\varphi_{t^\star}) - \xi\cdot t^\star = \Ev_A(\mu_\xi).
\end{aligned}
\end{equation}

In this way, we obtain the following finite dimensional optimization description of the energy $\Ev_A(\mu)$: 
denoting by $f_A\colon \R^\ell \to \R$ the function \[t\mapsto\E_A(\varphi_t),\] 
we get
\[\Ev_A(\mu_\xi)=\sup_{t\in \R^\ell} \left \{f_A(t) - \xi\cdot t\right\}.\]
What we will do next is to compute $f_A$ in terms of the expressions of the form 
\begin{equation}\label{eq:volumeexpression}
    \langle(\alpha - \sum_{i=1}^\ell s_i F_i)^n \rangle, \text{ for } s\in (\R_{\ge 0})^\ell.
\end{equation}
Using Equation~\eqref{eq:energyopt} we see that the Legendre transform of $-f$ at the point $-\xi$ computes the energy of $\mu_\xi$
\[\Ev_A(\mu_\xi) = \widehat{-f_A}\,(-\xi).\]

\subsubsection{Computing $f_A(t)$}
In order to compute the function $f_A(t)$ we will do as follows:
\begin{enumerate}
    \item Use Theorem 5.3.4 of \cite{MP24transcendental} to get a maximal geodesic ray $U$ associated to $\varphi_t$ such that
        \[U^{\NA} = \varphi_t, \quad \text{and}\quad \lim_{s\to \infty} \frac{\E_\omega(U_s)}{s} = \E_A(\varphi_t) = f(t). \]
    \item Give an explicit description of the Ross–Witt Nyström transform of $U$, so we can apply \cite[Theorem 2.6 (iii)]{DXZ23transcendental} to compute the energy of $U$ in terms of expressions of the form of \eqref{eq:volumeexpression}.
\end{enumerate}

Let us fix $t\in \R^\ell$, and denote by $\cF_\lambda$ the set 
\[\{u\in \PSH_{\sup}(\omega)\mid \nu(u, F_i)\ge \lambda - t_i \text{ for every } i = 1, \dotsc, \ell\},\]
where $\PSH_{\sup}$ denotes the set of $\omega$-psh functions with supremum normalized to be zero, and $G_\lambda$ denote the $\omega$-psh function
\(\sup\{u\in \cF_\lambda\}.\)

It is clear that $(G_\lambda)_\lambda$ is a \emph{relative test curve}, that is for every $x\in X$ the map  $\lambda\mapsto G_\lambda(x)$ is
\begin{itemize}
    \item Decreasing: if $\lambda \le \mu$ then $\cF_\mu \subseteq \cF_\lambda$, and thus $G_\mu\le G_\lambda$.
    \item Concave: for any $t\in [0,1]$ and any two functions $u_1\in \cF_\lambda$, $u_2\in \cF_\mu$, we have $t\cdot u_1 + (1-t)\cdot u_2\in \cF_{t\lambda + (1-t)\mu},$ implying $t\cdot G_\lambda + (1-t)\cdot G_\mu \le G_{t\lambda + (1-t)\mu}.$
    \item Upper semicontinuous: This follows from the fact that $\bigcap_{\epsilon>0}\cF_{\lambda + \epsilon} = \cF_\lambda.$
\end{itemize}

It is also clear that by definition $G_\lambda$ is \emph{$\cI$-maximal}, that is 
\[\mathrm P[G_\lambda]_{\cI} = G_\lambda.\]
We also observe that since Lelong numbers are bounded, there exists $\lambda_{0}\in \R$ such that for every $\lambda\ge \lambda_{0}$ the set $\cF_\lambda$ is empty, we denote $\lambda_{\max}$ the infimum of all such $\lambda_0$.
Let us also denote by $\lambda_{\min}(t)$, or simply $\lambda_{\min}$, the quantity $\min\{t_1, \dotsc, t_\ell\}$, and point out that for every $\lambda\le \lambda_{\min}$ the function $G_\lambda $ is constant equal to zero.

\begin{lemma}
    The Ross–Witt Nyström transform of $G_\lambda$ is the maximal geodesic ray starting from $0$ associated to $\varphi_t$.
\end{lemma}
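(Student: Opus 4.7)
The plan is to verify that $(G_\lambda)_\lambda$ is a bona fide $\cI$-maximal test curve, apply the Ross--Witt Nyström correspondence to obtain a maximal geodesic ray, and then match its non-Archimedean potential with $\varphi_t$ via a Lelong-number computation. First I would note, as essentially established in the set-up, that $(G_\lambda)_\lambda$ is decreasing, concave and upper semicontinuous in $\lambda$, identically zero for $\lambda\le \lambda_{\min}$, identically $-\infty$ for $\lambda>\lambda_{\max}$, and $\cI$-maximal by definition. The general Ross--Witt Nyström correspondence (see \cite[Theorem 2.6]{DXZ23transcendental}) then produces a maximal geodesic ray
\[U_s(x):=\left(\sup_{\lambda}(G_\lambda(x)+\lambda s)\right)^\star,\]
starting from $0$ in the sense that $U_0\equiv 0$, which follows from $G_\lambda\equiv 0$ for $\lambda\le \lambda_{\min}$.

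Next I would prove $U^{\NA}=\varphi_t$ by showing both inequalities. For $U^{\NA}\le \varphi_t$ I would appeal to the standard identity relating Lelong numbers of a maximal test curve along a prime divisor $F$ to the non-Archimedean potential of its RWN transform at the divisorial valuation $v_F=\ord_F$, namely
\[U^{\NA}(v_F)=\sup\{\lambda\in \R\mid \nu_F(G_\lambda)=0\}.\]
Specialised to $F=F_i$, the Lelong-number bound $\nu_{F_i}(G_\lambda)\ge (\lambda-t_i)_+$ baked into the definition of $\cF_\lambda$, together with $G_\lambda\equiv 0$ for $\lambda\le \lambda_{\min}$, immediately yields $U^{\NA}(v_i)=t_i$, and Lemma~\ref{lem:P=sup} promotes these pointwise bounds to $U^{\NA}\le \varphi_t$ on all of $X^{\NA}$.

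For the reverse inequality, by \cite[Theorem 5.3.4]{MP24transcendental} there exists a unique maximal geodesic ray $U_{\varphi_t}$ starting from $0$ with $U_{\varphi_t}^{\NA}=\varphi_t$, so it is enough to show $U_{\varphi_t}\le U$, which in turn follows once one establishes that $V\le U$ for every subgeodesic ray $V$ starting from $0$ with $V^{\NA}\le \varphi_t$. Given such a $V$, I would form its inverse-Legendre transform
\[\tilde G_\lambda(x):=\inf_{s>0}(V_s(x)-\lambda s),\]
translate the non-Archimedean constraint $V^{\NA}(v_i)\le t_i$ (via the description of $v_i$ as the multiplicity-one exceptional divisor of the blow-up of $F_i\times \{0\}$ in a model $Y_i\times \Pro$ dominating $X\times \Pro$) into the Lelong-number bound $\nu_{F_i}(\tilde G_\lambda)\ge \lambda-t_i$, conclude that $\tilde G_\lambda\in \cF_\lambda$ hence $\tilde G_\lambda\le G_\lambda$ for every $\lambda$, and finally invert the Legendre transform to obtain $V\le U$.

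The main obstacle is precisely this last translation: deducing the Lelong-number estimate $\nu_{F_i}(\tilde G_\lambda)\ge \lambda-t_i$ from the pointwise non-Archimedean inequality $V^{\NA}(v_i)\le t_i$ for an arbitrary subgeodesic ray $V$. It requires a careful analysis of the local behaviour of $V$ along $F_i\times \{0\}$ in an SNC model dominating $Y_i\times \Pro$, together with the interaction between the infimum in $s$ and Lelong numbers at a generic point of $F_i$; this is the transcendental counterpart of the classical Ross--Witt Nyström Lelong-number identity, and once it is in place the remaining steps are essentially formal.
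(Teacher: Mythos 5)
Your proposal runs in the opposite direction from the paper's proof, and that reversal is where the gap lies. The paper starts from the known maximal geodesic ray $U$ with $U^{\NA}=\varphi_t$ (furnished by \cite[Theorem 5.3.4]{MP24transcendental}), applies \cite[Proposition 3.1]{DXZ23transcendental} to $U$ to deduce $\nu_{F_i}(\check U_\lambda)\ge \lambda-t_i$, hence $\check U_\lambda\le G_\lambda$ and $U\le \hat G$; a second application of the same proposition to the maximal geodesic $\hat G$ gives $\hat G^{\NA}(\ord_{F_i})\le t_i$, so $\hat G^{\NA}\le \varphi_t$, and maximality of $U$ then forces $U=\hat G$. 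Crucially, the Lelong-number identity is invoked only for \emph{maximal} geodesic rays (the transform of an $\cI$-maximal test curve), which is exactly what the cited result provides.

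You instead set out to prove directly that $\hat G$ dominates \emph{every} subgeodesic $V$ with $V^{\NA}\le \varphi_t$, and you correctly flag the resulting obstruction: passing from $V^{\NA}(v_i)\le t_i$ to $\nu_{F_i}(\check V_\lambda)\ge \lambda-t_i$ for an \emph{arbitrary} subgeodesic is not covered by the available Ross--Witt Nystr\"om identity, and you offer no argument for it. That is a genuine gap, not a routine verification. It is also unnecessary: to get $U_{\varphi_t}\le \hat G$ it suffices to take $V=U_{\varphi_t}$ itself, which is a maximal geodesic ray, and then the needed Lelong bound is exactly \cite[Proposition 3.1]{DXZ23transcendental} — at which point your argument collapses back onto the paper's. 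Two secondary issues: the identity you quote, $U^{\NA}(v_F)=\sup\{\lambda\mid \nu_F(G_\lambda)=0\}$, is a misstatement of the correct formula $U^{\NA}(v_F)=\sup_\lambda\bigl(\lambda-\nu_F(\check U_\lambda)\bigr)$ (the two can differ when $\nu_F(G_\lambda)$ becomes positive with small slope); and your claim that the bounds ``immediately yield $U^{\NA}(v_i)=t_i$'' is unjustified — only the inequality $U^{\NA}(v_i)\le t_i$ follows, since the constraints from the other $F_j$ may force $\nu_{F_i}(G_\lambda)>0$ already for $\lambda<t_i$. Fortunately the inequality is all that is used, both in your outline and in the paper.
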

\begin{proof}
Let $U$ be a maximal geodesic ray associated to $\varphi_t$.  As in \cite[Proposition 3.1]{DXZ23transcendental} ––whose proof, originally for the projective setting, applies as is to our transcendental setting–– we have that for every divisorial valuation $v = r\cdot\ord_F$
\[\sup\{ -r\,\nu(\check U_\lambda, F) + \lambda \mid \lambda\le \lambda_{\max}\} = U^{\NA}(v) = \varphi_t(v),\]
if we normalize so that $U_0 = 0$, we observe
\[0=U_0= \sup_{\lambda\in \R} \check U_\lambda + \lambda \cdot 0 = \sup_{\lambda\in \R} \check U_\lambda,\]
thus for every $\lambda \in \R$ the supremum $\sup \check U_\lambda \le 0$, moreover we have for every $i = 1, \dotsc, \ell$ and every $\lambda\le \lambda_{\max}$
\[- \nu(\check U_\lambda, F_i) + \lambda \le \varphi_t(\ord_{F_i})\le t_i, \iff \nu(\check U_\lambda , F_i)\ge \lambda - t_i\]
which in turn implies $\check U_\lambda \le G_\lambda.$
Since $G_\lambda$ is $\cI$-maximal (in particular maximal), the Ross–Witt Nyström transform of $G_\lambda$ is a psh geodesic ray, which satisfies
\[U_s \le \hat G_s, \quad \text{and for every }i = 1, \dotsc, \ell \quad \hat G^{\NA}(\ord_{F_i})\le t_i\]
where the second inequality is again obtained using \cite[Proposition 3.1]{DXZ23transcendental}.
Combining the previous inequalities we have
\[\varphi_t = U^{\NA} \le \hat G^{\NA} \le \varphi_t,\]
which, by maximality of $U$, implies that $U = \hat G$.
\end{proof}

As a consequence we compute $f_A(t)$:

\begin{theorem}\label{thm:f}
    For every $t\in \R^\ell$ the energy of $\varphi_t$
    \[f_A(t) = \lambda_{0}+V^{-1}\int_{\lambda_{0}}^{+\infty}\langle(\alpha -\sum_{i=1}^\ell(\lambda - t_i)_+\, F_i)^n\rangle\mathrm d \lambda,\]
    where $\lambda_{0}$ is any constant strictly less than $\min\{t_1, \dotsc, t_\ell\}.$
\end{theorem}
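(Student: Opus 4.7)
The plan is to combine the preceding lemma with the asymptotic energy formula of \cite{DXZ23transcendental} to express $f_A(t)$ as an integral of non-pluripolar masses, and then to identify these masses with positive intersections of big classes.

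By the preceding lemma, the maximal geodesic ray $U$ associated to $\varphi_t$ is the Ross--Witt Nyström transform of the relative test curve $\lambda\mapsto G_\lambda$. Since the constraint $\nu(u,F_i)\ge \lambda-t_i$ is vacuous for $\lambda\le \lambda_{\min}:=\min_i t_i$, one has $G_\lambda\equiv 0$ there, while by the uniform boundedness of Lelong numbers $\cF_\lambda$ is empty for $\lambda>\lambda_{\max}$. By \cite[Theorem 5.3.4]{MP24transcendental} we have $f_A(t)=\E_A(\varphi_t)=\lim_{s\to\infty}s^{-1}\E_\omega(U_s)$, and by \cite[Theorem 2.6 (iii)]{DXZ23transcendental} this limit equals
\[V^{-1}\int_{\R} \lambda\, d\bigl(-\vol(G_\lambda)\bigr),\]
where $\vol(G_\lambda):=\int\langle(\omega+dd^c G_\lambda)^n\rangle$ denotes the non-pluripolar mass.

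The main technical step is then the identification
\[\vol(G_\lambda)=\bigl\langle\bigl(\alpha-\textstyle\sum_i(\lambda-t_i)_+\, F_i\bigr)^n\bigr\rangle\qquad\text{for } \lambda_{\min}\le\lambda\le\lambda_{\max}.\]
The correspondence $u\longleftrightarrow u-\sum_i(\lambda-t_i)_+\log|s_{F_i}|^2$ provides a bijection between $\omega$-psh functions $u$ with $\nu(u,F_i)\ge(\lambda-t_i)_+$ and $\theta$-psh functions for a smooth representative $\theta$ of the class $\beta_\lambda:=\alpha-\sum_i(\lambda-t_i)_+\, F_i$, and this correspondence preserves the non-pluripolar Monge--Ampère mass. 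Since $G_\lambda$ is $\cI$-maximal, it corresponds under this bijection to a minimal-singularities representative of $\beta_\lambda$, whose mass is $\langle\beta_\lambda^n\rangle$ by definition of the volume. Establishing this identification rigorously --- in particular that the $\cI$-maximality of $G_\lambda$ matches minimal singularities on the $\beta_\lambda$ side, and that the case of a non-big $\beta_\lambda$ at $\lambda>\lambda_{\max}$ gives vanishing mass on both sides --- is the main obstacle, but it is essentially a Zariski-type decomposition statement for envelopes with prescribed Lelong numbers.

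Finally, $\vol(G_\lambda)$ equals $V$ on $(-\infty,\lambda_{\min}]$, is monotone decreasing on $[\lambda_{\min},\lambda_{\max}]$, and vanishes on $(\lambda_{\max},+\infty)$, so a standard integration by parts yields
\[V^{-1}\int_{\R}\lambda\, d\bigl(-\vol(G_\lambda)\bigr)=\lambda_{\min}+V^{-1}\int_{\lambda_{\min}}^{\lambda_{\max}}\vol(G_\lambda)\, d\lambda.\]
For any $\lambda_0<\lambda_{\min}$, the integrand on $[\lambda_0,\lambda_{\min}]$ is $V$ and on $[\lambda_{\max},+\infty)$ is zero, so enlarging the range of integration exactly produces the expression claimed in the theorem statement.
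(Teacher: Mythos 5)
Your approach is essentially the same as the paper's: use the preceding lemma to identify $U$ as the Ross--Witt Nystr\"om transform of $(G_\lambda)$, apply the DXZ asymptotic energy formula, and then identify $\int_X\langle(\omega+\ddc G_\lambda)^n\rangle$ with the volume $\langle\beta_\lambda^n\rangle$ via the substitution $u\mapsto u-\sum_i(\lambda-t_i)_+\log|s_{F_i}|^2$. Your reformulation of \cite[Theorem 2.6 (iii)]{DXZ23transcendental} as $V^{-1}\int_{\R}\lambda\,d(-\vol(G_\lambda))$ followed by integration by parts is algebraically equivalent to the paper's direct rearrangement of the integrand, so that is only a cosmetic difference.

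One small correction: the step you flag as ``the main obstacle'' is not as delicate as you suggest, and it does not hinge on $\cI$-maximality. The $\cI$-maximality of $G_\lambda$ is only invoked in the preceding lemma, to ensure that the Ross--Witt Nystr\"om transform is a psh geodesic ray. For the mass identification, what one needs is that $G_\lambda-\psi^\lambda_F$ has minimal singularities in $\PSH_{\omega-\theta^\lambda_F}$, and this follows directly from the envelope definition of $G_\lambda$: any $v\in\PSH(\omega-\theta^\lambda_F)$ yields $u:=v+\psi^\lambda_F\in\PSH(\omega)$ with $\nu(u,F_i)\ge(\lambda-t_i)_+$, hence $u\le G_\lambda+O(1)$ and so $v\le G_\lambda-\psi^\lambda_F+O(1)$. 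No Zariski decomposition is needed; the non-pluripolar mass is unchanged under subtracting the divisorial parts, and this gives $\vol(G_\lambda)=\langle\beta_\lambda^n\rangle$ directly.
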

In the algebraic case the same formula appears without proof in \cite{Li23notes}.

\begin{proof}
Once again, let $U$ be the maximal geodesic ray associated to $\varphi_t$ starting from $0$, and recall that by the previous Lemma, its Ross–Witt Nyström transform is given by $\lambda\mapsto G_\lambda$.

By \cite[Theorem 2.6 (iii)]{DXZ23transcendental}  
\begin{equation}\label{eq:energy}
f_A(t) =\frac{\E_\omega(U_s)}{s} = \lambda_{\max} + \int_{-\infty}^{\lambda_{\max}} \left(-1 + V^{-1}\int_X \langle \omega +\ddc G_\lambda \rangle^n \right)\mathrm d\lambda. 
\end{equation}
 
Let us compute the last energy term.
Let $\theta_{F_i}$ be a smooth form representing the first Chern class of the line bundle associated to $F_i$, $\chern_1(\cO(F_i))$.
Let $\psi_{F_i}$ then be a potential satisfying
\[\theta_{F_i} + \ddc \psi_{F_i} = \delta_{F_i},\]
where $\delta_{F_i}$ represents the current of integration along $F_i$.
For simplicity, denote by $\theta^\lambda_F$ the smooth form $\sum_{i=1}^\ell (\lambda - t_i)_+\, \theta_{F_i},$ by $\psi^\lambda_F$ the function $\sum_{i=1}^\ell (\lambda - t_i)_+\, \psi_{F_i}$, and by $\delta^\lambda_F$ the measure $\sum_{i=1}^\ell (\lambda - t_i)_+\, \delta_{F_i}$.
Then the current 
\begin{align*}
    \omega - \theta^\lambda_{F} + \ddc \left[G_\lambda -\psi^\lambda_F\right]= \omega +\ddc G_\lambda -\delta_F^\lambda\in \alpha -\sum_{i=1}^\ell(\lambda - t_i)_+\, F_i
\end{align*}
has minimal singularities in $\alpha -\sum_{i=1}^\ell(\lambda - t_i)_+\, F_i$.
Furthermore, the integral of the nonpluripolar product satisfies:
\[\int_X \langle(\omega +\ddc G_\lambda)^n\rangle = \int_X \langle( \omega +\ddc G_\lambda - \delta^\lambda_F)^n\rangle.\]
Since the latter has minimal singularities, we then have that
\[\int_X \langle (\omega +\ddc G_\lambda)^n\rangle =\langle(\alpha -\sum_{i=1}^\ell(\lambda - t_i)_+\, F_i)^n\rangle.\]
Adding up everything together we obtain the desired formula:
\begin{equation}
    \begin{aligned}
        f_A(t) = \E_A(\varphi_t) &= \lambda_{\max} + \int_{-\infty}^{\lambda_{\max}} \left(-1 + V^{-1}\int_X \langle (\omega +\ddc G_\lambda)^n \rangle \right)\mathrm d\lambda\\
        &= \lambda_{\max} + \int_{\lambda_{0}}^{\lambda_{\max}} \left(-1 + V^{-1}\int_X \langle (\omega +\ddc G_\lambda)^n \rangle \right)\mathrm d\lambda\\
        &= \lambda_{0} + \int_{\lambda_{0}}^{\lambda_{\max}} \left( V^{-1}\int_X \langle (\omega +\ddc G_\lambda)^n \rangle \right)\mathrm d\lambda\\
        &=\lambda_{0}+V^{-1}\int_{\lambda_{0}}^{\lambda_{\max}} \langle(\alpha -\sum_{i=1}^\ell(\lambda - t_i)_+\, F_i)^n\rangle\mathrm d \lambda.
    \end{aligned}
\end{equation}
where $\lambda_{0} < \min\{t_1, \dotsc, t_\ell\}$.
\end{proof}

Specializing the previous result for the case $\ell =1$, we get the known formula
\[\E_A(\varphi_v) = V^{-1}\int_0^{\tau_{\psef}} \langle(\alpha - \lambda F)^n\rangle \,\mathrm d\lambda.\]

\begin{corollary}
    The Legendre transform of $ -f_A(t)$ is given by
    \[\widehat{-f_A}\,(-\xi) = \sup_{t\in \R^\ell}\left\{- \langle\xi, t\rangle + f_A(t)\right\} = \Ev_{A}(\mu_\xi),\]
    where $\mu_\xi \doteq \sum_{i=1}^\ell \xi_i \delta_{v_i}$ is divisorial measure associated to $\xi$ and $F_1, \dotsc, F_\ell.$
\end{corollary}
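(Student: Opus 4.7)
The plan is to observe that this corollary is essentially a repackaging of the finite-dimensional optimization description of $\Ev_A(\mu_\xi)$ already established just before Theorem~\ref{thm:f}. Concretely, the statement splits into two equalities, and each is almost a tautology given what precedes it.

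For the first equality, I would simply unpack the definition of the Legendre transform. For a function $g\colon \R^\ell\to \R$, one has $\widehat{g}(y)=\sup_{t\in\R^\ell}\{\langle y,t\rangle - g(t)\}$, so substituting $g=-f_A$ and $y=-\xi$ immediately gives
\[\widehat{-f_A}(-\xi) = \sup_{t\in \R^\ell}\{-\langle \xi,t\rangle + f_A(t)\}.\]
No non-trivial content enters here.

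For the second equality, the work has already been carried out in the chain of inequalities in \eqref{eq:energyopt}. I would just rewrite it, emphasizing that the potential $\varphi_t$ from \eqref{eq:phit} satisfies $\int \varphi_t\,\mathrm d\mu_\xi \le \sum_i \xi_i t_i = \langle\xi,t\rangle$ by definition, so that for every $t\in\R^\ell$ we have
\[f_A(t)-\langle\xi,t\rangle = \E_A(\varphi_t) - \langle\xi,t\rangle \le \E_A(\varphi_t) - \int \varphi_t\,\mathrm d\mu_\xi \le \Ev_A(\mu_\xi).\]
For the reverse inequality, Proposition~\ref{prop:divisorialenvelope} (together with Theorem~\ref{thm:B}) yields $t^\star\in\R^\ell$ with $\MA_A(\varphi_{t^\star})=\mu_\xi$, realizing the supremum in the definition of $\Ev_A(\mu_\xi)$. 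The orthogonality property of Theorem~\ref{thm:Orth}, applied to the PL function $\varphi_D$ produced in Lemma~\ref{lem:P=sup} (for which $\varphi_{t^\star}=P_A(\varphi_D)$ and $\varphi_D(v_i)=t^\star_i$), forces $\int \varphi_{t^\star}\,\mathrm d\mu_\xi = \sum_i \xi_i t^\star_i = \langle \xi,t^\star\rangle$, since $\mu_\xi$ is supported on $\{v_1,\dots,v_\ell\}$ where $\varphi_{t^\star}$ and $\varphi_D$ agree. Hence $\Ev_A(\mu_\xi) = \E_A(\varphi_{t^\star}) - \langle\xi,t^\star\rangle = f_A(t^\star) - \langle\xi,t^\star\rangle$, realizing equality in the supremum.

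There is no real obstacle; everything is in place. The only mild subtlety is confirming that the supremum in the Legendre transform is attained by $t^\star$, which is exactly what Proposition~\ref{prop:divisorialenvelope} combined with orthogonality provides, so the proof is essentially a reference to these earlier results.
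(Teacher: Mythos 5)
Your proof is correct and follows essentially the same route as the paper, which does not give a separate argument for this corollary but derives it directly from the chain of inequalities in \eqref{eq:energyopt}. You correctly identify that the first equality is the definition of the Legendre transform, that the upper bound $f_A(t)-\langle\xi,t\rangle\le\Ev_A(\mu_\xi)$ comes from $\int\varphi_t\,\mathrm d\mu_\xi\le\langle\xi,t\rangle$, and that equality is achieved at $t^\star$ via Proposition~\ref{prop:divisorialenvelope} (solving $\MA_A(\varphi_{t^\star})=\mu_\xi$) combined with orthogonality (Theorem~\ref{thm:Orth}), which forces $\int\varphi_{t^\star}\,\mathrm d\mu_\xi=\langle\xi,t^\star\rangle$.
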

If we denote by $g_A(\xi)$ the Legendre transform $\widehat{-f_A}\, (-\xi)$, for $f_A(t)$ as in Theorem~\ref{thm:f} we get the formula:
\begin{corollary} \label{cor:betaformula}
    Let $\mu_\xi$ be like above, then 
    \begin{align*}
        \beta_A(\mu_\xi) = \sum_{i=1}^\ell \xi_i \cdot A_X(F_i) + \nabla_{K_X} \, g_A(\xi). 
    \end{align*}
\end{corollary}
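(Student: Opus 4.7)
\textbf{Proof proposal for Corollary \ref{cor:betaformula}.} The plan is to unwind the definition $\beta_A(\mu) = \Ent(\mu) + \nabla_{K_X}\Ev_A(\mu)$ on the discrete measure $\mu_\xi = \sum_{i=1}^\ell \xi_i\,\delta_{v_i}$ and then identify each of the two summands.

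First, I would compute the entropy. Since $\mu_\xi$ is a finite $\C^*$-invariant combination of Dirac masses at divisorial points, the defining formula
\[
\Ent(\mu_\xi) = \int(A_{X\times\Pro} - 1)\,\mathrm d\mu_\xi
\]
collapses to the finite sum $\sum_i \xi_i(A_{X\times\Pro}(v_i)-1)$. Here I will use the identification of divisorial valuations $v_i = \ord_{F_i}$ recalled at the beginning of Section \ref{sec:computingbeta}, together with the identity $A_{X\times\Pro}(\ord_F) = A_X(F) + 1$ (which the paper extracts from \cite{MP24transcendental}). Plugging this in cancels the $-1$ and yields $\Ent(\mu_\xi) = \sum_i \xi_i\,A_X(F_i)$, giving the first term of the claimed formula.

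Next I would identify the energy term. By the corollary immediately preceding the statement we have $\Ev_A(\mu_\xi) = \widehat{-f_A}(-\xi) = g_A(\xi)$, and the differentiability of the energy in the class, established in \cite{BJ23synthetic} and already invoked in the definition of $\beta_A$, lets us write $\nabla_{K_X}\Ev_A(\mu_\xi) = \nabla_{K_X} g_A(\xi)$. Summing the two contributions gives exactly the asserted formula.

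The only genuinely delicate point is the log-discrepancy identity $A_{X\times\Pro}(\ord_F) = A_X(F)+1$, since this is the bridge between the two normalizations in play (log discrepancy on $X\times \Pro$ versus on $X$). I would treat it by working on a SNC model $\cX \to X\times\Pro$ dominating a modification $Y\to X$ in which the strict transform $\widetilde F$ of $F$ appears with multiplicity one in the central fiber, and then using the standard formula $A_{X\times \Pro}(E) = 1 + \mathrm{ord}_E(K_{\cX/X\times\Pro})$ together with the change of base $K_{X\times\Pro} = \pi_X^* K_X + \pi_{\Pro}^*K_{\Pro}$ already used in the proof of Theorem \ref{thm:functionalid}. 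Everything else is a bookkeeping exercise, so I do not anticipate any substantive obstacle.
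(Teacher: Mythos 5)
Your proposal matches the paper's (largely implicit) proof: the paper has already written $\beta_A(\mu_\xi) = \sum_i \xi_i A_X(F_i) + \nabla_{K_X}\Ev_A(\mu_\xi)$ in the lead-up, using exactly the entropy computation you describe, and the corollary immediately preceding the statement gives $\Ev_A(\mu_\xi) = g_A(\xi)$, so the result follows by substitution. Your extra step of re-deriving the log-discrepancy identity $A_{X\times\Pro}(\ord_F) = A_X(F)+1$ is harmless but unnecessary, since the paper simply cites \cite{MP24transcendental} for it; if you do carry it out, be careful about the normalization by the multiplicity $b_i$ of the central fiber along the strict transform of $F$, which is why one must pass to a model where that multiplicity equals one, as you note.
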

As a moral conclusion we have that in order to compute the beta invariant of a divisorial measure with support on $\ord_{F_1}, \dotsc, \ord_{F_\ell}$ it is enough to be able to compute the log discrepancy of $F_1, \dotsc, F_\ell$ and the integral of positive intersections of the form 
\[\langle(\alpha + \sum_{i=1}^\ell s_i F_i)^n\rangle, \quad \text{ for } s\in (\R_+)^\ell.\]

\bibliography{pietro}
\bibliographystyle{alpha}
\end{document}